\documentclass[11pt, a4paper]{article}

\usepackage{listings}
\lstset{language=Matlab}
\lstset{breaklines}
\lstset{extendedchars=false}

\usepackage{}

\marginparwidth 0pt \oddsidemargin 0pt \evensidemargin 0pt
\topmargin -1.2 cm \textheight 23.5 truecm \textwidth 16.0 truecm

\usepackage{tabularx}
\usepackage{indentfirst}
\usepackage{lineno}
\usepackage{graphicx}
\usepackage{ae}
\usepackage{amsmath}
\usepackage{amssymb}
\usepackage{latexsym}
\usepackage{url}
\usepackage{epsfig}
\usepackage{cite}
\usepackage{mathrsfs}
\usepackage{amsfonts}
\usepackage{amsthm}
\usepackage{float}
\usepackage{multirow}
\usepackage[titletoc]{appendix}
\usepackage[section]{placeins}

\usepackage{subfigure}
\usepackage{caption}
\usepackage{array}
\usepackage{longtable}
\usepackage{supertabular}
\newcommand{\PreserveBackslash}[1]{\let\temp=\\#1\let\\=\temp}
\newcolumntype{C}[1]{>{\PreserveBackslash\centering}p{#1}}
\newcolumntype{R}[1]{>{\PreserveBackslash\raggedleft}p{#1}}
\newcolumntype{L}[1]{>{\PreserveBackslash\raggedright}p{#1}}

\def\qed{\hfill$\Box$\vspace{12pt}}

\usepackage{color}

\long\def\delete#1{}



\newcommand{\be}{\begin{equation}}
\newcommand{\ee}{\end{equation}}
\newcommand{\ben}{\begin{equation*}}
\newcommand{\een}{\end{equation*}}
\newcommand{\bea}{\begin{eqnarray}}
\newcommand{\eea}{\end{eqnarray}}
\newcommand{\bean}{\begin{eqnarray*}}
\newcommand{\eean}{\end{eqnarray*}}


\newtheorem{thm}{Theorem}[section]
\newtheorem{cor}[thm]{Corollary}

\newtheorem{lem}[thm]{Lemma}

\newtheorem{defn}[thm]{Definition}

\numberwithin{equation}{section}

\title{Distance matrix correlation spectrum of graphs
\thanks{Supported by the National Natural Science Foundation of China (No.11361033).}}

\author{Pengli Lu\thanks{Corresponding author.   E-mail
addresses: lupengli88@163.com (\textbf{P. Lu}), lwzvay123@163.com (\textbf{W. Liu}).} \;and\; Wenzhi Liu
\\
\footnotesize{School of Computer and Communication, Lanzhou University of Technology, Lanzhou, 730050, Gansu, P.R. China}}

\date{}

\begin{document}

\openup 0.5\jot
\maketitle

\begin{abstract}
 Let $G$ be a simple, connected graph, $\mathcal{D}(G)$ be the distance matrix of $G$, and $Tr(G)$ be the diagonal matrix of vertex transmissions of $G$. The distance Laplacian matrix and distance signless Laplacian matrix of $G$ are defined by $\mathcal{L}(G)=Tr(G)-\mathcal{D}(G)$ and $\mathcal{Q}(G)=Tr(G)+\mathcal{D}(G)$, respectively. The eigenvalues of $\mathcal{D}(G)$, $\mathcal{L}(G)$ and $\mathcal{Q}(G)$ is called the $\mathcal{D}-$spectrum, $\mathcal{L}-$spectrum and $\mathcal{Q}-$spectrum, respectively. The generalized distance matrix of $G$ is defined as $\mathcal{D}_{\alpha}(G)=\alpha Tr(G)+(1-\alpha)\mathcal{D}(G),~0\leq\alpha\leq1$, and the generalized distance
spectral radius of $G$ is the largest eigenvalue of $\mathcal{D}_{\alpha}(G)$. In this paper, we give a complete description of the $\mathcal{D}-$spectrum, $\mathcal{L}-$spectrum and $\mathcal{Q}-$spectrum of some graphs obtained by operations. In addition, we present some new upper and lower bounds on the generalized distance spectral radius of $G$ and of its line graph $L(G)$, based on other graph-theoretic parameters, and characterize the extremal graphs. Finally, we study the generalized distance spectrum of some composite graphs.
    \bigskip

\noindent\textbf{Keywords: }distance Laplacian spectrum, distance signless Laplacian spectrum, generalized distance matrix, spectral radius, graph operations

\noindent{{\bf AMS Subject Classification (2010):} 05C50}
\end{abstract}

\section{Introduction}
All graphs considered are finite, simple and connected. Let $G=(V(G),E(G))$ be a graph with vertex set $V(G)$ and edge set $E(G)$. Let $d_{i}$ be the degree
of the vertex $v_{i}$ in $G$ for $i=1,2,\cdots,n$ and satisfy $d_{1}\geq d_{2}\geq\cdots\geq d_{n}$. Let $A(G)=(a_{ij})_{n\times n}$
be the $(0,1)$-adjacency matrix of $G$, where $a_{ij}=1$ if $v_{i}$ and $v_{j}$ are adjacent and $0$ otherwise,
and $D(G)=diag(d_{1},d_{2},\cdots,d_{n})$ be the degree diagonal matrix. The spectrum of $A(G)$ is denoted by $\lambda_{1}(G)\geq \lambda_{2}(G) \geq \cdots\geq \lambda_{n}(G)$.

The distance between two vertices $u$ and $v$ in $G$, denoted by $d_{G}(u,v)$, is defined to be the length of the shortest path between $u$ and $v$. The distance matrix $\mathcal{D}(G)=(d_{uv})$ of $G$ is the matrix indexed by vertices of $G$ with $d_{uv}=d_{G}(u,v)$. The eigenvalues of $\mathcal{D}(G)$ are denoted by $\mu_{1}^{\mathcal{D}}(G)\geq \mu_{2}^{\mathcal{D}}(G) \geq \cdots\geq \mu_{n}^{\mathcal{D}}(G)$, the multiset of all eigenvalues of $\mathcal{D}(G)$ is called the distance spectrum of $G$.
The transmission $Tr(u)$ of a vertex $u$ in $G$ is defined to be the sum of the distances from $u$ to all other vertices in $G$, i.e., $Tr(u)=\sum_{v\in V(G)}d_{G}(u,v)$. A graph $G$ is said to be $k$-transmission regular if $Tr(u)=k$, for each $u\in V(G)$. Hence the transmission degree sequence is given by $\{Tr_{1},Tr_{2},\cdots,Tr_{n}\}$. The second transmission degree of $v_{i}$, denoted by $T_{i}$, is given by $T_{i}=\sum_{j=1}^{n}d_{ij}Tr_{j}$.

Similarly to the Laplacian matrix and signless Laplacian matrix of graphs, the distance Laplacian matrix and distance signless Laplacian matrix are introduced by M. Aouchiche and P. Hansen \cite{1}. Let $Tr(G)=diag(Tr_{1},Tr_{2},\cdots,Tr_{n})$ be the diagonal matrix of the vertex transmissions in $G$, then $\mathcal{L}(G) = Tr(G)-\mathcal{D}(G)$ and $\mathcal{Q}(G) = Tr(G)+\mathcal{D}(G)$ are called the distance Laplacian matrix and distance signless Laplacian matrix, respectively. The spectrum of $\mathcal{L}(G)$ and $\mathcal{Q}(G)$ are denoted by $\mu_{1}^{\mathcal{L}}(G)\geq \mu_{2}^{\mathcal{L}}(G) \geq \cdots\geq \mu_{n}^{\mathcal{L}}(G)$ and $\mu_{1}^{\mathcal{Q}}(G)\geq \mu_{2}^{\mathcal{Q}}(G) \geq \cdots\geq \mu_{n}^{\mathcal{Q}}(G)$, respectively.

The average transmission is denoted by $t(G)$ and is defined by $t(G)=\frac{1}{n}\sum_{i=1}^{n}Tr_{G}(v_{i})$, then the distance energy of a connected graph $G$ was defined in \cite{t9} as
\begin{equation*}
DE(G)=\sum_{i=1}^{n}|\mu_{i}^{\mathcal{D}}(G)|.
\end{equation*}
Its mathematical properties were extensively investigated, see the recent articles \cite{t9,t10,t11,t12} and the references cited
therein.

The distance Laplacian energy and the distance signless Laplacian energy of G are defined as
\begin{equation*}
DLE(G)=\sum_{i=1}^{n}|\mu_{i}^{\mathcal{L}}(G)-t(G)|   \qquad  and \qquad  DSLE(G)=\sum_{i=1}^{n}|\mu_{i}^{\mathcal{Q}}(G)-t(G)|
\end{equation*}
, respectively. The distance Laplacian energy of a graph $G$ was first defined in \cite{13}, where several lower and upper bounds were obtained.

 In \cite{2}, Nikiforov proposed to study the convex linear combinations of the adjacency matrix and diagonal degree matrix of $G$, which reduces to merging the adjacency spectral and signless Laplacian spectral theories. Similarly, In \cite{4}, Guixian Tian and Shuyu Cui studied the convex combinations $\mathcal{D}_{\alpha}(G)$ of $Tr(G)$ and $\mathcal{D}(G)$ defined by
 \begin{equation*}
\begin{aligned}  
\mathcal{D}_{\alpha}(G)=\alpha Tr(G)+(1-\alpha)\mathcal{D}(G),~~0\leq\alpha\leq1.
\end{aligned}
\end{equation*}
Obviously,
\begin{equation*}
\begin{aligned}  
\mathcal{D}_{0}(G)=\mathcal{D}(G),~~~\mathcal{D}_{\frac{1}{2}}(G)=\frac{1}{2}\mathcal{Q}(G),~~~\mathcal{D}_{1}(G)=Tr(G),
\end{aligned}
\end{equation*}
and
\begin{equation*}
\begin{aligned}  
\mathcal{D}_{\alpha}(G)-\mathcal{D}_{\beta}(G)=(\alpha-\beta)\mathcal{L}(G).
\end{aligned}
\end{equation*}
Let $G=(V(G),E(G))$ be a connected graph of order $n$. Then all eigenvalues of $\mathcal{D}_{\alpha}(G)$ are denoted by $\rho_{1}(G)\geq\rho_{2}(G)\geq\cdots\geq\rho_{n}(G)$. The multiset of all
eigenvalues of $\mathcal{D}_{\alpha}(G)$ is called the generalized distance spectrum of $G$, denoted by
$\{\rho_{1}(G),\rho_{2}(G),\cdots,\rho_{n}(G)\}$. In particular, the largest eigenvalues of
$\mathcal{D}_{\alpha}(G)$ are denoted by $\rho(G)$. If $G$ is connected, then $\mathcal{D}_{\alpha}(G)$ is symmetric, nonnegative and irreducible.  By the Perron-Frobenius theorem, $\rho(G)$ is positive and simple, and there is a unique positive unit eigenvector $X$ corresponding to $\rho(G)$, which is called the generalized distance Perron vector of $G$.

A column vector $X=(x_{1},x_{2},\cdots,x_{n})^{T}\in R^{n}$ can be considered as a function defined on $V(G)$ which maps vertex $v_{i}$ to $x_{i}$, that is, $X(v _{i})=x_{i}$ for $i=1,2,\cdots,n$. Then,
\begin{equation*}
\begin{aligned}  
X^{T}\mathcal{D}_{\alpha}(G)X
\end{aligned}=\alpha\sum_{i=1}^{n}Tr(v_{i})x_{i}^{2}+2(1-\alpha)\sum_{1\leq i<j\leq n}d(v_{i},v_{j})x_{i}x_{j},
\end{equation*}
and $\rho$ is an eigenvalue of $\mathcal{D}_{\alpha}(G)$ corresponding to the eigenvector $X$ if and only if
$X\neq0$ and for each $i\in V(G)$,
\begin{equation*}
\begin{aligned}  
\rho x_{i}=\sum_{k=1}^{n}d_{ik}((1-\alpha)x_{k}+\alpha x_{i}).
\end{aligned}
\end{equation*}
These equations are called the $(\rho,x)$-eigenequations of $G$. For a normalized column
vector $X\in R_{n}$ with at least one nonnegative component, by the Rayleigh's principle,
we have
\begin{equation*}
\begin{aligned}  
\rho(G)\geq X^{T}\mathcal{D}_{\alpha}(G)X,
\end{aligned}
\end{equation*}
with equality if and only if $X$ is the generalized distance Perron vector of $G$.

Up till now, the distance spectrum of a connected graph has been investigated extensively, see the recent survey \cite{5} as well as the references therein. Recently, the distance Laplacian spectrum and distance signless Laplacian spectrum of graphs have also been studied in many papers. For example, Aouchiche and Hansen \cite{6} showed that the distance Laplacian eigenvalues and distance signless Laplacian eigenvalues do not decrease when an edge is deleted. In \cite{7}, the same authors proved that the star is the unique tree with minimum distance Laplacian spectral radius. In \cite{8}, Alhevaz et al. gave some upper and lower bounds on distance signless Laplacian spectral radius and also determined the distance signless Laplacian spectrum of some graph operations. In \cite{4}, Guixian Tian et al. defined the generalized distance matrix and studied Some spectral properties. Furthermore, they obtained some upper and lower bounds of spectral radius of the generalized distance matrix. Finally, the generalized distance spectra of some graphs obtained by operations are also studied.
For more review about distance Laplacian spectrum and distance signless Laplacian spectrum of graphs, readers may refer to \cite{9,10,11,111,112} and the references therein.

The paper is organized as follows. In Section 2, we give a list of some previously known results and the definition of some graph operation. In Section 3, we obtain the $\mathcal{D}-$spectrum, $\mathcal{L}-$spectrum and $\mathcal{Q}-$spectrum of the cluster of a distance regular graph with complete graph, the double graph of $G$, the join of regular graphs, the join of a regular graph with the union of regular graphs, the subdivision$-$edge join, the subdivision$-$vertex join and the subdivision$-$(vertex$-$edge) join of two regular graphs. These results enable us to study the distance Laplacian energy and the distance signless Laplacian energyof some special graphs.
In Section 4, we obtain some upper and lower bounds on the generalized distance spectral radius and
determine the extremal graphs in terms of transmission degree $Tr_{i}$, second transmission degree $T_{i}$, maximum degree $\bigtriangleup_{1}$, second maximum degree $\bigtriangleup_{2}$, minimum degree $\delta_{1}$, second minimum degree $\delta_{2}$ and so on. Further, we study the line graphs $L(G)$ of simple
connected graphs and determine some lower bounds on the generalized distance spectral radius of $L(G)$ based on some graph invariants, and characterize
the extremal graphs. In Section 5, we focus mainly on some graph operations and determine the generalized distance spectrum of some graphs obtained
by these operations.
\section{Preliminaries}
In this section, we shall list some previously known results that will be needed in the proofs of our results in the next three sections.
\begin{defn}\cite{12}\label{lem0}
If graphs $G_{1}$ and $G_{2}$ have no common vertices, It is said to disjoint union $G_{1}\cup G_{2}$;
The join of two vertex disjoint graphs $G_{1}$ and $G_{2}$, denoted by $G_{1}\triangledown G_{2}$, is the graph by joining each vertex of $G_{1}$ with every vertex of $G_{2}$.
\end{defn}
\begin{defn}\cite{13}\label{lem1}
Let $H$ be a graph rooted at $u$. Then given a graph G with vertex set $\{v_{1},v_{2}, \cdots , v_{p}\}$, the cluster $G\{H\}$ is defined as the graph obtained by taking $p$ copies of $H$ and for each $i$, joining the $i$th vertex of $G$ to the root in the $i$th copy of $H$.
\end{defn}
\begin{defn}\cite{14}\label{lem2}
 Let $G$ be a graph with vertex set $V(G)=\{v_{1}, v_{2},\cdots,v_{p}\}$. Take another copy of $G$ with the vertices labelled by $\{u_{1}, u_{2},\cdots, u_{p}\}$,  where $u_{i}$ corresponds to $v_{i}$ for each $i$ . Make $u_{i}$ adjacent to all the vertices in $N(v_{i})$ in G , for each $i$. The resulting graph, denoted by $D_{2}G$ , is called the double graph of $G$.
\end{defn}
\begin{defn}\cite{15}\label{lem3}
The subdivision$-$edge join of two vertex disjoint graphs $G_{1}$ and $G_{2}$, denoted by $G_{1}\oplus G_{2}$, is the graph obtained from $S(G_{1})$ and $S(G_{2})$ by joining each vertex of $I(G_{1})$ with every vertex of $I(G_{2})$.
\end{defn}
\begin{defn}\cite{15}\label{lem4}
The subdivision$-$vertex join of two vertex disjoint graphs $G_{1}$ and $G_{2}$, denoted by $G_{1}\dot{\vee}G_{2}$, is the graph obtained from $S(G_{1})$ and $S(G_{2})$ by joining each vertex of $V(G_{1})$ with every vertex of $V(G_{2})$.
\end{defn}
\begin{defn}\cite{15}\label{lem5}
The subdivision$-$(vertex$-$edge) join of two vertex disjoint graphs $G_{1}$ and $G_{2}$, denoted by $G_{1}\underline{\vee}G_{2}$, is the graph obtained from $S(G_{1})$ and $S(G_{2})$ by joining each vertex of $V(G_{1})$ with every vertex of $I(G_{2})$.
\end{defn}

\begin{defn}\cite{12}\label{lem6}
Let $G$ and $H$ be two graphs on vertex sets $V(G)=\{u_{1},u_{2},\cdots,u_{p}\}$
and $V(H)=\{v_{1},v_{2},\cdots,v_{n}\}$, respectively. Then their lexicographic product $G[H]$ is
a graph with vertex set $V(G[H])=V(G)\times V(H)$, in which $u=(u_{1},v_{1})$ is adjacent to $v=(u_{2},v_{2})$ if and only if either\\
$(a)$ $u_{1}$ is adjacent to $u_{2}$ in $G$, or\\
$(b)$ $u_{1}=u_{2}$ and $v_{1}$ is adjacent to $v_{2}$ in $H$.
\end{defn}

\begin{defn}\cite{12}\label{lem7}
Let $G$ and $H$ be two graphs on vertex sets $V(G)=\{u_{1},u_{2},\cdots,u_{p}\}$
and $V(H)=\{v_{1},v_{2},\cdots,v_{n}\}$, respectively. Then their cartesian product $G+H$ is
a graph with vertex set $V(G+H)=V(G)\times V(H)$, in which $u=(u_{1},v_{1})$ is adjacent to $v=(u_{2},v_{2})$ if and only if either\\
$(a)$ $u_{1}=u_{2}$ and $v_{1}$ is adjacent to $v_{2}$ in $H$, or\\
$(b)$ $v_{1}=v_{2}$ and $u_{1}$ is adjacent to $u_{2}$ in $G$.
\end{defn}

\begin{defn}\cite{16}\label{lem8}
The Hamming graph Ham$(d,n)$, $d\geq2, n\geq2$, of diameter $d$ and characteristic $n$ have vertex set consisting of all $d$-tuples of elements taken from an $n$-element set, with two vertices adjacent if and only if they differ in exactly one coordinate. Ham$(d,n)$ is equal to $K_{n}+K_{n}+\cdots+K_{n}$, the cartesian product of $K_{n}$, the complete graph on $n$ vertices, $d$ times. Ham$(3,n)$ is referred to as a cubic lattice graph.
\end{defn}
\begin{lem}\cite{17}\label{lem9}
Let
\begin{equation} 
A=
\left[                 
  \begin{array}{cc}     
  \begin{smallmatrix}
    A_{0} & A_{1} \\  
    A_{1} & A_{0}\\  
  \end{smallmatrix}
  \end{array}
\right],    \nonumber             
\end{equation}
be a $2\times2$ block symmetric matrix. Then the eigenvalues of $A$ are those of $A_{0} + A_{1}$ together with those of $A_{0}-A_{1}$ .
\end{lem}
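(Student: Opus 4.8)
The plan is to exhibit an explicit orthogonal change of basis that block-diagonalizes $A$. Write $I$ for the identity matrix of the same size as $A_{0}$ and $A_{1}$, and set
\[
P=\frac{1}{\sqrt{2}}\bmat{I & I \\ I & -I}.
\]
The first step is to record that $P$ is symmetric and satisfies $P^{2}=I$, so $P$ is orthogonal with $P^{-1}=P^{T}=P$; consequently $A$ and $P^{T}AP$ are similar and hence have identical spectra, counted with multiplicity.

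The second step is the block computation of $PAP$. Multiplying $A$ on the right by $P$ gives
\[
AP=\frac{1}{\sqrt{2}}\bmat{A_{0}+A_{1} & A_{0}-A_{1} \\ A_{0}+A_{1} & A_{1}-A_{0}},
\]
and then multiplying on the left by $P$ the off-diagonal cross terms cancel identically, leaving
\[
P A P=\bmat{A_{0}+A_{1} & 0 \\ 0 & A_{0}-A_{1}}.
\]
Since the spectrum of a block-diagonal matrix is the multiset union of the spectra of its diagonal blocks, this yields at once $\Spec(A)=\Spec(A_{0}+A_{1})\cup\Spec(A_{0}-A_{1})$, which is the assertion.

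If one prefers an eigenvector argument avoiding similarity language, the same fact can be packaged as follows: if $(A_{0}+A_{1})v=\lambda v$ with $v\neq 0$, then $(v^{T},v^{T})^{T}$ is an eigenvector of $A$ for $\lambda$; and if $(A_{0}-A_{1})w=\mu w$ with $w\neq 0$, then $(w^{T},-w^{T})^{T}$ is an eigenvector of $A$ for $\mu$. Choosing orthonormal eigenbases of $R^{n}$ for the (symmetric) matrices $A_{0}+A_{1}$ and $A_{0}-A_{1}$ produces $2n$ mutually orthogonal eigenvectors of $A$, which therefore exhaust its spectrum.

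There is no serious obstacle here; the only points needing a moment's care are the bookkeeping in the product $PAP$ and the observation that no commutativity between $A_{0}$ and $A_{1}$ is required — the vanishing of the off-diagonal blocks is purely formal. It is also worth noting that symmetry of $A$ forces $A_{0}=A_{0}^{T}$ and $A_{1}=A_{1}^{T}$, so $A_{0}\pm A_{1}$ are themselves symmetric and the phrase ``eigenvalues of $A_{0}\pm A_{1}$'' is unambiguous.
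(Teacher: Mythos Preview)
Your proof is correct and is the standard argument for this classical fact. The paper does not actually prove this lemma; it is quoted from the literature (Davis, \emph{Circulant Matrices}) as a preliminary result, so there is no ``paper's own proof'' to compare against.
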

\begin{lem}\cite{12}\label{lem10}
 Let $G$ be an $r-$regular graph with adjacency matrix $A$ and incidence matrix $R$, and
the line graph $L(G)$ of $G$. Then $RR^{T} = A+rI$, $R^{T}R = A(L(G))+2I$. Also, if $J$ is an all-one matrix of appropriate order, then
$JR = 2J = R^{T}J$ and $JR^{T} = rJ = RJ$.
\end{lem}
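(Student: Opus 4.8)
The plan is to verify each of the four matrix identities by computing entries directly from the combinatorial meaning of the incidence matrix. Write $n=|V(G)|$ and $m=|E(G)|$, and let $R=(r_{ve})$ be the $n\times m$ matrix with $r_{ve}=1$ when the vertex $v$ is an endpoint of the edge $e$ and $r_{ve}=0$ otherwise. All of the claimed relations then reduce to two elementary observations: each column of $R$ contains exactly two $1$'s (an edge has two endpoints), and, because $G$ is $r$-regular, each row of $R$ contains exactly $r$ $1$'s.

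First I would treat $RR^{T}=A+rI$. The $(u,v)$ entry of $RR^{T}$ is $\sum_{e\in E(G)}r_{ue}r_{ve}$, which counts the edges incident with both $u$ and $v$. For $u=v$ this number is $\deg(u)=r$, and for $u\neq v$ it is $1$ if $uv\in E(G)$ and $0$ otherwise, since $G$ is simple; this is exactly $A+rI$ entrywise. Symmetrically, for $R^{T}R=A(L(G))+2I$, the $(e,f)$ entry of $R^{T}R$ is $\sum_{v\in V(G)}r_{ve}r_{vf}$, the number of vertices incident with both edges $e$ and $f$: it equals $2$ when $e=f$ (the two endpoints of $e$), and for $e\neq f$ it equals $1$ precisely when $e$ and $f$ share an endpoint, i.e.\ when they are adjacent vertices of $L(G)$, and $0$ otherwise; this is exactly $A(L(G))+2I$.

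For the identities involving the all-ones matrix $J$, multiplying $R$ on the left by $J$ replaces each entry of the product by a column sum of $R$, namely $2$, so $JR=2J$; transposing gives $R^{T}J=2J$. Multiplying on the other side, the entries of $RJ$ and of $JR^{T}$ are row sums of $R$, namely $r$, so $RJ=JR^{T}=rJ$. In each of these equalities $J$ denotes the all-ones matrix of whatever order makes the products conformable (some are $n\times n$, some $m\times m$, and the outputs are rectangular), and keeping track of these orders — rather than any substantive difficulty — is the only point requiring care in the argument.
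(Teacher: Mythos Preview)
Your proof is correct. The paper does not supply its own proof of this lemma: it is quoted from \cite{12} as a standard fact, so there is no argument in the paper to compare against, and your direct entrywise verification is exactly the intended elementary justification.
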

\begin{lem}\cite{12}\label{lem11}
Let $G$ be an $r-$regular graph on $p$ vertices and $q$ edges $(q=\frac{1}{2}pr)$ with the adjacency spectrum $\{r,\lambda_{2},\cdots,\lambda_{p}\}$. Then, the adjacency spectrum of line graph of graph $G$ is

\begin{equation} 
spec(L(G))=
\left[                 
  \begin{array}{ccccc}     
    2r-2 & \lambda_{2}+r-2 & \cdots & \lambda_{p}+r-2 & -2\\  
    1 & 1 & \cdots & 1 & q-p\\  
  \end{array}
\right],    \nonumber             
\end{equation}
Also, $Z$ is an eigenvector corresponding to the eigenvalue $-2$ if and only if $RZ = 0$ where $R$ is the incidence matrix of $G$.
\end{lem}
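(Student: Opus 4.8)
The plan is to reduce everything to the incidence matrix $R$ of $G$ via Lemma~\ref{lem10}, which gives $RR^{T} = A + rI_{p}$ and $R^{T}R = A(L(G)) + 2I_{q}$, and then to compare the spectra of $RR^{T}$ and $R^{T}R$. The key tool is the classical determinant identity $x^{q}\det(xI_{p} - RR^{T}) = x^{p}\det(xI_{q} - R^{T}R)$, valid for any $p\times q$ matrix $R$; equivalently, $RR^{T}$ and $R^{T}R$ share the same nonzero eigenvalues with the same multiplicities, the larger matrix carrying $|q-p|$ extra zeros.

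First I would compute the spectrum of $RR^{T} = A + rI$. Since $G$ is $r$-regular with adjacency spectrum $\{r,\lambda_{2},\dots,\lambda_{p}\}$, this spectrum is $\{2r,\ \lambda_{2}+r,\ \dots,\ \lambda_{p}+r\}$. Substituting $\det(xI_{p}-RR^{T}) = \prod_{i=1}^{p}\bigl(x-(\lambda_{i}+r)\bigr)$ (with $\lambda_{1}=r$) into the identity and noting $q - p = \tfrac{1}{2}p(r-2)\ge 0$ for $r\ge 2$, I get $\det(xI_{q}-R^{T}R) = x^{\,q-p}\prod_{i=1}^{p}\bigl(x-(\lambda_{i}+r)\bigr)$. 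Hence the spectrum of $R^{T}R$, as a multiset of size $q$, is $\{2r,\ \lambda_{2}+r,\ \dots,\ \lambda_{p}+r\}\cup\{0^{(q-p)}\}$. Subtracting $2I_{q}$ and using $A(L(G)) = R^{T}R - 2I_{q}$ then yields that the spectrum of $A(L(G))$ is $\{2r-2,\ \lambda_{2}+r-2,\ \dots,\ \lambda_{p}+r-2\}\cup\{(-2)^{(q-p)}\}$, which is exactly the asserted table. Working with the full characteristic polynomial (rather than counting nonzero eigenvalues directly) has the advantage of automatically handling the bipartite case $\lambda_{p} = -r$: there the value $-2$ simply occurs once more, arising from the listed entry $\lambda_{p}+r-2 = -2$, in addition to the $q-p$ structural copies.

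For the eigenvector characterization, I would argue that $Z$ is an eigenvector of $A(L(G))$ for the eigenvalue $-2$ if and only if $\bigl(R^{T}R - 2I\bigr)Z = -2Z$, i.e.\ $R^{T}RZ = 0$; and $R^{T}RZ = 0$ is equivalent to $Z^{T}R^{T}RZ = \|RZ\|^{2} = 0$, i.e.\ to $RZ = 0$. The converse ($RZ = 0 \Rightarrow R^{T}RZ = 0 \Rightarrow A(L(G))Z = -2Z$) is immediate. For completeness one can also display the eigenvectors for the other eigenvalues: if $Av = \lambda_{i}v$ with $\lambda_{i}\neq -r$, then $RR^{T}v = (\lambda_{i}+r)v \neq 0$, so $R^{T}v$ is a nonzero eigenvector of $R^{T}R$, hence of $A(L(G))$, for the eigenvalue $\lambda_{i}+r-2$.

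The only delicate point — and it is bookkeeping rather than a genuine obstacle — is tracking multiplicities when $G$ is bipartite, so that $RR^{T}$ is singular and $\rank R = p-1$; the determinant identity is precisely what lets me avoid splitting into the bipartite and non‑bipartite cases. A minor auxiliary check is that $q\ge p$ (equivalently $r\ge 2$), which is implicit in the statement since otherwise $q-p$ is negative; this holds for every connected $r$-regular graph other than $K_{2}$.
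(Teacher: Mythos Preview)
The paper does not prove this lemma: it is stated in the Preliminaries section as a known result cited from \cite{12} (Cvetkovi\'c--Doob--Sachs), so there is no in-paper proof to compare against. Your argument is the standard one for this classical fact --- reduce to the incidence matrix via Lemma~\ref{lem10}, compare the characteristic polynomials of $RR^{T}$ and $R^{T}R$, and shift by $-2$ --- and it is correct, including the handling of the bipartite case through the polynomial identity and the eigenvector characterisation via $R^{T}RZ=0\iff \|RZ\|^{2}=0$.
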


\begin{lem}\cite{18}\label{lem12}
If $A$ is an $n\times n$ nonnegative matrix with the spectral radius $\lambda(A)$
and row sums $r_{1},r_{2},\cdots,r_{n}$, then
\begin{equation*}
\begin{aligned}
\min_{1\leq i\leq n}r_{i}\leq\lambda(A)\leq\max_{1\leq i\leq n}r_{i}.
\end{aligned}
\end{equation*}
Moreover, if $A$ is
irreducible, then one of the equalities holds if and only if the row sums of $A$ are all equal.
\end{lem}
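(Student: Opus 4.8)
The final statement is Lemma~\ref{lem12}, the classical bound of Collatz and Sinogowitz (or its Frobenius form) on the spectral radius of a nonnegative matrix in terms of its row sums, together with the irreducible equality case. Let me sketch how I would prove this.

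\medskip

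\noindent\textbf{Proof proposal.}
The plan is to prove the two-sided inequality by a Rayleigh-type argument applied to a Perron vector, and then to extract the equality characterisation from the structure of the Perron eigenvector when $A$ is irreducible. First I would recall that a nonnegative matrix $A$ always has its spectral radius $\lambda(A)$ as an eigenvalue with an associated nonnegative (right) eigenvector $x=(x_1,\dots,x_n)^T\geq 0$, $x\neq 0$ (Perron--Frobenius for nonnegative matrices). Let $x_k=\max_i x_i>0$. Reading off the $k$-th coordinate of $Ax=\lambda(A)x$ gives
\begin{equation*}
\lambda(A)\,x_k=\sum_{j=1}^n a_{kj}x_j\leq\sum_{j=1}^n a_{kj}x_k=r_k x_k,
\end{equation*}
so $\lambda(A)\leq r_k\leq\max_i r_i$. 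Symmetrically, taking $x_\ell=\min_i x_i$, if $x_\ell>0$ then the $\ell$-th coordinate gives $\lambda(A)x_\ell=\sum_j a_{\ell j}x_j\geq\sum_j a_{\ell j}x_\ell=r_\ell x_\ell$, hence $\lambda(A)\geq r_\ell\geq\min_i r_i$. If some $x_\ell=0$ one can either pass to the positive Perron vector in the irreducible case or handle the general lower bound by applying the upper bound argument to a suitable principal submatrix / use continuity (perturb $A$ to $A+\varepsilon J$, which is irreducible with positive row sums, apply the clean case, and let $\varepsilon\to 0$, noting $\lambda$ and the $r_i$ depend continuously on $\varepsilon$). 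This dispatches the inequality $\min_i r_i\leq\lambda(A)\leq\max_i r_i$.

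\medskip

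For the equality case, assume $A$ is irreducible; then by Perron--Frobenius the eigenvector $x$ for $\lambda(A)$ can be taken strictly positive, $x_i>0$ for all $i$. Suppose $\lambda(A)=\max_i r_i=:R$. With $k$ an index achieving $x_k=\max_i x_i$, the chain of inequalities above becomes
\begin{equation*}
R\,x_k=\lambda(A)x_k=\sum_{j=1}^n a_{kj}x_j\leq\sum_{j=1}^n a_{kj}x_k=r_k x_k\leq R\,x_k,
\end{equation*}
forcing equality throughout. From $\sum_j a_{kj}x_j=\sum_j a_{kj}x_k$ and $x_j\leq x_k$ we get $a_{kj}(x_k-x_j)=0$ for every $j$, i.e. $x_j=x_k$ whenever $a_{kj}>0$; and from $r_k=R$. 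Now I would propagate this: any $j$ with $a_{kj}>0$ also attains the maximum $x_j=x_k$, so the same argument applies at $j$, giving $r_j=R$ and $x_m=x_k$ for all $m$ with $a_{jm}>0$. By irreducibility every index is reachable from $k$ along positive entries, so $x$ is constant and, more to the point, $r_i=R$ for all $i$ — all row sums are equal. The converse is immediate: if all $r_i=r$ then $A\mathbf 1=r\mathbf 1$, so $r$ is an eigenvalue with nonnegative eigenvector, and for an irreducible (hence Perron) matrix the nonnegative eigenvalue equals the spectral radius, giving $\lambda(A)=r=\max_i r_i=\min_i r_i$. The same reasoning with ``min'' in place of ``max'' handles equality on the lower side.

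\medskip

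The only genuinely delicate point is the equality analysis, specifically the propagation step, which is exactly where irreducibility is used: one must argue that the set of indices achieving $x_i=\max_j x_j$ is closed under ``following an outgoing positive entry,'' and that this set being nonempty and closed in the associated directed graph forces it to be everything. I expect this to be the main obstacle to write cleanly; everything else is the standard maximum-coordinate trick. I would present the inequality first, then the equality case, and if one wishes to avoid invoking the strict positivity of the Perron vector the continuity/perturbation argument sketched above gives an alternative route to the inequalities in full generality.
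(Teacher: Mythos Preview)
Your proof is correct and is the standard Perron--Frobenius argument for this classical result. Note, however, that the paper does not prove this lemma at all: it is stated as a cited result from Minc's \emph{Nonnegative Matrices} (reference~\cite{18}) and used as a black box, so there is no ``paper's own proof'' to compare against.
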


Let $F_{1}$ be the $5$-vertex path, $F_{2}$ the graph obtained by identifying a vertex of a triangle
with an end vertex of the $3$-vertex path, and $F_{3}$ the graph obtained by identifying a
vertex of a triangle with a vertex of another triangle (see Fig. $1$).
\begin{lem}\cite{19,20}\label{lem13}
For a connected graph $G$, $diam(L(G))\leq2$ if and only if none
of the three graphs $F_{1}$, $F_{2}$ and $F_{3}$ of Fig. $1$ is an induced subgraph of $G$.
\end{lem}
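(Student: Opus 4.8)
The plan is to prove both implications by contraposition, using the elementary dictionary between $L(G)$ and $G$: for two edges $e$ and $f$ of $G$, one has $d_{L(G)}(e,f)\le 1$ iff $e$ and $f$ share an endpoint, and $d_{L(G)}(e,f)\le 2$ iff in addition some edge of $G$ joins an endpoint of $e$ to an endpoint of $f$. Hence $\diam(L(G))\ge 3$ precisely when there exist vertex-disjoint edges $e=uv$ and $f=xy$ for which none of $ux,uy,vx,vy$ lies in $E(G)$. The degenerate case $|E(G)|\le 1$ is trivial, so throughout we may assume $L(G)$ is connected (which holds whenever $G$ is connected with at least two edges), so that all distances in $L(G)$ are finite.

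For the ``only if'' direction (contrapositive), suppose some $F_i$ occurs as an \emph{induced} subgraph of $G$; I would exhibit two vertex-disjoint edges inside that copy witnessing $\diam(L(G))\ge 3$. Concretely: take the two pendant edges of $P_5$ in $F_1$; in $F_2$ take the triangle edge disjoint from the attached path together with the outermost edge of the path; in $F_3$ take the two ``outer'' triangle edges, one from each triangle, both avoiding the common vertex. In each case, within $F_i$ no edge joins an endpoint of the first chosen edge to an endpoint of the second, and since $F_i$ is induced no such edge exists in $G$ either; therefore these two edges lie at distance at least $3$ in the connected graph $L(G)$, so $\diam(L(G))\ge 3$.

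For the ``if'' direction, assume $\diam(L(G))\ge 3$ and choose edges $e,f$ with $d_{L(G)}(e,f)=3$ (such a pair exists by taking the first and fourth vertices of a shortest path realizing the diameter). Fix a geodesic $e,e_1,e_2,f$ in $L(G)$. Translating ``adjacent in $L(G)$'' into ``shares an endpoint in $G$'' and using that a geodesic has no chords, I would show, after relabelling the ends of $e$, that $e=uv$, $e_1=vw_1$, $e_2=w_1w_2$, $f=w_2w_3$ with $u,v,w_1,w_2,w_3$ pairwise distinct, and that among these five vertices the pairs $uw_2,uw_3,vw_2,vw_3$ are all non-edges (each would create a length-$2$ detour from $e$ to $f$), while the only pairs not forced to be non-edges are $uw_1$ and $w_1w_3$. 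A four-way split on whether $uw_1$ and $w_1w_3$ are present then yields: both absent $\Rightarrow$ induced $P_5=F_1$; exactly one present $\Rightarrow$ induced $F_2$ (the triangle sitting at the $e$-end or the $f$-end accordingly); both present $\Rightarrow$ induced $F_3$ with centre $w_1$. Thus one of $F_1,F_2,F_3$ is an induced subgraph of $G$, and combining the two contrapositives gives the equivalence.

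The conceptually trivial but slightly fussy part — and the only place I expect any risk of error — is the bookkeeping in the ``if'' direction: checking that the five vertices $u,v,w_1,w_2,w_3$ are genuinely distinct (every possible coincidence contradicts one of $e\not\sim f$, $e\not\sim e_2$, $e_1\not\sim f$ in $L(G)$) and verifying that \emph{exactly} the pairs $uw_2,uw_3,vw_2,vw_3$ are forced to be absent while $uw_1,w_1w_3$ are unconstrained. Everything else is routine case-checking.
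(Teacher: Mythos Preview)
Your proof is correct. The paper does not prove this lemma at all: it is quoted from \cite{19,20} as a preliminary result and used as a black box in Theorem~\ref{thm49}. Thus there is no ``paper's own proof'' to compare against; you have supplied a complete argument where the paper simply cites one.

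A couple of minor remarks on presentation. Your characterisation ``$d_{L(G)}(e,f)\ge 3$ iff $e,f$ are vertex-disjoint and none of the four cross-edges $ux,uy,vx,vy$ lies in $E(G)$'' is exactly right, and the verification that in each $F_i$ the chosen pair of edges witnesses this is clean. In the converse direction your bookkeeping is also sound: the distinctness of $u,v,w_1,w_2,w_3$ follows exactly as you indicate (each coincidence produces a forbidden chord on the geodesic $e,e_1,e_2,f$), and the four forced non-edges $uw_2,uw_3,vw_2,vw_3$ are precisely the cross-edges between $e$ and $f$, while $uw_1$ and $w_1w_3$ genuinely remain free since adding either still leaves $d_{L(G)}(e,f)=3$. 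The resulting four-case split recovers $F_1,F_2,F_2,F_3$ as claimed. No gaps.
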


\begin{figure}[htbp]
\centering
\subfigure[$F_1$]{\includegraphics[width=1.45in]{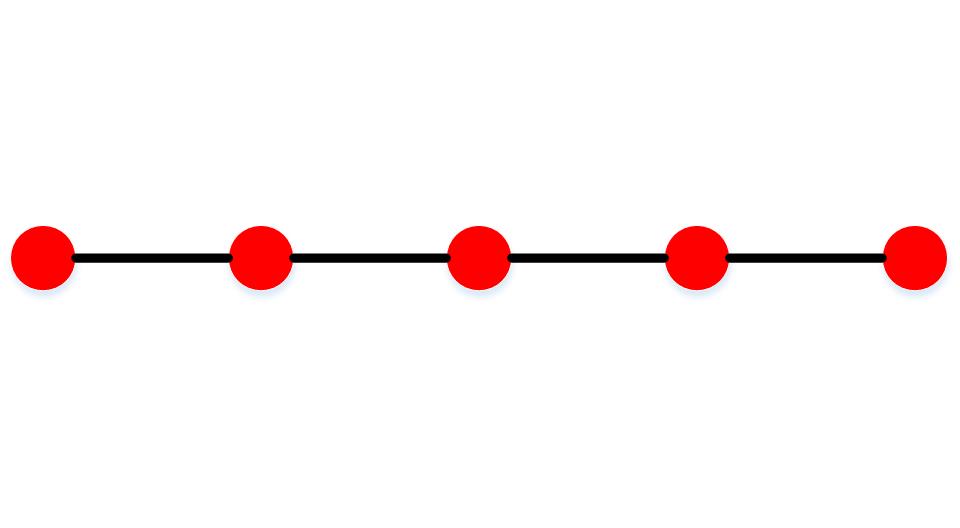}}
\hspace{0.4in}
\subfigure[$F_2$]{ \includegraphics[width=1.35in]{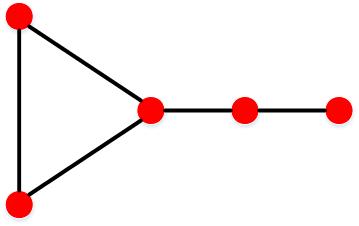}}
\hspace{0.35in}
\subfigure[$F_3$]{ \includegraphics[width=1.15in]{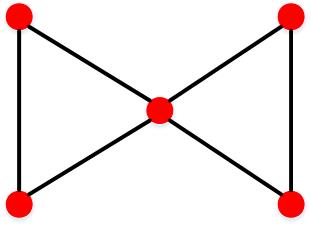}}
\hspace{0.35in}
\caption{}
\label{Fig1}
\end{figure}

\begin{lem}\cite{21}\label{lem14}
Let $G$ be a $k$-transmission regular graph of order $p$ with distance spectrum
$\{k,\mu_{2}^{\mathcal{D}},\mu_{3}^{\mathcal{D}},\cdots,\mu_{p}^{\mathcal{D}}\}$. Also let $H$ be a $t$-transmission regular graph of order $n$ with distance
spectrum $\{t,\eta_{2}^{\mathcal{D}},\eta_{3}^{\mathcal{D}},\cdots,\eta_{n}^{\mathcal{D}}\}$. Then the generalized distance spectrum of $G+H$ consists precisely of\\
$(1)$ $nk+pt$;\\
$(2)$ $(1-\alpha)n\mu_{i}^{\mathcal{D}}+\alpha(nk+pt)$, for $i=2,3,\cdots,p$; \\
$(3)$ $(1-\alpha)p\eta_{j}^{\mathcal{D}}+\alpha(nk+pt)$, for $j=2,3,\cdots,n$;\\
$(4)$ $\alpha(nk+pt)$, with multiplicity $(p-1)(n-1)$.
\end{lem}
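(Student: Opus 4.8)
The plan is to pass through the distance matrix of the Cartesian product $G+H$ and diagonalize $\mathcal{D}_{\alpha}(G+H)$ via Kronecker products. First I would record the standard fact that in a Cartesian product a geodesic changes one coordinate at a time, so $d_{G+H}\bigl((u_1,v_1),(u_2,v_2)\bigr)=d_G(u_1,u_2)+d_H(v_1,v_2)$ for all vertices. Ordering the $pn$ vertices of $G+H$ lexicographically, this identity becomes the block identity
\[
\mathcal{D}(G+H)=\mathcal{D}(G)\otimes J_n+J_p\otimes \mathcal{D}(H),
\]
where $\otimes$ is the Kronecker product and $J_m$ the $m\times m$ all-ones matrix. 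Summing over a row gives $Tr_{G+H}(u,v)=n\,Tr_G(u)+p\,Tr_H(v)=nk+pt$ for every vertex, so $G+H$ is $(nk+pt)$-transmission regular and $Tr(G+H)=(nk+pt)I_{pn}$; hence
\[
\mathcal{D}_{\alpha}(G+H)=\alpha(nk+pt)I_{pn}+(1-\alpha)\bigl(\mathcal{D}(G)\otimes J_n+J_p\otimes \mathcal{D}(H)\bigr).
\]

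Next I would diagonalize the bracketed matrix. Since $G$ is $k$-transmission regular, $\mathbf{1}_p$ is a $\mathcal{D}(G)$-eigenvector for $k$, so fix an orthonormal eigenbasis $x_1=\tfrac{1}{\sqrt{p}}\mathbf{1}_p,x_2,\dots,x_p$ of $\mathcal{D}(G)$ with $\mathcal{D}(G)x_i=\mu_i^{\mathcal{D}}x_i$; then $J_px_1=px_1$ while $J_px_i=0$ for $i\ge 2$ by orthogonality to $\mathbf{1}_p$. Similarly fix $y_1=\tfrac{1}{\sqrt{n}}\mathbf{1}_n,y_2,\dots,y_n$ for $\mathcal{D}(H)$ with $\mathcal{D}(H)y_j=\eta_j^{\mathcal{D}}y_j$, $J_ny_1=ny_1$, $J_ny_j=0$ for $j\ge 2$. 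The $pn$ vectors $x_i\otimes y_j$ form an orthonormal basis of $\mathbb{R}^{pn}$, and using $(M\otimes N)(x\otimes y)=(Mx)\otimes(Ny)$ one gets $(\mathcal{D}(G)\otimes J_n)(x_i\otimes y_j)=\mu_i^{\mathcal{D}}\,x_i\otimes(J_ny_j)$ and $(J_p\otimes \mathcal{D}(H))(x_i\otimes y_j)=(J_px_i)\otimes\eta_j^{\mathcal{D}}y_j$. Distinguishing the four cases $(i=1,j=1)$, $(i\ge 2,j=1)$, $(i=1,j\ge 2)$, $(i\ge 2,j\ge 2)$ then shows $x_i\otimes y_j$ is an eigenvector of $\mathcal{D}_{\alpha}(G+H)$ with eigenvalue $nk+pt$, $\alpha(nk+pt)+(1-\alpha)n\mu_i^{\mathcal{D}}$, $\alpha(nk+pt)+(1-\alpha)p\eta_j^{\mathcal{D}}$, and $\alpha(nk+pt)$ respectively; since there are $(p-1)(n-1)$ pairs with $i\ge 2,j\ge 2$, this accounts for all $pn$ eigenvalues and matches (1)--(4).

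I do not expect a serious obstacle here; the two points needing care are the justification of the additive distance formula for the Cartesian product (one must check no shortcut exists, which follows since every edge of $G+H$ alters exactly one coordinate) and the Kronecker-product bookkeeping, in particular keeping track of which of the $x_i\otimes y_j$ are annihilated by $\mathcal{D}(G)\otimes J_n$ or by $J_p\otimes\mathcal{D}(H)$. It is worth emphasizing that the transmission-regularity hypotheses on $G$ and $H$ are used precisely to make $\mathbf{1}_p$ and $\mathbf{1}_n$ eigenvectors of $\mathcal{D}(G)$ and $\mathcal{D}(H)$ and to make $Tr(G+H)$ a scalar matrix; these are exactly what let the Kronecker eigenbasis diagonalize $\mathcal{D}_{\alpha}(G+H)$ cleanly.
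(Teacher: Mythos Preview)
The paper does not prove this lemma; it is quoted from \cite{21} as a preliminary result and stated without proof. Your argument is correct and is precisely the standard route: the additive distance formula for the Cartesian product gives $\mathcal{D}(G+H)=\mathcal{D}(G)\otimes J_n+J_p\otimes\mathcal{D}(H)$, transmission regularity of $G$ and $H$ makes $Tr(G+H)$ scalar, and then the tensor eigenbasis $x_i\otimes y_j$ diagonalizes $\mathcal{D}_\alpha(G+H)$ with the four eigenvalue types as you describe. This is essentially the proof one finds in \cite{21}, so there is nothing to compare against in the present paper.
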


\begin{lem}\cite{22}\label{lem15}
The distance spectrum of the cycle $C_{n}$ is given by
\renewcommand\arraystretch{2}
\begin{table}[htbp]
\centering
\begin{tabular}{p{1cm}<{\centering}|p{3cm}<{\centering}|p{2.5cm}<{\centering}|p{2.5cm}<{\centering}}
\hline
$n$ & greast eigenvalue & $j$ even & $j$ odd \\ \hline
even & $\frac{n^{2}}{4}$ & $0$ & $-\csc^{2}(\frac{\pi j}{n})$\\ \hline
odd & $\frac{n^{2}-1}{4}$ & $-\frac{1}{4}\sec^{2}(\frac{\pi j}{2n})$ &  $-\frac{1}{4}\csc^{2}(\frac{\pi j}{2n})$\\ \hline
\end{tabular}
\end{table}
\end{lem}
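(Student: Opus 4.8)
The plan is to exploit the circulant structure of $\mathcal{D}(C_n)$. Label the vertices cyclically as $v_0,v_1,\dots,v_{n-1}$; then $d_{C_n}(v_i,v_j)=c_{j-i}$, where $c_\ell=\min(\ell,n-\ell)$ and indices are taken mod $n$, so that $\mathcal{D}(C_n)=\sum_{\ell=0}^{n-1}c_\ell P^\ell$ with $P$ the cyclic shift matrix. Hence the eigenvalues of $\mathcal{D}(C_n)$ are $f(\omega^k)$ for $k=0,1,\dots,n-1$, where $\omega=e^{2\pi i/n}$ and $f(x)=\sum_{\ell=1}^{n-1}c_\ell x^\ell$. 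The case $k=0$ gives $f(1)=\sum_{\ell}c_\ell=Tr(v_0)$, which a direct summation of an arithmetic progression evaluates to $n^2/4$ when $n$ is even and $(n^2-1)/4$ when $n$ is odd; since $\mathcal{D}(C_n)$ is nonnegative and irreducible, this is the largest eigenvalue. It remains to compute $f(\omega^k)$ for $1\le k\le n-1$.

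The key point is that the cyclic second difference of the sequence $(c_\ell)$ is supported on only two or three indices, because $(c_\ell)$ is piecewise linear. Set $b_\ell:=c_{\ell-1}-2c_\ell+c_{\ell+1}$ with indices taken mod $n$. A short check at the turning points yields: if $n=2m$ is even, then $b_0=2$, $b_m=-2$, and $b_\ell=0$ otherwise; if $n=2m+1$ is odd, then $b_0=2$, $b_m=b_{m+1}=-1$, and $b_\ell=0$ otherwise. Since $(x^{-1}-2+x)f(x)\equiv\sum_{\ell}b_\ell x^\ell\pmod{x^n-1}$, evaluating at $x=\omega^k$ and using $\omega^{-k}-2+\omega^k=-4\sin^2(\pi k/n)$ gives
\begin{equation*}
-4\sin^2\!\left(\frac{\pi k}{n}\right)f(\omega^k)=\sum_{\ell=0}^{n-1}b_\ell\,\omega^{k\ell}.
\end{equation*}
Since $1\le k\le n-1$, we have $\sin(\pi k/n)\neq 0$, so $f(\omega^k)$ is determined by this identity.

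For $n=2m$, the right-hand side is $2-2\omega^{mk}=2-2(-1)^k$, which is $0$ for $k$ even and $4$ for $k$ odd; hence $f(\omega^k)=0$ for even $k$ and $f(\omega^k)=-\csc^2(\pi k/n)$ for odd $k$. For $n=2m+1$, using $\omega^m=-e^{-\pi i/n}$ one obtains $\omega^{mk}+\omega^{(m+1)k}=2(-1)^k\cos(\pi k/n)$, so the right-hand side equals $2-2(-1)^k\cos(\pi k/n)$ and
\begin{equation*}
f(\omega^k)=\frac{(-1)^k\cos(\pi k/n)-1}{2\sin^2(\pi k/n)}.
\end{equation*}
Applying the half-angle identities $1-\cos\phi=2\sin^2(\phi/2)$, $1+\cos\phi=2\cos^2(\phi/2)$, and $\sin^2\phi=4\sin^2(\phi/2)\cos^2(\phi/2)$ with $\phi=\pi k/n$ rewrites this as $-\tfrac14\sec^2(\pi k/(2n))$ for $k$ even and $-\tfrac14\csc^2(\pi k/(2n))$ for $k$ odd. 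Renaming $k$ as $j$ reproduces the table.

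The only real labor is bookkeeping: checking the second-difference values $b_0,b_m,b_{m+1}$ at the boundary indices (a handful of cases, separated by the parity of $n$), correctly reducing the powers $\omega^{m}$ and $\omega^{m+1}$ in each case, and verifying $\sin(\pi k/n)\neq 0$ on the range $1\le k\le n-1$ so that the division is legitimate. None of this is deep; the whole proof rests on the observation that the discrete Laplacian of the distance sequence of a cycle collapses to two or three point masses, after which only elementary trigonometry remains.
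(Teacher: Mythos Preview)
Your proof is correct. The circulant structure is the natural route, and the second-difference trick you use to collapse the generating polynomial to two or three point masses is a clean way to avoid summing $\sum_\ell \ell\,\omega^{k\ell}$ directly; all the boundary checks, the reduction of $\omega^{mk}$ in each parity case, and the half-angle simplifications are done correctly.

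As for comparison with the paper: there is nothing to compare. The paper states this lemma as a cited result from \cite{22} (Graovac, Jashari, Strunje, \emph{On the distance spectrum of a cycle}, Apl.\ Mat.\ 30 (1985), 286--290) and gives no proof of its own; it is listed among the preliminaries and used as a black box in the computation of the generalized distance spectrum of $C_4$-nanotori. So your argument is not merely an alternative to the paper's proof --- it supplies a proof where the paper has none.
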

\section{The $\mathcal{D}-$spectrum, $\mathcal{L}-$spectrum and $\mathcal{Q}-$spectrum of some composite graphs}
\subsection{The $\mathcal{L}-$spectrum and $\mathcal{Q}-$spectrum of $G\{K_{m}\}$}
Let $G$ be a distance regular graph with vertex set $V=\{v_{1},v_{2},\cdots,v_{n}\}$ and let the vertex set of the $i$th copy of $K_{m}$ be $\{u_{1}^{i},u_{2}^{i},\cdots,u_{m}^{i}\}$ with root $u_{1}^{i}$. Let $W_{j}=\{u_{j}^{1},u_{j}^{2},\cdots,u_{j}^{n}\}$. With this labeling, $V(G\{K_{m}\})=V(G)\bigcup W_{1}\bigcup W_{2}\cdots \bigcup W_{m}$.

 According to Definiton \ref{lem1}, its distance Laplacian matrix $\mathcal{L}(G\{K_{m}\})$ can be written in the form
 \begin{equation} 
\mathcal{L}(G\{K_{m}\})=
\left[                 
  \begin{array}{ccc}     
  \begin{smallmatrix}
    (2mn-n)I_{n}+mTr_{G}+\mathcal{L}(G) & -(\mathcal{D}(G)+J_{n}) & -J_{1\times m-1}\otimes(\mathcal{D}(G)+2J_{n}) \\  
    -(\mathcal{D}(G)+J_{n}) & (3mn-2m+2)I_{n}+mTr_{G}+\mathcal{L}(G)-2J_{n} &  -J_{1\times m-1}\otimes(\mathcal{D}(G)+3J_{n}-2I_{n})\\  
    -J_{m-1\times 1}\otimes(\mathcal{D}(G)+2J_{n}) & -J_{m-1\times 1}\otimes(\mathcal{D}(G)+3J_{n}-2I_{n}) & \mathcal{L}^{\ast}
  \end{smallmatrix}
  \end{array}
\right],    \nonumber             
\end{equation}
where
$\mathcal{L}^{\ast}=(I-J)_{m-1}\otimes(Tr_{G}-\mathcal{L}(G)+4J_{n}-3I_{n})+I_{m-1}\otimes[(4mn-3m+n+4)I_{n}+mTr_{G}+\mathcal{L}(G)-4J_{n}]$, $J$ is the all-one matrix, and $I$ is the identity matrix of appropriate orders.

According to Definiton \ref{lem1}, its distance signless Laplacian matrix $\mathcal{Q}(G\{K_{m}\})$ can be written in the form
 \begin{equation} 
\mathcal{Q}(G\{K_{m}\})=
\left[                 
  \begin{array}{ccc}     
  \begin{smallmatrix}
    (2mn-n)I_{n}+mTr_{G}+\mathcal{Q}(G) & \mathcal{D}(G)+J_{n} & J_{1\times m-1}\otimes(\mathcal{D}(G)+2J_{n}) \\  
    \mathcal{D}(G)+J_{n} & (3mn-2m-2)I_{n}+mTr_{G}+\mathcal{Q}(G)+2J_{n} &  J_{1\times m-1}\otimes(\mathcal{D}(G)+3J_{n}-2I_{n})\\  
    J_{m-1\times 1}\otimes(\mathcal{D}(G)+2J_{n}) & J_{m-1\times 1}\otimes(\mathcal{D}(G)+3J_{n}-2I_{n}) & \mathcal{Q}^{\ast}
  \end{smallmatrix}
  \end{array}
\right],   \nonumber             
\end{equation}
where
$\mathcal{Q}^{\ast}=(J-I)_{m-1}\otimes(\mathcal{Q}(G)-Tr_{G}+4J_{n}-3I_{n})+I_{m-1}\otimes[(4mn-3m+n-4)I_{n}+mTr_{G}+\mathcal{Q}(G)+4J_{n}]$, $J$ is the all-one matrix, and $I$ is the identity matrix of appropriate orders. Now we shall find the $\mathcal{L}-$spectrum and $\mathcal{Q}-$spectrum of $\mathcal{Q}(G\{K_{m}\})$.

\begin{thm}\label{thm1}
Let $G$ be a distance regular graph with distance regularity $k$, a distance Laplacian matrix $\mathcal{L}(G)$ and distance Laplacian spectrum $\{0=\mu_{1}^{\mathcal{L}},\mu_{2}^{\mathcal{L}},\cdots,\mu_{n}^{\mathcal{L}}\}$. If $K_{m}$ is a complete graph of $m$ vertices, then the distance Laplacian spectrum of $\mathcal{L}(G\{K_{m}\})$ is

(a) $(m+1)k+4mn-3m+n+1$ with multiplicity $(m-2)n$,

(b) the roots of the equation $\prod_{i=2}^{n}(x^{3}-Ax^{2}+BX+C)=0$, for $\mu_{i}^{\mathcal{L}}\neq0$,\\
where $A=2k - 2m + \mu_{i}^{\mathcal{L}} + 2km + 9mn + m\mu_{i}^{\mathcal{L}}$, $B=2n - 2km - 2kn + 2mn + 2k\mu_{i}^{\mathcal{L}} - 2m\mu_{i}^{\mathcal{L}} + 2n\mu_{i}^{\mathcal{L}} - 2km^2 + 2k^2m - 2mn^2 - 12m^2n - 2m^2\mu_{i}^{\mathcal{L}} + k^2 - n^2 + k^2m^2 + 26m^2n^2 + 13km^2n + 2km^2\mu_{i}^{\mathcal{L}} + 5m^2n\mu_{i}^{\mathcal{L}} + 11kmn + 4km\mu_{i}^{\mathcal{L}} + 7mn\mu_{i}^{\mathcal{L}}$, and $C= -12km^2n\mu_{i}^{\mathcal{L}} - 5km^3n\mu_{i}^{\mathcal{L}} - 9kmn\mu_{i}^{\mathcal{L}} - 2kn\mu_{i}^{\mathcal{L}} + 2km\mu_{i}^{\mathcal{L}} - 4kmn - 11m^2n^2\mu_{i}^{\mathcal{L}} - 6m^3n^2\mu_{i}^{\mathcal{L}} - 3k^2m^2\mu_{i}^{\mathcal{L}} - k^2m^3\mu_{i}^{\mathcal{L}} - 20km^3n^2 - 13km^2n^2 - 6k^2m^2n - 4k^2m^3n + 6m^2n\mu_{i}^{\mathcal{L}} + 4m^3n\mu_{i}^{\mathcal{L}}- 4mn^2\mu_{i}^{\mathcal{L}} + 4km^2\mu_{i}^{\mathcal{L}} - 3k^2m\mu_{i}^{\mathcal{L}} + 2km^3\mu_{i}^{\mathcal{L}} + 8km^3n + 7kmn^2 + 4km^2n - 24m^3n^3 + 16m^3n^2 - 8m^2n^2 + 6m^2n^3 - k^2\mu_{i}^{\mathcal{L}} + 3mn^3 - 4mn^2 + 2k^2n - 2n\mu_{i}^{\mathcal{L}} + n^2\mu_{i}^{\mathcal{L}} + 2n^2$.

(c) $\frac{2(k-m+n+km)+5mn\pm\sqrt{m^{2}n^{2}-4m^{2}n+4m^{2}+8n^{2}-8n}}{2}$, $0$.
\end{thm}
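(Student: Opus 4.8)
The plan is to compute the eigenvalues of the block matrix $\mathcal{L}(G\{K_m\})$ displayed above by exploiting the fact that $G$ is distance regular with regularity $k$, so that $\mathcal{D}(G)$, $Tr_G = kI_n$, and $\mathcal{L}(G) = kI_n - \mathcal{D}(G)$ are simultaneously diagonalizable, all sharing the all-ones vector $\mathbf{1}_n$ as a common eigenvector with $\mathcal{D}(G)\mathbf{1}_n = k\mathbf{1}_n$, $\mathcal{L}(G)\mathbf{1}_n = 0$. First I would split $\mathbb{R}^n$ into $\mathrm{span}(\mathbf{1}_n)$ and its orthogonal complement. For an eigenvector $Z_i$ of $\mathcal{L}(G)$ with $\mathcal{L}(G)Z_i = \mu_i^{\mathcal{L}} Z_i$ and $\mu_i^{\mathcal{L}}\neq 0$ (equivalently $\mathcal{D}(G)Z_i = (k-\mu_i^{\mathcal{L}})Z_i$ and $J_n Z_i = 0$), every block of $\mathcal{L}(G\{K_m\})$ restricted to the $m$ copies of the space spanned by $Z_i$ acts as a scalar: the $J_n$ terms vanish, and we are left with an $(m+1)\times m$ -- really $m\times m$ after accounting for the root-vs-nonroot structure -- pattern. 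Here the $(m-1)\times(m-1)$ block $\mathcal{L}^\ast$ further decomposes via the spectral decomposition of $(J-I)_{m-1}$ (eigenvalue $-1$ with multiplicity $m-2$, eigenvalue $m-2$ once), by Lemma \ref{lem9}-type reasoning.

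The three parts of the theorem then correspond to the three regimes. Part (a): taking $Z_i$ with $\mu_i^{\mathcal{L}}\neq 0$ and simultaneously a vector in the $(-1)$-eigenspace of $(J-I)_{m-1}$ kills the coupling to the $V(G)$-block and to the first $W$-block, leaving the scalar $I_{m-1}$-coefficient from $\mathcal{L}^\ast$, namely $(4mn-3m+n+4) + mk + \mu_i^{\mathcal{L}} + ((-1)\cdot\text{the }(Tr_G-\mathcal{L}(G)+4J_n-3I_n)\text{ scalar}) $; substituting $Tr_G = kI$, $\mathcal{L}(G)Z_i=\mu_i^{\mathcal{L}}Z_i$, $J_nZ_i=0$ should collapse to $(m+1)k + 4mn - 3m + n + 1$, with multiplicity $(m-2)$ for each of the $n-1$ choices of $i$, plus presumably one more contribution — I would track the bookkeeping to land exactly on $(m-2)n$. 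Part (b): for each $i$ with $\mu_i^{\mathcal{L}}\neq 0$, the remaining three "coordinates" — the $V(G)$-component, the root $W_1$-component, and the $(m-2)$-eigenvector direction of $(J-I)_{m-1}$ mixing the non-root copies — give a $3\times 3$ matrix whose characteristic polynomial is the stated cubic $x^3 - Ax^2 + Bx + C$; I would write that $3\times 3$ matrix explicitly with entries linear in $k, m, n, \mu_i^{\mathcal{L}}$ and expand. Part (c): the $\mathbf{1}_n$-direction. Restricting every block to $\mathrm{span}(\mathbf 1_n)$ replaces $\mathcal{D}(G)$ by $k$, $J_n$ by $n$ (on that one-dimensional space), $\mathcal{L}(G)$ by $0$; the resulting small matrix (again with an $(m-1)$-fold internal reduction producing a $2\times 2$ block and a multiplicity-$(m-2)$ scalar, one of which turns out to be $0$, consistent with $\mathcal{L}$ always being singular) yields the quadratic with roots $\tfrac{2(k-m+n+km)+5mn \pm \sqrt{m^2n^2 - 4m^2n + 4m^2 + 8n^2 - 8n}}{2}$ together with the eigenvalue $0$.

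The main obstacle is purely computational bookkeeping: correctly assembling the reduced $3\times 3$ matrix in part (b) and expanding its characteristic polynomial to match the enormous expressions for $A$, $B$, $C$ — an error of a single sign or a dropped cross term there is the likeliest failure mode, so I would verify it on a small example (say $G = C_4$ or $C_6$, using Lemma \ref{lem15}, and $m=2,3$) by comparing against a direct eigenvalue computation. A secondary point requiring care is confirming that the three eigenvector directions used in parts (a)--(c) are mutually orthogonal and exhaust all $(m+1)n$ dimensions: counting gives $(m-2)n$ from (a), $3(n-1)$ from (b), and $(m-1) + 1 = m$ from (c) including the $2\times 2$ block plus the multiplicity-$(m-2)$... wait — one must check $(m-2)n + 3(n-1) + \big[(m-2) + 2 + 1\big] = (m-2)n + 3(n-1) + (m+1) = mn - 2n + 3n - 3 + m + 1 = mn + n + m - 2 = (m+1)n - 2$, which is short by $2$; so in fact part (c)'s list must contribute $m+1$ values counting the double root structure of the $2\times 2$ plus the scalar — I would recount carefully, most likely the $(m-2)$-multiplicity scalar from the $\mathbf 1_n$-direction coincides with a value already listed or the "$0$" in (c) carries multiplicity governed by the same reduction, and reconcile the dimension count exactly. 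The analogous computation for $\mathcal{Q}(G\{K_m\})$ would then follow the identical template with $\mathcal{L}(G)$ replaced by $\mathcal{Q}(G) = kI + \mathcal{D}(G)$ throughout.
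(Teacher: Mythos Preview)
Your approach is exactly the paper's: reduce via simultaneous eigenvectors of $\mathcal{L}(G)$ and of $J_{m-1}$, obtaining part (a) from the null space of $J_{m-1}$, part (b) from a $3\times 3$ reduction on each $Z_i\perp\mathbf 1_n$, and part (c) from a $3\times 3$ reduction on the $\mathbf 1_n$-direction. Your dimension-count puzzle resolves precisely as you guessed: in the block $\mathcal{L}^{\ast}$ the $J_n$-terms cancel along with the $\mu_i^{\mathcal{L}}$-terms, so the part-(a) eigenvalue is independent of the $\mathbb{R}^n$-component and you may use \emph{all} of $\mathbb{R}^n$ (equivalently, $Y_j\otimes\mathbf 1_n$ gives the same eigenvalue), yielding the full multiplicity $(m-2)n$; part (c) is then just a three-dimensional reduction $(\alpha\mathbf 1_n,\beta\mathbf 1_n,J_{(m-1)\times 1}\otimes\gamma\mathbf 1_n)$, and $(m-2)n+3(n-1)+3=(m+1)n$ checks.
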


\begin{proof}By the definition of the cluster of two graphs, the distance Laplacian matrix $G\{K_{m}\}$ can be written in the form

\begin{equation} 
\mathcal{L}(G\{K_{m}\})=
\left[                 
  \begin{array}{ccc}     
  \begin{smallmatrix}
    (2mn-n)I_{n}+mTr_{G}+\mathcal{L}(G) & -(\mathcal{D}(G)+J_{n}) & -J_{1\times m-1}\otimes(\mathcal{D}(G)+2J_{n}) \\  
    -(\mathcal{D}(G)+J_{n}) & (3mn-2m+2)I_{n}+mTr_{G}+\mathcal{L}(G)-2J_{n} &  -J_{1\times m-1}\otimes(\mathcal{D}(G)+3J_{n}-2I_{n})\\  
    -J_{m-1\times 1}\otimes(\mathcal{D}(G)+2J_{n}) & -J_{m-1\times 1}\otimes(\mathcal{D}(G)+3J_{n}-2I_{n}) & \mathcal{L}^{\ast}
  \end{smallmatrix}
  \end{array}
\right],    \nonumber             
\end{equation}
where
$\mathcal{L}^{\ast}=(I-J)_{m-1}\otimes(Tr_{G}-\mathcal{L}(G)+4J_{n}-3I_{n})+I_{m-1}\otimes[(4mn-3m+n+4)I_{n}+mTr_{G}+\mathcal{L}(G)-4J_{n}]$, $J$ is the all-one matrix, and $I$ is the identity matrix of appropriate orders.

Let $Y_{j},j=2,3,\cdots,m-1$ be the eigenvectors of $J_{m-1}$ corresponding to zero, then $Y_{j}$ is orthogonal to the all-ones vector. Let $e_{n\times1}^{l}$ is a $n\times1$ column vector with the $l$th entry equal $1$, and all other entrie equal to zero.

To prove part(a), consider the vector
$\begin{matrix}
\Psi_{j}^{l}=
\left[                 
  \begin{array}{ccc}   
    \begin{smallmatrix}
    0_{n\times1}  \\  
    0_{n\times1} \\  
    Y_{j}\otimes e_{n\times1}^{l}
    \end{smallmatrix}
  \end{array}
\right]    \nonumber             
\end{matrix}$, which
is an eigenvector of $\mathcal{L}(G\{K_{m}\})$ with eigenvalues $(m+1)k+4mn-3m+n+1$ for each $j=2,3,\cdots,m-1$ and $l=1,2,\cdots,n$.
\begin{equation*}
\begin{aligned} 
\mathcal{L}(G\{K_{m}\})\cdot\Psi_{j}^{l}&=
\left[                 
  \begin{array}{ccc}     
  \begin{smallmatrix}
    (2mn-n)I_{n}+mTr_{G}+\mathcal{L}(G) & -(\mathcal{D}(G)+J_{n}) & -J_{1\times m-1}\otimes(\mathcal{D}(G)+2J_{n}) \\  
    -(\mathcal{D}(G)+J_{n}) & (3mn-2m+2)I_{n}+mTr_{G}+\mathcal{L}(G)-2J_{n} &  -J_{1\times m-1}\otimes(\mathcal{D}(G)+3J_{n}-2I_{n})\\  
    -J_{m-1\times 1}\otimes(\mathcal{D}(G)+2J_{n}) & -J_{m-1\times 1}\otimes(\mathcal{D}(G)+3J_{n}-2I_{n}) & \mathcal{L}^{\ast}
  \end{smallmatrix}
  \end{array}
\right]    \nonumber      \\&       
\begin{matrix}
\times
\left[                 
  \begin{array}{ccc}   
    \begin{smallmatrix}
    0_{n\times1}  \\  
    0_{n\times1} \\  
    Y_{j}\otimes e_{n\times1}^{l}
    \end{smallmatrix}
  \end{array}
\right]    \nonumber             
\end{matrix},\\  &
\begin{matrix}
=
\left[                 
  \begin{array}{ccc}   
    \begin{smallmatrix}
    (-J_{1\times m-1}\otimes(\mathcal{D}(G)+2J_{n}))\cdot Y_{j}\otimes e_{n\times1}^{l} \\  
    (-J_{1\times m-1}\otimes(\mathcal{D}(G)+3J_{n}-2I_{n}))\cdot Y_{j}\otimes e_{n\times1}^{l}\\  
    \mathcal{L}^{\ast}\cdot Y_{j}\otimes e_{n\times1}^{l}
    \end{smallmatrix}
  \end{array}
\right]    \nonumber             
\end{matrix},\\&
\begin{matrix}
=
\left[                 
  \begin{array}{ccc}   
    \begin{smallmatrix}
    0 \\  
    0\\  
    [(m+1)k+4mn-3m+n+1]\cdot Y_{j}\otimes e_{n\times1}^{l}
    \end{smallmatrix}
  \end{array}
\right],    \nonumber             
\end{matrix}\\  &
=[(m+1)k+4mn-3m+n+1]\Psi_{j}^{l}.
\end{aligned}
\end{equation*}
Thus, $(m+1)k+4mn-3m+n+1$ is an eigenvalue of $\mathcal{L}(G\{K_{m}\})$ with multiplicity $n(m-2)$.

To prove part(b), now consider the eigenvalue $\mu_{i}^{\mathcal{L}}\neq0$ of $\mathcal{L}(G)$ with an eigenvector $X_{i},i=2,3,\cdots,n$. Then, $X_{i}$ is orthogonal to the all-ones vector. Let $\mu_{i_{r}},r=1,2,3$ be the three roots of the equation
\begin{equation}
x^{3}-Ax^{2}+BX+C=0\tag{1},
\end{equation}
where $A=2k - 2m + \mu_{i}^{\mathcal{L}} + 2km + 9mn + m\mu_{i}^{\mathcal{L}}$, $B=2n - 2km - 2kn + 2mn + 2k\mu_{i}^{\mathcal{L}} - 2m\mu_{i}^{\mathcal{L}} + 2n\mu_{i}^{\mathcal{L}} - 2km^2 + 2k^2m - 2mn^2 - 12m^2n - 2m^2\mu_{i}^{\mathcal{L}} + k^2 - n^2 + k^2m^2 + 26m^2n^2 + 13km^2n + 2km^2\mu_{i}^{\mathcal{L}} + 5m^2n\mu_{i}^{\mathcal{L}} + 11kmn + 4km\mu_{i}^{\mathcal{L}} + 7mn\mu_{i}^{\mathcal{L}}$, and $C= -12km^2n\mu_{i}^{\mathcal{L}} - 5km^3n\mu_{i}^{\mathcal{L}} - 9kmn\mu_{i}^{\mathcal{L}} - 2kn\mu_{i}^{\mathcal{L}} + 2km\mu_{i}^{\mathcal{L}} - 4kmn - 11m^2n^2\mu_{i}^{\mathcal{L}} - 6m^3n^2\mu_{i}^{\mathcal{L}} - 3k^2m^2\mu_{i}^{\mathcal{L}} - k^2m^3\mu_{i}^{\mathcal{L}} - 20km^3n^2 - 13km^2n^2 - 6k^2m^2n - 4k^2m^3n + 6m^2n\mu_{i}^{\mathcal{L}} + 4m^3n\mu_{i}^{\mathcal{L}}- 4mn^2\mu_{i}^{\mathcal{L}} + 4km^2\mu_{i}^{\mathcal{L}} - 3k^2m\mu_{i}^{\mathcal{L}} + 2km^3\mu_{i}^{\mathcal{L}} + 8km^3n + 7kmn^2 + 4km^2n - 24m^3n^3 + 16m^3n^2 - 8m^2n^2 + 6m^2n^3 - k^2\mu_{i}^{\mathcal{L}} + 3mn^3 - 4mn^2 + 2k^2n - 2n\mu_{i}^{\mathcal{L}} + n^2\mu_{i}^{\mathcal{L}} + 2n^2$.


For each $i=2,3,\cdots,n$, the roots of $Eq.(1)$,$\mu_{i_{r}},r=1,2,3$ are eigenvalues of $\mathcal{L}(G\{K_{m}\})$.

To prove the claim, we investigate the condition under which
$\begin{matrix}
\Phi_{i}^{r}=
\left[                 
  \begin{array}{ccc}   
    \begin{smallmatrix}
    t_{r}X_{i} \\  
    X_{i}\\  
    J_{m-1\times1\otimes s_{r}X_{i}}
    \end{smallmatrix}
  \end{array}
\right]    \nonumber             
\end{matrix}$
becomes an eigenvector corresponding to $\mu_{i_{r}},r=1,2,3$ for $\mathcal{L}(G\{K_{m}\})$.

Now using $\mathcal{L}(G\{K_{m}\})\cdot \Phi_{i}^{r}=\mu_{i_{r}}\cdot\Phi_{i}^{r}$ and $X_{i}\neq0$, we get the following.
\begin{equation}
(2mn-n+mk+\mu_{i}^{\mathcal{L}})t_{r}-(m-1)(k-\mu_{i}^{\mathcal{L}})s_{r}-(k-\mu_{i}^{\mathcal{L}})=t_{r}\mu_{i_{r}} \tag{2},
\end{equation}
\begin{equation}
(\mu_{i}^{\mathcal{L}}-k)t_{r}-(m-1)(k-\mu_{i}^{\mathcal{L}}-2)s_{r}+3mn-2m+mk+\mu_{i}^{\mathcal{L}}+2=\mu_{i_{r}} \tag{3},
\end{equation}
\begin{equation}
(\mu_{i}^{\mathcal{L}}-k)t_{r}+[(m-1)\mu_{i}^{\mathcal{L}}+4mn+n+2k-2]s_{r}+\mu_{i}^{\mathcal{L}}-k+2=s_{r}\mu_{i_{r}} \tag{4}.
\end{equation}
Now solving $Eqs.(2)-(4)$ by Matlab yields a cubic equation about $\mu_{i_{r}}$, which is equivalent to $Eq.(1)$, proving our claim.
Thus, forming eigenvectors of this type we get $n(m-2)+3(n-1)=mn+n-3$ eigenvectors, and there remains $3$.


To prove part(c), now consider the eigenvalue $\mu_{i}^{\mathcal{L}}=0$ of $\mathcal{L}(G)$ with an eigenvector $J_{n\times1}$.
 By the construction, all eigenvectors are orthogonal to the all-ones vectors, and hence the remaining three are of the form
$\begin{matrix}
\Omega=
\left[                 
  \begin{array}{ccc}   
    \begin{smallmatrix}
    \alpha J_{n\times1} \\  
    \beta J_{n\times1}\\  
    J_{m-1\times1}\otimes \gamma J_{n\times1}
    \end{smallmatrix}
  \end{array}
\right]    \nonumber             
\end{matrix}$
for some $(\alpha,\beta,\gamma)\neq(0,0,0)$.

Let $v$ be an eigenvalue with an eigenvector $\Omega$, then the equation $\mathcal{L}(G\{K_{m}\})\cdot \Omega=v\cdot\Omega$ gives the following.
\begin{equation}
(2mn-n+mk)\alpha-(k+n)\beta-(m-1)(k+2n)\gamma=v\alpha \tag{5},
\end{equation}
\begin{equation}
-(k+n)\alpha+(3mn-2m+2+mk-2n)\beta-(m-1)(k+3n-2)\gamma=v\beta \tag{6},
\end{equation}
\begin{equation}
-(k+2n)\alpha-(k+3n-2)\beta+(2k+5n-2)\gamma=v\gamma \tag{7}.
\end{equation}

Now $\alpha\neq0$. Otherwise, solving $Eqs.(5)-(7)$ implies $\beta=0$ and $\gamma=0$. Therefore, without loss of generality we can assume that $\alpha=1$ and solving $Eqs.(5)-(7)$ by Matlab yields the part$(c)$. Whence the theorem.
\qed\end{proof}
\begin{cor}\label{thm2}
 Let $G$ be a distance regular graph with distance regularity $k$, a distance Laplacian matrix $\mathcal{L}(G)$ and distance Laplacian spectrum $\{0=\mu_{1}^{\mathcal{L}},\mu_{2}^{\mathcal{L}},\cdots,\mu_{n}^{\mathcal{L}}\}$. If $K_{m}$ is a isolated vertex, then the distance Laplacian spectrum of $\mathcal{L}(G\{K_{1}\})$ is

(a) $k+2n+\mu_{i}^{\mathcal{L}}\pm\sqrt{(k-\mu_{i}^{\mathcal{L}})^{2}+n^{2}}$ for $\mu_{i}^{\mathcal{L}}\neq0$,

(b) $2(k+n)$, $0$.
\end{cor}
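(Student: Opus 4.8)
The plan is to specialize the block-matrix eigenvector analysis from the proof of Theorem~\ref{thm1} to the degenerate case $m=1$. When $m=1$ the third block row and column (coming from the copies $W_2,\dots,W_{m}$) vanish, so $\mathcal{L}(G\{K_1\})$ is simply a $2\times2$ block matrix on $V(G)\cup W_1$, where $W_1$ is the set of the $n$ pendant vertices. Using that $G$ is $k$-transmission regular, so $Tr_G=kI_n$ and $\mathcal{L}(G)=kI_n-\mathcal{D}(G)$, I would first record that the top-left block is $(2k+n)I_n-\mathcal{D}(G)$, the two off-diagonal blocks equal $-(\mathcal{D}(G)+J_n)$, and the bottom-right block is $(2k+3n)I_n-\mathcal{D}(G)-2J_n$; this follows by substituting $m=1$ in the displayed form of $\mathcal{L}(G\{K_m\})$, or directly from the observation that in $G\{K_1\}$ a pendant vertex lies at distance $d_G(\cdot,\cdot)+1$ from an original vertex and at distance $d_G(\cdot,\cdot)+2$ from another pendant vertex, which gives vertex transmissions $2k+n$ and $2k+3n-2$ respectively.

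Next I would use the spectral decomposition of $\mathcal{D}(G)$: since $G$ is $k$-transmission regular, $J_{n\times1}$ is the Perron eigenvector with eigenvalue $k$, and the remaining eigenvectors $X_2,\dots,X_n$ are orthogonal to $J_{n\times1}$ with $\mathcal{D}(G)X_i=\mu_i^{\mathcal{D}}X_i=(k-\mu_i^{\mathcal{L}})X_i$. For $i=2,\dots,n$ I look for eigenvectors of $\mathcal{L}(G\{K_1\})$ of the form $(aX_i,\ bX_i)^{T}$; because $J_nX_i=0$, the eigenequation collapses to the $2\times2$ symmetric system with matrix $\left[\begin{smallmatrix} k+n+\mu_i^{\mathcal{L}} & -(k-\mu_i^{\mathcal{L}})\\ -(k-\mu_i^{\mathcal{L}}) & k+3n+\mu_i^{\mathcal{L}}\end{smallmatrix}\right]$. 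Its trace is $2(k+2n+\mu_i^{\mathcal{L}})$, and a short simplification of $(\mathrm{trace}/2)^2-\det$ leaves exactly $(k-\mu_i^{\mathcal{L}})^2+n^2$ (the cross terms cancel), so the two eigenvalues are $k+2n+\mu_i^{\mathcal{L}}\pm\sqrt{(k-\mu_i^{\mathcal{L}})^2+n^2}$. This yields the $2(n-1)$ eigenvalues in part~(a).

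For the two remaining eigenvectors, which are forced to be orthogonal to all the previous ones, I would take $(\alpha J_{n\times1},\ \beta J_{n\times1})^{T}$. Now $\mathcal{D}(G)J_{n\times1}=kJ_{n\times1}$ and $J_nJ_{n\times1}=nJ_{n\times1}$, so the eigenequation reduces to the $2\times2$ matrix $(k+n)\left[\begin{smallmatrix}1&-1\\-1&1\end{smallmatrix}\right]$, whose eigenvalues are $2(k+n)$ and $0$ --- this is part~(b). Finally the count $2(n-1)+2=2n=|V(G\{K_1\})|$ confirms that the whole spectrum has been found.

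The computation here is entirely routine; the only point requiring care is keeping the all-ones matrix $J_n$ distinct from the identity $I_n$ while computing the transmissions of the two vertex classes. What one must emphasize, rather than overcome, is that the result is \emph{not} obtained by literally setting $m=1$ in parts~(a)--(c) of Theorem~\ref{thm1}: the stated multiplicity $(m-2)n$ becomes negative, and the cubic in part~(b) and the triple in part~(c) arise from three-component trial vectors that no longer exist when $m=1$, so the $m=1$ case genuinely needs the separate (but much simpler) treatment sketched above.
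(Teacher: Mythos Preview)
Your proposal is correct and in fact carries out more than the paper does: the paper's entire proof is the single sentence ``The proof is immediate from the above theorem by choosing $m=1$.'' Your block-eigenvector analysis is precisely the method of Theorem~\ref{thm1}, specialized to the $2\times2$ block case, so the approaches agree. Your closing observation is well taken: literal substitution of $m=1$ into the \emph{statement} of Theorem~\ref{thm1} is ill-formed (multiplicity $(m-2)n<0$, and the three-component trial vectors collapse), so what the paper presumably means is that the same \emph{method} applies --- and that is exactly what you have written out.
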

\begin{proof}The proof is immediate from the above theorem by choosing $m=1$.\qed\end{proof}
\begin{thm}\label{thm3}
Let $G$ be a distance regular graph with distance regularity $k$, a distance signless Laplacian matrix $\mathcal{Q}(G)$ and distance signless Laplacian spectrum $\{2k=\mu_{1}^{\mathcal{Q}},\mu_{2}^{\mathcal{Q}},\cdots,\mu_{n}^{\mathcal{Q}}\}$. If $K_{m}$ is a complete graph of $m$ vertices, then the distance signless Laplacian spectrum of $\mathcal{Q}(G\{K_{m}\})$ is

(a) $(m+1)k+4mn-3m+n-1$ with multiplicity $(m-2)n$,

(b) the roots of the equation $\prod_{i=2}^{n}(x^{3}-Ax^{2}+BX+C)=0$ for $\mu_{i}^{\mathcal{Q}}\neq2k$,\\
where $A=2k - 8m + \mu_{i} + 2km + 9mn + m\mu_{i}^{\mathcal{Q}}$, $B=4m - 2n - 6km - 2kn + 4mn + 2k\mu_{i}^{\mathcal{Q}} - 10m\mu_{i}^{\mathcal{Q}} + 2n\mu_{i}^{\mathcal{Q}} - 14km^2 + 2k^2m - 2mn^2 - 42m^2n - 2m^2\mu_{i}^{\mathcal{Q}} + k^2 + 12m^2 - n^2 + k^2m^2 + 26m^2n^2 + 13km^2n + 2km^2\mu_{i}^{\mathcal{Q}} + 5m^2n\mu_{i}^{\mathcal{Q}} + 11kmn + 4km\mu_{i}^{\mathcal{Q}} + 7mn\mu_{i}^{\mathcal{Q}}
$, and $C=- 12km^2n\mu_{i}^{\mathcal{Q}} - 5km^3n\mu_{i}^{\mathcal{Q}} - 9kmn\mu_{i}^{\mathcal{Q}} + 2mn\mu_{i}^{\mathcal{Q}} - 2kn\mu_{i}^{\mathcal{Q}} + 10km\mu_{i}^{\mathcal{Q}} - 4kmn - 11m^2n^2\mu_{i}^{\mathcal{Q}} - 6m^3n^2\mu_{i}^{\mathcal{Q}} - 3k^2m^2\mu_{i}^{\mathcal{Q}} - k^2m^3\mu_{i}^{\mathcal{Q}} - 20km^3n^2 - 13km^2n^2 - 6k^2m^2n - 4k^2m^3n + 28m^2n\mu_{i}^{\mathcal{Q}} + 4m^3n\mu_{i}^{\mathcal{Q}} - 4mn^2\mu_{i}^{\mathcal{Q}} + 12km^2\mu_{i}^{\mathcal{Q}} - 3k^2m\mu_{i}^{\mathcal{Q}} + 2km^3\mu_{i}^{\mathcal{Q}} + 38km^3n + 10km^2n + 7kmn^2 + 52m^3n^2 - 24m^3n^3 - 18m^2n^2 + 6m^2n^3 + 6k^2m^3 + 4k^2m^2 - 12m^2\mu_{i}^{\mathcal{Q}} - k^2\mu_{i}^{\mathcal{Q}} - 24m^3n + 4m^2n + 3mn^3 + 2k^2n - 12km^3 - 4km^2 - 2k^2m + 2nu - 4m\mu_{i}^{\mathcal{Q}} + 4mn + n^2\mu_{i}^{\mathcal{Q}} - 2n^2$.

(c) the roots of the equation $x^{3}-A^{\ast}x^{2}+B^{\ast}X+C^{\ast}=0$,\\
where $A^{\ast}=4k - 8m - 2n + 4km + 13mn$, $B^{\ast}=4m - 2n - 26km - 2kn + 4mn - 18km^2 + 10k^2m - 19mn^2 - 50m^2n + 5k^2 + 12m^2 + 7n^2 + 5k^2m^2 + 46m^2n^2 + 31km^2n + 29kmn$, and $C^{\ast}=- 52km^3n^2 - 38km^2n^2 - 36k^2m^2n - 18k^2m^3n + 74km^2n + 54km^3n - 18k^2mn + 8kmn^2 + 68m^3n^2 - 48m^3n^3 + 40m^2n^3 - 34m^2n^2 + 28k^2m^2 + 10k^2m^3 - 6k^3m^2 - 2k^3m^3 - 24m^3n - 16mn^3 + 4m^2n + 2mn^2 - 6kn^2 - 28km^2 + 18k^2m - 12km^3 - 6k^3m + 4mn + 4kn - 8km + 4n^3 - 2k^3$.
\end{thm}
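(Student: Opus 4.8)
The plan is to mirror the structure of the proof of Theorem \ref{thm1} verbatim, replacing $\mathcal{L}$ by $\mathcal{Q}$ throughout, since the block form of $\mathcal{Q}(G\{K_m\})$ given before the statement differs from that of $\mathcal{L}(G\{K_m\})$ only in a handful of signs and in the replacement of $\mathcal{L}(G)$ by $\mathcal{Q}(G)$ and of $Tr_G-\mathcal{L}(G)$ by $\mathcal{Q}(G)-Tr_G$. First I would record that $G$ distance regular of regularity $k$ forces $Tr_G=kI_n$, so every block of $\mathcal{Q}(G\{K_m\})$ is a polynomial in $\mathcal{D}(G)$, $I_n$ and $J_n$; since these three matrices commute and $\mathcal{Q}(G)=kI_n+\mathcal{D}(G)$ has $\mathcal{D}$-eigenvalues $\mu_i^{\mathcal{D}}=\mu_i^{\mathcal{Q}}-k$ with the Perron eigenvalue $\mu_1^{\mathcal{Q}}=2k$ attached to $J_{n\times 1}$, I can diagonalize $\mathcal{D}(G)$ simultaneously and reduce the $3n(m)$-dimensional eigenproblem to small ones vertex-class by vertex-class.

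The argument splits into the same three cases. For part (a), for $j=2,\dots,m-1$ take $Y_j$ a $J_{m-1}$-eigenvector for eigenvalue $0$ and $e_{n\times 1}^{l}$ a standard basis vector, and check that $\Psi_j^{l}=(0_{n\times1},0_{n\times1},Y_j\otimes e_{n\times1}^{l})^{T}$ is killed in the first two block-rows (because $J_{1\times m-1}(Y_j\otimes\cdot)=0$) and is an eigenvector of $\mathcal{Q}^{\ast}$; since $Y_j\otimes\cdot$ annihilates the $(J-I)_{m-1}\otimes(\cdots)$ term and leaves $I_{m-1}\otimes[(4mn-3m+n-4)I_n+mTr_G+\mathcal{Q}(G)+4J_n]$ acting, and here $e_{n\times1}^{l}$ is not a $\mathcal{D}(G)$-eigenvector — so I would instead run part (a) with $e_{n\times1}^{l}$ replaced by a $\mathcal{D}(G)$-eigenvector $X_i$ orthogonal to $J_{n\times1}$ (killing the $4J_n$ term), giving eigenvalue $(m+1)k+4mn-3m+n-1$ with multiplicity $(m-2)(n-1)$, plus the $J_{n\times1}$-direction which is folded into part (c). For part (b), for each $\mu_i^{\mathcal{Q}}\neq 2k$ with $\mathcal{D}(G)$-eigenvector $X_i\perp J_{n\times1}$, posit $\Phi_i^{r}=(t_r X_i, X_i, J_{m-1\times1}\otimes s_r X_i)^{T}$, impose $\mathcal{Q}(G\{K_m\})\Phi_i^{r}=\mu_{i_r}\Phi_i^{r}$, and use $X_i\perp J_{n\times1}$ (so $J_nX_i=0$) together with $\mathcal{D}(G)X_i=(\mu_i^{\mathcal{Q}}-k)X_i$ to reduce the three block-rows to three scalar equations in $t_r,s_r,\mu_{i_r}$; eliminating $t_r$ and $s_r$ yields the cubic $x^3-Ax^2+Bx+C=0$ with the stated coefficients. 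For part (c), the remaining three eigenvectors lie in the span of $(\alpha J_{n\times1},\beta J_{n\times1},J_{m-1\times1}\otimes\gamma J_{n\times1})^{T}$; here $J_nJ_{n\times1}=nJ_{n\times1}$ and $\mathcal{D}(G)J_{n\times1}=kJ_{n\times1}$, so the eigenequation collapses to a $3\times3$ matrix whose characteristic polynomial is $x^3-A^{\ast}x^2+B^{\ast}x+C^{\ast}=0$.

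A bookkeeping check closes the argument: part (a) supplies $(m-2)(n-1)$ eigenvalues, part (b) supplies $3(n-1)$, and part (c) supplies $3$, totalling $(m-2)(n-1)+3(n-1)+3=(m+1)(n-1)+3=mn+n-m+2$; on the other hand the matrix has order $n+n+(m-1)n=(m+1)n$, so I must also account for the $\mathcal{Q}^{\ast}$-eigenvectors of the form $Y_j\otimes X_i$ with $j=2,\dots,m-1$, $i=2,\dots,n$ (the genuine multiplicity-$(m-2)(n-1)$ family of part (a)) and the $Y_j\otimes J_{n\times1}$ family, which contributes eigenvalue $(m+1)k+4mn-3m+n-1$ as well with multiplicity $m-2$ — together $(m-2)(n-1)+(m-2)=(m-2)n$, matching the stated multiplicity in (a). Adding $(m-2)n+3(n-1)+3=(m-2)n+3n=(m+1)n$ accounts for everything. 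The main obstacle is purely computational: carrying out the eliminations in the $3\times3$ systems of parts (b) and (c) to recognize that they collapse to exactly the displayed cubics with those lengthy coefficients $A,B,C,A^{\ast},B^{\ast},C^{\ast}$; as in Theorem \ref{thm1}, I would delegate this symbolic elimination to Matlab rather than expand it by hand, and simply verify a low-order special case (e.g. $m=2$, where part (a) is empty) against the known $\mathcal{Q}$-spectrum of the double-cover-type construction to guard against sign errors.
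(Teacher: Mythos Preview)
Your approach is exactly the paper's: its entire proof of this theorem reads ``we can prove the theorem by using the similar method as for Theorem~\ref{thm1}'', and you have correctly reconstructed what that means --- reuse the block form of $\mathcal{Q}(G\{K_m\})$ already displayed, run the same three families of eigenvector Ans\"atze, and hand the cubic eliminations to Matlab.

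One simplification you missed in part~(a): your worry that $e_{n\times1}^{l}$ is not a $\mathcal{D}(G)$-eigenvector is unfounded, because when $\mathcal{Q}^{\ast}$ acts on $Y_j\otimes w$ with $J_{m-1}Y_j=0$ the two tensor pieces combine as
\[
\bigl[-(\mathcal{Q}(G)-Tr_G+4J_n-3I_n)+(4mn-3m+n-4)I_n+mTr_G+\mathcal{Q}(G)+4J_n\bigr]w=\bigl[(m+1)Tr_G+(4mn-3m+n-1)I_n\bigr]w,
\]
and since $Tr_G=kI_n$ this is a scalar multiple of $w$ for \emph{any} $w$. Hence the paper's choice $w=e_{n\times1}^{l}$ already gives the full multiplicity $(m-2)n$ in one stroke, and your split into the $X_i$-directions and the $J_{n\times1}$-direction (with the attendant bookkeeping reconciliation at the end) is unnecessary.
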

\begin{proof}we can prove the theorem by using the similar method as for Theorem \ref{thm1}.\qed\end{proof}

\begin{cor}\label{thm4}
Let $G$ be a distance regular graph with distance regularity $k$, a distance signless Laplacian matrix $\mathcal{Q}(G)$ and distance signless Laplacian spectrum $\{2k=\mu_{1}^{\mathcal{Q}},\mu_{2}^{\mathcal{Q}},\cdots,\mu_{n}^{\mathcal{Q}}\}$. If $K_{m}$ is a isolated vertex, then the distance signless Laplacian spectrum of $\mathcal{Q}(G\{K_{1}\})$ is

(a) $k+2n-2+\mu_{i}\pm\sqrt{(k-\mu_{i}^{\mathcal{Q}})^{2}+(n-2)^{2}}$ for $\mu_{i}^{\mathcal{Q}}\neq2k$,

(b) $3k+3n-2\pm\sqrt{(k+n)^{2}+(2n-2)^{2}}$.
\end{cor}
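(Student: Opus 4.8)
The plan is to specialize the cluster construction to $m=1$ and redo the eigenvector analysis of Theorem \ref{thm3} with a $2\times 2$ block partition rather than the $3\times 3$ one. A preliminary warning: one cannot simply put $m=1$ in the closed forms of Theorem \ref{thm3}, because the block $W_2\cup\cdots\cup W_m$ is then empty, so the three block rows collapse to two, the cubic factors degenerate, and the multiplicity $(m-2)n$ in part (a) becomes meaningless. Instead, note that $G\{K_1\}$ is exactly the graph obtained from $G$ by attaching a single pendant vertex $u_1^i$ to each $v_i$. Since $G$ is $k$-transmission regular, a direct count gives $d(u_1^i,v_j)=1+d_G(v_i,v_j)$ and $d(u_1^i,u_1^j)=2+d_G(v_i,v_j)$ for $i\neq j$, hence $Tr(v_i)=2k+n$ and $Tr(u_1^i)=2k+3n-2$ for all $i$. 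Ordering the vertices of $G$ before the pendant vertices and using $\mathcal{Q}(G)=kI_n+\mathcal{D}(G)$, I would record
\begin{equation*}
\mathcal{Q}(G\{K_1\})=\begin{bmatrix}(k+n)I_n+\mathcal{Q}(G) & \mathcal{D}(G)+J_n\\ \mathcal{D}(G)+J_n & (k+3n-4)I_n+\mathcal{Q}(G)+2J_n\end{bmatrix}.
\end{equation*}

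Next, following the pattern in the proof of Theorem \ref{thm1}, I would diagonalize block by block against the spectrum of $\mathcal{Q}(G)$. Fix an eigenvalue $\mu_i^{\mathcal{Q}}\neq 2k$ with eigenvector $X_i$; as $\mathcal{Q}(G)$ is symmetric with Perron eigenvector $J_{n\times1}$, choose $X_i\perp J_{n\times1}$, so $J_nX_i=0$ and $\mathcal{D}(G)X_i=(\mu_i^{\mathcal{Q}}-k)X_i$. Substituting a trial vector $(aX_i,\,bX_i)^T$ into $\mathcal{Q}(G\{K_1\})$ and cancelling $X_i$ turns the eigenequation into the $2\times2$ symmetric eigenvalue problem with matrix
\begin{equation*}
M_i=\begin{bmatrix}k+n+\mu_i^{\mathcal{Q}} & \mu_i^{\mathcal{Q}}-k\\ \mu_i^{\mathcal{Q}}-k & k+3n-4+\mu_i^{\mathcal{Q}}\end{bmatrix},
\end{equation*}
whose eigenvalues, by the elementary formula $\frac{p+r}{2}\pm\sqrt{(\frac{p-r}{2})^2+q^2}$ for a symmetric $2\times2$ matrix with diagonal entries $p,r$ and off-diagonal entry $q$, are precisely $k+2n-2+\mu_i^{\mathcal{Q}}\pm\sqrt{(k-\mu_i^{\mathcal{Q}})^2+(n-2)^2}$, which is part (a). Running over $i=2,\dots,n$ yields $2(n-1)$ eigenvectors; choosing the $X_i$ pairwise orthogonal (possible even for repeated $\mu_i^{\mathcal{Q}}$) keeps these independent and produces the correct multiplicities.

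Finally, the two remaining eigenvectors must be orthogonal to all the ones just constructed, hence of the form $(\alpha J_{n\times1},\,\beta J_{n\times1})^T$. Using $\mathcal{Q}(G)J_{n\times1}=2kJ_{n\times1}$, $\mathcal{D}(G)J_{n\times1}=kJ_{n\times1}$ and $J_nJ_{n\times1}=nJ_{n\times1}$, the eigenequation reduces to the $2\times2$ symmetric matrix
\begin{equation*}
N=\begin{bmatrix}3k+n & k+n\\ k+n & 3k+5n-4\end{bmatrix},
\end{equation*}
with eigenvalues $3k+3n-2\pm\sqrt{(k+n)^2+(2n-2)^2}$, giving part (b). Since $2(n-1)+2=2n$, this exhausts the spectrum of $\mathcal{Q}(G\{K_1\})$ and the corollary follows. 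The only genuine obstacle is the point raised at the outset: $m=1$ is a degenerate case of Theorem \ref{thm3} and must be handled by its own $2\times2$ block reduction rather than by formal substitution; once that is recognized, only the routine $2\times2$ eigenvalue computations remain.
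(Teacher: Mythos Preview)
Your proof is correct and complete. The paper's own proof is a one-liner: ``The proof is immediate from the above theorem by choosing $m=1$.'' You instead work out the $2\times 2$ block reduction explicitly, and your observation that naive substitution of $m=1$ into the \emph{statement} of Theorem \ref{thm3} fails (the multiplicity $(m-2)n$ becomes negative and the cubics degenerate) is well taken. In effect you are spelling out what ``choosing $m=1$'' must mean at the level of the proof of Theorem \ref{thm3} rather than its statement: the third block row and column vanish, the $3\times 3$ eigensystems collapse to $2\times 2$, and one recovers exactly your matrices $M_i$ and $N$. So the two approaches agree in substance; yours is simply the honest version of the paper's shortcut.
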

\begin{proof}The proof is immediate from the above theorem by choosing $m=1$.\qed\end{proof}

\subsection{The $\mathcal{L}-$spectrum and $\mathcal{Q}-$spectrum of $D_{2}G$}
\begin{thm}\label{thm5}
Let $G$ be a graph with distance Laplacian spectrum $\{\mu_{1}^{\mathcal{L}}\geq\mu_{2}^{\mathcal{L}}\geq\cdots\geq\mu_{n}^{\mathcal{L}}=0\}$, then the distance Laplacian spectrum of $D_{2}G$ consists of eigenvalues
$0, 2\mu_{i}^{\mathcal{L}}$ for $i=1,2,\cdots,n-1$ and $2Tr_{G}(v_{i})+4$ for $i=1,2,\cdots,n$, where $Tr_{G}(v_{i})$ is the sum of all coordinates of the row vector of $\mathcal{D}(G)$ indexed by $v_{i}$.
\end{thm}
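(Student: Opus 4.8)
The plan is to exploit the symmetric block structure of the double graph and reduce to Lemma~\ref{lem9}. First I would compute the distance matrix of $D_2G$ explicitly. The key observation is that in $D_2G$, for each $i$, the vertices $v_i$ and $u_i$ have exactly the same neighbourhood except that $u_i$ is not adjacent to $v_i$ (nor $v_i$ to $u_i$); consequently $d_{D_2G}(v_i,u_i)=2$ when $v_i$ has at least one neighbour, and for any $j\neq i$ one checks $d_{D_2G}(v_i,v_j)=d_{D_2G}(v_i,u_j)=d_{D_2G}(u_i,v_j)=d_{D_2G}(u_i,u_j)=\min\{d_G(v_i,v_j),2\}$ — in fact, since $v_i,u_i$ are twins and the graph $G$ is connected with more than one vertex, all these distances collapse to $\min\{d_G(v_i,v_j),2\}$. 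Wait — more care is needed: the distance in $D_2G$ between $v_i$ and $v_j$ need not be $\min\{d_G(v_i,v_j),2\}$ in general; rather $D_2G$ has the same adjacency-based shortest paths as $G$ on the $v$-copy, so $d_{D_2G}(v_i,v_j)=d_G(v_i,v_j)$, while mixed distances equal $d_G(v_i,v_j)$ as well (a shortest $v_i$--$v_j$ path can be mirrored through $u$-vertices at no extra cost, and the last step can land on $u_j$ since $u_j$ shares $v_j$'s neighbourhood). Thus, writing $\mathcal{D}=\mathcal{D}(G)$, the distance matrix of $D_2G$ is
\[
\mathcal{D}(D_2G)=\begin{bmatrix}\mathcal{D} & \mathcal{D}+2I_n-2\,\mathrm{sgn?}\end{bmatrix}
\]
— precisely, $\mathcal{D}(D_2G)=\begin{bmatrix}\mathcal{D} & \mathcal{D}'\\ \mathcal{D}' & \mathcal{D}\end{bmatrix}$ where $\mathcal{D}'=\mathcal{D}+2I_n$ off the diagonal agreement, i.e. $\mathcal{D}'$ agrees with $\mathcal{D}$ off-diagonal and has $2$ on the diagonal (since $d(v_i,u_i)=2$). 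So $\mathcal{D}'=\mathcal{D}+2I_n$.

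Next I would assemble the transmission matrix. Each $v_i$ has transmission $Tr_{D_2G}(v_i)=\sum_j\mathcal{D}_{ij}+\sum_j\mathcal{D}'_{ij}=Tr_G(v_i)+(Tr_G(v_i)+2)=2Tr_G(v_i)+2$, and the same for $u_i$ by symmetry, so $Tr(D_2G)=\mathrm{diag}\big(2Tr_G(v_i)+2\big)\oplus\mathrm{diag}\big(2Tr_G(v_i)+2\big)$. Hence
\[
\mathcal{L}(D_2G)=Tr(D_2G)-\mathcal{D}(D_2G)=\begin{bmatrix}L_0 & L_1\\ L_1 & L_0\end{bmatrix},
\]
a $2\times2$ block symmetric matrix of the form in Lemma~\ref{lem9}, where $L_0=\mathrm{diag}(2Tr_G(v_i)+2)-\mathcal{D}$ and $L_1=-\mathcal{D}'=-\mathcal{D}-2I_n$. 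By Lemma~\ref{lem9} the spectrum of $\mathcal{L}(D_2G)$ is $\Spec(L_0+L_1)\cup\Spec(L_0-L_1)$.

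Now I would identify the two $n\times n$ blocks. We have $L_0-L_1=\mathrm{diag}(2Tr_G(v_i)+2)-\mathcal{D}+\mathcal{D}+2I_n=\mathrm{diag}(2Tr_G(v_i)+4)$, a diagonal matrix, whose eigenvalues are exactly $2Tr_G(v_i)+4$ for $i=1,\dots,n$ — this gives the second family in the statement. And $L_0+L_1=\mathrm{diag}(2Tr_G(v_i)+2)-\mathcal{D}-\mathcal{D}-2I_n=2\,\mathrm{diag}(Tr_G(v_i))-2\mathcal{D}=2\mathcal{L}(G)$, whose eigenvalues are $2\mu_i^{\mathcal{L}}$ for $i=1,\dots,n$; since $\mu_n^{\mathcal{L}}=0$ this contributes $0$ together with $2\mu_1^{\mathcal{L}},\dots,2\mu_{n-1}^{\mathcal{L}}$. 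Combining the two lists yields exactly the claimed spectrum, and the proof is complete. The main obstacle is the first step: rigorously verifying the distance formula $d_{D_2G}(v_i,u_j)=d_G(v_i,v_j)$ for all $i,j$ (and $d(v_i,u_i)=2$), which requires a short case analysis using the twin-neighbourhood property and the connectivity of $G$ (implicitly $G$ must have no isolated vertices, so that $D_2G$ is connected and $d(v_i,u_i)=2$); once that is pinned down, everything else is the mechanical block computation above plus a single invocation of Lemma~\ref{lem9}.
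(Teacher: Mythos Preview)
Your approach is essentially identical to the paper's: compute the distances in $D_2G$ to obtain the block form $\mathcal{L}(D_2G)=\begin{bmatrix}2Tr_G+2I_n-\mathcal{D}(G) & -(\mathcal{D}(G)+2I_n)\\ -(\mathcal{D}(G)+2I_n) & 2Tr_G+2I_n-\mathcal{D}(G)\end{bmatrix}$ and then invoke Lemma~\ref{lem9}. Your write-up in fact spells out more than the paper does (the paper simply asserts the distance relations and the block matrix, then cites the lemma), so aside from the exploratory false start before you settle on $\mathcal{D}'=\mathcal{D}+2I_n$, the argument is correct and matches the paper's proof.
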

\begin{proof}According to Definition \ref{lem2}, we have:
\begin{equation*}
d_{D_{2}G}(v_{i},v_{j})=d_{G}(v_{i},v_{j})
\end{equation*}
\begin{equation*}
d_{D_{2}G}(v_{i},u_{i})=2
\end{equation*}
\begin{equation*}
d_{D_{2}G}(v_{i},u_{j})=d_{G}(v_{i},v_{j})
\end{equation*}
\begin{equation*}
d_{D_{2}G}(v_{j},u_{i})=d_{G}(v_{j},v_{i})
\end{equation*}
Hence the distance Laplacian matrix $D_{2}G$ can be written in the form

\begin{equation} 
\mathcal{L}(D_{2}G)=
\left[                 
  \begin{array}{cc}     
  \begin{smallmatrix}
    2Tr_{G}+2I_{n}-\mathcal{D}(G) & -(\mathcal{D}(G)+2I_{n}) \\  
    -(\mathcal{D}(G)+2I_{n}) & 2Tr_{G}+2I_{n}-\mathcal{D}(G)\\  
  \end{smallmatrix}
  \end{array}
\right],    \nonumber             
\end{equation}
and the theorem follows from Lemma \ref{lem9}.\qed\end{proof}
\begin{cor}\label{thm6}
Let $G$ be a $k$-distance regular graph with distance Laplacian spectrum $\{\mu_{1}^{\mathcal{L}}\geq\mu_{2}^{\mathcal{L}}\geq\cdots\geq\mu_{n}^{\mathcal{L}}=0\}$, then the distance Laplacian spectrum of $D_{2}G$ consists of eigenvalues
$0, 2\mu_{i}^{\mathcal{L}}$ for $i=1,2,\cdots,n-1$ and $2k+4$ with multiplicity $n$.
\end{cor}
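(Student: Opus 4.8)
The plan is to derive the distance Laplacian matrix of $D_2G$ explicitly and then apply Lemma~\ref{lem9}. First I would compute the distances in $D_2G$: by Definition~\ref{lem2}, every $u_i$ has exactly the same neighbourhood as $v_i$, so for $j\neq i$ the distance from $v_i$ to $v_j$, to $u_j$, and from $u_i$ to $u_j$ all equal $d_G(v_i,v_j)$, while $d_{D_2G}(v_i,u_i)=2$ (they are non-adjacent but share a neighbour, since $G$ is connected with at least one edge at $v_i$). This gives the four displayed distance identities. From these, the transmission of $v_i$ (and of $u_i$, by symmetry) is $\mathrm{Tr}_{D_2G}(v_i)=2\,\mathrm{Tr}_G(v_i)+2$: the $2\,\mathrm{Tr}_G(v_i)$ comes from the two copies of $V(G)\setminus\{v_i\}$ and the extra $+2$ from the distance to the twin $u_i$.

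Next I would assemble the $2n\times 2n$ matrix $\mathcal{L}(D_2G)=\mathrm{Tr}(D_2G)-\mathcal{D}(D_2G)$ in $2\times 2$ block form with blocks indexed by $\{v_1,\dots,v_n\}$ and $\{u_1,\dots,u_n\}$. The diagonal blocks are $A_0=(2\,\mathrm{Tr}_G+2I_n)-\mathcal{D}(G)$ (the $v$–$v$ distances are $\mathcal{D}(G)$, and likewise $u$–$u$), and the off-diagonal blocks are $A_1=-\bigl(\mathcal{D}(G)+2I_n\bigr)$ (the $v_i$–$u_j$ entry is $d_G(v_i,v_j)$ for $i\neq j$ and $2$ for $i=j$, hence $\mathcal{D}(G)+2I_n$, negated). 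This is exactly the block matrix written in the statement, and it has the symmetric form $\bmat{A_0 & A_1 \\ A_1 & A_0}$ required by Lemma~\ref{lem9}.

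By Lemma~\ref{lem9} the spectrum of $\mathcal{L}(D_2G)$ is the union of the spectra of $A_0+A_1$ and $A_0-A_1$. I would then simplify: $A_0+A_1=(2\,\mathrm{Tr}_G+2I_n)-\mathcal{D}(G)-\mathcal{D}(G)-2I_n=2\,\mathrm{Tr}_G-2\mathcal{D}(G)=2\mathcal{L}(G)$, whose eigenvalues are $2\mu_i^{\mathcal{L}}$ for $i=1,\dots,n$; since $\mu_n^{\mathcal{L}}=0$ this accounts for $0$ together with $2\mu_1^{\mathcal{L}},\dots,2\mu_{n-1}^{\mathcal{L}}$. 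On the other hand $A_0-A_1=(2\,\mathrm{Tr}_G+2I_n)-\mathcal{D}(G)+\mathcal{D}(G)+2I_n=2\,\mathrm{Tr}_G+4I_n$, a diagonal matrix whose eigenvalues are $2\,\mathrm{Tr}_G(v_i)+4$ for $i=1,\dots,n$. Combining the two lists gives exactly the claimed spectrum.

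The only genuinely delicate point is justifying $d_{D_2G}(v_i,u_i)=2$ and, more generally, that no shortcuts through the twin vertices shorten any distance; this rests on $G$ being connected (hence each vertex has a neighbour, giving the length-$2$ path $v_i$–$w$–$u_i$) and on the observation that replacing any vertex on a $v$–$v$ path by its twin yields a walk of the same length but never a shorter one. Once the distance table is pinned down, the rest is the routine block computation above, and the corollary for $k$-distance-regular $G$ is immediate since then $\mathrm{Tr}_G(v_i)=k$ for all $i$, so the $n$ eigenvalues $2\,\mathrm{Tr}_G(v_i)+4$ collapse to $2k+4$ with multiplicity $n$.
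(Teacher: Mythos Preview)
Your argument is correct and follows exactly the paper's approach: the paper proves Theorem~\ref{thm5} by writing $\mathcal{L}(D_2G)$ in the block form $\bmat{A_0 & A_1 \\ A_1 & A_0}$ with $A_0=2\,Tr_G+2I_n-\mathcal{D}(G)$ and $A_1=-(\mathcal{D}(G)+2I_n)$ and applying Lemma~\ref{lem9}, and then derives Corollary~\ref{thm6} by simply substituting $Tr_G(v_i)=k$. You have reproduced that proof of Theorem~\ref{thm5} inline (with some additional justification of the distance formulas) and then made the same specialization in your final paragraph.
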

\begin{proof}The proof follows from the above theorem by choosing $Tr_{G}(v_{i})=k$.\qed\end{proof}
\begin{cor}\label{thm7}
Let $G$ be a $k$-distance regular graph with distance Laplacian spectrum $\{\mu_{1}^{\mathcal{L}}\geq\mu_{2}^{\mathcal{L}}\geq\cdots\geq\mu_{n}^{\mathcal{L}}=0\}$, then the distance Laplacian energy of $D_{2}G$ is
\begin{equation*}
DLE(D_{2}G)=2\sum_{i=1}^{\sigma}\mu_{i}^{\mathcal{L}}-2\sum_{i=\sigma+1}^{n-1}\mu_{i}^{\mathcal{L}}+2nk-4\sigma k+4n-4\sigma.
\end{equation*}
\end{cor}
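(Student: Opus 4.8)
The plan is to combine the eigenvalue description in Corollary \ref{thm6} with the definition of distance Laplacian energy, carefully tracking which terms exceed the average transmission $t(D_2G)$ and which do not. First I would record the order of $D_2G$: it has $2n$ vertices, so $t(D_2G) = \frac{1}{2n}\sum_{i}\mu_i^{\mathcal{L}}(D_2G)$. By Corollary \ref{thm6} the distance Laplacian spectrum of $D_2G$ is $\{0,\,2\mu_1^{\mathcal{L}},\,2\mu_2^{\mathcal{L}},\dots,2\mu_{n-1}^{\mathcal{L}}\}$ together with $2k+4$ repeated $n$ times. Since $\sum_{i=1}^{n}\mu_i^{\mathcal{L}} = \operatorname{tr}\mathcal{L}(G) = \sum_i Tr_G(v_i) = nk$ (using $k$-transmission regularity and $\mu_n^{\mathcal{L}}=0$), the sum of all $\mathcal{L}$-eigenvalues of $D_2G$ is $2nk + n(2k+4) = 4nk + 4n$, hence $t(D_2G) = \frac{4nk+4n}{2n} = 2k+2$.

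Next I would plug this into $DLE(D_2G) = \sum |\mu_i^{\mathcal{L}}(D_2G) - (2k+2)|$. The $n$ eigenvalues equal to $2k+4$ each contribute $|2k+4-(2k+2)| = 2$, for a total of $2n$. The eigenvalue $0$ contributes $|0-(2k+2)| = 2k+2$. For the eigenvalues $2\mu_i^{\mathcal{L}}$, $i=1,\dots,n-1$, the sign of $2\mu_i^{\mathcal{L}} - (2k+2) = 2(\mu_i^{\mathcal{L}} - k - 1)$ depends on whether $\mu_i^{\mathcal{L}} \geq k+1$; this is exactly where the index $\sigma$ enters — $\sigma$ should be defined as the largest index with $\mu_\sigma^{\mathcal{L}} \geq k+1$ (equivalently $2\mu_\sigma^{\mathcal{L}} \geq t(D_2G)$). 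Then the contribution of these terms splits as $\sum_{i=1}^{\sigma}\big(2\mu_i^{\mathcal{L}} - (2k+2)\big) + \sum_{i=\sigma+1}^{n-1}\big((2k+2) - 2\mu_i^{\mathcal{L}}\big)$, which rearranges to $2\sum_{i=1}^{\sigma}\mu_i^{\mathcal{L}} - 2\sum_{i=\sigma+1}^{n-1}\mu_i^{\mathcal{L}} - 2\sigma(k+1) + 2(n-1-\sigma)(k+1)$.

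Finally I would collect all pieces: $DLE(D_2G) = 2n + (2k+2) + 2\sum_{i=1}^{\sigma}\mu_i^{\mathcal{L}} - 2\sum_{i=\sigma+1}^{n-1}\mu_i^{\mathcal{L}} + (n-1-2\sigma)(2k+2)$, and simplify the constant-and-linear-in-$k$ terms $2n + (2k+2) + (n-1-2\sigma)(2k+2) = 2n + n(2k+2) - 2\sigma(2k+2) = 2nk + 4n - 4\sigma k - 4\sigma$, which matches the claimed formula $2\sum_{i=1}^{\sigma}\mu_i^{\mathcal{L}} - 2\sum_{i=\sigma+1}^{n-1}\mu_i^{\mathcal{L}} + 2nk - 4\sigma k + 4n - 4\sigma$. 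The only genuine subtlety — and the step most likely to need a sentence of justification — is pinning down the definition of $\sigma$ so that the split of the absolute values is valid; everything else is bookkeeping with the trace identity $\sum \mu_i^{\mathcal{L}} = nk$ and Corollary \ref{thm6}.
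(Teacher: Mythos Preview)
Your proposal is correct and follows exactly the approach of the paper: invoke Corollary~\ref{thm6} for the spectrum, compute $t(D_2G)=2k+2$ via the trace identity, define $\sigma$ as the largest index with $\mu_\sigma^{\mathcal L}\ge k+1$, and split the absolute values accordingly. In fact your write-up is more explicit than the paper's, which states the value of $t(D_2G)$ and the definition of $\sigma$ and then simply declares the computation ``completed''; your bookkeeping fills in precisely the arithmetic the paper omits.
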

\begin{proof}By Corollary \ref{thm6}, we know that the distance Laplacian spectrum of $D_{2}G$. We have $\sum_{i=1}^{n}\mu_{i}^{\mathcal{L}}(G)=\sum_{i=1}^{n}Tr_{G}(v_{i})=nt(G)$, then we get $t(D_{2}G)=2k+2$. Let $\sigma$ be the largest positive integer such that $\mu_{\sigma}^{\mathcal{L}}(G)\geq k+1$, which is equivalent to $2\mu_{i}^{\mathcal{L}}\geq t(G)$ for $i=1,2,\cdots,\sigma$; otherwise, $2\mu_{i}^{\mathcal{L}}<t(G)$ for $i=\sigma+1,\sigma+2,\cdots,n-1$. Starting from the definition of distance Laplacian energy, the proof of Corollary is completed.\qed\end{proof}

\begin{thm}\label{thm8}
Let $G$ be a graph with distance signless Laplacian spectrum $\{\mu_{1}^{\mathcal{Q}},\mu_{2}^{\mathcal{Q}},\cdots,\mu_{n}^{\mathcal{Q}}\}$, then the distance signless Laplacian spectrum of $D_{2}G$ consists of eigenvalues $2\mu_{i}^{\mathcal{Q}}+4$ and $2Tr_{G}(v_{i})$ for $i=1,2,\cdots,n$,  where $Tr_{G}(v_{i})$ is the sum of all coordinates of the row vector of $\mathcal{D}(G)$ indexed by $v_{i}$.
\end{thm}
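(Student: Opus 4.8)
The plan is to mirror the argument used for the distance Laplacian case in Theorem \ref{thm5}, but starting from the distance signless Laplacian matrix of $D_{2}G$. First I would record the distances in $D_{2}G$, exactly as in the proof of Theorem \ref{thm5}: $d_{D_{2}G}(v_{i},v_{j})=d_{G}(v_{i},v_{j})$, $d_{D_{2}G}(u_{i},u_{j})=d_{G}(v_{i},v_{j})$, $d_{D_{2}G}(v_{i},u_{j})=d_{G}(v_{i},v_{j})$ for $i\neq j$, and $d_{D_{2}G}(v_{i},u_{i})=2$. From these, the transmission of each vertex $v_{i}$ (and of its twin $u_{i}$) in $D_{2}G$ equals $2Tr_{G}(v_{i})+2$, so the transmission diagonal matrix of $D_{2}G$ is the block-diagonal matrix with both diagonal blocks equal to $2Tr_{G}+2I_{n}$.

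Next I would assemble the distance signless Laplacian matrix. The distance matrix of $D_{2}G$ has the block form $\bmat{\mathcal{D}(G) & \mathcal{D}(G)+2I_{n}\\ \mathcal{D}(G)+2I_{n} & \mathcal{D}(G)}$, so adding the transmission diagonal gives
\begin{equation*}
\mathcal{Q}(D_{2}G)=
\left[
  \begin{array}{cc}
  \begin{smallmatrix}
    2Tr_{G}+2I_{n}+\mathcal{D}(G) & \mathcal{D}(G)+2I_{n}\\
    \mathcal{D}(G)+2I_{n} & 2Tr_{G}+2I_{n}+\mathcal{D}(G)
  \end{smallmatrix}
  \end{array}
\right].
\end{equation*}
This is a $2\times 2$ block symmetric matrix of the type handled by Lemma \ref{lem9}, with $A_{0}=2Tr_{G}+2I_{n}+\mathcal{D}(G)$ and $A_{1}=\mathcal{D}(G)+2I_{n}$. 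By that lemma its spectrum is the union of the spectra of $A_{0}+A_{1}=2Tr_{G}+4I_{n}+2\mathcal{D}(G)=2\mathcal{Q}(G)+4I_{n}$ and of $A_{0}-A_{1}=2Tr_{G}$. The first block contributes the eigenvalues $2\mu_{i}^{\mathcal{Q}}+4$ for $i=1,2,\dots,n$, and the second contributes the eigenvalues $2Tr_{G}(v_{i})$ for $i=1,2,\dots,n$, which is exactly the claimed spectrum.

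The only place that requires genuine care — and the step I expect to be the main obstacle — is verifying the distance relations, in particular that $d_{D_{2}G}(v_{i},u_{i})=2$ always holds (via a common neighbour of $v_{i}$ and $u_{i}$, which exists provided $v_{i}$ has at least one neighbour in $G$, i.e. $G$ has no isolated vertices) and that $d_{D_{2}G}(v_{i},u_{j})=d_{G}(v_{i},v_{j})$ for $i\neq j$ rather than something smaller; both follow from the fact that $u_{j}$ has the same neighbourhood as $v_{j}$ in the original vertex set, so any $v_{i}$--$v_{j}$ path in $G$ lifts to a $v_{i}$--$u_{j}$ path of the same length in $D_{2}G$, and no shorter one can exist because distances in $D_{2}G$ restricted to the copies of $G$ coincide with those in $G$. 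Once the block form of $\mathcal{Q}(D_{2}G)$ is justified, the eigenvalue computation is an immediate application of Lemma \ref{lem9} with no further algebra, as in Theorem \ref{thm5}.
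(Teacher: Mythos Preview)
Your proposal is correct and follows exactly the paper's approach: write $\mathcal{Q}(D_{2}G)$ in the same $2\times 2$ block form and invoke Lemma~\ref{lem9}. If anything, you are more thorough than the paper, which simply states the block form and appeals to the lemma without the explicit $A_{0}\pm A_{1}$ computation or the discussion of why the distance relations hold.
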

\begin{proof}By Definition \ref{lem2}, the distance signless Laplacian matrix of $D_{2}G$ can be written in the form
\begin{equation} 
Q(D_{2}G)=
\left[                 
  \begin{array}{cc}     
  \begin{smallmatrix}
    2Tr_{G}+2I_{n}+\mathcal{D}(G) & \mathcal{D}(G)+2I_{n} \\  
   \mathcal{D}(G)+2I_{n} & 2Tr_{G}+2I_{n}+\mathcal{D}(G)\\  
  \end{smallmatrix}
  \end{array}
\right],    \nonumber             
\end{equation}
and the theorem follows from Lemma \ref{lem9}.\qed\end{proof}
\begin{cor}\label{thm9}
Let $G$ be a $k$-distance regular graph with distance signless Laplacian spectrum $\{2k=\mu_{1}^{\mathcal{Q}}\geq\mu_{2}^{\mathcal{Q}}\geq\cdots\geq\mu_{n}^{\mathcal{Q}}\}$, then the distance signless Laplacian spectrum of $D_{2}G$ consists of eigenvalues $4k+4$, $2\mu_{i}^{\mathcal{Q}}+4$ for $i=2,3,\cdots,n$ and $2k$ with multiplicity $n$.
\end{cor}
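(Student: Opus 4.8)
The plan is to obtain this corollary as a direct specialization of Theorem \ref{thm8} to the $k$-distance regular case. First I would record the two elementary facts that drive everything: (i) if $G$ is $k$-distance regular then $Tr_G(v_i)=\sum_{j}d_G(v_i,v_j)=k$ for every $i$, so in particular $\mathcal{Q}(G)=Tr(G)+\mathcal{D}(G)=kI_n+\mathcal{D}(G)$; and (ii) since $\mathcal{D}(G)$ is a nonnegative symmetric irreducible matrix with all row sums equal to $k$, the Perron--Frobenius theorem (equivalently Lemma \ref{lem12}) gives $\mu_1^{\mathcal{D}}(G)=k$ with the all-ones vector as eigenvector, whence $\mu_1^{\mathcal{Q}}(G)=k+k=2k$, matching the normalization $2k=\mu_1^{\mathcal{Q}}$ in the statement.

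Next I would invoke Theorem \ref{thm8}: the distance signless Laplacian spectrum of $D_2G$ is the multiset $\{\,2\mu_i^{\mathcal{Q}}(G)+4 : 1\le i\le n\,\}\cup\{\,2Tr_G(v_i) : 1\le i\le n\,\}$, a list of $2n$ numbers accounting for the full spectrum of the $2n\times 2n$ matrix $\mathcal{Q}(D_2G)$. Substituting $Tr_G(v_i)=k$ collapses the second family to the single value $2k$ occurring with multiplicity $n$. In the first family, the index $i=1$ contributes $2\mu_1^{\mathcal{Q}}(G)+4=2(2k)+4=4k+4$, while the indices $i=2,\dots,n$ contribute $2\mu_i^{\mathcal{Q}}(G)+4$. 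Collecting the three groups --- namely $4k+4$ once, $2\mu_i^{\mathcal{Q}}(G)+4$ for $i=2,\dots,n$, and $2k$ with multiplicity $n$ --- yields exactly $1+(n-1)+n=2n$ eigenvalues, which is the asserted spectrum.

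There is essentially no obstacle here: once Theorem \ref{thm8} is in hand, the corollary is pure bookkeeping. The only point that deserves an explicit sentence in the write-up is the evaluation $\mu_1^{\mathcal{Q}}(G)=2k$, i.e.\ that distance regularity forces the Perron eigenvalue of $\mathcal{D}(G)$ to equal the common transmission $k$ (so that the first family of Theorem \ref{thm8} splits off the value $4k+4$); everything else is substitution and counting multiplicities, exactly in the spirit of the preceding corollaries obtained by specializing their parent theorems.
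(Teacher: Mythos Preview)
Your proposal is correct and follows the same approach as the paper, which simply says the result follows from Theorem \ref{thm8} by setting $Tr_G(v_i)=k$. You give more detail than the paper (in particular, explicitly justifying $\mu_1^{\mathcal{Q}}=2k$ and checking the multiplicity count), but the underlying argument is identical.
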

\begin{proof}The proof follows from the above theorem by choosing $Tr_{G}(v_{i})=k$.\qed\end{proof}
\begin{cor}\label{thm10}
Let $G$ be a $k$-distance regular graph with distance signless Laplacian spectrum $\{2k=\mu_{1}^{\mathcal{Q}}\geq\mu_{2}^{\mathcal{Q}}\geq\cdots\geq\mu_{n}^{\mathcal{Q}}\}$, then the distance signless Laplacian energy of $D_{2}G$ is
\begin{equation*}
DSLE(D_{2}G)=2\sum_{i=2}^{\sigma}\mu_{i}^{\mathcal{Q}}-2\sum_{i=\sigma+1}^{n}\mu_{i}^{\mathcal{Q}}+2nk-4k\sigma+4k+4\sigma.
\end{equation*}
\end{cor}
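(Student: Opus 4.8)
The plan is to mirror the computation already used to prove Corollary \ref{thm7}, only now working from the distance signless Laplacian spectrum of $D_{2}G$ supplied by Corollary \ref{thm9} rather than from the Laplacian spectrum of Corollary \ref{thm6}. First I would record the explicit spectrum: by Corollary \ref{thm9}, $\mathcal{Q}(D_{2}G)$ has eigenvalues $4k+4$ once, $2\mu_{i}^{\mathcal{Q}}+4$ for $i=2,3,\dots,n$, and $2k$ with multiplicity $n$. Next I need the average transmission $t(D_{2}G)$. Since $D_{2}G$ is a graph on $2n$ vertices and the trace of $\mathcal{Q}(D_{2}G)$ equals the sum of all vertex transmissions, I would compute $\mathrm{tr}\,\mathcal{Q}(D_{2}G)=(4k+4)+\sum_{i=2}^{n}(2\mu_{i}^{\mathcal{Q}}+4)+n\cdot 2k$; using $\sum_{i=1}^{n}\mu_{i}^{\mathcal{Q}}=\mathrm{tr}\,\mathcal{Q}(G)=\sum_{i}Tr_G(v_i)=nk\cdot 2 \cdot \tfrac12$ — more precisely $\sum_{i=1}^n \mu_i^{\mathcal Q} = 2\sum_i Tr_G(v_i)/\text{(number of vertices)}$ is not quite it; rather $\sum_{i=1}^n \mu_i^{\mathcal Q} = \mathrm{tr}\,\mathcal Q(G) = 2\sum_i Tr_G(v_i)$ does not hold either, so carefully: $\mathrm{tr}\,\mathcal Q(G)=\sum_i Tr_G(v_i)=nk$ since $Tr_G(v_i)=k$. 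Hence $\sum_{i=2}^n\mu_i^{\mathcal Q}=nk-2k=(n-2)k$, and the trace of $\mathcal{Q}(D_2G)$ works out to $4k+4+2(n-2)k+4(n-1)+2nk=4nk+4n$, giving $t(D_{2}G)=\frac{4nk+4n}{2n}=2k+2$.

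With $t(D_{2}G)=2k+2$ in hand, the next step is to split the eigenvalues according to whether they lie above or below $t(D_{2}G)$. The eigenvalue $4k+4$ satisfies $4k+4-(2k+2)=2k+2\geq 0$, so it contributes $2k+2$ in absolute value. The eigenvalue $2k$ (with multiplicity $n$) gives $|2k-(2k+2)|=2$, contributing $2n$ total. For the eigenvalues $2\mu_{i}^{\mathcal{Q}}+4$, $i=2,\dots,n$, note $2\mu_{i}^{\mathcal{Q}}+4-(2k+2)=2\mu_{i}^{\mathcal{Q}}-2k+2=2(\mu_{i}^{\mathcal{Q}}-k+1)$, so the sign is governed by whether $\mu_{i}^{\mathcal{Q}}\geq k-1$. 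I would define $\sigma$ to be the largest index $i\in\{2,\dots,n\}$ with $\mu_{i}^{\mathcal{Q}}\geq k-1$ (consistent with the statement's $\sum_{i=2}^{\sigma}$ and $\sum_{i=\sigma+1}^{n}$ ranges), so that these eigenvalues contribute $\sum_{i=2}^{\sigma}\bigl(2\mu_{i}^{\mathcal{Q}}-2k+2\bigr)-\sum_{i=\sigma+1}^{n}\bigl(2\mu_{i}^{\mathcal{Q}}-2k+2\bigr)$.

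Finally I would assemble the pieces. Adding the three contributions,
\begin{equation*}
DSLE(D_{2}G)=(2k+2)+2n+\sum_{i=2}^{\sigma}\bigl(2\mu_{i}^{\mathcal{Q}}-2k+2\bigr)-\sum_{i=\sigma+1}^{n}\bigl(2\mu_{i}^{\mathcal{Q}}-2k+2\bigr),
\end{equation*}
and then simplify the constant terms: the first sum runs over $\sigma-1$ indices and the second over $n-\sigma$ indices, so the accumulated $-2k+2$ terms total $(\sigma-1)(-2k+2)-(n-\sigma)(-2k+2)=(2\sigma-1-n)(-2k+2)$. Combining this with $(2k+2)+2n$ and collecting, I expect everything to telescope into $2\sum_{i=2}^{\sigma}\mu_{i}^{\mathcal{Q}}-2\sum_{i=\sigma+1}^{n}\mu_{i}^{\mathcal{Q}}+2nk-4k\sigma+4k+4\sigma$, matching the claimed formula. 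The only real obstacle is bookkeeping: getting the index ranges for $\sigma$ exactly right and making sure the constant-term arithmetic reproduces the stated closed form; there is no conceptual difficulty beyond Corollary \ref{thm9} and the definition of $DSLE$.
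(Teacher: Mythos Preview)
Your proposal is correct and follows exactly the approach the paper intends: the paper's proof of Corollary~\ref{thm10} simply says to use the same method as Corollary~\ref{thm7}, which is precisely what you do---take the spectrum of $D_2G$ from Corollary~\ref{thm9}, compute $t(D_2G)=2k+2$, split the eigenvalues $2\mu_i^{\mathcal{Q}}+4$ at the threshold $\mu_i^{\mathcal{Q}}\ge k-1$ via the index $\sigma$, and collect terms. Your arithmetic (despite the mid-paragraph self-correction on $\mathrm{tr}\,\mathcal{Q}(G)=nk$) is correct and the final simplification checks out.
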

\begin{proof}We can prove the theorem by using the similar method as for Corollary \ref{thm7}.\qed\end{proof}

\subsection{The $\mathcal{L}-$spectrum and $\mathcal{Q}-$spectrum of $G_{1}\nabla G_{2}$}

\begin{thm}\label{thm11}
For $i=1,2$, let $G_{i}$ be an $r_{i}$-regular graph with $n_{i}$ vertices and eigenvalues of the adjacency matrix $A(G_{i}), \{r_{1}=\lambda_{1}, \lambda_{2},\cdots,\lambda_{n_{1}}\}$ and $\{r_{2}=\mu_{1}, \mu_{2},\cdots,\mu_{n_{2}}\}$, respectively. The distance Laplacian spectrum of $G_{1}\nabla G_{2}$ consists of eigenvalues $0, n_{1}+n_{2}, 2n_{1}+n_{2}-r_{1}+\lambda_{j}$ for $j=2,3,\cdots,n_{1}$ and $2n_{2}+n_{1}-r_{2}+\mu_{j}$ for $j=2,3,\cdots,n_{2}$.
\end{thm}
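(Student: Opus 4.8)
The plan is to write down the distance matrix of $G_1 \nabla G_2$ explicitly and exploit the fact that in a join the diameter is at most $2$. Since $G_1$ and $G_2$ are joined completely, for vertices $u,v$ in the same part we have $d(u,v) = 1$ if they are adjacent in $G_i$ and $d(u,v) = 2$ otherwise, while $d(u,v) = 1$ for $u \in V(G_1)$, $v \in V(G_2)$. Hence $\mathcal{D}(G_1 \nabla G_2)$ has the block form
\begin{equation*}
\mathcal{D}(G_1 \nabla G_2) = \begin{bmatrix} 2(J_{n_1}-I_{n_1}) - A(G_1) & J_{n_1 \times n_2} \\ J_{n_2 \times n_1} & 2(J_{n_2}-I_{n_2}) - A(G_2) \end{bmatrix},
\end{equation*}
and correspondingly the transmission of a vertex $v_i \in V(G_1)$ is $Tr(v_i) = 2(n_1 - 1 - d_i) + d_i + n_2 = 2n_1 + n_2 - 2 - d_i$, which since $G_1$ is $r_1$-regular equals $2n_1 + n_2 - 2 - r_1$; similarly every vertex of $G_2$ has transmission $2n_2 + n_1 - 2 - r_2$. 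So $Tr(G_1 \nabla G_2)$ is a block-scalar matrix, and $\mathcal{L}(G_1 \nabla G_2) = Tr - \mathcal{D}$ can be assembled from these pieces into a $2\times 2$ block matrix whose diagonal blocks are $(2n_1+n_2-2-r_1)I_{n_1} - 2(J_{n_1}-I_{n_1}) + A(G_1)$ and its $G_2$ analogue, and whose off-diagonal blocks are $-J$.

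Next I would diagonalize using the common eigenvectors. Because $G_i$ is regular, $A(G_i)$, $J_{n_i}$ and $I_{n_i}$ are simultaneously diagonalizable: the all-ones vector $\mathbf{1}_{n_i}$ is an eigenvector of $A(G_i)$ with eigenvalue $r_i$, and every other eigenvector $x$ of $A(G_i)$ (eigenvalue $\lambda_j$ or $\mu_j$) satisfies $J x = 0$. For an eigenvector of the form $(x, 0)$ with $x \perp \mathbf{1}_{n_1}$ and $A(G_1)x = \lambda_j x$, the off-diagonal blocks annihilate it and the top-left block gives eigenvalue $(2n_1 + n_2 - 2 - r_1) + 2 + \lambda_j = 2n_1 + n_2 - r_1 + \lambda_j$; this accounts for $j = 2, \dots, n_1$. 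Symmetrically, vectors $(0, y)$ with $y \perp \mathbf{1}_{n_2}$ give $2n_2 + n_1 - r_2 + \mu_j$ for $j = 2, \dots, n_2$. That leaves the $2$-dimensional space spanned by $(\mathbf{1}_{n_1}, 0)$ and $(0, \mathbf{1}_{n_2})$, on which $\mathcal{L}$ acts as an explicit $2 \times 2$ matrix; one eigenvector is the global all-ones vector $(\mathbf{1}_{n_1}, \mathbf{1}_{n_2})$ with eigenvalue $0$ (as it must be, since $\mathcal{L}$ is a distance Laplacian), and computing the trace of the $2\times 2$ block — or just the remaining eigenvalue directly — yields $n_1 + n_2$. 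Collecting the four families gives exactly the claimed spectrum, and the count $1 + 1 + (n_1 - 1) + (n_2 - 1) = n_1 + n_2$ confirms completeness.

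I do not expect a serious obstacle here: the join structure forces diameter $\le 2$, which makes $\mathcal{D}$ a polynomial-free affine combination of $A$, $J$, $I$, and regularity makes everything simultaneously diagonalizable. The only mildly delicate point is bookkeeping the constant shifts correctly — in particular verifying that the $r_1$, $r_2$ terms in the transmissions cancel against the $+2\lambda_j$ and the $+2$ from $2(J-I)$ so that the $A(G_i)$-eigenvalues enter with coefficient $+1$ rather than $-1$ — and checking that the $2\times 2$ block on the all-ones subspace really does produce $0$ and $n_1 + n_2$ rather than some $r_1, r_2$-dependent values; a direct substitution of $(\mathbf{1},\mathbf{1})$ and orthogonality of the second $2\times2$ eigenvector settles this cleanly.
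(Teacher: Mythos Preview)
Your proposal is correct and follows essentially the same route as the paper: write $\mathcal{L}(G_1\nabla G_2)$ as a $2\times 2$ block matrix with diagonal blocks $(2n_i+n_{3-i}-r_i)I-2J+A(G_i)$ and off-diagonal blocks $-J$, read off the eigenvalues $2n_i+n_{3-i}-r_i+\lambda$ from eigenvectors $(x,0)$ and $(0,y)$ orthogonal to the all-ones vector, and then reduce on the span of $(\mathbf{1}_{n_1},0),(0,\mathbf{1}_{n_2})$ to a $2\times2$ problem yielding $0$ and $n_1+n_2$. The only cosmetic difference is that the paper solves the resulting quadratic $v^2-(n_1+n_2)v=0$ explicitly, whereas you invoke the known eigenvalue $0$ and a trace argument; both are equivalent.
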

\begin{proof} The distance Laplacian matrix of the join $G_{1}\nabla G_{2}$ has the form
\begin{equation} 
\mathcal{L}(G_{1}\nabla G_{2})=
\left[                 
  \begin{array}{cc}     
  \begin{smallmatrix}
    (2n_{1}+n_{2}-r_{1})I_{n_{1}}-2J_{n_{1}}+A(G_{1}) & -J_{n_{1}\times n_{2}}\\  
    -J_{n_{2}\times n_{1}}& (2n_{2}+n_{1}-r_{2})I_{n_{2}}-2J_{n_{2}}+A(G_{2})\\  
  \end{smallmatrix}
  \end{array}
\right].    \nonumber             
\end{equation}

\textbf{Claim 1.} As a regular graph, $G_{1}$ has the all-ones vector $J$ as an eigenvector corresponding to eigenvalue $r_{1}$, while all other eigenvectors are orthogonal to $J$. Let $\lambda_{j}$ be an arbitrary eigenvalue of the adjacency matrix of $G_{1}$ with corresponding eigenvector $x_{j}$ for $j=2,3,\cdots,n_{1}$, such that $J^{T}x_{j}=0$.

$\begin{matrix}
\Psi=
\left[                 
  \begin{array}{cc}   
    \begin{smallmatrix}
    x_{n_{1}\times1}  \\  
    0_{n_{2}\times1} \\  
    \end{smallmatrix}
  \end{array}
\right]    \nonumber             
\end{matrix}$
is an eigenvector of $\mathcal{L}(G_{1}\nabla G_{2})$ corresponding to the eigenvalue $2n_{1}+n_{2}-r_{1}+\lambda_{j}$ for $j=2,3,\cdots,n_{1}$.
\begin{equation*}
\begin{aligned} 
\mathcal{L}(G_{1}\nabla G_{2})\cdot\Psi&=
\left[                 
  \begin{array}{ccc}     
  \begin{smallmatrix}
    (2n_{1}+n_{2}-r_{1})I_{n_{1}}-2J_{n_{1}}+A(G_{1}) & -J_{n_{1}\times n_{2}}\\  
    -J_{n_{2}\times n_{1}}& (2n_{2}+n_{1}-r_{2})I_{n_{2}}-2J_{n_{2}}+A(G_{2})\\  
  \end{smallmatrix}
  \end{array}
\right]    \nonumber             
\begin{matrix}
\left[                 
  \begin{array}{ccc}   
    \begin{smallmatrix}
    x_{n_{1}\times1}  \\  
    0_{n_{2}\times1} \\  
    \end{smallmatrix}
  \end{array}
\right]    \nonumber             
\end{matrix},\\  &
\begin{matrix}
=
\left[                 
  \begin{array}{ccc}   
    \begin{smallmatrix}
     (2n_{1}+n_{2}-r_{1}+\lambda_{j})x_{n_{1}\times1}\\  
    0_{n_{2}\times1}\\  
    \end{smallmatrix}
  \end{array}
\right]    \nonumber             
\end{matrix},\\&
=(2n_{1}+n_{2}-r_{1}+\lambda_{j})\cdot\Psi.
\end{aligned}
\end{equation*}
Thus $2n_{1}+n_{2}-r_{1}+\lambda_{j}$ is an eigenvalue of $\mathcal{L}(G_{1}\nabla G_{2})$ for $j=2,3,\cdots,n_{1}$

\textbf{Claim 2.} As a regular graph, $G_{2}$ has the all-one vector $J$ as an eigenvector corresponding to eigenvalue $r_{2}$, while all other eigenvectors are orthogonal to $J$. Let $\mu_{j}$ be an arbitrary eigenvalue of the adjacency matrix of $G_{2}$ with corresponding eigenvector $y_{j}$ for $j=2,3,\cdots,n_{2}$, such that $J^{T}y_{j}=0$.

$\begin{matrix}
\Psi=
\left[                 
  \begin{array}{cc}   
    \begin{smallmatrix}
    0_{n_{1}\times1}  \\  
    y_{n_{2}\times1} \\  
    \end{smallmatrix}
  \end{array}
\right]    \nonumber             
\end{matrix}$
is an eigenvector of $\mathcal{L}(G_{1}\nabla G_{2})$ corresponding to the eigenvalue $2n_{2}+n_{1}-r_{2}+\mu_{j}$ for $j=2,3,\cdots,n_{2}$. For
\begin{equation*}
\begin{aligned} 
\mathcal{L}(G_{1}\nabla G_{2})\cdot\Psi&=
\left[                 
  \begin{array}{ccc}     
  \begin{smallmatrix}
    (2n_{1}+n_{2}-r_{1})I_{n_{1}}-2J_{n_{1}}+A(G_{1}) & -J_{n_{1}\times n_{2}}\\  
    -J_{n_{2}\times n_{1}}& (2n_{2}+n_{1}-r_{2})I_{n_{2}}-2J_{n_{2}}+A(G_{2})\\  
  \end{smallmatrix}
  \end{array}
\right]    \nonumber             
\begin{matrix}
\left[                 
  \begin{array}{ccc}   
    \begin{smallmatrix}
    0_{n_{1}\times1}  \\  
    y_{n_{2}\times1} \\  
    \end{smallmatrix}
  \end{array}
\right]    \nonumber             
\end{matrix},\\  &
\begin{matrix}
=
\left[                 
  \begin{array}{ccc}   
    \begin{smallmatrix}
    0_{n_{1}\times1}\\
     (2n_{2}+n_{1}-r_{2}+\mu_{j})y_{n_{2}\times1}\\  
    \end{smallmatrix}
  \end{array}
\right]    \nonumber             
\end{matrix},\\&
=(2n_{2}+n_{1}-r_{2}+\mu_{j})\cdot\Psi.
\end{aligned}
\end{equation*}
Thus $2n_{2}+n_{1}-r_{2}+\mu_{j}$ is an eigenvalue of $\mathcal{L}(G_{1}\nabla G_{2})$ for $j=2,3,\cdots,n_{2}$.

\textbf{Claim 3.} Suppose now that $v$ is an eigenvalue of $\mathcal{L}(G_{1}\nabla G_{2})$ with an eigenvector of the form
$\begin{matrix}
\Psi=
\left[                 
  \begin{array}{cc}   
    \begin{smallmatrix}
    \alpha J_{n_{1}\times1}  \\  
    \beta J_{n_{2}\times1} \\  
    \end{smallmatrix}
  \end{array}
\right]    \nonumber             
\end{matrix}$.
Then, from $\mathcal{L}(G_{1}\nabla G_{2})\Psi=v\Psi$, using $A(G_{1})J_{n_{1}\times1}=r_{1}J_{n_{1}\times1}$ and $A(G_{2})J_{n_{2}\times1}=r_{2}J_{n_{2}\times1}$, we get the following
\begin{equation*}
n_{2}\alpha-n_{2}\beta=v\alpha,
\end{equation*}
\begin{equation*}
n_{1}\beta-n_{1}\alpha=v\beta,
\end{equation*}
Eliminating $\alpha$ and $\beta$, we get the quadratic equation in $v$
\begin{equation*}
v^{2}-(n_{1}+n_{2})v=0.
\end{equation*}
Thus $0$ and $n_{1}+n_{2}$ is the eigenvalue of $\mathcal{L}(G_{1}\nabla G_{2})$.\qed\end{proof}

Note that the complete bipartite graph $K_{n_{1},n_{2}}$ is isomorphic to a join $\overline{K}_{n_{1}}\nabla\overline{K}_{n_{2}}$ of the empty graphs $\overline{K}_{n_{1}}$ and  $\overline{K}_{n_{2}}$. Hence
\begin{cor}\label{thm12}
The distance Laplacian spectrum of the complete bipartite graph $K_{n_{1},n_{2}}$ consists of simple eigenvalues $0$, $n_{1}+n_{2}$, $2n_{1}+n_{2}$ with multiplicity $n_{1}-1$ and $n_{1}+2n_{2}$ with multiplicity $n_{2}-1$.
\end{cor}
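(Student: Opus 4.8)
The plan is to recognize $K_{n_1,n_2}$ as a join of two empty graphs and then simply specialize Theorem~\ref{thm11}. As noted in the paragraph preceding the corollary, $K_{n_1,n_2}\cong\overline{K}_{n_1}\nabla\overline{K}_{n_2}$, where $\overline{K}_{n_i}$ denotes the empty graph on $n_i$ vertices. First I would record the relevant data: $\overline{K}_{n_i}$ is $0$-regular, so in the notation of Theorem~\ref{thm11} we have $r_1=r_2=0$; and since $A(\overline{K}_{n_i})$ is the zero matrix, its adjacency spectrum is $0$ with multiplicity $n_i$, so that $\lambda_j=0$ for $j=2,3,\dots,n_1$ and $\mu_j=0$ for $j=2,3,\dots,n_2$.

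Next I would substitute these values into the conclusion of Theorem~\ref{thm11}. The family of eigenvalues $2n_1+n_2-r_1+\lambda_j$ collapses to the single value $2n_1+n_2$, occurring once for each $j=2,3,\dots,n_1$, i.e.\ with multiplicity $n_1-1$; likewise $2n_2+n_1-r_2+\mu_j$ collapses to $n_1+2n_2$ with multiplicity $n_2-1$; and the two simple eigenvalues $0$ and $n_1+n_2$ are retained verbatim. A multiplicity count then confirms completeness: $1+1+(n_1-1)+(n_2-1)=n_1+n_2$, which equals the order of $K_{n_1,n_2}$, so the list above is the entire distance Laplacian spectrum, and the simplicity of $0$ and $n_1+n_2$ follows directly from the statement of Theorem~\ref{thm11}.

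I do not expect any genuine obstacle, since the result is a pure specialization. The only two points worth a brief comment are that $\overline{K}_{n_i}$ is itself disconnected when $n_i\geq 2$, which is harmless because the join $\overline{K}_{n_1}\nabla\overline{K}_{n_2}$ is connected and Theorem~\ref{thm11} only requires the two factors to be regular; and that the degenerate cases $n_1=1$ or $n_2=1$ (where $K_{n_1,n_2}$ is a star) are handled automatically, since then the corresponding multiplicity $n_i-1$ is zero and that eigenvalue simply does not appear.
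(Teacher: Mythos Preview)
Your proposal is correct and follows exactly the approach the paper uses: the paper simply remarks that $K_{n_1,n_2}\cong\overline{K}_{n_1}\nabla\overline{K}_{n_2}$ and writes ``Hence'' before stating the corollary, so your specialization of Theorem~\ref{thm11} with $r_1=r_2=0$ and all $\lambda_j=\mu_j=0$ is precisely what is intended. Your additional remarks on the multiplicity count and the degenerate cases $n_i=1$ are sound elaborations but not required.
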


\begin{thm}\label{thm13}
For $i=1,2$, let $G_{i}$ be an $r_{i}$-regular graph with $n_{i}$ vertices and eigenvalues of the adjacency matrix $A(G_{i}), \{r_{1}=\lambda_{1}, \lambda_{2},\cdots,\lambda_{n_{1}}\}$ and $\{r_{2}=\mu_{1}, \mu_{2},\cdots,\mu_{n_{2}}\}$, respectively. The distance signless Laplacian spectrum of $G_{1}\nabla G_{2}$ consists of eigenvalues $2n_{1}+n_{2}-r_{1}-\lambda_{j}-4$ for $j=2,3,\cdots,n_{1}$, $2n_{2}+n_{1}-r_{2}-\mu_{j}-4$ for $j=2,3,\cdots,n_{2}$ and two eigenvalues of the form
\begin{equation*}
\frac{5}{2}(n_{1}+n_{2})-r_{1}-r_{2}-4\pm\frac{\sqrt{\bigtriangleup}}{2}
\end{equation*}where $\bigtriangleup=9n_{1}^{2}-14n_{1}n_{2}-12n_{1}r_{1}+12n_{1}r_{2}+9n_{2}^{2}+12n_{2}r_{1}-12n_{2}r_{2}+4r_{1}^{2}-8r_{1}r_{2}+4r_{2}^{2}$.
\end{thm}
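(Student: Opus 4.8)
The plan is to mirror the structure of the proof of Theorem~\ref{thm11}, working directly with the distance signless Laplacian matrix of $G_1\nabla G_2$. First I would record the block form of $\mathcal{Q}(G_1\nabla G_2)$: since in a join every pair of non-adjacent vertices within $G_i$ is at distance $2$ and every cross pair is at distance $1$, the transmission of a vertex $v$ in $G_1$ is $2(n_1-1)-\deg_{G_1}(v)+n_2=2n_1+n_2-r_1-2$ (and symmetrically for $G_2$), and the distance matrix is $\mathcal{D}(G_1\nabla G_2)=\bmat{2(J_{n_1}-I_{n_1})-A(G_1) & J_{n_1\times n_2}\\ J_{n_2\times n_1} & 2(J_{n_2}-I_{n_2})-A(G_2)}$. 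Adding the transmission diagonal gives
\begin{equation*}
\mathcal{Q}(G_1\nabla G_2)=\bmat{(2n_1+n_2-r_1-4)I_{n_1}+2J_{n_1}-A(G_1) & J_{n_1\times n_2}\\ J_{n_2\times n_1} & (2n_2+n_1-r_2-4)I_{n_2}+2J_{n_2}-A(G_2)}.
\end{equation*}

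Next I would exhibit the eigenvectors orthogonal to the all-ones vectors, exactly as in Claims 1 and 2 of Theorem~\ref{thm11}. For an eigenvector $x_j$ of $A(G_1)$ with $\lambda_j$, $j\ge 2$, we have $J^Tx_j=0$, so $J_{n_1}x_j=0$ and $J_{n_2\times n_1}x_j=0$; hence $\Psi=(x_j^T,0^T)^T$ satisfies $\mathcal{Q}(G_1\nabla G_2)\Psi=(2n_1+n_2-r_1-4)\Psi-A(G_1)x_j\Psi/\|\cdot\|=(2n_1+n_2-r_1-\lambda_j-4)\Psi$. This yields the eigenvalues $2n_1+n_2-r_1-\lambda_j-4$ for $j=2,\dots,n_1$, and symmetrically $2n_2+n_1-r_2-\mu_j-4$ for $j=2,\dots,n_2$, accounting for $(n_1-1)+(n_2-1)$ eigenvalues.

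For the remaining two eigenvalues I would restrict to eigenvectors of the form $\Psi=(\alpha J_{n_1\times 1}^T,\beta J_{n_2\times 1}^T)^T$, using $A(G_i)J=r_iJ$ and $J_{n_i}J_{n_i\times 1}=n_iJ_{n_i\times 1}$. The eigenequation $\mathcal{Q}(G_1\nabla G_2)\Psi=v\Psi$ collapses to the $2\times 2$ system
\begin{equation*}
\begin{aligned}
(2n_1+n_2-r_1-4+2n_1-r_1)\alpha+n_2\beta&=v\alpha,\\
n_1\alpha+(2n_2+n_1-r_2-4+2n_2-r_2)\beta&=v\beta,
\end{aligned}
\end{equation*}
i.e. a quadratic $v^2-(\tr)v+(\det)=0$ whose trace is $4n_1+4n_2-2r_1-2r_2-8$; solving gives the two roots $\tfrac{5}{2}(n_1+n_2)-r_1-r_2-4\pm\tfrac12\sqrt{\Delta}$ after simplifying the discriminant to the stated $\Delta$. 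The only real obstacle is the bookkeeping in this last step — making sure the diagonal entries $4n_i-2r_i+\cdots$ are assembled correctly and that the discriminant expansion matches $9n_1^2-14n_1n_2-12n_1r_1+12n_1r_2+9n_2^2+12n_2r_1-12n_2r_2+4r_1^2-8r_1r_2+4r_2^2$; this is a routine (if error-prone) computation. Counting $(n_1-1)+(n_2-1)+2=n_1+n_2$ eigenvalues confirms completeness, and the theorem follows.
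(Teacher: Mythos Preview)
Your proposal is correct and follows exactly the paper's approach: the paper records the same block form for $\mathcal{Q}(G_1\nabla G_2)$ and then simply says to use the method of Theorem~\ref{thm11}. One arithmetic slip to fix: from your own $2\times 2$ system the trace is $(4n_1+n_2-2r_1-4)+(4n_2+n_1-2r_2-4)=5(n_1+n_2)-2r_1-2r_2-8$, not $4n_1+4n_2-2r_1-2r_2-8$; with this correction the half-trace is the stated $\tfrac{5}{2}(n_1+n_2)-r_1-r_2-4$, and the discriminant $(3n_1-3n_2-2r_1+2r_2)^2+4n_1n_2$ expands to the claimed $\Delta$.
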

\begin{proof} The distance signless Laplacian matrix of the join $G_{1}\nabla G_{2}$ has the form
\begin{equation} 
\mathcal{Q}(G_{1}\nabla G_{2})=
\left[                 
  \begin{array}{cc}     
  \begin{smallmatrix}
    (2n_{1}+n_{2}-r_{1}-4)I_{n_{1}}+2J_{n_{1}}-A(G_{1}) & J_{n_{1}\times n_{2}}\\  
    J_{n_{2}\times n_{1}}& (2n_{2}+n_{1}-r_{2}-4)I_{n_{2}}+2J_{n_{2}}-A(G_{2})\\  
  \end{smallmatrix}
  \end{array}
\right],    \nonumber             
\end{equation}
we can prove the theorem by using the similar method as for Theorem \ref{thm11}.\qed\end{proof}

Note that the complete bipartite graph $K_{n_{1},n_{2}}$ is isomorphic to a join $\overline{K}_{n_{1}}\nabla\overline{K}_{n_{2}}$ of the empty graphs $\overline{K}_{n_{1}}$ and  $\overline{K}_{n_{2}}$. Hence
\begin{cor}\label{thm14}
The distance signless Laplacian spectrum of the complete bipartite graph $K_{n_{1},n_{2}}$ consists of simple eigenvalues $\frac{5(n_{1}+n_{2})-8\pm\sqrt{9(n_{1}-n_{2})^{2}+4n_{1}n_{2}}}{2}$, $2n_{1}+n_{2}-4$ with multiplicity $n_{1}-1$ and $n_{1}+2n_{2}-4$ with multiplicity $n_{2}-1$.
\end{cor}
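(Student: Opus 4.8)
The plan is to realize $K_{n_1,n_2}$ as a join and then read the spectrum directly off Theorem \ref{thm13}, so that the corollary becomes a pure specialization rather than a fresh computation. As noted immediately above the statement, $K_{n_1,n_2}\cong\overline{K}_{n_1}\nabla\overline{K}_{n_2}$, where each empty graph $\overline{K}_{n_i}$ is $0$-regular; hence I would take $r_1=r_2=0$. Moreover the adjacency matrix of $\overline{K}_{n_i}$ is the zero matrix, so its entire adjacency spectrum is $\{0,0,\ldots,0\}$, giving $\lambda_j=0$ for $j=1,\ldots,n_1$ and $\mu_j=0$ for $j=1,\ldots,n_2$. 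Since both factors are regular, Theorem \ref{thm13} applies verbatim under these substitutions.

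Next I would plug $r_1=r_2=0$ and $\lambda_j=\mu_j=0$ into the three families of eigenvalues supplied by Theorem \ref{thm13}. The values $2n_1+n_2-r_1-\lambda_j-4$ collapse to the single number $2n_1+n_2-4$, occurring for $j=2,\ldots,n_1$, i.e. with multiplicity $n_1-1$; symmetrically $2n_2+n_1-r_2-\mu_j-4$ collapses to $n_1+2n_2-4$ with multiplicity $n_2-1$. The two remaining eigenvalues $\tfrac{5}{2}(n_1+n_2)-r_1-r_2-4\pm\tfrac{1}{2}\sqrt{\bigtriangleup}$ reduce to $\tfrac{5(n_1+n_2)-8\pm\sqrt{\bigtriangleup}}{2}$.

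The only genuine computation is the discriminant. Setting $r_1=r_2=0$ in $\bigtriangleup=9n_1^2-14n_1n_2-12n_1r_1+12n_1r_2+9n_2^2+12n_2r_1-12n_2r_2+4r_1^2-8r_1r_2+4r_2^2$ annihilates every term containing an $r_i$ and leaves $9n_1^2-14n_1n_2+9n_2^2$. I would then verify the identity $9n_1^2-14n_1n_2+9n_2^2=9(n_1-n_2)^2+4n_1n_2$ by expanding $9(n_1-n_2)^2=9n_1^2-18n_1n_2+9n_2^2$ and adding $4n_1n_2$, which reproduces the left-hand side exactly. This converts the two special eigenvalues into the advertised form $\tfrac{5(n_1+n_2)-8\pm\sqrt{9(n_1-n_2)^2+4n_1n_2}}{2}$.

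Finally I would account for all $n_1+n_2$ eigenvalues and confirm simplicity. The two special eigenvalues arise from the $2\times2$ quotient block in the proof of Theorem \ref{thm13} and are distinct from each other since $9(n_1-n_2)^2+4n_1n_2>0$ for $n_1,n_2\ge1$; together with the $n_1-1$ copies of $2n_1+n_2-4$ and the $n_2-1$ copies of $n_1+2n_2-4$ this yields $(n_1-1)+(n_2-1)+2=n_1+n_2$ eigenvalues, matching $|V(K_{n_1,n_2})|$. There is no real obstacle here: the corollary is an immediate specialization of the parent theorem, and the sole place one could slip is in simplifying the discriminant, so that algebraic identity is the one step I would write out with care.
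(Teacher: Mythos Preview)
Your proposal is correct and is precisely the argument the paper intends: the corollary is stated immediately after the observation that $K_{n_1,n_2}\cong\overline{K}_{n_1}\nabla\overline{K}_{n_2}$, with no separate proof given, so the implied derivation is exactly the specialization of Theorem~\ref{thm13} with $r_1=r_2=0$ and all adjacency eigenvalues zero that you carry out. Your simplification of the discriminant and the eigenvalue count are both accurate.
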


\subsection{The $\mathcal{L}-$spectrum and $\mathcal{Q}-$spectrum of $G_{1}\nabla (G_{2}\cup G_{3})$}
\begin{thm}\label{thm15}
For $i=1,2,3$, let $G_{i}$ be an $r_{i}$-regular graph with $n_{i}$ vertices and eigenvalues of the adjacency matrix $A(G_{i}), \{r_{1}=\lambda_{1}, \lambda_{2},\cdots,\lambda_{n_{1}}\}$, $\{r_{2}=\mu_{1}, \mu_{2},\cdots,\mu_{n_{2}}\}$ and $\{r_{3}=\delta_{1}, \delta_{2},\cdots,\delta_{n_{3}}\}$, respectively. The distance Laplacian spectrum of $G_{1}\nabla (G_{2}\cup G_{3})$ consists of eigenvalues $0$,  $n_{1}+n_{2}+n_{3}$, $n_{1}+2n_{2}+2n_{3}$, $2n_{1}+n_{2}+n_{3}-r_{1}+\lambda_{j}$ for $j=2,3,\cdots,n_{1}$, $n_{1}+2n_{2}+2n_{3}-r_{2}+\mu_{j}$ for $j=2,3,\cdots,n_{2}$ and $n_{1}+2n_{2}+2n_{3}-r_{3}+\delta_{j}$ for $j=2,3,\cdots,n_{3}$.
\end{thm}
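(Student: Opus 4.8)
\textbf{Proof proposal for Theorem~\ref{thm15}.}

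The plan is to mimic the proof of Theorem~\ref{thm11}, working directly with the distance Laplacian matrix of $G_{1}\nabla(G_{2}\cup G_{3})$ in block form and then exploiting the regularity of each $G_i$ to split off eigenvectors orthogonal to the all-ones vectors. First I would compute all pairwise distances in $H:=G_{1}\nabla(G_{2}\cup G_{3})$: every vertex of $G_1$ is at distance $1$ from all other vertices of $H$, so a vertex $v\in V(G_1)$ has transmission $2n_1+n_2+n_3-r_1-1$ where $r_1$ accounts for the neighbours in $G_1$ (distance $1$ instead of the ``default'' nonadjacent distance $2$ within $G_1$, and the ``$-1$'' because $v$ itself contributes $0$); within $G_2$ two vertices are at distance $1$ if adjacent and $2$ otherwise, and any $G_2$--$G_3$ pair is at distance $2$ (through $G_1$), so a vertex of $G_2$ has transmission $n_1+2n_2+2n_3-r_2-2$. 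This gives $\mathcal{D}(H)$ and hence $\mathcal{L}(H)=Tr(H)-\mathcal{D}(H)$ as a $3\times 3$ block matrix whose $G_i$--$G_i$ diagonal block is $(\,\text{scalar}\,)I_{n_i}-2J_{n_i}+A(G_i)$ and whose off-diagonal blocks are $-J$ or $-2J$ of the appropriate sizes.

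The main body of the argument is then three ``Claims'' exactly parallel to Theorem~\ref{thm11}. In Claim~1, for each eigenvalue $\lambda_j$ ($j\ge 2$) of $A(G_1)$ with eigenvector $x_j\perp J_{n_1}$, the vector $(x_j^{T},0,0)^{T}$ is an eigenvector of $\mathcal{L}(H)$: the off-diagonal $J$-blocks kill it because $J_{n_2\times n_1}x_j=0$ and $J_{n_3\times n_1}x_j=0$, and the diagonal block acts as $(2n_1+n_2+n_3-r_1)I-2J+A(G_1)$, giving eigenvalue $2n_1+n_2+n_3-r_1+\lambda_j$. In Claims~2 and~3, for $\mu_j$ ($j\ge2$) of $A(G_2)$ with $y_j\perp J_{n_2}$ the vector $(0,y_j^{T},0)^{T}$ is an eigenvector with eigenvalue $n_1+2n_2+2n_3-r_2+\mu_j$, and symmetrically $(0,0,z_j^{T})^{T}$ for $A(G_3)$ yields $n_1+2n_2+2n_3-r_3+\delta_j$; here the crucial point is that the $G_2$--$G_3$ block is a multiple of $J$, which annihilates $y_j$ and $z_j$. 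This produces $(n_1-1)+(n_2-1)+(n_3-1)$ eigenvalues, leaving three.

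For the remaining three eigenvalues I would restrict $\mathcal{L}(H)$ to the $3$-dimensional space spanned by $(\,J_{n_1},0,0)$, $(0,J_{n_2},0)$, $(0,0,J_{n_3})$, on which it acts (using $A(G_i)J=r_iJ$) as an explicit $3\times 3$ matrix; the all-ones vector on all of $V(H)$ is in the kernel, giving the eigenvalue $0$, and the quotient is a $2\times 2$ matrix whose eigenvalues should come out to $n_1+n_2+n_3$ and $n_1+2n_2+2n_3$. I expect the only real obstacle to be bookkeeping: getting the diagonal scalars and the sign/size of each $J$-block right in the block form of $\mathcal{L}(H)$, and then checking that the reduced $3\times 3$ matrix indeed has characteristic polynomial $v(v-(n_1+n_2+n_3))(v-(n_1+2n_2+2n_3))$ — a routine determinant computation (the entries are symmetric in the roles of $G_2$ and $G_3$, which is a useful sanity check). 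Finally I would remark that the $\mathcal{Q}$-analogue follows by the same method, replacing $-J$ by $+J$, $A(G_i)$ by $-A(G_i)$, and adjusting the diagonal shifts by $-4$ in the $G_2,G_3$ blocks and $0$ in the $G_1$ block.
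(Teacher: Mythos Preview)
Your approach is correct and is exactly the method the paper uses (the paper's own proof simply writes down the $3\times 3$ block form of $\mathcal{L}(G_1\nabla(G_2\cup G_3))$ and says ``similar method as for Theorem~\ref{thm11}''); your three Claims and the reduction to a $3\times3$ matrix on the span of the blockwise all-ones vectors are precisely that similar method. Two small slips to fix in the write-up: the transmission of $v\in V(G_1)$ is $2n_1+n_2+n_3-r_1-2$ (not $-1$), and in your closing remark about the $\mathcal{Q}$-analogue the diagonal shift is $-4$ in \emph{all three} blocks, including the $G_1$ block.
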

\begin{proof} The distance Laplacian matrix of $G_{1}\nabla (G_{2}\cup G_{3})$ has the form
\begin{equation} 
\mathcal{L}(G_{1}\nabla (G_{2}\cup G_{3}))=
\left[                 
  \begin{array}{cc}     
  \begin{smallmatrix}
    (2n_{1}+n_{2}+n_{3}-r_{1})I_{n_{1}}-2J_{n_{1}}+A(G_{1}) & -J_{n_{1}\times n_{2}} & -J_{n_{1}\times n_{3}}\\  
   -J_{n_{2}\times n_{1}}& (n_{1}+2n_{2}+2n_{3}-r_{2})I_{n_{2}}-2J_{n_{2}}+A(G_{2})& -2J_{n_{2}\times n_{3}}\\  
   -J_{n_{3}\times n_{1}}&  -2J_{n_{3}\times n_{2}} & L^{\ast}\\  
  \end{smallmatrix}
  \end{array}
\right],    \nonumber             
\end{equation}
where $L^{\ast}=(n_{1}+2n_{2}+2n_{3}-r_{3})I_{n_{3}}-2J_{n_{3}}+A(G_{3})$,
and we can prove the theorem by using the similar method as for Theorem \ref{thm11}.\qed\end{proof}

\begin{thm}\label{thm16}
For $i=1,2,3$, let $G_{i}$ be an $r_{i}$-regular graph with $n_{i}$ vertices and eigenvalues of the adjacency matrix $A
(G_{i}), \{r_{1}=\lambda_{1}, \lambda_{2},\cdots,\lambda_{n_{1}}\}$, $\{r_{2}=\mu_{1}, \mu_{2},\cdots,\mu_{n_{2}}\}$ and $\{r_{3}=\delta_{1}, \delta_{2},\cdots,\delta_{n_{3}}\}$, respectively. The distance signless Laplacian spectrum of $G_{1}\nabla (G_{2}\cup G_{3})$ consists of eigenvalues $2n_{1}+n_{2}+n_{3}-r_{1}-4-\lambda_{j}-4$ for $j=2,3,\cdots,n_{1}$, $n_{1}+2n_{2}+2n_{3}-r_{2}-\mu_{j}-4$ for $j=2,3,\cdots,n_{2}$, $n_{1}+2n_{2}+2n_{3}-r_{3}-\delta_{j}-4$ for $j=2,3,\cdots,n_{3}$ and the three roots of the equation
\begin{equation*}
x^{3}+Ax^{2}+Bx+C=0
\end{equation*}
where $A=-6n_{1}-7n_{2}-7n_{3}+2r_{1}+2r_{2}+2r_{3}+12$,  $B=-48n_{1} - 56n_{2} - 56n_{3} + 16r_{1} + 16r_{2} + 16r_{3 }+ 31n_{1}n_{2} + 31n_{1}n_{3} + 28n_{2}n_{3} - 4n_{1}r_{1} - 10n_{1}r_{2} - 12n_{2}r_{1} - 10n_{1}r_{3} - 6n_{2}r_{2} - 12n_{3}r_{1} - 10n_{2}r_{3} - 10n_{3}r_{2} - 6n_{3}r_{3} + 4r_{1}r_{2} + 4r_{1}r_{3} + 4r_{2}r_{3} + 9n_{1}^2 + 14n_{2}^2 + 14n_{3}^2 + 48$ and $C=8r_{1}r_{2}r_{3} - 16n_{3}r_{1}r_{2} - 16n_{2}r_{1}r_{3} - 16n_{1}r_{2}r_{3} - 8n_{3}r_{1}r_{3} - 8n_{2}r_{1}r_{2} - 4n_{3}r_{2}r_{3} - 4n_{2}r_{2}r_{3} - 4n_{1}r_{1}r_{3} - 4n_{1}r_{1}r_{2} + 32n_{2}n_{3}r_{1} + 32n_{1}n_{3}r_{2} + 32n_{1}n_{2}r_{3} + 18n_{1}n_{3}r_{3 }+ 18n_{1}n_{2}r_{2} + 12n_{2}n_{3}r_{3} + 12n_{2}n_{3}r_{2} + 12n_{1}n_{3}r_{1} + 12n_{1}n_{2}r_{1} - 72n_{1}n_{2}n_{3} + 16n_{3}^2r_{1} + 16n_{2}^2r_{1} + 8n_{3}^2r_{2} + 8n_{2}^2r_{3} + 8n_{1}^2r_{3} + 8n_{1}^2r_{2} + 4n_{3}^2r_{3} + 4n_{2}^2r_{2} + 2n_{1}^2r_{1} - 36n_{1}n_{3}^2 - 36n_{1}n_{2}^2 - 24n_{2}^2n_{3} - 24n_{2}n_{3}^2 - 24n_{1}^2n_{3 }- 24n_{1}^2n_{2} + 16r_{2}r_{3} + 16r_{1}r_{3} + 16r_{1}r_{2} - 48n_{3}r_{1} - 48n_{2}r_{1} - 40n_{3}r_{2} - 40n_{2}r_{3} - 40n_{1}r_{3} - 40n_{1}r_{2} - 24n_{3}r_{3} - 24n_{2}r_{2} - 16n_{1}r_{1} + 124n_{1}n_{3} + 124n_{1}n_{2} + 112n_{2}n_{3} + 32r_{3} + 32r_{2} + 32r_{1} - 112n_{3} - 112n_{2} - 96n_{1} + 56n_{3}^2 + 56n_{2}^2 + 36n_{1}^2 - 8n_{3}^3 - 8n_{2}^3 - 4n_{1}^3 + 64$.
\end{thm}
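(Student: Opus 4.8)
The plan is to follow the template established in the proof of Theorem \ref{thm11}, exploiting the rotational symmetry of the join together with the regularity of $G_1, G_2, G_3$. First I would write down the distance signless Laplacian matrix $\mathcal{Q}(G_{1}\nabla (G_{2}\cup G_{3}))$ in block form: since $\diam(G_{1}\nabla(G_{2}\cup G_{3}))\le 2$, every distance is $1$ or $2$, so the off-diagonal blocks are $J-A(G_i)$ on the diagonal blocks (distance $1$ for adjacent, $2$ for non-adjacent within the same $G_i$), $J$ between $G_1$ and each $G_i$, and $2J$ between $G_2$ and $G_3$; the transmission of a vertex is then read off as a constant on each $V(G_i)$ because each $G_i$ is regular. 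This yields exactly the stated block matrix with $\mathcal{Q}^{\ast}$-type diagonal entries $(2n_1+n_2+n_3-r_1-4)I_{n_1}+2J_{n_1}-A(G_1)$ and the analogous expressions involving $n_1+2n_2+2n_3$ for the $G_2,G_3$ blocks.

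Next I would split the eigenvectors into two families. For the ``non-Perron'' family, take any eigenvector $x_j$ of $A(G_1)$ with eigenvalue $\lambda_j$, $j\ge 2$, so that $J^{T}x_j=0$; padding it with zeros on the $G_2,G_3$ coordinates kills every $J$-block, and the diagonal block acts as $(2n_1+n_2+n_3-r_1-4)x_j - A(G_1)x_j = (2n_1+n_2+n_3-r_1-4-\lambda_j)x_j$. Wait — matching the statement, this is $2n_1+n_2+n_3-r_1-\lambda_j-4$ (the ``$-4$'' appearing twice in the theorem statement is evidently a typo). Symmetrically one gets $n_1+2n_2+2n_3-r_2-\mu_j-4$ from eigenvectors of $A(G_2)$ orthogonal to $J$ (again the inner $J$-blocks vanish, and the $2J$-block to $G_3$ vanishes too), and $n_1+2n_2+2n_3-r_3-\delta_j-4$ from eigenvectors of $A(G_3)$ orthogonal to $J$. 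This accounts for $(n_1-1)+(n_2-1)+(n_3-1) = n_1+n_2+n_3-3$ eigenvalues.

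For the remaining three eigenvalues, I restrict $\mathcal{Q}$ to the $3$-dimensional space spanned by $(J_{n_1},0,0)$, $(0,J_{n_2},0)$, $(0,0,J_{n_3})$, which is invariant because $A(G_i)J=r_iJ$ and every $J$-block sends a constant vector to a constant vector. On this subspace $\mathcal{Q}$ acts as the $3\times 3$ quotient matrix
\begin{equation*}
M=\begin{pmatrix}
2n_1+n_2+n_3-r_1-4+(2n_1-r_1) & n_2 & n_3\\
n_1 & n_1+2n_2+2n_3-r_2-4+(2n_2-r_2) & 2n_3\\
n_1 & 2n_2 & n_1+2n_2+2n_3-r_3-4+(2n_3-r_3)
\end{pmatrix}
\end{equation*}
(each diagonal entry picking up the row sum $2J_{n_i}-A(G_i)$ contributes $2n_i-r_i$, and the $J$ / $2J$ off-diagonal blocks contribute $n_j$ / $2n_j$). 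Its characteristic polynomial is the claimed cubic $x^3+Ax^2+Bx+C=0$; I would verify $A=-\tr M$, $B=$ sum of principal $2\times 2$ minors, $C=-\det M$ match the stated coefficients. The main obstacle is purely computational: expanding that $3\times3$ determinant and collecting the many monomials in $n_1,n_2,n_3,r_1,r_2,r_3$ to match $C$ exactly — this is where I would lean on a symbolic computation (as the authors do elsewhere with Matlab) rather than doing it by hand, and where sign/typo slips are most likely.
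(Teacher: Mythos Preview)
Your proposal is correct and follows exactly the paper's own approach: the paper writes down precisely the same $3\times 3$ block form for $\mathcal{Q}(G_{1}\nabla(G_{2}\cup G_{3}))$ and then simply says ``we can prove the theorem by using the similar method as for Theorem \ref{thm11}.'' Your writeup in fact gives more detail than the paper (including the quotient-matrix computation and the check that $-\tr M$ matches $A$), and your observation about the duplicated ``$-4$'' in the statement is a genuine typo in the paper.
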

\begin{proof} The distance signless Laplacian matrix of $G_{1}\nabla (G_{2}\cup G_{3})$ has the form
\begin{equation} 
\mathcal{Q}(G_{1}\nabla (G_{2}\cup G_{3}))=
\left[                 
  \begin{array}{cc}     
  \begin{smallmatrix}
    (2n_{1}+n_{2}+n_{3}-r_{1}-4)I_{n_{1}}+2J_{n_{1}}-A(G_{1}) & J_{n_{1}\times n_{2}} & J_{n_{1}\times n_{3}}\\  
    J_{n_{2}\times n_{1}}& (n_{1}+2n_{2}+2n_{3}-r_{2}-4)I_{n_{2}}+2J_{n_{2}}-A(G_{2})& 2J_{n_{2}\times n_{3}}\\  
    J_{n_{3}\times n_{1}}&  2J_{n_{3}\times n_{2}} & Q^{\ast}\\  
  \end{smallmatrix}
  \end{array}
\right],    \nonumber             
\end{equation}
where $Q^{\ast}=(n_{1}+2n_{2}+2n_{3}-r_{3}-4)I_{n_{3}}+2J_{n_{3}}-A(G_{3})$, and we can prove the theorem by using the similar method as for Theorem \ref{thm11}.\qed\end{proof}

\subsection{The $\mathcal{D}-$spectrum, $\mathcal{L}-$spectrum and $\mathcal{Q}-$spectrum of $G_{1}\oplus G_{2}$}

\begin{thm}\label{thm17}
Let $G_{i}$ be an $r_{i}-$regular graph of order $p_{i}$ with the adjacency matrix $A_{i}$ and adjacency spectrum $\{r_{i},\lambda_{i_{2}},\lambda_{i_{3}},\cdots,\lambda_{i_{p_{i}}}\}$ for $i=1,2$. Then, the distance spectrum of $G_{1}\oplus G_{2}$ is:\\
$(1)$ $-(\lambda_{1_{j}}+3\pm\sqrt{(\lambda_{1_{j}}+1)^{2}+4(\lambda_{1_{j}}+r_{1})})$, $j=2,3,\cdots,p_{1}$;\\
$(2)$ $-(\lambda_{2_{j}}+3\pm\sqrt{(\lambda_{2_{j}}+1)^{2}+4(\lambda_{2_{j}}+r_{2})})$, $j=2,3,\cdots,p_{2}$;\\
$(3)$ $-2$ with multiplicity $\frac{1}{2}p_{1}r_{1}+\frac{1}{2}p_{2}r_{2}-p_{1}-p_{2}$; \\
$(4)$ together with the four eigenvalues of
\begin{equation} 
\left[                 
  \begin{array}{cccc}     
    4p_{1}-2r_{1}-4 & \frac{3}{2}p_{1}r_{1}-2r_{1} & 3p_{2} & p_{2}r_{2} \\  
    3p_{1}-4 & p_{1}r_{1}-2 &  2p_{2}&  \frac{1}{2}p_{2}r_{2}\\  
    3p_{1} & p_{1}r_{1} & 4p_{2}-2r_{2}-4&  \frac{3}{2}p_{2}r_{2}-2r_{2}\\
    2p_{1} & \frac{1}{2}p_{1}r_{1} & 3p_{2}-4&  p_{2}r_{2}-2
  \end{array}
\right].    \nonumber             
\end{equation}
\end{thm}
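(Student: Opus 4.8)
Here is how I would prove Theorem~\ref{thm17}.

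The plan is to follow the eigenvector-construction method of Theorem~\ref{thm1}, in three stages: write $\mathcal D(G_1\oplus G_2)$ in block form, exhibit three families of eigenvectors, and count to confirm that the whole spectrum has been found.

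First I would determine every distance in $G_1\oplus G_2$. Order the vertex classes as $V(G_1),I(G_1),V(G_2),I(G_2)$, of sizes $p_1,q_1,p_2,q_2$ with $q_i=\tfrac12p_ir_i$. From Definition~\ref{lem3} a short case analysis gives: within $V(G_i)$ two vertices are at distance $2$ if adjacent in $G_i$ and $4$ otherwise; a vertex of $V(G_i)$ is at distance $1$ from the subdivision vertex of each incident edge and $3$ from every other vertex of $I(G_i)$; two distinct vertices of $I(G_i)$ are at distance $2$; a vertex of $I(G_1)$ is at distance $1$ from every vertex of $I(G_2)$ and $2$ from every vertex of $V(G_2)$; and a vertex of $V(G_1)$ is at distance $2$ from every vertex of $I(G_2)$ and $3$ from every vertex of $V(G_2)$. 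Writing $A_i=A(G_i)$ and $R_i$ for the $p_i\times q_i$ incidence matrix of $G_i$ and using Lemma~\ref{lem10}, this gives
\begin{equation*}
\mathcal D(G_1\oplus G_2)=
\left[\begin{array}{cccc}
4J-4I-2A_1 & 3J-2R_1 & 3J & 2J\\
3J-2R_1^{T} & 2J-2I & 2J & J\\
3J & 2J & 4J-4I-2A_2 & 3J-2R_2\\
2J & J & 3J-2R_2^{T} & 2J-2I
\end{array}\right],
\end{equation*}
all blocks of the appropriate sizes.

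Next I would produce three types of eigenvectors. (i) For $j=2,\dots,p_1$ take an eigenvector $x_j$ of $A_1$ with $A_1x_j=\lambda_{1_j}x_j$; regularity of $G_1$ forces $\mathbf 1^{T}x_j=0$, hence also $\mathbf 1^{T}R_1^{T}x_j=(R_1\mathbf 1)^{T}x_j=r_1\mathbf 1^{T}x_j=0$. Substituting $(ax_j,\,bR_1^{T}x_j,\,0,\,0)^{T}$ into the eigenequation and using $R_1R_1^{T}=A_1+r_1I$ annihilates every $J$-block and reduces the problem to a $2\times2$ system whose characteristic polynomial is $\theta^{2}+(2\lambda_{1_j}+6)\theta+8-4r_1$, with roots exactly the values in $(1)$; the mirror computation on the $G_2$-side gives $(2)$. (ii) If $R_1Z=0$ then $\mathbf 1^{T}Z=0$ because $R_1^{T}\mathbf 1=2\mathbf 1$, and a direct check shows $(0,Z,0,0)^{T}$ is an eigenvector for $-2$; by Lemma~\ref{lem11} such $Z$ and their $G_2$-analogues span a space of dimension $(q_1-p_1)+(q_2-p_2)=\tfrac12p_1r_1+\tfrac12p_2r_2-p_1-p_2$, which is $(3)$. (iii) The partition into the four classes is equitable for $\mathcal D(G_1\oplus G_2)$ — each block has constant row sums, e.g.\ $4p_1-2r_1-4,\ \tfrac32p_1r_1-2r_1,\ 3p_2,\ p_2r_2$ across the first block-row — so the class-constant vectors span a $4$-dimensional invariant subspace on which $\mathcal D$ acts as the quotient matrix in $(4)$.

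Finally I would check completeness: the type-(i) vectors for distinct $j$ are orthogonal (the $x_j$ are), types (i) and (ii) are orthogonal (their $I(G_1)$-components lie in the image of $R_1^{T}$ and in $\ker R_1$), and all of (i)--(iii) are orthogonal to the class-constant vectors (they have zero sum on each class), so one obtains $2(p_1-1)+2(p_2-1)+(q_1-p_1)+(q_2-p_2)+4=p_1+q_1+p_2+q_2$ independent eigenvectors, i.e.\ the whole spectrum. The step I expect to require care is Stage~1 — the distance table has to be verified for every pair of classes, since one wrong entry corrupts the block matrix — together with the dimension count in $(3)$: one has $\rank R_i=p_i$ exactly when $G_i$ is non-bipartite, the regime matching the stated multiplicity, whereas in the bipartite case one $-2$ is lost and the eigenvalue $-r_i$ of $A_i$ must be folded into type (i) by a small separate argument. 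The $2\times2$ and $4\times4$ determinant expansions behind $(1)$--$(4)$ are routine and, as in Theorem~\ref{thm1}, can be confirmed with computer algebra.
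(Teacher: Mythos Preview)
Your proposal is correct and follows essentially the same approach as the paper: the same $4\times4$ block form for $\mathcal D(G_1\oplus G_2)$, the same three families of eigenvectors $(tX_{i_j},R_i^{T}X_{i_j},0,0)^{T}$, $(0,Z_i,0,0)^{T}$ with $R_iZ_i=0$, and the class-constant vectors giving the quotient matrix in~(4). Your treatment is in fact a bit more careful than the paper's --- you spell out the orthogonality needed for the dimension count, and you flag the bipartite issue (where $\operatorname{rank}R_i=p_i-1$, $R_i^{T}x_{p_i}=0$ for the eigenvector of $-r_i$, so one type-(i) vector degenerates and is recovered as an extra type-(ii) vector for the same eigenvalue $-2$); the paper simply invokes Lemma~\ref{lem11} without comment on this case.
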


\begin{proof} Let $R_{i}$ be the incidence matrix of $G_{i}$ for $i=1,2$. Then, by a proper ordering of the vertices of $G_{1}\oplus G_{2}$,
its distance matrix $\mathcal{D}(G_{1}\oplus G_{2})$ can be written in the form
\begin{equation} 
\mathcal{D}(G_{1}\oplus G_{2})=
\left[                 
  \begin{array}{cccc}     
    4(J-I)-2A_{1} & 3J-2R_{1} & 3J & 2J\\  
    3J-2R_{1}^{T} & 2(J-I) &  2J &  J\\  
    3J & 2J & 4(J-I)-2A_{2} &  3J-2R_{2}\\
    2J & J & 3J-2R_{2}^{T} & 2(J-I)
  \end{array}
\right],    \nonumber             
\end{equation}
where $J$ is the all-one matrix, and $I$ is the identity matrix of appropriate orders.

Let $\lambda_{i_{j}}\neq r_{i}$ be an eigenvalue of $A_{i}$ with an eigenvector $X_{i_{j}}$ for $i=1,2$ and $j=2,3,\cdots,p_{i}$. Then, $X_{i_{j}}$ is orthogonal to the all-one matrix $J$, and $A_{i}X_{i_{j}}=\lambda_{i_{j}} X_{i_{j}}$. Now, by Lemma \ref{lem10}, we have\\
\begin{equation*}
\begin{aligned} 
R_{i}R_{i}^{T}=&A_{i}+r_{i}I,\\
R_{i}R_{i}^{T}X_{i_{j}}=&(A_{i}+r_{i}I)X_{i_{j}}\\
      =&(\lambda_{i_{j}}+r_{i})X_{i_{j}},\\
      A(L(G_{i}))=&R_{i}^{T}R_{i}-2I,\\
      A(L(G_{i}))R_{i}^{T}X_{i_{j}}=&(R_{i}^{T}R_{i}-2I)R_{i}^{T}X_{i_{j}}\\
      =&R_{i}^{T}(A_{i}+r_{i}I)X_{i_{j}}-2R_{i}^{T}X_{i_{j}}\\
      =&(\lambda_{i_{j}}+r_{i}-2)R_{i}^{T}X_{i_{j}}.
\end{aligned}
\end{equation*}
Therefore, $R_{i}^{T}X_{i_{j}}$ is an eigenvector of $A(L(G_{i}))$ with an eigenvalue $\lambda_{i_{j}}+r_{i}-2$, which is different from its regularity as $\lambda_{i_{j}}\neq r_{i}$. Then, $R_{i}^{T}X_{i_{j}}$ is orthogonal to the all-one vector.

Now, consider the vector
$\begin{matrix}
\phi_{1}=
\left[                 
  \begin{array}{cccc}   
    tX_{1_{j}}  \\  
    R_{1}^{T}X_{1_{j}} \\  
     0\\
     0
  \end{array}
\right]    \nonumber             
\end{matrix}$,
which
is an eigenvector of $\mathcal{D}(G_{1}\oplus G_{2})$ with eigenvalues $\mu_{1}$, then from the equation $\mathcal{D}(G_{1}\oplus G_{2})\phi_{1}=\mu_{1}\phi_{1}$, we get
\begin{equation}\label{LGQQEqu1}
\begin{aligned} 
-(2\lambda_{1_{j}}+4)t-2(\lambda_{1_{j}}+r_{1})=\mu_{1}t,
\end{aligned}
\end{equation}
\begin{equation}\label{LGQQEqu2}
\begin{aligned} 
-2t-2=\mu_{1}.
\end{aligned}
\end{equation}
Thus, solving $Eqs.(\ref{LGQQEqu1})$ and $Eqs.(\ref{LGQQEqu2})$ yields $\mu_{1}=-(\lambda_{1_{j}}+3\pm\sqrt{(\lambda_{1_{j}}+1)^{2}+4(\lambda_{1_{j}}+r_{1})})$, $j=2,3,\cdots,p_{1}$.

Then, consider the vector
$\begin{matrix}
\phi_{2}=
\left[                 
  \begin{array}{cccc}   
    0  \\  
    0\\  
     tX_{2_{j}}\\
      R_{2}^{T}X_{2_{j}}
  \end{array}
\right]    \nonumber             
\end{matrix}$, which
is an eigenvector of $\mathcal{D}(G_{1}\oplus G_{2})$ with eigenvalues $\mu_{2}$, then from the equation $\mathcal{D}(G_{1}\oplus G_{2})\phi_{2}=\mu_{2}\phi_{2}$, we get
\begin{equation}\label{LGQQEqu3}
\begin{aligned} 
-(2\lambda_{2_{j}}+4)t-2(\lambda_{2_{j}}+r_{2})=\mu_{2}t,
\end{aligned}
\end{equation}
\begin{equation}\label{LGQQEqu4}
\begin{aligned} 
-2t-2=\mu_{2}.
\end{aligned}
\end{equation}
Thus, solving $Eqs.(\ref{LGQQEqu3})$ and $Eqs.(\ref{LGQQEqu4})$ yields $\mu_{2}=-(\lambda_{2_{j}}+3\pm\sqrt{(\lambda_{2_{j}}+1)^{2}+4(\lambda_{2_{j}}+r_{2})})$, $j=2,3,\cdots,p_{2}$.

Let $Z_{i}$ be an eigenvector of $L(G_{i})$ with the eigenvalue $-2$. Then, by Lemma \ref{lem11}, $R_{i}Z_{i}=0$. Now, let
$\begin{matrix}
\psi_{1}=
\left[                 
  \begin{array}{cccc}   
    0  \\  
    Z_{1} \\  
     0\\
     0
  \end{array}
\right]    \nonumber             
\end{matrix}$ and
$\begin{matrix}
\psi_{2}=
\left[                 
  \begin{array}{cccc}   
    0  \\  
    0 \\  
     0\\
     Z_{2}
  \end{array}
\right]    \nonumber             
\end{matrix}$
is an eigenvector of $\mathcal{D}(G_{1}\oplus G_{2})$ corresponding to the eigenvalue $-2$ with multiplicity $\frac{1}{2}p_{i}r_{i}-p_{i}$.
 \begin{equation*}
\begin{aligned} 
\mathcal{D}(G_{1}\oplus G_{2})\psi_{i}&=
\left[                 
  \begin{array}{cccc}     
    4(J-I)-2A_{1} & 3J-2R_{1} & 3J & 2J\\  
    3J-2R_{1}^{T} & 2(J-I) &  2J &  J\\  
    3J & 2J & 4(J-I)-2A_{2} &  3J-2R_{2}\\
    2J & J & 3J-2R_{2}^{T} & 2(J-I)
  \end{array}
\right]    \nonumber             
\psi_{i},\\  &
=-2\psi_{i}.
\end{aligned}
\end{equation*}

Thus, forming eigenvectors of this type we get $p_{1}+p_{2}+\frac{1}{2}p_{1}r_{1}+\frac{1}{2}p_{2}r_{2}-4$ eigenvectors, and there remains 4. By
the construction, the remaining four are of the form
$\begin{matrix}
v=
\left[                 
  \begin{array}{cccc}   
    \alpha J  \\  
    \beta J \\  
     \gamma J\\
     \delta J
  \end{array}
\right]    \nonumber             
\end{matrix}$
for some $(\alpha,\beta,\gamma,\delta)\neq(0,0,0,0)$. If $\sigma$ be an eigenvalue of $\mathcal{D}(G_{1}\oplus G_{2})$ with an eigenvector $v$,
then from $\mathcal{D}(G_{1}\oplus G_{2})v=\sigma v$ we can see that the remaining four are the eigenvalues of the matrix
\begin{equation} 
\left[                 
  \begin{array}{cccc}     
    4p_{1}-2r_{1}-4 & \frac{3}{2}p_{1}r_{1}-2r_{1} & 3p_{2} & p_{2}r_{2} \\  
    3p_{1}-4 & p_{1}r_{1}-2 &  2p_{2}&  \frac{1}{2}p_{2}r_{2}\\  
    3p_{1} & p_{1}r_{1} & 4p_{2}-2r_{2}-4&  \frac{3}{2}p_{2}r_{2}-2r_{2}\\
    2p_{1} & \frac{1}{2}p_{1}r_{1} & 3p_{2}-4&  p_{2}r_{2}-2
  \end{array}
\right].    \nonumber             
\end{equation}
This completes the proof.\qed\end{proof}

\begin{thm}\label{thm18}
Let $G_{i}$ be an $r_{i}-$regular graph of order $p_{i}$ with the adjacency matrix $A_{i}$ and adjacency spectrum $\{r_{i},\lambda_{i_{2}},\lambda_{i_{3}},\cdots,\lambda_{i_{p_{i}}}\}$ for $i=1,2$. Then, the distance Laplacian spectrum of $G_{1}\oplus G_{2}$ is:\\
$(1)$ $\lambda_{1_{j}}+\frac{7p_{1}}{2}+\frac{5p_{2}}{2}+\frac{5p_{1}r_{1}}{4}+\frac{3p_{2}r_{2}}{4}-2r_{1}-2\pm\frac{\sqrt{4\lambda_{1_{j}}^2 + 2\lambda_{1_{j}}p_{1}r_{1} + 4\lambda_{1_{j}}p_{1} + 2\lambda_{1_{j}}p_{2}r_{2} + 4\lambda_{1_{j}}p_{2} - 16\lambda_{1_{j}}r_{1} + 32\lambda_{1_{j}} + \frac{1}{4}p_{1}^2r_{1}^2}}{2}\\  \frac{\overline{ + p_{1}^2r_{1}+ p_{1}^2 + \frac{1}{2}p_{1}p_{2}r_{1}r_{2} + p_{1}p_{2}r_{1} + p_{1}p_{2}r_{2} + 2p_{1}p_{2} - 4p_{1}r_{1}^2 - 4p_{1}r_{1} + 8p_{1} + \frac{1}{4}p_{2}^2r_{2}^2 + p_{2}^2r_{2} + p_{2}^2 - 4p_{2}r_{1}r_{2} - 8p_{2}r_{1} + 4p_{2}r_{2} + 8p_{2} + 16r_{1}^2 }}{2}\\
\frac{\overline{ - 16r_{1} + 16}}{2}$, $j=2,3,\cdots,p_{1}$;\\
$(2)$ $\lambda_{2_{j}}+\frac{5p_{1}}{2}+\frac{7p_{2}}{2}+\frac{3p_{1}r_{1}}{4}+\frac{5p_{2}r_{2}}{4}-2r_{2}-2\pm\frac{\sqrt{4\lambda_{2_{j}}^2 + 2\lambda_{2_{j}}p_{1}r_{1} + 4\lambda_{2_{j}}p_{1} + 2\lambda_{2_{j}}p_{2}r_{2} + 4\lambda_{2_{j}}p_{2} - 16\lambda_{2_{j}}r_{2} + 32\lambda_{2_{j}} + \frac{1}{4}p_{1}^2r_{1}^2 }}{2}\\  \frac{\overline{+ p_{1}^2r_{1} + p_{1}^2 + \frac{1}{2}p_{1}p_{2}r_{1}r_{2} + p_{1}p_{2}r_{1} + p_{1}p_{2}r_{2} + 2p_{1}p_{2} - 4p_{1}r_{1}r_{2} + 4p_{1}r_{1} -8p_{1}r_{2} + 8p_{1}+ \frac{1}{4}p_{2}^2r_{2}^2 + p_{2}^2r_{2} + p_{2}^2 - 4p_{2}r_{2}^{2}-4p_{2}r_{2} + 8p_{2}  + 16r_{2}^2}}{2}\\
\frac{\overline{ - 16r_{2} + 16}}{2}$, $j=2,3,\cdots,p_{2}$;\\
$(3)$ $3p_{1}+p_{1}r_{1}+2p_{2}+\frac{1}{2}p_{2}r_{2}-4$ with multiplicity $\frac{1}{2}p_{1}r_{1}-p_{1}$;\\
$(4)$ $2p_{1}+\frac{1}{2}p_{1}r_{1}+3p_{2}+p_{2}r_{2}-4$ with multiplicity $\frac{1}{2}p_{2}r_{2}-p_{2}$;\\
$(5)$ together with the four eigenvalues of
\begin{equation} 
\left[                 
  \begin{array}{cccc}     
  \begin{smallmatrix}
    \frac{3}{2}p_{1}r_{1}+3p_{2}+p_{2}r_{2}-2r_{1}& 2r_{1}-\frac{3}{2}p_{1}r_{1} & -3p_{2} & -p_{2}r_{2} \\  
    4-3p_{1} & 3p_{1}+2p_{2}+\frac{1}{2}p_{2}r_{2}-4 &  -2p_{2}&  -\frac{1}{2}p_{2}r_{2}\\  
    -3p_{1} & -p_{1}r_{1} & 3p_{1}+p_{1}r_{1}+\frac{3}{2}p_{2}r_{2}-2r_{2}&  2r_{2}-\frac{3}{2}p_{2}r_{2}\\
    -2p_{1} & -\frac{1}{2}p_{1}r_{1} & 4-3p_{2}& 2p_{1}+\frac{1}{2}p_{1}r_{1}+3p_{2}-4
  \end{smallmatrix}
  \end{array}
\right].    \nonumber             
\end{equation}
\end{thm}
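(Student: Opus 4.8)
\textbf{Proof proposal for Theorem~\ref{thm18}.}
The plan is to mimic the argument of Theorem~\ref{thm17} verbatim, working now with the distance Laplacian $\mathcal{L}(G_1\oplus G_2)=\mathrm{Tr}(G_1\oplus G_2)-\mathcal{D}(G_1\oplus G_2)$ instead of $\mathcal{D}(G_1\oplus G_2)$. First I would record the transmission of each vertex of $G_1\oplus G_2$: a vertex of $V(G_1)$, a vertex of $I(G_1)$ (the subdivision vertices of $G_1$), a vertex of $V(G_2)$, and a vertex of $I(G_2)$ each have a constant transmission, since each $G_i$ is $r_i$-regular; call these four constants $t_1,t_2,t_3,t_4$. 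These are read off directly from the block distance matrix displayed in the proof of Theorem~\ref{thm17} by summing the rows, and then $\mathrm{Tr}(G_1\oplus G_2)$ is the block-scalar diagonal matrix $\mathrm{diag}(t_1 I_{p_1},\,t_2 I_{q_1},\,t_3 I_{p_2},\,t_4 I_{q_2})$ with $q_i=\tfrac12 p_i r_i$. Subtracting the distance matrix block by block gives the explicit $4\times 4$ block form of $\mathcal{L}(G_1\oplus G_2)$.

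Next I would reuse the same three families of eigenvectors. For an eigenvalue $\lambda_{1_j}\neq r_1$ of $A_1$ with eigenvector $X_{1_j}\perp J$, Lemma~\ref{lem10} gives $R_1^T X_{1_j}$ as an eigenvector of $A(L(G_1))$ for $\lambda_{1_j}+r_1-2$, and both $X_{1_j}$ and $R_1^T X_{1_j}$ are orthogonal to the relevant all-ones vectors, so every $J$-block in $\mathcal{L}(G_1\oplus G_2)$ annihilates the trial vector $\phi_1=(tX_{1_j},\,R_1^TX_{1_j},\,0,\,0)^T$; the eigen-equation $\mathcal{L}(G_1\oplus G_2)\phi_1=\mu\phi_1$ then collapses to a $2\times 2$ system in $t$ whose determinant condition is a quadratic in $\mu$, yielding the $\pm$-pair in part~(1). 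The symmetric choice $\phi_2=(0,0,tX_{2_j},R_2^TX_{2_j})^T$ gives part~(2). For an eigenvector $Z_i$ of $L(G_i)$ with eigenvalue $-2$ we have $R_iZ_i=0$ by Lemma~\ref{lem11}; the vectors $(0,Z_1,0,0)^T$ and $(0,0,0,Z_2)^T$ are killed by every $J$-block and by $R_i$, so they are eigenvectors of $\mathcal{L}(G_1\oplus G_2)$, and a short computation with the two diagonal scalar entries produces the constant eigenvalues of parts~(3) and~(4) with multiplicities $\tfrac12 p_i r_i-p_i$ (matching Lemma~\ref{lem11}'s count $q_i-p_i$). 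Finally, all the eigenvectors produced so far are orthogonal to vectors constant on each of the four blocks, so the remaining $4$ eigenvalues come from the $4\times 4$ quotient matrix obtained by letting $\mathcal{L}(G_1\oplus G_2)$ act on $v=(\alpha J,\beta J,\gamma J,\delta J)^T$; reading off the action of each block (using $A_iJ=r_iJ$, $R_iJ=\tfrac{\cdot}{\cdot}$ as in Lemma~\ref{lem10}) and subtracting the transmission diagonal gives exactly the displayed $4\times 4$ matrix in part~(5). Counting, $2(p_1-1)+2(p_2-1)+(\tfrac12 p_1r_1-p_1)+(\tfrac12 p_2r_2-p_2)+4=p_1+p_2+\tfrac12 p_1r_1+\tfrac12 p_2r_2$, which is the order of $G_1\oplus G_2$, so the list is complete.

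The main obstacle is purely bookkeeping rather than conceptual: one must get the four transmission constants $t_1,\dots,t_4$ right (these involve $p_1,p_2,r_1,r_2$ in a slightly delicate way because the subdivision vertices of $G_1$ are at distance $2$ from most of $G_1$ but at distance $3$ from the subdivision vertices of $G_2$, etc.), and then carry the resulting linear shift through the $2\times 2$ determinant so that the radicand in parts~(1)--(2) comes out in the stated expanded form. I would verify the $2\times 2$ systems symbolically (as the authors do, ``by Matlab'') to be confident the long polynomial under the square root is transcribed correctly, and I would cross-check the four constant/quotient eigenvalues by specializing to a known case such as $G_1=G_2=K_2$ or by confirming that the trace of the $4\times 4$ matrix plus the other listed eigenvalues equals $\operatorname{tr}\mathcal{L}(G_1\oplus G_2)=t_1p_1+t_2q_1+t_3p_2+t_4q_2$.
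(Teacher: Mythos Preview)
Your proposal is correct and follows essentially the same route as the paper: the paper's proof consists of writing $\mathcal{L}(G_1\oplus G_2)$ in the same $4\times 4$ block form you describe (with the four transmission constants absorbed into the diagonal blocks) and then saying ``We can prove the theorem in a similar way to Theorem~\ref{thm17}.'' Your outline of that similar way --- the trial vectors $\phi_1,\phi_2$, the $(-2)$-eigenvectors $Z_i$ of $L(G_i)$, and the block-constant quotient vector $v$ --- matches the machinery of Theorem~\ref{thm17} exactly, and your dimension count and bookkeeping caveats are appropriate.
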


\begin{proof} Let $R_{i}$ be the incidence matrix of $G_{i}$ for $i=1,2$. Then, by a proper ordering of the vertices of $G_{1}\oplus G_{2}$,
its distance Laplacian matrix $\mathcal{L}(G_{1}\oplus G_{2})$ can be written in the form
$$\scriptsize{
\begin{aligned}
\mathcal{L}(G_{1}\oplus G_{2})=
\left[                 
  \begin{array}{cccc}     
  \begin{smallmatrix}
    (4p_{1}+\frac{3}{2}p_{1}r_{1}+3p_{2}+p_{2}r_{2}-4r_{1})I-4J+2A_{1} & 2R_{1}-3J & -3J & -2J\\  
    2R_{1}^{T}-3J & (3p_{1}+p_{1}r_{1}+2p_{2}+\frac{1}{2}p_{2}r_{2}-4)I-2J &  -2J &  -J\\  
    -3J & -2J & B &  2R_{2}-3J\\
    -2J & -J & 2R_{2}^{T}-3J & C
  \end{smallmatrix}
  \end{array}
\right],    \nonumber             
\end{aligned}}$$
where $B=(3p_{1}+p_{1}r_{1}+4p_{2}+\frac{3}{2}p_{2}r_{2}-4r_{2})I-4J+2A_{2}$, $C=(2p_{1}+\frac{1}{2}p_{1}r_{1}+3p_{2}+p_{2}r_{2}-4)I-2J$, $J$ is the all-one matrix, and $I$ is the identity matrix of appropriate orders. We can prove the theorem in a similar way to Theorem
\ref{thm17}.\qed\end{proof}

\begin{thm}\label{thm19}
Let $G_{i}$ be an $r_{i}-$regular graph of order $p_{i}$ with the adjacency matrix $A_{i}$ and adjacency spectrum $\{r_{i},\lambda_{i_{2}},\lambda_{i_{3}},\cdots,\lambda_{i_{p_{i}}}\}$ for $i=1,2$. Then, the distance signless Laplacian spectrum of $G_{1}\oplus G_{2}$ is:\\
$(1)$ $-\lambda_{1_{j}}+\frac{7p_{1}}{2}+\frac{5p_{2}}{2}+\frac{5p_{1}r_{1}}{4}+\frac{3p_{2}r_{2}}{4}-2r_{1}-8\pm\frac{\sqrt{4\lambda_{1_{j}}^2 - 2\lambda_{1_{j}}p_{1}r_{1} - 4\lambda_{1_{j}}p_{1} - 2\lambda_{1_{j}}p_{2}r_{2} - 4\lambda_{1_{j}}p_{2} + 16\lambda_{1_{j}}r_{1} + 16\lambda_{1_{j}} + }}{2}\\
 \frac{\overline{\frac{1}{4}p_{1}^2r_{1}^2 +p_{1}^2r_{1} + p_{1}^2 + \frac{1}{2}p_{1}p_{2}r_{1}r_{2} + p_{1}p_{2}r_{1} + p_{1}p_{2}r_{2} + 2p_{1}p_{2} - 4p_{1}r_{1}^2 - 8p_{1}r_{1} + \frac{1}{4}p_{2}^2r_{2}^2 + p_{2}^2r_{2} + p_{2}^2 - 4p_{2}r_{1}r_{2} - 8p_{2}r_{1} + 16r_{1}^2 + 16r_{1}}}{2}$, $j=2,3,\cdots,p_{1}$;\\
$(2)$ $-\lambda_{2_{j}}+\frac{5p_{1}}{2}+\frac{7p_{2}}{2}+\frac{3p_{1}r_{1}}{4}+\frac{5p_{2}r_{2}}{4}-2r_{2}-8\pm\frac{\sqrt{4\lambda_{2_{j}}^2 - 2\lambda_{2_{j}}p_{1}r_{1} - 4\lambda_{2_{j}}p_{1} - 2\lambda_{2_{j}}p_{2}r_{2} - 4\lambda_{2_{j}}p_{2} + 16\lambda_{2_{j}}r_{2} + 16\lambda_{2_{j}} + }}{2}\\
 \frac{\overline{\frac{1}{4}p_{1}^2r_{1}^2 +p_{1}^2r_{1} + p_{1}^2 + \frac{1}{2}p_{1}p_{2}r_{1}r_{2} + p_{1}p_{2}r_{1} + p_{1}p_{2}r_{2} + 2p_{1}p_{2} - 4p_{1}r_{1}r_{2} - 8p_{1}r_{2} + \frac{1}{4}p_{2}^2r_{2}^2 + p_{2}^2r_{2} + p_{2}^2 - 4p_{2}r_{2}^2 - 8p_{2}r_{2} + 16r_{2}^2 + 16r_{2}}}{2}$, $j=2,3,\cdots,p_{2}$;\\
$(3)$ $3p_{1}+p_{1}r_{1}+2p_{2}+\frac{1}{2}p_{2}r_{2}-8$ with multiplicity $\frac{1}{2}p_{1}r_{1}-p_{1}$;\\
$(4)$ $2p_{1}+\frac{1}{2}p_{1}r_{1}+3p_{2}+p_{2}r_{2}-8$ with multiplicity $\frac{1}{2}p_{2}r_{2}-p_{2}$;\\
$(5)$ together with the four eigenvalues of
$$\scriptsize{
\begin{aligned}
\left[                 
  \begin{array}{cccc}     
  \begin{smallmatrix}
    8p_{1}+\frac{3}{2}p_{1}r_{1}+3p_{2}+p_{2}r_{2}-6r_{1}-8& \frac{3}{2}p_{1}r_{1}-2r_{1} & 3p_{2} & p_{2}r_{2} \\  
    3p_{1}-4 & 3p_{1}+2p_{1}r_{1}+2p_{2}+\frac{1}{2}p_{2}r_{2}-8 &  2p_{2}&  \frac{1}{2}p_{2}r_{2}\\  
    3p_{1} & p_{1}r_{1} & 3p_{1}+p_{1}r_{1}+8p_{2}+\frac{3}{2}p_{2}r_{2}-6r_{2}-8&  \frac{3}{2}p_{2}r_{2}-2r_{2}\\
    2p_{1} & \frac{1}{2}p_{1}r_{1} & 3p_{2}-4& 2p_{1}+\frac{1}{2}p_{1}r_{1}+3p_{2}+2p_{2}r_{2}-8
  \end{smallmatrix}
  \end{array}
\right].    \nonumber             
\end{aligned}}$$
\end{thm}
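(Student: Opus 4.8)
The plan is to follow the same template that drove the proof of Theorem \ref{thm17}, only with the distance signless Laplacian matrix in place of the distance matrix. First I would set up the block form of $\mathcal{Q}(G_{1}\oplus G_{2})$: since $G_{1}\oplus G_{2}$ is built from $S(G_1)$ and $S(G_2)$ by joining the subdivision vertices $I(G_1)$ to $I(G_2)$, I compute the pairwise distances among the four natural vertex classes $V(G_1)$, $I(G_1)$, $V(G_2)$, $I(G_2)$ exactly as in Theorem \ref{thm17}, then add the transmission diagonal $Tr$. Because each $G_i$ is $r_i$-regular the transmissions are constant on each class, so $Tr$ becomes a scalar multiple of $I$ on each block; the off-diagonal blocks are $\mathcal{D}$-blocks of the form $cJ - 2R_i$, $cJ - 2R_i^{T}$, or $cJ$. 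This yields precisely the $4\times4$ block matrix written in the statement (with $B$, $C$ as abbreviated there), and the only care needed is bookkeeping the transmission of a vertex in $V(G_i)$ versus a subdivision vertex in $I(G_i)$.

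Next I would diagonalize the ``non-Perron'' part using the regular structure, exactly as in Theorem \ref{thm17}. For an eigenvalue $\lambda_{1_j}\neq r_1$ of $A_1$ with eigenvector $X_{1_j}\perp \mathbf{1}$, I use Lemma \ref{lem10}: $R_1R_1^{T}X_{1_j} = (\lambda_{1_j}+r_1)X_{1_j}$ and $R_1^{T}X_{1_j}$ is an eigenvector of $A(L(G_1))$ orthogonal to $\mathbf{1}$, hence killed by every all-ones block. Plugging a trial vector $(t X_{1_j},\, R_1^{T}X_{1_j},\, 0,\, 0)^{T}$ into $\mathcal{Q}(G_1\oplus G_2)\Psi = \mu\Psi$ collapses (all the $J$-terms vanish) to a $2\times2$ linear system in $t$ and $\mu$ whose elimination gives a quadratic in $\mu$ — this produces the $\pm$ pair in part (1). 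Symmetrically one gets part (2). For part (3) and (4) I take $Z_i$ with $R_iZ_i=0$ (Lemma \ref{lem11}); then $(0, Z_1, 0, 0)^{T}$ and $(0,0,0,Z_2)^{T}$ are eigenvectors, and reading off the diagonal entry of the corresponding block gives the stated eigenvalues with multiplicities $\tfrac12 p_i r_i - p_i$. Counting: $2(p_1-1) + 2(p_2-1) + (\tfrac12 p_1 r_1 - p_1) + (\tfrac12 p_2 r_2 - p_2)$ accounts for all but $4$ eigenvectors, and the remaining ones must be of the form $(\alpha J,\beta J,\gamma J,\delta J)^{T}$; substituting into the eigenequation and using $A(G_i)J = r_iJ$, $R_iJ = 2J$, $R_i^{T}J = r_iJ$ reduces $\mathcal{Q}(G_1\oplus G_2)$ acting on such vectors to the $4\times4$ quotient matrix of part (5), whose eigenvalues complete the spectrum.

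The routine but error-prone step is the transmission computation: I must get the diameter-$2$ (and $3$) distance pattern of $G_1\oplus G_2$ right — e.g.\ a vertex of $V(G_1)$ to a vertex of $V(G_2)$ is at distance $3$, to a subdivision vertex of $I(G_2)$ at distance $2$, and so on — and then sum these over all vertices in each class to get the four transmission constants, which depend on $p_1,p_2,r_1,r_2$. Those constants propagate into every entry of the final matrices, so the real obstacle is not any deep idea but the sheer algebra of assembling $Tr + \mathcal{D}$ blockwise and solving the resulting $2\times2$ systems symbolically; as the paper does elsewhere, this last simplification is best handed to a computer algebra system. Once the block form is verified, the eigenvector ansätze and the multiplicity count go through verbatim as in Theorem \ref{thm17}, so I would simply write ``the proof is analogous to that of Theorem \ref{thm17}'' after exhibiting the matrix $\mathcal{Q}(G_1\oplus G_2)$, which is exactly what the surrounding theorems in this subsection do.
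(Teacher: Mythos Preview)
Your proposal is correct and matches the paper's own proof essentially verbatim: the paper simply writes down the block form of $\mathcal{Q}(G_{1}\oplus G_{2})$ (exactly the matrix you describe, with the transmission constants added to the diagonal blocks of $\mathcal{D}(G_1\oplus G_2)$ from Theorem~\ref{thm17}) and then states ``We can prove the theorem in a similar way to Theorem~\ref{thm17}.'' One small slip to fix in your quotient-matrix step: by Lemma~\ref{lem10} it is $R_i\mathbf{1}=r_i\mathbf{1}$ and $R_i^{T}\mathbf{1}=2\cdot\mathbf{1}$, not the other way around.
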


\begin{proof} Let $R_{i}$ be the incidence matrix of $G_{i}$ for $i=1,2$. Then, by a proper ordering of the vertices of $G_{1}\oplus G_{2}$,
its distance signless Laplacian matrix $\mathcal{Q}(G_{1}\oplus G_{2})$ can be written in the form\\
$$\scriptsize{
\begin{aligned}
\mathcal{Q}(G_{1}\oplus G_{2})=&
\left[                 
  \begin{array}{cccc}     
  \begin{smallmatrix}
    (4p_{1}+\frac{3}{2}p_{1}r_{1}+3p_{2}+p_{2}r_{2}-4r_{1}-8)I+4J-2A_{1}&3J-2R_{1}&3J&2J\\  
    3J-2R_{1}^{T}&(3p_{1}+p_{1}r_{1}+2p_{2}+\frac{1}{2}p_{2}r_{2}-8)I+2J&2J&J\\  
    3J&2J&B&3J-2R_{2}\\
    2J&J&3J-2R_{2}^{T}&C
  \end{smallmatrix}
  \end{array}
\right],    \nonumber             
\end{aligned}}$$\\
where $B=(3p_{1}+p_{1}r_{1}+4p_{2}+\frac{3}{2}p_{2}r_{2}-4r_{2}-8)I+4J-2A_{2}$, $C=(2p_{1}+\frac{1}{2}p_{1}r_{1}+3p_{2}+p_{2}r_{2}-8)I+2J$, $J$ is the all-one matrix and $I$ is the identity matrix of appropriate orders. We can prove the theorem in a similar way to Theorem
 \ref{thm17}.\qed\end{proof}
\subsection{The $\mathcal{D}-$spectrum, $\mathcal{L}-$spectrum and $\mathcal{Q}-$spectrum of $G_{1}\dot{\vee}G_{2}$}
\begin{thm}\label{thm20}
Let $G_{i}$ be an $r_{i}-$regular graph of order $p_{i}$ with the adjacency matrix $A_{i}$ and adjacency spectrum $\{r_{i},\lambda_{i_{2}},\lambda_{i_{3}},\cdots,\lambda_{i_{p_{i}}}\}$ for $i=1,2$. Then, the distance spectrum of $G_{1}\dot{\vee}G_{2}$ is:\\
$(1)$ $-2(\lambda_{1_{j}}+r_{1}+1)$, $j=2,3,\cdots,p_{1}$;\\
$(2)$ $-2(\lambda_{2_{j}}+r_{2}+1)$, $j=2,3,\cdots,p_{2}$;\\
$(3)$ $0$ with multiplicity $\frac{1}{2}r_{1}p_{1}+\frac{1}{2}r_{2}p_{2}-2$;\\
$(4)$ together with the four eigenvalues of
\begin{equation} 
\left[                 
  \begin{array}{cccc}     
    2p_{1}-2 & \frac{3}{2}r_{1}p_{1}-2r_{1} & p_{2}& r_{2}p_{2} \\  
    3p_{1}-4 & 2r_{1}p_{1}-4r_{1} &  2p_{2}& \frac{3}{2}r_{2}p_{2}\\  
    p_{1} & r_{1}p_{1} & 2p_{2}-2 & \frac{3}{2}r_{2}p_{2}-2r_{2}\\
    2p_{1} & \frac{3}{2}r_{1}p_{1} & 3p_{2}-4 & 2r_{2}p_{2}-4r_{2}
  \end{array}
\right].    \nonumber             
\end{equation}
\end{thm}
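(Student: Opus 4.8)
The plan is to follow the strategy of Theorem~\ref{thm17}. First I would order the vertices of $G_{1}\dot{\vee}G_{2}$ as $V(G_{1}),\,I(G_{1}),\,V(G_{2}),\,I(G_{2})$, where $I(G_{i})$ denotes the set of subdivision vertices of $G_{i}$, so that $|V(G_{i})|=p_{i}$ and $|I(G_{i})|=q_{i}=\tfrac12 p_{i}r_{i}$. From Definition~\ref{lem4} the pairwise distances are obtained by a short case analysis: two distinct vertices of $V(G_{1})$ are at distance $2$ (through any common neighbour in $V(G_{2})$); a vertex of $V(G_{1})$ is at distance $1$ from an incident subdivision vertex of $G_{1}$ and at distance $3$ from a non-incident one; two distinct subdivision vertices of $G_{1}$ are at distance $2$ if the corresponding edges share an endpoint and at distance $4$ otherwise; across the two sides, distances are $1$ on $V(G_{1})\times V(G_{2})$, $2$ on $V(G_{1})\times I(G_{2})$ and on $I(G_{1})\times V(G_{2})$, and $3$ on $I(G_{1})\times I(G_{2})$; finally the distances internal to the $G_{2}$-part mirror those of the $G_{1}$-part. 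Writing the ``edges share an endpoint'' pattern through $R_{i}^{T}R_{i}$, where $R_{i}$ is the incidence matrix of $G_{i}$, and using Lemma~\ref{lem10}, this yields
\[
\mathcal{D}(G_{1}\dot{\vee}G_{2})=
\begin{bmatrix}
2(J-I) & 3J-2R_{1} & J & 2J\\
3J-2R_{1}^{T} & 4J-2R_{1}^{T}R_{1} & 2J & 3J\\
J & 2J & 2(J-I) & 3J-2R_{2}\\
2J & 3J & 3J-2R_{2}^{T} & 4J-2R_{2}^{T}R_{2}
\end{bmatrix},
\]
where $J$ and $I$ denote all-one and identity blocks of the appropriate sizes.

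Next I would read off most of the spectrum as in Theorem~\ref{thm17}. Let $\lambda_{1_{j}}\neq r_{1}$ be an eigenvalue of $A_{1}=A(G_{1})$ with eigenvector $X_{1_{j}}$ orthogonal to the all-one vector. Using $JX_{1_{j}}=0$, the identities $R_{1}R_{1}^{T}=A_{1}+r_{1}I$ and $R_{1}^{T}R_{1}=A(L(G_{1}))+2I$ of Lemma~\ref{lem10}, and their consequences $R_{1}R_{1}^{T}X_{1_{j}}=(\lambda_{1_{j}}+r_{1})X_{1_{j}}$, $R_{1}^{T}R_{1}(R_{1}^{T}X_{1_{j}})=(\lambda_{1_{j}}+r_{1})R_{1}^{T}X_{1_{j}}$ and $JR_{1}^{T}X_{1_{j}}=0$ (because $R_{1}\mathbf{1}=r_{1}\mathbf{1}$), one checks that the subspace spanned by $(X_{1_{j}},0,0,0)^{T}$ and $(0,R_{1}^{T}X_{1_{j}},0,0)^{T}$ is invariant under $\mathcal{D}(G_{1}\dot{\vee}G_{2})$, on which $\mathcal{D}$ acts as the matrix $\left[\begin{smallmatrix}-2 & -2(\lambda_{1_{j}}+r_{1})\\ -2 & -2(\lambda_{1_{j}}+r_{1})\end{smallmatrix}\right]$, whose eigenvalues are $-2(\lambda_{1_{j}}+r_{1}+1)$ and $0$. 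Ranging over $j=2,\dots,p_{1}$ gives the eigenvalues of part~$(1)$ together with $p_{1}-1$ copies of $0$; the mirror computation on the last two blocks gives part~$(2)$ and $p_{2}-1$ copies of $0$. For the subdivision vertices, if $Z_{i}$ is an eigenvector of $L(G_{i})$ for the eigenvalue $-2$, then by Lemma~\ref{lem11} one has $R_{i}Z_{i}=0$ and $Z_{i}$ is orthogonal to the all-one vector, so every $J$-block annihilates it and both $(0,Z_{1},0,0)^{T}$ and $(0,0,0,Z_{2})^{T}$ lie in the kernel of $\mathcal{D}$; this contributes $(q_{1}-p_{1})+(q_{2}-p_{2})$ further copies of $0$. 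Summing, the multiplicity of $0$ is $(p_{1}-1)+(p_{2}-1)+(q_{1}-p_{1})+(q_{2}-p_{2})=\tfrac12 r_{1}p_{1}+\tfrac12 r_{2}p_{2}-2$, giving part~$(3)$.

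Finally, every eigenvector constructed above is orthogonal to the span $S$ of $(\mathbf{1}_{p_{1}},0,0,0)^{T}$, $(0,\mathbf{1}_{q_{1}},0,0)^{T}$, $(0,0,\mathbf{1}_{p_{2}},0)^{T}$, $(0,0,0,\mathbf{1}_{q_{2}})^{T}$, so the four remaining eigenvalues are the eigenvalues of the restriction of $\mathcal{D}(G_{1}\dot{\vee}G_{2})$ to the ($\mathcal{D}$-invariant) subspace $S$. Using $R_{i}\mathbf{1}_{q_{i}}=r_{i}\mathbf{1}_{p_{i}}$, $R_{i}^{T}\mathbf{1}_{p_{i}}=2\mathbf{1}_{q_{i}}$, $R_{i}^{T}R_{i}\mathbf{1}_{q_{i}}=2r_{i}\mathbf{1}_{q_{i}}$, $J_{a\times b}\mathbf{1}_{b}=b\,\mathbf{1}_{a}$, and $q_{i}=\tfrac12 p_{i}r_{i}$, a direct computation identifies this restriction, written in the basis of the four indicator vectors, with the $4\times4$ matrix displayed in part~$(4)$, which completes the proof. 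The only genuine obstacle is the distance case analysis establishing the block form of $\mathcal{D}(G_{1}\dot{\vee}G_{2})$; after that everything is mechanical, and the count $(p_{1}-1)+(p_{2}-1)+\bigl(\tfrac12 r_{1}p_{1}+\tfrac12 r_{2}p_{2}-2\bigr)+4=p_{1}+q_{1}+p_{2}+q_{2}$ confirms that the whole spectrum has been accounted for.
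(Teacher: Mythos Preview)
Your proposal is correct and follows essentially the same approach as the paper: the same vertex ordering, the same block form of $\mathcal{D}(G_{1}\dot{\vee}G_{2})$ (your $4J-2R_{i}^{T}R_{i}$ equals the paper's $4(J-I)-2A(L(G_{i}))$ by Lemma~\ref{lem10}), the same use of the $R_{i}^{T}X_{i_{j}}$ vectors and of the $Z_{i}\in\ker R_{i}$, and the same $4\times4$ quotient on the span of block indicators. The only difference is cosmetic: you phrase the computation on each $\{X_{1_{j}},R_{1}^{T}X_{1_{j}}\}$ subspace via its $2\times2$ action matrix, whereas the paper writes a parametrized ansatz $(tX_{1_{j}},R_{1}^{T}X_{1_{j}},0,0)^{T}$ and solves for $t$; these produce the same pair $\{-2(\lambda_{1_{j}}+r_{1}+1),\,0\}$.
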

\begin{proof} Let $R_{i}$ be the incidence matrix of $G_{i}$ and $A(L(G_{i}))$ be the adjacency matrix of $L(G_{i})$, Then, by a proper ordering of the vertices of $G_{1}\dot{\vee}G_{2}$,
its distance matrix $\mathcal{D}(G_{1}\dot{\vee}G_{2})$ can be written in the form
\begin{equation} 
\mathcal{D}(G_{1}\dot{\vee}G_{2})=
\left[                 
  \begin{array}{cccc}     
    2(J-I) & 3J-2R_{1} & J &2J\\  
    3J-2R_{1}^{T} & 4(J-I)-2A(L(G_{1})) &  2J&3J\\  
    J & 2J & 2(J-I)& 3J-2R_{2}\\
    2J & 3J & 3J-2R_{2}^{T}& 4(J-I)-2A(L(G_{2}))
  \end{array}
\right],    \nonumber             
\end{equation}
where $J$ is the all-one matrix, and $I$ is the identity matrix of appropriate orders.

Let $\lambda_{i_{j}}\neq r_{i}$ be an eigenvalue of $A_{i}$ with an eigenvector $X_{i_{j}}$. Then, $X_{i_{j}}$ is orthogonal to the all-one matrix $J$, and $A_{i}X_{i_{j}}=\lambda_{i_{j}} X_{i_{j}}$.

Now, consider the vector
$\begin{matrix}
\phi_{1}=
\left[                 
  \begin{array}{cccc}   
    tX_{1_{j}}  \\  
    R_{1}^{T}X_{1_{j}} \\  
     0\\
     0
  \end{array}
\right]    \nonumber             
\end{matrix}$, which
is an eigenvector of $\mathcal{D}(G_{1}\dot{\vee}G_{2})$ with eigenvalues $\mu_{1}$, then from the equation $\mathcal{D}(G_{1}\dot{\vee}G_{2})\phi_{1}=\mu_{1}\phi_{1}$, we get
\begin{equation}\label{LGQQEqu5}
\begin{aligned} 
-2t-2(\lambda_{1_{j}}+r_{1})=\mu_{1} t,
\end{aligned}
\end{equation}
\begin{equation}\label{LGQQEqu6}
\begin{aligned} 
-2t-2(\lambda_{1_{j}}+r_{1})=\mu_{1}.
\end{aligned}
\end{equation}
Thus, solving $Eqs.(\ref{LGQQEqu5})$ and $Eqs.(\ref{LGQQEqu6})$ yields $\mu_{1}=-2(\lambda_{1_{j}}+r_{1}+1)$ and $\mu_{1}=0$, $j=2,3,\cdots,p_{1}$;

Then, consider the vector
$\begin{matrix}
\phi_{2}=
\left[                 
  \begin{array}{ccc}   
    0  \\  
    0 \\  
     tX_{2_{j}}\\
     R_{2}^{T}X_{2_{j}}
  \end{array}
\right]    \nonumber             
\end{matrix}$, which
is an eigenvector of $\mathcal{D}(G_{1}\dot{\vee}G_{2})$ with eigenvalues $\mu_{2}$, then from the equation $\mathcal{D}(G_{1}\dot{\vee}G_{2})\phi_{2}=\mu_{2}\phi_{2}$, we get
\begin{equation}\label{LGQQEqu10}
\begin{aligned} 
-2t-2(\lambda_{2_{j}}+r_{2})=\mu_{2} t,
\end{aligned}
\end{equation}
\begin{equation}\label{LGQQEqu11}
\begin{aligned} 
-2t-2(\lambda_{2_{j}}+r_{2})=\mu_{2}.
\end{aligned}
\end{equation}
Thus, solving $Eqs.(\ref{LGQQEqu10})$ and $Eqs.(\ref{LGQQEqu11})$ yields $\mu_{2}=-2(\lambda_{2_{j}}+r_{2}+1)$ and $\mu_{2}=0$, $j=2,3,\cdots,p_{2}$;

Let $Z_{i}$ be an eigenvector of $L(G_{i})$ with the eigenvalue $-2$. Then, by Lemma \ref{lem11}, $R_{i}Z_{i}=0$. Now, let
$\begin{matrix}
\psi_{1}=
\left[                 
  \begin{array}{cccc}   
    0  \\  
    Z_{1} \\  
     0\\
     0
  \end{array}
\right]    \nonumber             
\end{matrix}$ and
$\begin{matrix}
\psi_{2}=
\left[                 
  \begin{array}{cccc}   
    0  \\  
    0 \\  
     0\\
     Z_{2}
  \end{array}
\right]    \nonumber             
\end{matrix}$
is an eigenvector of $\mathcal{D}(G_{1}\dot{\vee} G_{2})$ corresponding to the eigenvalue $0$ with multiplicity $\frac{1}{2}p_{i}r_{i}-p_{i}$.
 \begin{equation*}
\begin{aligned} 
\mathcal{D}(G_{1}\dot{\vee} G_{2})\psi_{i}&=
\left[                 
  \begin{array}{cccc}     
     2(J-I) & 3J-2R_{1} & J &2J\\  
    3J-2R_{1}^{T} & 4(J-I)-2A(L(G_{1})) &  2J&3J\\  
    J & 2J & 2(J-I)& 3J-2R_{2}\\
    2J & 3J & 3J-2R_{2}^{T}& 4(J-I)-2A(L(G_{2}))
  \end{array}
\right]    \nonumber             
\psi_{i},\\  &
=0\psi_{i}.
\end{aligned}
\end{equation*}

Thus, forming eigenvectors of this type we get $p_{1}+p_{2}+\frac{1}{2}p_{1}r_{1}+\frac{1}{2}p_{2}r_{2}-4$ eigenvectors, and there remains 4. By
the construction, the remaining four are of the form
$\begin{matrix}
v=
\left[                 
  \begin{array}{cccc}   
    \alpha J  \\  
    \beta J \\  
     \gamma J\\
     \delta J
  \end{array}
\right]    \nonumber             
\end{matrix}$
for some $(\alpha,\beta,\gamma,\delta)\neq(0,0,0,0)$. If $\sigma$ be an eigenvalue of $\mathcal{D}(G_{1}\dot{\vee} G_{2})$ with an eigenvector $v$,
and then from $\mathcal{D}(G_{1}\dot{\vee} G_{2})v=\sigma v$ we can see that the remaining four are the eigenvalues of the matrix
\begin{equation} 
\left[                 
  \begin{array}{cccc}     
   2p_{1}-2 & \frac{3}{2}r_{1}p_{1}-2r_{1} & p_{2}& r_{2}p_{2} \\  
    3p_{1}-4 & 2r_{1}p_{1}-4r_{1} &  2p_{2}& \frac{3}{2}r_{2}p_{2}\\  
    p_{1} & r_{1}p_{1} & 2p_{2}-2 & \frac{3}{2}r_{2}p_{2}-2r_{2}\\
    2p_{1} & \frac{3}{2}r_{1}p_{1} & 3p_{2}-4 & 2r_{2}p_{2}-4r_{2}
  \end{array}
\right].    \nonumber             
\end{equation}
This completes the proof.\qed\end{proof}

\begin{thm}\label{thm21}
Let $G_{i}$ be an $r_{i}-$regular graph of order $p_{i}$ with the adjacency matrix $A_{i}$ and adjacency spectrum $\{r_{i},\lambda_{i_{2}},\lambda_{i_{3}},\cdots,\lambda_{i_{p_{i}}}\}$ for $i=1,2$. Then, the distance Laplacian spectrum of $G_{1}\dot{\vee}G_{2}$ is:\\
$(1)$ $\lambda_{1_{j}}+ \frac{5p_{1}}{2} + \frac{3p_{2}}{2} + \frac{7p_{1}r_{1}}{4}+ \frac{5p_{2}r_{2}}{4} - 2r_{1}-2 \pm \frac{\sqrt{4\lambda_{1_{j}}^2 + 2\lambda_{1_{j}}p_{1}r_{1} + 4\lambda_{1_{j}}p_{1} + 2\lambda_{1_{j}}p_{2}r_{2} + 4\lambda_{1_{j}}p_{2} + \frac{1}{4}p_{1}^2r_{1}^2 + p_{1}^2r_{1} + p_{1}^2 + }}{2}\\
\frac{ \overline{ \frac{1}{2}p_{1}p_{2}r_{1}r_{2}+ p_{1}p_{2}r_{1} + p_{1}p_{2}r_{2} + 2p_{1}p_{2} - 4p_{1}r_{1} - 8p_{1} + \frac{1}{4}p_{2}^2r_{2}^2 + p_{2}^2r_{2} + p_{2}^2 - 4p_{2}r_{2} - 8p_{2} + 16r_{1} + 16}}{2}$, $j=2,3,\cdots,p_{1}$;\\
$(2)$ $\lambda_{2_{j}}+ \frac{3p_{1}}{2} + \frac{5p_{2}}{2} +  \frac{5p_{1}r_{1}}{4}+ \frac{7p_{2}r_{2}}{4} - 2r_{2}-2 \pm \frac{\sqrt{4\lambda_{2_{j}}^2 + 2\lambda_{2_{j}}p_{1}r_{1} + 4\lambda_{2_{j}}p_{1} + 2\lambda_{2_{j}}p_{2}r_{2} + 4\lambda_{2_{j}}p_{2} + \frac{1}{4}p_{1}^2r_{1}^2 + p_{1}^2r_{1} + p_{1}^2 + }}{2}\\
\frac{ \overline{\frac{1}{2}p_{1}p_{2}r_{1}r_{2}+p_{1}p_{2}r_{1} + p_{1}p_{2}r_{2} + 2p_{1}p_{2} - 4p_{1}r_{1} - 8p_{1} + \frac{1}{4}p_{2}^2r_{2}^2 + p_{2}^2r_{2} + p_{2}^2 - 4p_{2}r_{2} - 8p_{2} + 16r_{2} + 16}}{2}$, $j=2,3,\cdots,p_{2}$;\\
$(3)$ $3p_{1}+2p_{1}r_{1}+2p_{2}+\frac{3}{2}p_{2}r_{2}-4r_{1}-4$ with multiplicity $\frac{1}{2}p_{1}r_{1}-p_{1}$;\\$(4)$ $2p_{1}+\frac{3}{2}p_{1}r_{1}+3p_{2}+2p_{2}r_{2}-4r_{2}-4$ with multiplicity $\frac{1}{2}p_{2}r_{2}-p_{2}$;\\
$(5)$ together with the four eigenvalues of
\begin{equation} 
\left[                 
  \begin{array}{ccc}     
  \begin{smallmatrix}
    \frac{3}{2}p_{1}r_{1}+p_{2}+p_{2}r_{2}-2r_{1} & 2r_{1}-\frac{3}{2}p_{1}r_{1} & -p_{2}& -p_{2}r_{2}\\  
    4-3p_{1} & 3p_{1}+2p_{2}+\frac{3}{2}p_{2}r_{2}-4 &  -2p_{2}& -\frac{3}{2}p_{2}r_{2}\\  
    -p_{1} & -p_{1}r_{1} & p_{1}+p_{1}r_{1}+\frac{3}{2}p_{2}r_{2}-2r_{2}&2r_{2}-\frac{3}{2}p_{2}r_{2}\\
    -2p_{1} & -\frac{3}{2}p_{1}r_{1} & 4-3p_{2}&2p_{1}+\frac{3}{2}p_{1}r_{1}+3p_{2}-4
  \end{smallmatrix}
  \end{array}
\right].    \nonumber             
\end{equation}
\end{thm}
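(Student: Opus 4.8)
The plan is to follow the proof of Theorem~\ref{thm20} line by line, with $\mathcal{D}(G_{1}\dot{\vee}G_{2})$ replaced by $\mathcal{L}(G_{1}\dot{\vee}G_{2})=Tr(G_{1}\dot{\vee}G_{2})-\mathcal{D}(G_{1}\dot{\vee}G_{2})$. First I would partition $V(G_{1}\dot{\vee}G_{2})$ into the four classes $V(G_{1}),I(G_{1}),V(G_{2}),I(G_{2})$ of sizes $p_{1},q_{1}=\tfrac12p_{1}r_{1},p_{2},q_{2}=\tfrac12p_{2}r_{2}$ and, reading the distances off the block form of $\mathcal{D}(G_{1}\dot{\vee}G_{2})$ established in Theorem~\ref{thm20}, compute the (class-constant) transmission of a representative of each class:
\begin{equation*}
\tau_{1}=2p_{1}+\tfrac32p_{1}r_{1}+p_{2}+p_{2}r_{2}-2r_{1}-2,\qquad \tau_{2}=3p_{1}+2p_{1}r_{1}+2p_{2}+\tfrac32p_{2}r_{2}-4r_{1}-4,
\end{equation*}
and $\tau_{3},\tau_{4}$ obtained from $\tau_{1},\tau_{2}$ by the substitution $1\leftrightarrow2$. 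Subtracting $\mathcal{D}(G_{1}\dot{\vee}G_{2})$ from $\mathrm{diag}(\tau_{1}I,\tau_{2}I,\tau_{3}I,\tau_{4}I)$ then displays $\mathcal{L}(G_{1}\dot{\vee}G_{2})$ as a $4\times4$ block matrix whose diagonal blocks are $(\tau_{1}+2)I-2J$, $(\tau_{2}+4)I-4J+2A(L(G_{1}))$, $(\tau_{3}+2)I-2J$, $(\tau_{4}+4)I-4J+2A(L(G_{2}))$ and whose off-diagonal blocks are the negatives of those of $\mathcal{D}(G_{1}\dot{\vee}G_{2})$.

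The second step produces the three families of eigenvectors exactly as before. If $\lambda_{1_{j}}\neq r_{1}$ is an eigenvalue of $A_{1}$ with eigenvector $X_{1_{j}}\perp\mathbf{1}$, then by Lemma~\ref{lem10} one has $R_{1}R_{1}^{T}X_{1_{j}}=(\lambda_{1_{j}}+r_{1})X_{1_{j}}$, while $R_{1}^{T}X_{1_{j}}$ is an eigenvector of $A(L(G_{1}))$ for the eigenvalue $\lambda_{1_{j}}+r_{1}-2$, which—because $\lambda_{1_{j}}\neq r_{1}$—is orthogonal to $\mathbf{1}$. Feeding the trial vector $(tX_{1_{j}},R_{1}^{T}X_{1_{j}},0,0)^{T}$ into $\mathcal{L}(G_{1}\dot{\vee}G_{2})$, the last two block-rows vanish and the first two collapse to
\begin{equation*}
t(\tau_{1}+2)+2(\lambda_{1_{j}}+r_{1})=\mu t,\qquad 2t+\tau_{2}+2\lambda_{1_{j}}+2r_{1}=\mu ;
\end{equation*}
eliminating $t$ yields a quadratic in $\mu$ whose two roots are precisely the values of part~$(1)$, and part~$(2)$ follows by the $1\leftrightarrow2$ symmetry. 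Next, if $Z_{i}$ is an eigenvector of $A(L(G_{i}))$ for the eigenvalue $-2$ (so $R_{i}Z_{i}=0$ and $Z_{i}\perp\mathbf{1}$ by Lemma~\ref{lem11}), then $(0,Z_{1},0,0)^{T}$ and $(0,0,0,Z_{2})^{T}$ are eigenvectors of $\mathcal{L}(G_{1}\dot{\vee}G_{2})$ with eigenvalues $\tau_{2}$ and $\tau_{4}$, i.e.\ the values of parts~$(3)$ and~$(4)$, each with multiplicity equal to the multiplicity of $-2$ in $L(G_{i})$, namely $q_{i}-p_{i}=\tfrac12p_{i}r_{i}-p_{i}$.

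The final step is a dimension count: these three families supply $2(p_{1}-1)+2(p_{2}-1)+(q_{1}-p_{1})+(q_{2}-p_{2})=p_{1}+p_{2}+\tfrac12p_{1}r_{1}+\tfrac12p_{2}r_{2}-4$ mutually orthogonal eigenvectors, each with zero coordinate-sum on every one of the four blocks; hence the remaining four eigenvectors lie in the block-constant subspace and have the form $(\alpha\mathbf{1},\beta\mathbf{1},\gamma\mathbf{1},\delta\mathbf{1})^{T}$. Substituting this into $\mathcal{L}(G_{1}\dot{\vee}G_{2})v=\mu v$ and using $A_{i}\mathbf{1}=r_{i}\mathbf{1}$, $R_{i}\mathbf{1}=r_{i}\mathbf{1}$, $R_{i}^{T}\mathbf{1}=2\mathbf{1}$, $A(L(G_{i}))\mathbf{1}=(2r_{i}-2)\mathbf{1}$ turns each block-row into a scalar equation, so the last four eigenvalues are exactly the eigenvalues of the $4\times4$ quotient matrix displayed in part~$(5)$, which would complete the argument.

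I expect the only real obstacle to be bookkeeping rather than insight: one must get the four transmissions $\tau_{1},\dots,\tau_{4}$ right from the (occasionally non-obvious) distances in $G_{1}\dot{\vee}G_{2}$—for instance that two subdivision vertices of $G_{1}$ arising from disjoint edges are at distance $4$, not $3$—and then expand $Tr-\mathcal{D}$ carefully block by block. After that, the surd expressions in parts~$(1)$–$(2)$ and the characteristic polynomial of the $4\times4$ matrix in part~$(5)$ are routine but heavy, and, as the authors do elsewhere in the paper, I would carry out those eliminations with a symbolic computation package. The one point that genuinely needs care is the standing hypothesis $\lambda_{i_{j}}\neq r_{i}$, which is exactly what makes $R_{i}^{T}X_{i_{j}}$ orthogonal to $\mathbf{1}$ and thereby keeps the constructed eigenvectors independent of the block-constant ones.
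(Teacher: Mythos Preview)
Your proposal is correct and follows exactly the approach the paper takes: it writes down the $4\times4$ block form of $\mathcal{L}(G_{1}\dot{\vee}G_{2})$ (with precisely the diagonal blocks and transmissions you compute) and then says ``we can prove the theorem in a similar way to Theorem~\ref{thm20}''. In fact you have spelled out the eigenvector ans\"atze, the resulting $2\times2$ eliminations, the $-2$-eigenspace contribution, and the quotient on block-constant vectors in more detail than the paper itself provides.
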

\begin{proof} Let $R_{i}$ be the incidence matrix of $G_{i}$ and $A(L(G_{i}))$ be the adjacency matrix of $L(G_{i})$. Then, by a proper ordering of the vertices of $G_{1}\dot{\vee}G_{2}$,
its distance Laplacian matrix $\mathcal{L}(G_{1}\dot{\vee}G_{2})$ can be written in the form
$$\scriptsize{
\begin{aligned} 
\mathcal{L}(G_{1}\dot{\vee}G_{2})=
\left[                 
  \begin{array}{ccc}     
  \begin{smallmatrix}
    (2p_{1}+\frac{3}{2}p_{1}r_{1}+p_{2}+p_{2}r_{2}-2r_{1})I-2J & 2R_{1}-3J & -J &-2J \\  
    2R_{1}^{T}-3J & (3p_{1}+2p_{1}r_{1}+2p_{2}+\frac{3}{2}p_{2}r_{2}-4r_{1})I-4J+2A(L(G_{1})) &  -2J&  -3J\\  
    -J & -2J & B & 2R_{2}-3J\\
    -2J & -3J & 2R_{2}^{T}-3J & C\\
  \end{smallmatrix}
  \end{array}
\right],    \nonumber             
\end{aligned}}$$
where $B=(p_{1}+p_{1}r_{1}+2p_{2}+\frac{3}{2}p_{2}r_{2}-2r_{2})I-2J$, $C=(2p_{1}+\frac{3}{2}p_{1}r_{1}+3p_{2}+2p_{2}r_{2}-4r_{2})I-4J+2A(L(G_{2}))$, $J$ is the all-one matrix, and $I$ is the identity matrix of appropriate orders. We can prove the theorem in a similar way to Theorem
 \ref{thm20}.\qed\end{proof}

\begin{thm}\label{thm22}
Let $G_{i}$ be an $r_{i}-$regular graph of order $p_{i}$ with the adjacency matrix $A_{i}$ and adjacency spectrum $\{r_{i},\lambda_{i_{2}},\lambda_{i_{3}},\cdots,\lambda_{i_{p_{i}}}\}$ for $i=1,2$. Then, the distance signless Laplacian spectrum of $G_{1}\dot{\vee}G_{2}$ is:\\
$(1)$ $-\lambda_{1_{j}}+ \frac{5p_{1}}{2} + \frac{3p_{2}}{2} + \frac{7p_{1}r_{1}}{4}+ \frac{5p_{2}r_{2}}{4} - 4r_{1}-4 \pm \frac{\sqrt{4\lambda_{1_{j}}^2 - 2\lambda_{1_{j}}p_{1}r_{1} - 4\lambda_{1_{j}}p_{1} - 2\lambda_{1_{j}}p_{2}r_{2} - 4\lambda_{1_{j}}p_{2} + 16\lambda_{1_{j}}r_{1} + 16\lambda_{1_{j}} + }}{2}\\ \frac{ \overline{\frac{1}{4}p_{1}^2r_{1}^2 +p_{1}^2r_{1} + p_{1}^2 + \frac{1}{2}p_{1}p_{2}r_{1}r_{2} + p_{1}p_{2}r_{1} + p_{1}p_{2}r_{2} + 2p_{1}p_{2} - 4p_{1}r_{1}^2 - 8p_{1}r_{1} +\frac{1}{4}p_{2}^2r_{2}^2 + p_{2}^2r_{2} + p_{2}^2 - 4p_{2}r_{1}r_{2} - 8p_{2}r_{1} + 16r_{1}^2 + 16r_{1}}}{2}$, $j=2,3,\cdots,p_{1}$;\\
$(2)$ $-\lambda_{2_{j}}+ \frac{3p_{1}}{2} + \frac{5p_{2}}{2} + \frac{5p_{1}r_{1}}{4}+ \frac{7p_{2}r_{2}}{4} - 4r_{2}-4 \pm \frac{\sqrt{4\lambda_{2_{j}}^2 - 2\lambda_{2_{j}}p_{1}r_{1} - 4\lambda_{2_{j}}p_{1} - 2\lambda_{2_{j}}p_{2}r_{2} - 4\lambda_{2_{j}}p_{2} + 16\lambda_{2_{j}}r_{2} + 16\lambda_{2_{j}} + }}{2}\\ \frac{ \overline{\frac{1}{4}p_{1}^2r_{1}^2 +p_{1}^2r_{1} + p_{1}^2 + \frac{1}{2}p_{1}p_{2}r_{1}r_{2} + p_{1}p_{2}r_{1} + p_{1}p_{2}r_{2} + 2p_{1}p_{2} - 4p_{1}r_{1}r_{2} - 8p_{1}r_{2} +\frac{1}{4}p_{2}^2r_{2}^2 + p_{2}^2r_{2} + p_{2}^2 - 4p_{2}r_{2}^{2} - 8p_{2}r_{2} + 16r_{2}^2 + 16r_{2}}}{2}$, $j=2,3,\cdots,p_{2}$;\\
$(3)$ $3p_{1}+2p_{1}r_{1}+2p_{2}+\frac{3}{2}p_{2}r_{2}-4r_{1}-4$ with multiplicity $\frac{1}{2}p_{1}r_{1}-p_{1}$; \\
$(4)$ $2p_{1}+\frac{3}{2}p_{1}r_{1}+3p_{2}+2p_{2}r_{2}-4r_{2}-4$ with multiplicity $\frac{1}{2}p_{2}r_{2}-p_{2}$;\\
$(5)$ together with the four eigenvalues of
$$\scriptsize{
\begin{aligned} 
\left[                 
  \begin{array}{ccc}     
  \begin{smallmatrix}
    4p_{1}+\frac{3}{2}p_{1}r_{1}+p_{2}+p_{2}r_{2}-2r_{1}-4 & \frac{3}{2}p_{1}r_{1}-2r_{1} & p_{2}& p_{2}r_{2}\\  
    3p_{1}-4 & 3p_{1}+4p_{1}r_{1}+2p_{2}+\frac{3}{2}p_{2}r_{2}-8r_{1}-4 &  2p_{2}& \frac{3}{2}p_{2}r_{2}\\  
    p_{1} & p_{1}r_{1} & p_{1}+p_{1}r_{1}+4p_{2}+\frac{3}{2}p_{2}r_{2}-2r_{2}-4 & \frac{3}{2}p_{2}r_{2}-2r_{2}\\
    2p_{1} & \frac{3}{2}p_{1}r_{1} & 3p_{2}-4 & 2p_{1}+\frac{3}{2}p_{1}r_{1}+3p_{2}+4p_{2}r_{2}-8r_{2}-4
  \end{smallmatrix}
  \end{array}
\right].    \nonumber             
\end{aligned}
}$$
\end{thm}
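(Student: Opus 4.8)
The plan is to follow the template of the proofs of Theorems~\ref{thm20} and~\ref{thm21} almost verbatim, using the signless combination $\mathcal{Q}=Tr+\mathcal{D}$ in place of $\mathcal{D}$. First I would fix the vertex partition of $G_{1}\dot{\vee}G_{2}$ into the four cells $V(G_{1})$, $I(G_{1})$, $V(G_{2})$, $I(G_{2})$, where $I(G_{i})$ is the set of subdivision vertices of $G_{i}$, and determine every pairwise distance by a short case analysis: two vertices of $V(G_{i})$ are at distance $2$; a vertex of $V(G_{i})$ and a subdivision vertex of $G_{i}$ are at distance $1$ when incident and $3$ otherwise; two subdivision vertices of $G_{i}$ are at distance $2$ when their edges meet and $4$ otherwise; and the cross distances are $1$ between $V(G_{1})$ and $V(G_{2})$, $2$ between $V(G_{i})$ and $I(G_{3-i})$, and $3$ between $I(G_{1})$ and $I(G_{2})$. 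Summing rows gives the transmissions, and hence $\mathcal{Q}(G_{1}\dot{\vee}G_{2})$ as a $4\times4$ block matrix whose off-diagonal blocks coincide with those of the distance matrix displayed in the proof of Theorem~\ref{thm20} and whose diagonal blocks are the corresponding distance-matrix blocks with the transmission diagonal added. Throughout I would use the identities $R_{i}R_{i}^{T}=A_{i}+r_{i}I$, $A(L(G_{i}))=R_{i}^{T}R_{i}-2I$, $JR_{i}=2J=R_{i}^{T}J$, $JR_{i}^{T}=r_{i}J=R_{i}J$ from Lemma~\ref{lem10}.

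Next, for each eigenvalue $\lambda_{1_{j}}\neq r_{1}$ of $A_{1}$ with eigenvector $X_{1_{j}}$ (necessarily orthogonal to the all-ones vector), the vectors $(X_{1_{j}},0,0,0)^{T}$ and $(0,R_{1}^{T}X_{1_{j}},0,0)^{T}$ span a $2$-dimensional $\mathcal{Q}(G_{1}\dot{\vee}G_{2})$-invariant subspace: every $J$-block annihilates both vectors because $X_{1_{j}}$ and $R_{1}^{T}X_{1_{j}}$ are orthogonal to the all-ones vectors of their cells (here $R_{1}$ maps the all-ones vector to $r_{1}$ times the all-ones vector), while by Lemma~\ref{lem10} the surviving blocks reduce to multiplication by $A_{1}+r_{1}I$ and $R_{1}^{T}R_{1}-2I$, so that $A(L(G_{1}))R_{1}^{T}X_{1_{j}}=(\lambda_{1_{j}}+r_{1}-2)R_{1}^{T}X_{1_{j}}$. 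The restriction of $\mathcal{Q}(G_{1}\dot{\vee}G_{2})$ to this plane is a $2\times2$ matrix, and its two eigenvalues, obtained by solving the resulting quadratic, are exactly the pair listed in part~(1); the mirror-image plane spanned by $(0,0,X_{2_{j}},0)^{T}$ and $(0,0,0,R_{2}^{T}X_{2_{j}})^{T}$ yields part~(2).

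Then, for each eigenvector $Z_{i}$ of $A(L(G_{i}))$ with eigenvalue $-2$ one has $R_{i}Z_{i}=0$ by Lemma~\ref{lem11}, so the distance-matrix diagonal block on $I(G_{i})$ sends $Z_{i}$ to $0$ and the vectors $\psi_{1}=(0,Z_{1},0,0)^{T}$, $\psi_{2}=(0,0,0,Z_{2})^{T}$ are eigenvectors of $\mathcal{Q}(G_{1}\dot{\vee}G_{2})$ with eigenvalue equal to the common transmission of a subdivision vertex of $G_{1}$, respectively $G_{2}$; this produces parts~(3) and~(4), with multiplicities $\frac{1}{2}p_{1}r_{1}-p_{1}$ and $\frac{1}{2}p_{2}r_{2}-p_{2}$ equal to the multiplicity of $-2$ in $\Spec(L(G_{i}))$ from Lemma~\ref{lem11}. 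A count now gives $2(p_{1}-1)+2(p_{2}-1)+\big(\frac{1}{2}p_{1}r_{1}-p_{1}\big)+\big(\frac{1}{2}p_{2}r_{2}-p_{2}\big)=p_{1}+p_{2}+\frac{1}{2}p_{1}r_{1}+\frac{1}{2}p_{2}r_{2}-4=|V(G_{1}\dot{\vee}G_{2})|-4$, and since the four cells form an equitable partition the four remaining eigenvalues are those of the quotient matrix obtained by letting $\mathcal{Q}(G_{1}\dot{\vee}G_{2})$ act on vectors $v=(\alpha J,\beta J,\gamma J,\delta J)^{T}$, which is precisely the $4\times4$ matrix in part~(5).

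The main obstacle is computational rather than conceptual. Pinning down every diametral distance in the subdivision-vertex join is delicate, and those distances feed into the transmission diagonal (hence into constants such as $\frac{3}{2}p_{1}r_{1}+\cdots$) and into the unwieldy radicands of parts~(1) and~(2); carrying out the two $2\times2$ reductions and computing the characteristic polynomial of the $4\times4$ quotient matrix in closed form is exactly the bookkeeping the authors delegate to Matlab. A secondary point is the boundary accounting: the case $\lambda_{i_{j}}=r_{i}$ is excluded because the candidate vectors then degenerate into all-ones-type vectors already counted by the quotient, and the $-2$-eigenvectors $Z_{i}$ of $L(G_{i})$ are genuinely independent of the vectors $R_{i}^{T}X_{i_{j}}$ precisely because $R_{i}Z_{i}=0$; checking these ensures that no eigenvalue is double-counted and that the multiplicities sum to $|V(G_{1}\dot{\vee}G_{2})|$.
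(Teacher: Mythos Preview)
Your proposal is correct and follows essentially the same approach as the paper: the authors write out $\mathcal{Q}(G_{1}\dot{\vee}G_{2})$ as the same $4\times4$ block matrix you describe (with the transmission diagonal added to the distance blocks from Theorem~\ref{thm20}) and then defer to the argument of Theorem~\ref{thm20}, which is precisely the eigenvector ansatz $(tX_{1_{j}},R_{1}^{T}X_{1_{j}},0,0)^{T}$, the $-2$-eigenvectors $(0,Z_{1},0,0)^{T}$, $(0,0,0,Z_{2})^{T}$ via Lemma~\ref{lem11}, and the $4\times4$ quotient on the all-ones blocks. Your description of the two-dimensional invariant subspace is a cleaner rephrasing of their parametrized eigenvector computation, but the content is identical.
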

\begin{proof} Let $R_{i}$ be the incidence matrix of $G_{i}$ and $A(L(G_{i}))$ be the adjacency matrix of $L(G_{i})$. Then, by a proper ordering of the vertices of $G_{1}\dot{\vee}G_{2}$,
its distance signless Laplacian matrix $\mathcal{Q}(G_{1}\dot{\vee}G_{2})$ can be written in the form
$$\scriptsize{
\begin{aligned} 
\mathcal{Q}(G_{1}\dot{\vee}G_{2})=
\left[                 
  \begin{array}{ccc}     
  \begin{smallmatrix}
    (2p_{1}+\frac{3}{2}p_{1}r_{1}+p_{2}+p_{2}r_{2}-2r_{1}-4)I+2J & 3J-2R_{1} & J &2J \\  
   3J-2R_{1}^{T} & (3p_{1}+2p_{1}r_{1}+2p_{2}+\frac{3}{2}p_{2}r_{2}-4r_{1}-8)I+4J-2A(L(G_{1})) &  2J&  3J\\  
    J & 2J & B & 3J-2R_{2}\\
    2J & 3J & 3J-2R_{2}^{T} & C\\
  \end{smallmatrix}
  \end{array}
\right],    \nonumber             
\end{aligned}}$$
where $B=(p_{1}+p_{1}r_{1}+2p_{2}+\frac{3}{2}p_{2}r_{2}-2r_{2}-4)I+2J$, $C=(2p_{1}+\frac{3}{2}p_{1}r_{1}+3p_{2}+2p_{2}r_{2}-4r_{2}-8)I+4J-2A(L(G_{2}))$,
 $J$ is the all-one matrix, and $I$ is the identity matrix of appropriate orders. We can prove the theorem in a similar way to Theorem
 \ref{thm20}.\qed\end{proof}
\subsection{The $\mathcal{D}-$spectrum, $\mathcal{L}-$spectrum and $\mathcal{Q}-$spectrum of $G_{1}\underline{\vee}G_{2}$}

\begin{thm}\label{thm23}
Let $G_{i}$ be an $r_{i}-$regular graph of order $p_{i}$ with the adjacency matrix $A_{i}$ and adjacency spectrum $\{r_{i},\lambda_{i_{2}},\lambda_{i_{3}},\cdots,\lambda_{i_{p_{i}}}\}$ for $i=1,2$. Then, the distance spectrum of $G_{1}\underline{\vee}G_{2}$ is:\\
$(1)$ $ -\lambda_{1_{j}}-3 \pm \sqrt{\lambda_{1_{j}}^{2}+6\lambda_{1_{j}}+4r_{1}+1}$, $j=2,3,\cdots,p_{1}$;\\
$(2)$ $-2\lambda_{2_{j}}-2r_{2}-2$, $j=2,3,\cdots,p_{2}$;\\
$(3)$ $-2$ with multiplicity $\frac{1}{2}p_{1}r_{1}-p_{1}$; \\
$(4)$ $0$ with multiplicity $\frac{1}{2}p_{2}r_{2}-1$; \\
$(5)$ together with the four eigenvalues of
\begin{equation} 
\left[                 
  \begin{array}{cccc}     
    4p_{1}-2r_{1}-4 & \frac{3}{2}p_{1}r_{1}-2r_{1} & 2p_{2}&\frac{3}{2}p_{2}r_{2} \\  
    3p_{1}-4 & p_{1}r_{1}-2 &  p_{2}&  p_{2}r_{2}\\  
    2p_{1} & \frac{1}{2}p_{1}r_{1} & 2p_{2}-2&\frac{3}{2}p_{2}r_{2}-2r_{2}\\
    3p_{1} & p_{1}r_{1} & 3p_{2}-4&2p_{2}r_{2}-4r_{2}
  \end{array}
\right].    \nonumber             
\end{equation}
\end{thm}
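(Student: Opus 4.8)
The plan is to run the same argument as the proofs of Theorems~\ref{thm17} and~\ref{thm20}. First I would order the vertices of $G_1\underline{\vee}G_2$ by the four blocks $V(G_1),I(G_1),V(G_2),I(G_2)$, of sizes $p_1,\tfrac12p_1r_1,p_2,\tfrac12p_2r_2$, and determine all distances by a short case analysis (the only non-subdivision edges run between $I(G_1)$ and $V(G_2)$): inside $V(G_1)$ a pair is at distance $2$ if adjacent in $G_1$ and $4$ otherwise; $V(G_1)$ meets an edge-vertex of $G_1$ at distance $1$ or $3$ by incidence; any two vertices within $I(G_1)$, within $V(G_2)$, across $V(G_1)$--$V(G_2)$, or across $I(G_1)$--$I(G_2)$ are at distance $2$, while $I(G_1)$--$V(G_2)$ has distance $1$ and $V(G_1)$--$I(G_2)$ has distance $3$; finally inside $I(G_2)$ a pair is at distance $2$ if the two edges share an endpoint and $4$ otherwise, and $V(G_2)$--$I(G_2)$ is $1$ or $3$ by incidence. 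With $R_i$ the incidence matrix of $G_i$ and $A(L(G_2))$ the adjacency matrix of $L(G_2)$, this gives
\[
\mathcal{D}(G_1\underline{\vee}G_2)=\begin{pmatrix}
4(J-I)-2A_1 & 3J-2R_1 & 2J & 3J\\
3J-2R_1^{T} & 2(J-I) & J & 2J\\
2J & J & 2(J-I) & 3J-2R_2\\
3J & 2J & 3J-2R_2^{T} & 4(J-I)-2A(L(G_2))
\end{pmatrix}.
\]

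Next I would construct the eigenvectors exactly as in Theorem~\ref{thm17}. For an adjacency eigenvalue $\lambda_{1_j}\neq r_1$ of $G_1$ with eigenvector $X_{1_j}$ (so $\mathbf{1}^{T}X_{1_j}=0$), Lemma~\ref{lem10} gives $R_1R_1^{T}X_{1_j}=(\lambda_{1_j}+r_1)X_{1_j}$, $A(L(G_1))R_1^{T}X_{1_j}=(\lambda_{1_j}+r_1-2)R_1^{T}X_{1_j}$ and $JX_{1_j}=0=JR_1^{T}X_{1_j}$, so the last two block-columns annihilate $(tX_{1_j},R_1^{T}X_{1_j},0,0)^{T}$ and the eigenequation collapses to $-(2\lambda_{1_j}+4)t-2(\lambda_{1_j}+r_1)=\mu t$ and $-2t-2=\mu$; eliminating $t$ yields $\mu^{2}+2(\lambda_{1_j}+3)\mu+8-4r_1=0$, i.e.\ the pair in $(1)$, hence $2(p_1-1)$ eigenvalues. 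Symmetrically (in the spirit of Theorem~\ref{thm20}), for $\lambda_{2_j}\neq r_2$ with eigenvector $X_{2_j}$ the vector $(0,0,tX_{2_j},R_2^{T}X_{2_j})^{T}$ reduces to $-2t-2(\lambda_{2_j}+r_2)=\mu t$ and $-2t-2(\lambda_{2_j}+r_2)=\mu$, giving $\mu=-2(\lambda_{2_j}+r_2+1)$ (item~$(2)$) together with $\mu=0$, contributing $p_2-1$ zero eigenvalues.

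For the remainder I would use Lemma~\ref{lem11}. If $Z_1$ is an eigenvector of $L(G_1)$ for $-2$ then $R_1Z_1=0$ and $\mathbf{1}^{T}Z_1=0$, so $(0,Z_1,0,0)^{T}$ is an eigenvector of $\mathcal{D}(G_1\underline{\vee}G_2)$ for $-2$, giving item~$(3)$ with multiplicity $\tfrac12p_1r_1-p_1$; likewise each eigenvector $Z_2$ of $L(G_2)$ for $-2$ gives $(0,0,0,Z_2)^{T}$, an eigenvector for $0$ (using $[4(J-I)-2A(L(G_2))]Z_2=-4Z_2+4Z_2=0$), of multiplicity $\tfrac12p_2r_2-p_2$, which together with the $p_2-1$ zeros above is precisely item~$(4)$. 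All of these $p_1+p_2+\tfrac12p_1r_1+\tfrac12p_2r_2-4$ eigenvectors are orthogonal to every vector of the form $(\alpha J,\beta J,\gamma J,\delta J)^{T}$, so the last four eigenvalues are the eigenvalues of the matrix obtained by replacing each block of $\mathcal{D}(G_1\underline{\vee}G_2)$ by its common row-sum over that block (the partition is equitable because $G_1,G_2$ are regular); computing those row-sums, using that rows/columns of $R_i$ have $r_i$ and $2$ ones and rows of $A(L(G_2))$ have $2(r_2-1)$ ones, produces exactly the $4\times4$ matrix of item~$(5)$.

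The main obstacle is the first step---pinning down every distance \emph{exactly}. The subdivision graphs $S(G_i)$ are bipartite, which fixes parities and eliminates the naive short paths, while the only extra edges join $I(G_1)$ to $V(G_2)$, so a detour through the opposite side always costs precisely $2$; in each case one has to verify both that such a length-$2$ detour (or a length-$3$ path within a single $S(G_i)$) genuinely exists and that nothing shorter does, so that the ``long'' distances are truly $3$ (for mixed pairs) and $4$ (for same-side non-incident/non-adjacent pairs). Once the block matrix is in hand, the rest is the same bookkeeping as in Theorems~\ref{thm17}--\ref{thm22}; the $2\times2$ reductions and the $4\times4$ quotient are elementary but tedious and are best confirmed with a symbolic computation, as the authors do elsewhere in the paper.
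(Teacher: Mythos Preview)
Your proposal is correct and follows exactly the paper's approach: the paper writes down the identical $4\times4$ block form of $\mathcal{D}(G_1\underline{\vee}G_2)$ and then says ``we can prove the theorem in a similar way to Theorem~\ref{thm20}'', which is precisely the eigenvector construction you spell out (the vectors $(tX_{1_j},R_1^{T}X_{1_j},0,0)^{T}$ and $(0,0,tX_{2_j},R_2^{T}X_{2_j})^{T}$, the $-2$-eigenvectors of $L(G_i)$ via Lemma~\ref{lem11}, and the equitable-partition quotient for the last four eigenvalues). Your explicit count of the zero eigenvalues---combining the $p_2-1$ zeros coming from the pairs in item~(2) with the $\tfrac12 p_2 r_2-p_2$ zeros from the $(0,0,0,Z_2)^{T}$ vectors to get the multiplicity $\tfrac12 p_2 r_2-1$ in item~(4)---is exactly the bookkeeping the paper leaves implicit.
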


\begin{proof} Let $R_{i}$ be the incidence matrix of $G_{i}$ and $A(L(G_{i}))$ be the adjacency matrix of $L(G_{i})$, Then, by a proper ordering of the vertices of $G_{1}\underline{\vee}G_{2}$,
its distance matrix $\mathcal{D}(G_{1}\underline{\vee}G_{2})$ can be written in the form
\begin{equation} 
\mathcal{D}(G_{1}\underline{\vee}G_{2})=
\left[                 
  \begin{array}{cccc}     
   (4J-I)-2A_{1} & 3J-2R_{1}&2J &3J\\  
    3J-2R_{1}^{T} & 2(J-I)&J&2J\\  
    2J & J & 2(J-I) & 3J-2R_{2}\\
    3J & 2J & 3J-2R_{2}^{T} & 4(J-I)-2A(L(G_{2}))
  \end{array}
\right].    \nonumber             
\end{equation}
where $J$ is the all-one matrix, and $I$ is the identity matrix of appropriate orders. We can prove the theorem in a similar way to Theorem
 \ref{thm20}.\qed\end{proof}

\begin{thm}\label{thm24}
Let $G_{i}$ be an $r_{i}-$regular graph of order $p_{i}$ vertices with the adjacency matrix $A_{i}$ and adjacency spectrum $\{r_{i},\lambda_{i_{2}},\lambda_{i_{3}},\cdots,\lambda_{i_{p_{i}}}\}$ for $i=1,2$. Then, the distance Laplacian spectrum of $G_{1}\underline{\vee}G_{2}$ is:\\
$(1)$ $\lambda_{1_{j}}+\frac{7p_{1}}{2}+ \frac{3p_{2}}{2} + \frac{5p_{1}r_{1}}{4}+ \frac{5p_{2}r_{2}}{4}- 2r_{1}-2 \pm\frac{\sqrt{4\lambda_{1_{j}}^2 + 2\lambda_{1_{j}}p_{1}r_{1} + 4\lambda_{1_{j}}p_{1} + 2\lambda_{1_{j}}p_{2}r_{2} + 4\lambda_{1_{j}}p_{2} - 16\lambda_{1_{j}}r_{1} + 32\lambda_{1_{j}} + \frac{1}{4}p_{1}^2r_{1}^2 }}{2}\\
\frac{\overline{+ p_{1}^2r_{1} + p_{1}^2 + \frac{1}{2}p_{1}p_{2}r_{1}r_{2} + p_{1}p_{2}r_{1} + p_{1}p_{2}r_{2} + 2p_{1}p_{2} - 4p_{1}r_{1}^2 - 4p_{1}r_{1} + 8p_{1} + \frac{1}{4}p_{2}^2r_{2}^2 + p_{2}^2r_{2} + p_{2}^2 - 4p_{2}r_{1}r_{2} - 8p_{2}r_{1} + 4p_{2}r_{2} + 8p_{2} + 16r_{1}^2}}{2}\\
\frac{\overline{ - 16r_{1} + 16}}{2}$, $j=2,3,\cdots,p_{1}$;\\
$(2)$ $\lambda_{2_{j}}+\frac{5p_{1}}{2}+ \frac{5p_{2}}{2} + \frac{3p_{1}r_{1}}{4}+ \frac{7p_{2}r_{2}}{4}- 2r_{2}-2 \pm\frac{\sqrt{4\lambda_{2_{j}}^2 + 2\lambda_{2_{j}}p_{1}r_{1} + 4\lambda_{2_{j}}p_{1} + 2\lambda_{2_{j}}p_{2}r_{2} + 4\lambda_{2_{j}}p_{2}+\frac{1}{4}p_{1}^2r_{1}^2 + p_{1}^2r_{1} + p_{1}^2 + }}{2}\\
\frac{\overline{\frac{1}{2}p_{1}p_{2}r_{1}r_{2}+ p_{1}p_{2}r_{1} + p_{1}p_{2}r_{2} + 2p_{1}p_{2} - 4p_{1}r_{1} - 8p_{1} + \frac{1}{4}p_{2}^2r_{2}^2 + p_{2}^2r_{2} + p_{2}^2 - 4p_{2}r_{2} - 8p_{2} + 16r_{2} + 16}}{2}$, $j=2,3,\cdots,p_{2}$;\\
$(3)$ $3p_{1}+p_{1}r_{1}+p_{2}+p_{2}r_{2}-4$ with multiplicity $\frac{1}{2}p_{1}r_{1}-p_{1}$;\\
$(4)$ $3p_{1}+p_{1}r_{1}+2p_{2}+2p_{2}r_{2}-4r_{2}-4$ with multiplicity $\frac{1}{2}p_{2}r_{2}-p_{2}$;\\
$(5)$ together with the four eigenvalues of
\begin{equation} 
\left[                 
  \begin{array}{cccc}     
  \begin{smallmatrix}
    \frac{3}{2}p_{1}r_{1}+2p_{2}+\frac{3}{2}p_{2}r_{2}-2r_{1} & 2r_{1}-\frac{3}{2}p_{1}r_{1} & -2p_{2} & -\frac{3}{2}p_{2}r_{2}\\  
   4-3p_{1} & 3p_{1}+p_{2}+p_{2}r_{2}-4 &  -p_{2} & -p_{2}r_{2}\\  
    -2p_{1} & -\frac{1}{2}p_{1}r_{1} & 2p_{1}+\frac{1}{2}p_{1}r_{1}+\frac{3}{2}p_{2}r_{2}-2r_{2} & 2r_{2}-\frac{3}{2}p_{2}r_{2}\\
    -3p_{1} & -p_{1}r_{1} & 4-3p_{2} & 3p_{1}+p_{1}r_{1}+3p_{2}-4
  \end{smallmatrix}
  \end{array}
\right].    \nonumber             
\end{equation}
\end{thm}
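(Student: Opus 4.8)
The plan is to mirror the proof of Theorem~\ref{thm20} (and of Theorems~\ref{thm21}--\ref{thm23}): write $\mathcal{L}(G_1\underline{\vee}G_2)=Tr(G_1\underline{\vee}G_2)-\mathcal{D}(G_1\underline{\vee}G_2)$ in $4\times4$ block form indexed by the vertex classes $V(G_1)$, $I(G_1)$, $V(G_2)$, $I(G_2)$ (so $|I(G_i)|=\tfrac12 p_ir_i$), and then diagonalise block by block. First I would read off all pairwise distances from the two subdivision graphs together with the edges added by the $\underline{\vee}$-join: two vertices of $V(G_1)$ are at distance $2$ or $4$ according as they are adjacent in $G_1$; a vertex of $V(G_1)$ and a vertex of $I(G_1)$ are at distance $1$ or $3$ according to incidence; any two vertices of $V(G_2)$, and any two vertices of $I(G_1)$, are at distance $2$ (a common neighbour always exists), while each vertex of $I(G_1)$ is adjacent to each vertex of $V(G_2)$, so that no $A_2$ or $A(L(G_1))$ term survives in those blocks; two vertices of $I(G_2)$ are at distance $2$ or $4$ according as they are adjacent in $L(G_2)$; and the remaining cross-distances are exactly the blocks shown in the distance matrix of Theorem~\ref{thm23}. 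Since each $G_i$ is regular, the transmission is constant on each of the four classes, so placing those four scalars on the diagonal and subtracting $\mathcal{D}$ gives $\mathcal{L}(G_1\underline{\vee}G_2)$ as a block matrix built from $A_1$, $A(L(G_2))$, the incidence matrices $R_1,R_2$, and all-ones blocks $J$.

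Next I would handle parts~$(1)$ and $(2)$. For $j=2,\dots,p_1$ let $X_{1_j}$ be an eigenvector of $A_1$ for $\lambda_{1_j}\neq r_1$; it is orthogonal to $\mathbf 1$, and by Lemma~\ref{lem10} one has $R_1R_1^{T}X_{1_j}=(\lambda_{1_j}+r_1)X_{1_j}$, while $R_1^{T}X_{1_j}$ is an $A(L(G_1))$-eigenvector orthogonal to $\mathbf 1$. Substituting the trial vector $(tX_{1_j},\,R_1^{T}X_{1_j},\,0,\,0)^{T}$ into the eigenequation $\mathcal{L}(G_1\underline{\vee}G_2)\Psi=\mu\Psi$ annihilates every all-ones block and leaves a $2\times2$ linear system in $t$ and $\mu$; eliminating $t$ produces a quadratic in $\mu$ whose two roots are the pair in part~$(1)$. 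The symmetric trial vectors $(0,\,0,\,sX_{2_j},\,R_2^{T}X_{2_j})^{T}$ for eigenvectors $X_{2_j}$ of $A_2$, $j=2,\dots,p_2$, give part~$(2)$; note that, in contrast to the pure distance matrix of Theorem~\ref{thm23}, the transmission terms on the two diagonal blocks now break the cancellation that collapsed one root to $0$, so the $2\times2$ system is genuinely non-degenerate and yields two roots per $j$.

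For parts~$(3)$ and $(4)$, take $Z_i$ in the $(-2)$-eigenspace of $A(L(G_i))$, which by Lemma~\ref{lem11} has dimension $\tfrac12 p_ir_i-p_i$ and satisfies $R_iZ_i=0$ (and $\mathbf 1^{T}Z_i=0$, since $-2$ is not the valency of $L(G_i)$). The vectors $(0,Z_1,0,0)^{T}$ and $(0,0,0,Z_2)^{T}$ are then eigenvectors of $\mathcal{L}(G_1\underline{\vee}G_2)$: every off-diagonal block applied to them vanishes, and the diagonal block acts as a scalar, so the eigenvalues are the transmission of an $I(G_1)$-vertex increased by $2$ and the transmission of an $I(G_2)$-vertex, which are the constants in $(3)$ and $(4)$. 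Counting, parts~$(1)$--$(4)$ supply $p_1+p_2+\tfrac12 p_1r_1+\tfrac12 p_2r_2-4$ eigenvectors, all of which have zero sum on each of the four classes, hence lie in the orthogonal complement of the subspace of block-constant vectors. The remaining $4$ eigenvectors are therefore of the form $(\alpha\mathbf 1,\beta\mathbf 1,\gamma\mathbf 1,\delta\mathbf 1)^{T}$, and substituting this into the eigenequation, using $A_i\mathbf 1=r_i\mathbf 1$, $R_i\mathbf 1=r_i\mathbf 1$, $R_i^{T}\mathbf 1=2\mathbf 1$, collapses $\mathcal{L}(G_1\underline{\vee}G_2)$ to the $4\times4$ quotient matrix of part~$(5)$, whose rows sum to zero so that $0$ is among its eigenvalues (the Laplacian eigenvalue, with eigenvector $\mathbf 1$).

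The only genuinely delicate point, and it is the same one as in Theorems~\ref{thm17}--\ref{thm23}, is the bookkeeping: getting every one of the cross-distances right, computing the four class-transmissions correctly, and then pushing the resulting scalars cleanly through the two $2\times2$ eliminations so that the radicands in $(1)$ and $(2)$ and the entries of the $4\times4$ quotient matrix come out in exactly the displayed form. I would carry out that final step with a symbolic computation, as the authors do elsewhere, rather than by hand.
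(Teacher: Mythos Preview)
Your proposal is correct and follows essentially the same route as the paper: the authors simply display the $4\times4$ block form of $\mathcal{L}(G_{1}\underline{\vee}G_{2})$ (with the ordering $V(G_{1}),I(G_{1}),V(G_{2}),I(G_{2})$ and the diagonal blocks $(4p_{1}+\tfrac{3}{2}p_{1}r_{1}+2p_{2}+\tfrac{3}{2}p_{2}r_{2}-4r_{1})I-4J+2A_{1}$, $(3p_{1}+p_{1}r_{1}+p_{2}+p_{2}r_{2}-4)I-2J$, $(2p_{1}+\tfrac{1}{2}p_{1}r_{1}+2p_{2}+\tfrac{3}{2}p_{2}r_{2}-2r_{2})I-2J$, $(3p_{1}+p_{1}r_{1}+3p_{2}+2p_{2}r_{2}-4r_{2})I-4J+2A(L(G_{2}))$) and then say that the rest is identical to the argument for Theorem~\ref{thm20}. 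Your choice of trial vectors $(tX_{1_{j}},R_{1}^{T}X_{1_{j}},0,0)^{T}$, $(0,0,sX_{2_{j}},R_{2}^{T}X_{2_{j}})^{T}$, $(0,Z_{1},0,0)^{T}$, $(0,0,0,Z_{2})^{T}$ and the block-constant vectors is exactly what that reference is pointing to, and your observation that the transmission terms prevent the degeneracy seen in the pure distance case (Theorem~\ref{thm23}) is the only extra remark needed.
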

\begin{proof} Let $R_{i}$ be the incidence matrix of $G_{i}$ and $A(L(G_{i}))$ be the adjacency matrix of $L(G_{i})$, Then, by a proper ordering of the vertices of $G_{1}\underline{\vee}G_{2}$,
its distance Laplacian matrix $\mathcal{L}(G_{1}\underline{\vee}G_{2})$ can be written in the form
$$\scriptsize{
\begin{aligned} 
\mathcal{L}(G_{1}\underline{\vee}G_{2})=
\left[                 
  \begin{array}{ccc}     
  \begin{smallmatrix}
    (4p_{1}+\frac{3}{2}p_{1}r_{1}+2p_{2}+\frac{3}{2}p_{2}r_{2}-4r_{1})I-4J+2A_{1} & 2R_{1}-3J & -2J &-3J\\  
    2R_{1}^{T}-3J & (3p_{1}+p_{1}r_{1}+p_{2}+p_{2}r_{2}-4)I-2J &  -J&-2J\\  
    -2J & -J & B & 2R_{2}-3J\\
    -3J & -2J & 2R_{2}^{T}-3J &C
  \end{smallmatrix}
  \end{array}
\right],    \nonumber             
\end{aligned}}$$
where $B=(2p_{1}+\frac{1}{2}p_{1}r_{1}+2p_{2}+\frac{3}{2}p_{2}r_{2}-2r_{2})I-2J$, $C=(3p_{1}+p_{1}r_{1}+3p_{2}+2p_{2}r_{2}-4r_{2})I-4J+2A(L(G_{2}))$, $J$ is the all-one matrix, and $I$ is the identity matrix of appropriate orders. We can prove the theorem in a similar way to Theorem
 \ref{thm20}.\qed\end{proof}

\begin{thm}\label{thm25}
Let $G_{i}$ be an $r_{i}-$regular graph of order $p_{i}$ with the adjacency matrix $A_{i}$ and adjacency spectrum $\{r_{i},\lambda_{i_{2}},\lambda_{i_{3}},\cdots,\lambda_{i_{p_{i}}}\}$ for $i=1,2$. Then, the distance signless Laplacian spectrum of $G_{1}\underline{\vee}G_{2}$ is:\\
$(1)$ $-\lambda_{1_{j}}+\frac{7p_{1}}{2}+ \frac{3p_{2}}{2} + \frac{5p_{1}r_{1}}{4}+ \frac{5p_{2}r_{2}}{4}- 2r_{1}-8 \pm\frac{\sqrt{4\lambda_{1_{j}}^2 - 2\lambda_{1_{j}}p_{1}r_{1} - 4\lambda_{1_{j}}p_{1} - 2\lambda_{1_{j}}p_{2}r_{2} - 4\lambda_{1_{j}}p_{2} + 16\lambda_{1_{j}}r_{1} + 16\lambda_{1_{j}} + }}{2}\\
\frac{\overline{\frac{1}{4}p_{1}^2r_{1}^2 +  p_{1}^2r_{1} + p_{1}^2 + \frac{1}{2}p_{1}p_{2}r_{1}r_{2} + p_{1}p_{2}r_{1} + p_{1}p_{2}r_{2} + 2p_{1}p_{2} - 4p_{1}r_{1}^2 - 8p_{1}r_{1} + \frac{1}{4}p_{2}^2r_{2}^2 + p_{2}^2r_{2} + p_{2}^2 - 4p_{2}r_{1}r_{2} - 8p_{2}r_{1} + 16r_{1}^2 + 16r_{1}}}{2}$, $j=2,3,\cdots,p_{1}$;\\
$(2)$ $-\lambda_{2_{j}}+\frac{5p_{1}}{2}+ \frac{5p_{2}}{2} + \frac{3p_{1}r_{1}}{4}+ \frac{7p_{2}r_{2}}{4}- 4r_{2}-4 \pm\frac{\sqrt{4\lambda_{2_{j}}^2 - 2\lambda_{2_{j}}p_{1}r_{1} - 4\lambda_{2_{j}}p_{1} - 2\lambda_{2_{j}}p_{2}r_{2} - 4\lambda_{2_{j}}p_{2} + 16\lambda_{2_{j}}r_{2} + 16\lambda_{2_{j}} + }}{2}\\
\frac{\overline{\frac{1}{4}p_{1}^2r_{1}^2 +  p_{1}^2r_{1} + p_{1}^2 + \frac{1}{2}p_{1}p_{2}r_{1}r_{2} + p_{1}p_{2}r_{1} + p_{1}p_{2}r_{2} + 2p_{1}p_{2} - 4p_{1}r_{1}r_{2} - 8p_{1}r_{2} + \frac{1}{4}p_{2}^2r_{2}^2 + p_{2}^2r_{2} + p_{2}^2 - 4p_{2}r_{2}^{2} - 8p_{2}r_{2} + 16r_{2}^2 + 16r_{2}}}{2}$, $j=2,3,\cdots,p_{2}$;\\
$(3)$ $3p_{1}+p_{1}r_{1}+p_{2}+p_{2}r_{2}-8$ with multiplicity $\frac{1}{2}p_{1}r_{1}-p_{1}$;\\
$(4)$ $3p_{1}+p_{1}r_{1}+3p_{2}+2p_{2}r_{2}-4r_{2}-4$ with multiplicity $\frac{1}{2}p_{2}r_{2}-p_{2}$;\\
$(5)$ together with the four eigenvalues of
$$\scriptsize{
\begin{aligned} 
\left[                 
  \begin{array}{cccc}     
  \begin{smallmatrix}
    8p_{1}+\frac{3}{2}p_{1}r_{1}+2p_{2}+\frac{3}{2}p_{2}r_{2}-6r_{1}-8 & \frac{3}{2}p_{1}r_{1}-2r_{1} & 2p_{2} & \frac{3}{2}p_{2}r_{2}\\  
   3p_{1}-4 & 3p_{1}+2p_{1}r_{1}+p_{2}+p_{2}r_{2}-8 &  p_{2} & p_{2}r_{2}\\  
    2p_{1} & \frac{1}{2}p_{1}r_{1} & 2p_{1}+\frac{1}{2}p_{1}r_{1}+4p_{2}+\frac{3}{2}p_{2}r_{2}-2r_{2}-4 & \frac{3}{2}p_{2}r_{2}-2r_{2}\\
    3p_{1} & p_{1}r_{1} & 3p_{2}-4 & 3p_{1}+p_{1}r_{1}+3p_{2}+4p_{2}r_{2}-8r_{2}-4
  \end{smallmatrix}
  \end{array}
\right].    \nonumber             
\end{aligned}}$$
\end{thm}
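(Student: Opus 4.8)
The plan is to reuse, essentially verbatim, the machinery developed for Theorems~\ref{thm20}--\ref{thm24}. Write $q_i=\frac{1}{2}p_i r_i$ for the number of edges of $G_i$ and fix the vertex ordering $V(G_1),I(G_1),V(G_2),I(G_2)$ on the $p_1+q_1+p_2+q_2$ vertices of $H:=G_1\underline{\vee}G_2$. First I would read the pairwise distances of $H$ off Definition~\ref{lem5}: since $V(G_1)$ is joined completely to $I(G_2)$, any two vertices lying inside $V(G_1)$ (resp. inside $I(G_2)$) are at distance $2$, every vertex of $V(G_1)$ is at distance $1$ from $I(G_2)$, and all the remaining distances are small constants with corrections governed by the incidence matrices $R_1,R_2$ and the line-graph adjacency matrices $A(L(G_i))$; the diameter of $H$ is at most $4$. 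Assembling these entries yields $\mathcal{D}(H)$, and hence $\mathcal{Q}(H)=Tr(H)+\mathcal{D}(H)$ as a $4\times4$ block matrix whose diagonal blocks are scalar multiples of $I$ plus combinatorial pieces; the scalar transmissions forced by regularity are exactly the constants appearing in items~(1)--(5).

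Next I would handle the adjacency-eigenvector contributions. For an eigenvalue $\lambda_{1_j}\neq r_1$ of $A_1$ with eigenvector $X_{1_j}$ orthogonal to $J$, Lemma~\ref{lem10} gives $R_1R_1^{T}=A_1+r_1I$, $A(L(G_1))=R_1^{T}R_1-2I$ and $JR_1^{T}=r_1J$, so a direct substitution shows that $\bigl(tX_{1_j},\,R_1^{T}X_{1_j},\,0,\,0\bigr)^{T}$ is an eigenvector of $\mathcal{Q}(H)$ precisely when $t$ and the eigenvalue satisfy a $2\times2$ linear system; eliminating $t$ gives a quadratic in the eigenvalue whose two roots are the two expressions in item~(1), so this family contributes $2(p_1-1)$ eigenvectors. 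The mirror computation on the $G_2$-side with $\bigl(0,\,0,\,tX_{2_j},\,R_2^{T}X_{2_j}\bigr)^{T}$ (and Lemma~\ref{lem10} for $G_2$) produces item~(2) and another $2(p_2-1)$ eigenvectors. For items~(3) and~(4), Lemma~\ref{lem11} says that an eigenvector $Z_i$ of $L(G_i)$ for the eigenvalue $-2$ satisfies $R_iZ_i=0$ and is orthogonal to the all-ones vector; consequently $(0,Z_1,0,0)^{T}$ and $(0,0,0,Z_2)^{T}$ are eigenvectors on which every all-ones block, every $R_i$-block and the $A(L(G_i))$-block reduce to scalars, yielding the constant eigenvalues of items~(3) and~(4) with multiplicities $q_1-p_1$ and $q_2-p_2$.

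Finally, these eigenvectors span a subspace of dimension $2(p_1-1)+2(p_2-1)+(q_1-p_1)+(q_2-p_2)=p_1+q_1+p_2+q_2-4$, and by construction each of them is orthogonal to the four block-constant vectors $(\alpha J,\beta J,\gamma J,\delta J)^{T}$, which therefore span a four-dimensional $\mathcal{Q}(H)$-invariant complement. Restricting $\mathcal{Q}(H)$ to that complement and using $A_iJ=r_iJ$, $R_iJ=r_iJ$ and $R_i^{T}J=2J$ from Lemma~\ref{lem10} gives the $4\times4$ quotient matrix of item~(5), which closes the dimension count. The genuinely delicate steps are (a) getting every entry of the $4\times4$ block distance matrix right --- in particular the off-diagonal constants linking the two subdivided graphs and the transmission sums on the diagonals --- and (b) pushing the two $2\times2$ eliminations all the way to the stated radicands; both are purely computational (the paper itself resorts to Matlab for the analogous algebra), so I expect (a) to be the real obstacle, exactly as in Theorems~\ref{thm23} and~\ref{thm24}.
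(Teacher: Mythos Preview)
Your proposal is correct and follows essentially the same approach as the paper: the paper's own proof simply writes down the $4\times 4$ block form of $\mathcal{Q}(G_{1}\underline{\vee}G_{2})$ (with the same vertex ordering $V(G_1),I(G_1),V(G_2),I(G_2)$) and then defers to the method of Theorem~\ref{thm20}, which is precisely the eigenvector ans\"atze $(tX_{1_j},R_1^{T}X_{1_j},0,0)^{T}$, $(0,0,tX_{2_j},R_2^{T}X_{2_j})^{T}$, $(0,Z_1,0,0)^{T}$, $(0,0,0,Z_2)^{T}$ and the block-constant quotient you describe. Your remark that the delicate part is assembling the correct block distance matrix and pushing through the $2\times2$ eliminations matches the paper's treatment exactly.
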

\begin{proof} Let $R_{i}$ be the incidence matrix of $G_{i}$ and $A(L(G_{i}))$ be the adjacency matrix of $L(G_{i})$, Then, by a proper ordering of the vertices of $G_{1}\underline{\vee}G_{2}$,
its distance signless Laplacian matrix $\mathcal{Q}(G_{1}\underline{\vee}G_{2})$ can be written in the form
$$\scriptsize{
\begin{aligned} 
\mathcal{Q}(G_{1}\underline{\vee}G_{2})=
\left[                 
  \begin{array}{ccc}     
  \begin{smallmatrix}
    (4p_{1}+\frac{3}{2}p_{1}r_{1}+2p_{2}+\frac{3}{2}p_{2}r_{2}-4r_{1}-8)I+4J-2A_{1} & 3J-2R_{1} & 2J &3J\\  
    3J-2R_{1}^{T} & (3p_{1}+p_{1}r_{1}+p_{2}+p_{2}r_{2}-8)I+2J &  J & 2J\\  
    2J & J & B & 3J-2R_{2}\\
    3J & 2J & 3J-2R_{2}^{T} & C
  \end{smallmatrix}
  \end{array}
\right],    \nonumber             
\end{aligned}}$$
where $B=(2p_{1}+\frac{1}{2}p_{1}r_{1}+2p_{2}+\frac{3}{2}p_{2}r_{2}-2r_{2}-4)I+2J$, $C=(3p_{1}+p_{1}r_{1}+3p_{2}+2p_{2}r_{2}-4r_{2}-8)I+4J-2A(L(G_{2}))$,
$J$ is the all-one matrix, and $I$ is the identity matrix of appropriate orders. We can prove the theorem in a similar way to Theorem
\ref{thm20}.\qed\end{proof}

\section{Bounds on generalized distance spectral radius}
 In this section, we establish some bounds on generalized distance spectral radius of a simple, connected graph $G$. To the begin, we discuss about the lower and upper bounds of $\rho(G)$ involving transmission degree and second transmission degree.

 \begin{thm}\label{thm41}
 Let $G$ be a simple, connected graph. Then
 \begin{equation*}
\begin{aligned}  
\rho(G)\leq\frac{\alpha Tr_{\max}+\sqrt{\alpha^{2}Tr_{\max}^{2}+4(1-\alpha)T_{\max}}}{2},
\end{aligned}
\end{equation*}
where $Tr_{\max}$ and $T_{\max}$ are the maximum transmission degree and the maximum second
transmission degree of $G$, respectively. Moreover, the equality holds if and only if $G$
is transmission regular graph.
\end{thm}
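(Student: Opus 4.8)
The plan is to apply the $(\rho,x)$-eigenequation twice, starting from the coordinate of the generalized distance Perron vector with largest value. Let $X=(x_1,\dots,x_n)^T$ be the positive unit eigenvector of $\mathcal{D}_\alpha(G)$ belonging to $\rho=\rho(G)$, and choose $i$ with $x_i=\max_{k}x_k$. The eigenequation at $i$ gives $\rho x_i=\alpha Tr_i x_i+(1-\alpha)\sum_k d_{ik}x_k$. Multiplying through by $\rho$ and then substituting $\rho x_k=\alpha Tr_k x_k+(1-\alpha)\sum_j d_{kj}x_j$ on the right-hand side produces
\[
\rho^2 x_i=\alpha Tr_i\,\rho x_i+\alpha(1-\alpha)\sum_k d_{ik}Tr_k x_k+(1-\alpha)^2\sum_k\sum_j d_{ik}d_{kj}x_j .
\]

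Next I would replace each $x_k$ and $x_j$ appearing on the right by $x_i$ (valid since $x_i$ is maximal and $X>0$), and use the identities $\sum_k d_{ik}Tr_k=T_i$ and $\sum_k d_{ik}\sum_j d_{kj}=\sum_k d_{ik}Tr_k=T_i$. Since $\alpha(1-\alpha)+(1-\alpha)^2=1-\alpha$, this yields $\rho^2 x_i\le \alpha Tr_i\,\rho x_i+(1-\alpha)T_i x_i$. Dividing by $x_i>0$ and bounding $Tr_i\le Tr_{\max}$, $T_i\le T_{\max}$ (permissible because $\alpha\ge0$, $1-\alpha\ge0$ and $\rho>0$) gives $\rho^2-\alpha Tr_{\max}\rho-(1-\alpha)T_{\max}\le 0$. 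Since $\rho>0$ lies above the (non-positive) smaller root of $t^2-\alpha Tr_{\max}t-(1-\alpha)T_{\max}$, it is at most the larger root $\tfrac12\big(\alpha Tr_{\max}+\sqrt{\alpha^2 Tr_{\max}^2+4(1-\alpha)T_{\max}}\big)$, which is the asserted inequality.

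For the equality statement I would retrace the estimates. If equality holds then $Tr_i=Tr_{\max}$, $T_i=T_{\max}$, and---assuming $\alpha\neq1$, the only nondegenerate case since $\mathcal{D}_1(G)=Tr(G)$ is diagonal---the bound $x_k\le x_i$ must in fact be an equality wherever its coefficient is positive. Using that $G$ is connected (so $d_{ik}>0$ for all $k\neq i$) when $\alpha>0$, and that $\mathcal{D}(G)^2$ has strictly positive entries for $n\ge3$ when $\alpha=0$, this forces $X$ to be a constant vector (the cases $n\le2$ being trivially transmission regular). A constant eigenvector of $\mathcal{D}_\alpha(G)$ forces all its row sums to agree, i.e.\ $G$ is transmission regular. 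Conversely, if $G$ is $k$-transmission regular then $\mathcal{D}_\alpha(G)$ has all row sums equal to $k$, so $\rho(G)=k$ by Lemma~\ref{lem12}, and since $Tr_{\max}=k$ and $T_{\max}=k^2$, a direct substitution shows the right-hand side also equals $k$. The computation is routine throughout; the only point requiring care is checking that the chain of inequalities can be made simultaneously tight in the equality analysis.
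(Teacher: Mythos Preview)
Your proof is correct and arrives at the same quadratic inequality $\rho^2-\alpha Tr_{\max}\rho-(1-\alpha)T_{\max}\le 0$ as the paper, but via a different mechanism. The paper computes the $i$-th row sum of $\mathcal{D}_\alpha^2(G)$ as $\alpha Tr_i\cdot r_{v_i}(\mathcal{D}_\alpha)+(1-\alpha)T_i$, bounds it above by $\alpha Tr_{\max}\cdot r_{v_i}(\mathcal{D}_\alpha)+(1-\alpha)T_{\max}$, and then appeals to Lemma~\ref{lem12} on the matrix $\mathcal{D}_\alpha^2-\alpha Tr_{\max}\mathcal{D}_\alpha$ to pass to the spectral radius. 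You instead work directly with the Perron eigenvector, iterating the eigenequation once and bounding each $x_k$ by the maximal coordinate $x_i$. Your route is a little more self-contained: it avoids the row-sum lemma and in fact sidesteps the technical point that $\mathcal{D}_\alpha^2-\alpha Tr_{\max}\mathcal{D}_\alpha$ need not be entrywise nonnegative, so Lemma~\ref{lem12} as stated does not literally apply. The paper's route, on the other hand, keeps the algebra tidier by packaging everything as row-sum identities. The equality analyses are essentially the same: both reduce to forcing the Perron vector to be constant when $\alpha<1$, and you are right to set aside the degenerate case $\alpha=1$ (where $\mathcal{D}_1(G)=Tr(G)$ and the bound is trivially an equality for every graph).
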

 \begin{proof}
 Since $\mathcal{D}_{\alpha}(G)=\alpha Tr(G)+(1-\alpha)\mathcal{D}(G),~0\leq\alpha\leq1$,  by a simple calculation, we have $r_{v_{i}}(\mathcal{D}_{\alpha}(G))=Tr_{i}$, $r_{v_{i}}(Tr^{2})=r_{v_{i}}(Tr\mathcal{D})=Tr^{2}(v_{i})$ and $r_{v_{i}}(\mathcal{D}^{2})=r_{v_{i}}(\mathcal{D}Tr)=T_{v_{i}}$. Then
   \begin{equation*}
\begin{aligned}  
r_{v_{i}}(\mathcal{D}_{\alpha}^{2}(G))&=r_{v_{i}}(\alpha^{2}Tr^{2}+\alpha(1-\alpha)Tr\mathcal{D}+\alpha(1-\alpha)\mathcal{D}Tr+(1-\alpha)^{2}\mathcal{D}^{2})\\&
= r_{v_{i}}(\alpha Tr(\alpha Tr+(1-\alpha)\mathcal{D})+\alpha(1-\alpha)r_{v_{i}}(\mathcal{D}Tr)+(1-\alpha)^{2}r_{v_{i}}(\mathcal{D}^{2})\\&
=\alpha Tr_{i}r_{v_{i}}(\mathcal{D}_{\alpha}(G))+(1-\alpha)T_{i}\\&
\leq \alpha Tr_{\max}r_{v_{i}}(\mathcal{D}_{\alpha}(G))+(1-\alpha)T_{\max}.
\end{aligned}
\end{equation*}
So we have
\begin{equation*}
\begin{aligned}  
r_{v_{i}}[\mathcal{D}_{\alpha}^{2}(G)- \alpha Tr_{\max}r_{v_{i}}(\mathcal{D}_{\alpha}(G))]
\leq(1-\alpha)T_{\max}.
\end{aligned}
\end{equation*}
By Lemma \ref{lem12}, we have
\begin{equation*}
\begin{aligned}  
\rho^{2}(G)- \alpha Tr_{\max}\rho(G)-(1-\alpha)T_{\max}\leq0.
\end{aligned}
\end{equation*}
and then by Lemma \ref{lem12}, the result follows. In order to get the equality, all inequalities
in the above should be equalities. That is $Tr_{i}=Tr_{\max}$ and $T_{i}=T_{\max}$ holds for any
vertex $v_{i}$. So by Lemma \ref{lem12}, $G$ is transmission regular.

Conversely, when $G$ is transmission regular, it is easy to check that the equality
holds.
 \qed\end{proof}
 \begin{thm}\label{thm42}
 Let $G$ be a simple, connected graph. Then
 \begin{equation*}
\begin{aligned}  
\rho(G)\geq\frac{\alpha Tr_{\min}+\sqrt{\alpha^{2}Tr_{\min}^{2}+4(1-\alpha)T_{\min}}}{2},
\end{aligned}
\end{equation*}
where $Tr_{\min}$ and $T_{\min}$ are the minimum transmission degree and the minimum second
transmission degree of $G$, respectively. Moreover, the equality holds if and only if $G$
is transmission regular graph.
\end{thm}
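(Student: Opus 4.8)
The plan is to run the argument of Theorem~\ref{thm41} with all inequalities reversed, estimating the row sums of $\mathcal{D}_{\alpha}^{2}(G)$ from below by the minimum transmission and the minimum second transmission. First I would record the row-sum identity used there: expanding
\[
\mathcal{D}_{\alpha}^{2}(G)=\alpha^{2}Tr^{2}+\alpha(1-\alpha)(Tr\mathcal{D}+\mathcal{D}Tr)+(1-\alpha)^{2}\mathcal{D}^{2}
\]
and using $r_{v_{i}}(Tr^{2})=r_{v_{i}}(Tr\mathcal{D})=Tr_{i}^{2}$ together with $r_{v_{i}}(\mathcal{D}^{2})=r_{v_{i}}(\mathcal{D}Tr)=T_{i}$, one obtains
\[
r_{v_{i}}(\mathcal{D}_{\alpha}^{2}(G))=\alpha Tr_{i}\,r_{v_{i}}(\mathcal{D}_{\alpha}(G))+(1-\alpha)T_{i}=\alpha Tr_{i}^{2}+(1-\alpha)T_{i}.
\]

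Next I would introduce $M:=\mathcal{D}_{\alpha}^{2}(G)-\alpha Tr_{\min}\mathcal{D}_{\alpha}(G)$ and check that it is nonnegative: for $i\neq j$ the entry is $(\mathcal{D}_{\alpha}^{2})_{ij}-\alpha Tr_{\min}(1-\alpha)d_{ij}\ge\alpha(1-\alpha)d_{ij}(Tr_{i}-Tr_{\min})\ge0$ (using that $(\mathcal{D}_{\alpha}^{2})_{ij}$ already contains the summand $\alpha Tr_{i}(1-\alpha)d_{ij}$), and the diagonal entry equals $\alpha^{2}Tr_{i}(Tr_{i}-Tr_{\min})+(1-\alpha)^{2}\sum_{k}d_{ik}^{2}\ge0$; when $\alpha<1$ (the case $\alpha=1$ being trivial, since then $\rho(G)=Tr_{\max}\ge Tr_{\min}$) $M$ is moreover irreducible. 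If $X$ is the generalized distance Perron vector of $G$, then $\mathcal{D}_{\alpha}(G)X=\rho(G)X$ gives $MX=\big(\rho(G)^{2}-\alpha Tr_{\min}\rho(G)\big)X$ with $X>0$, so by the Perron--Frobenius theorem $\rho(G)^{2}-\alpha Tr_{\min}\rho(G)$ is the spectral radius of $M$. Then Lemma~\ref{lem12} yields
\[
\rho(G)^{2}-\alpha Tr_{\min}\rho(G)\ \ge\ \min_{i}r_{v_{i}}(M)=\min_{i}\big(\alpha Tr_{i}(Tr_{i}-Tr_{\min})+(1-\alpha)T_{i}\big)\ \ge\ (1-\alpha)T_{\min},
\]
i.e.\ $\rho(G)^{2}-\alpha Tr_{\min}\rho(G)-(1-\alpha)T_{\min}\ge0$; since $\rho(G)>0$, this quadratic inequality forces $\rho(G)\ge\tfrac{\alpha Tr_{\min}+\sqrt{\alpha^{2}Tr_{\min}^{2}+4(1-\alpha)T_{\min}}}{2}$.

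For the equality case I would trace the chain backwards. Equality needs equality in Lemma~\ref{lem12}, so by irreducibility of $M$ all its row sums coincide, and equality in $r_{v_{i}}(M)\ge(1-\alpha)T_{\min}$ forces $Tr_{i}=Tr_{\min}$ and $T_{i}=T_{\min}$ for every $i$; in particular all vertex transmissions are equal, so $G$ is transmission regular. Conversely, if $G$ is $k$-transmission regular then $Tr_{\min}=k$, $T_{\min}=k^{2}$, $\mathcal{D}_{\alpha}(G)$ has constant row sums $k$ so $\rho(G)=k$, and the right-hand side equals $\tfrac{\alpha k+\sqrt{\alpha^{2}k^{2}+4(1-\alpha)k^{2}}}{2}=\tfrac{\alpha k+(2-\alpha)k}{2}=k$, giving equality. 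The only point needing a little care is the Perron--Frobenius step, namely verifying that $M$ is nonnegative (and irreducible) so that $\rho(G)^{2}-\alpha Tr_{\min}\rho(G)$ really is its spectral radius and Lemma~\ref{lem12} applies; everything else is the same routine bookkeeping as in the proof of Theorem~\ref{thm41}.
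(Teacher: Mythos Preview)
Your proof is correct and follows essentially the same approach as the paper, which simply says ``Similar to the proof of Theorem~\ref{thm41}.'' You carry out exactly that mirrored argument: compute the row sums of $\mathcal{D}_{\alpha}^{2}(G)$, bound them below using $Tr_{\min}$ and $T_{\min}$, and read off the quadratic inequality for $\rho(G)$. Your added verification that $M=\mathcal{D}_{\alpha}^{2}(G)-\alpha Tr_{\min}\mathcal{D}_{\alpha}(G)$ is nonnegative (and irreducible for $\alpha<1$) is a genuine improvement in rigor over the paper, since Lemma~\ref{lem12} requires nonnegativity and the paper does not check the analogous hypothesis in the proof of Theorem~\ref{thm41}; but this is a refinement of the same argument, not a different route.
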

\begin{proof}
Similar to the proof of Theorem \ref{thm41}.
\qed\end{proof}

\begin{thm}\label{thm43}
If the transmission degree sequence of $G$ is $\{Tr_{1},Tr_{2},\cdots,Tr_{n}\}$, then
\begin{equation}\label{gong1}
\begin{aligned}  
\rho(G)\geq\sqrt{\frac{\sum_{i=1}^{n}Tr_{i}^{2}}{n}}.
\end{aligned}
\end{equation}
with equality holding if and only if $G$ is transmission regular graph.
\end{thm}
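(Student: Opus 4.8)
The plan is to apply the variational (Rayleigh) characterization of the largest eigenvalue not to $\mathcal{D}_\alpha(G)$ itself but to its square $\mathcal{D}_\alpha(G)^2$, chosen precisely so that the all-ones test vector reproduces $\sum_i Tr_i^{2}$. Two preliminary observations make this work. First, for connected $G$ the matrix $\mathcal{D}_\alpha(G)$ is symmetric, nonnegative and irreducible, so by Perron--Frobenius $\rho(G)$ is its largest eigenvalue and dominates all other eigenvalues in absolute value; hence the largest eigenvalue of the symmetric matrix $\mathcal{D}_\alpha(G)^2$ is exactly $\rho(G)^2$. Second, writing $\mathbf{j}=(1,\dots,1)^{T}$, the $i$-th entry of $\mathcal{D}_\alpha(G)\mathbf{j}$ is the $i$-th row sum of $\mathcal{D}_\alpha(G)$, which equals $\alpha Tr_i+(1-\alpha)Tr_i=Tr_i$; thus $\mathcal{D}_\alpha(G)\mathbf{j}=(Tr_1,\dots,Tr_n)^{T}$.

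Combining these, apply Rayleigh's principle to $\mathcal{D}_\alpha(G)^2$ with the test vector $\mathbf{j}$:
\begin{equation*}
\rho(G)^{2}\;\ge\;\frac{\mathbf{j}^{T}\mathcal{D}_\alpha(G)^2\mathbf{j}}{\mathbf{j}^{T}\mathbf{j}}\;=\;\frac{\big(\mathcal{D}_\alpha(G)\mathbf{j}\big)^{T}\big(\mathcal{D}_\alpha(G)\mathbf{j}\big)}{n}\;=\;\frac{\sum_{i=1}^{n}Tr_i^{2}}{n},
\end{equation*}
and taking positive square roots gives (\ref{gong1}).

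For the equality statement: equality above holds if and only if $\mathbf{j}$ is an eigenvector of $\mathcal{D}_\alpha(G)^2$ associated with its top eigenvalue $\rho(G)^2$. Now the Perron vector $X$ of $\mathcal{D}_\alpha(G)$ is a positive eigenvector of $\mathcal{D}_\alpha(G)^2$ for $\rho(G)^2$, and $\mathcal{D}_\alpha(G)^2$ is itself irreducible for connected $G$ with $n\ge 3$ (when $\alpha>0$, $\mathcal{D}_\alpha(G)$ has positive diagonal, hence is primitive; when $\alpha=0$, a direct check shows $\mathcal{D}(G)^2$ is entrywise positive for $n\ge 3$; the cases $n\le 2$ are $K_1,K_2$, which are transmission regular). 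By the uniqueness clause of Perron--Frobenius, the nonnegative eigenvector $\mathbf{j}$ must then be a scalar multiple of $X$, so $\mathbf{j}$ is an eigenvector of $\mathcal{D}_\alpha(G)$ as well; since $\mathcal{D}_\alpha(G)\mathbf{j}=(Tr_1,\dots,Tr_n)^{T}$, this forces all $Tr_i$ equal, i.e. $G$ is transmission regular. The converse is immediate: if $G$ is transmission regular with transmission $Tr$, then $\mathcal{D}_\alpha(G)\mathbf{j}=Tr\,\mathbf{j}$ gives $\rho(G)=Tr=\sqrt{\sum_i Tr_i^{2}/n}$.

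The forward inequality is routine once one decides to work with $\mathcal{D}_\alpha(G)^2$; the one genuinely delicate point — the main obstacle — is the equality analysis, where one must exclude a contribution to $\mathbf{j}$ coming from a possible $(-\rho(G))$-eigenspace of $\mathcal{D}_\alpha(G)$, and that is exactly where primitivity/irreducibility of $\mathcal{D}_\alpha(G)$ (equivalently of $\mathcal{D}_\alpha(G)^2$) together with the uniqueness of the Perron eigenvector is used.
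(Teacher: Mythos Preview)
Your proof is correct and follows essentially the same route as the paper: apply the Rayleigh quotient to $\mathcal{D}_\alpha(G)^2$ with the constant test vector, using that the row sums of $\mathcal{D}_\alpha(G)$ are the transmissions $Tr_i$. Your treatment of the equality case is in fact more careful than the paper's, which simply asserts that equality forces $C$ to be the Perron eigenvector of $\mathcal{D}_\alpha(G)$ without addressing the possibility of a $(-\rho(G))$-eigenspace; your primitivity/irreducibility argument closes that gap.
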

\begin{proof}  Let $X=(x_{1},x_{2},\cdots,x_{n} )^{T}$ be the generalized distance Perron vector of $G$ and $C =\frac{1}{\sqrt{n}} (1,1,\cdots,1)^{ T}$. Then
\begin{equation*}
\begin{aligned}  
\rho(G)=\sqrt{\rho(G)^{2}}=\sqrt{X^{T}\mathcal{D}^{2}_{\alpha}(G)X}\geq\sqrt{C^{T}\mathcal{D}^{2}_{\alpha}(G)C},
\end{aligned}
\end{equation*}\\
and
\begin{equation*}
\begin{aligned}  
C^{T}\mathcal{D}^{2}_{\alpha}(G)C&=C^{T}(\alpha Tr+(1-\alpha)\mathcal{D})^{2}C
\\&=\alpha^{2}C^{T}Tr^{2}C+\alpha(1-\alpha)C^{T}\mathcal{D}TrC+\alpha(1-\alpha)C^{T}Tr\mathcal{D}C+(1-\alpha)^{2}C^{T}\mathcal{D}^{2}C.
\end{aligned}
\end{equation*}
We now have
\begin{equation*}
\begin{aligned}  
C^{T}Tr=C^{T}\mathcal{D}=\frac{1}{\sqrt{n}}(Tr_{1},Tr_{2},\cdots,Tr_{n}),\\
TrC=\mathcal{D}C=\frac{1}{\sqrt{n}}(Tr_{1},Tr_{2},\cdots,Tr_{n})^{T}.
\end{aligned}
\end{equation*}
Hence
\begin{equation*}
\begin{aligned}  
C^{T}Tr^{2}C=C^{T}\mathcal{D}TrC=C^{T}Tr\mathcal{D}C=C^{T}\mathcal{D}^{2}C=\frac{\sum_{i=1}^{n}Tr_{i}^{2}}{n}.
\end{aligned}
\end{equation*}
Thus,
\begin{equation*}
\begin{aligned}  
\rho(G)\geq\sqrt{\alpha^{2}\frac{\sum_{i=1}^{n}Tr_{i}^{2}}{n}+2\alpha(1-\alpha)\frac{\sum_{i=1}^{n}Tr_{i}^{2}}{n}+(1-\alpha)^{2}\frac{\sum_{i=1}^{n}Tr_{i}^{2}}{n}}
=\sqrt{\frac{\sum_{i=1}^{n}Tr_{i}^{2}}{n}}.
\end{aligned}
\end{equation*}
Now assume that G is $k$-transmission regular graph. Then by the Theorem of Frobenius
\cite{12}, $k$ is the simple and the greatest eigenvalue of $\mathcal{D}_{\alpha}(G)$. But then $\rho(G)=\sqrt{\frac{\sum_{i=1}^{n}Tr_{i}^{2}}{n}}=\sqrt{\frac{nk^{2}}{n}}=k$, and hence equality in (\ref{gong1}) holds,  Conversely, if equality in (\ref{gong1}) holds, then
$C$ is the eigenvector corresponding to $\rho(G)$. Thus, $G$ is transmission regular.
\qed\end{proof}

\begin{thm}\label{thm44}
 If the transmission degree sequence and the second transmission degree sequence of $G$ are $\{Tr_{1},Tr_{2},\cdots,Tr_{n}\}$ and $\{T_{1},T_{2},\cdots,T_{n}\}$, respectively, then
 \begin{equation}\label{gong2}
\begin{aligned}  
\rho(G)\leq\max_{1\leq i,j\leq n}\left\{\frac{\alpha (Tr_{i}+Tr_{j})+\sqrt{\alpha^{2}(Tr_{i}-Tr_{j})^{2}+4(1-\alpha)^{2}(\frac{T_{i}}{Tr_{i}})(\frac{T_{j}}{Tr_{j}})}}{2}\right\},
\end{aligned}
\end{equation}
with equality holding if and only if $G$ is a transmission regular graph.
\end{thm}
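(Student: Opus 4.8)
The plan is to run a two–index argument on the generalized distance Perron vector, after rescaling the coordinates by the transmission degrees so that the ratios $T_i/Tr_i$ emerge naturally; once the right $2\times2$ quadratic inequality is in hand, (\ref{gong2}) drops out.

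Let $X=(x_1,\dots,x_n)^{T}$ be the generalized distance Perron vector of $G$. Rewriting the $(\rho,x)$-eigenequations as $(\rho-\alpha Tr_i)x_i=(1-\alpha)\sum_{k}d_{ik}x_k$ for every $i$, I would pick the vertex $u$ at which $x_u/Tr_u$ is largest and a vertex $v\neq u$ at which $x_v/Tr_v$ is largest among the rest, so that $x_k/Tr_k\le x_v/Tr_v$ for all $k\neq u$ and $x_k/Tr_k\le x_u/Tr_u$ for all $k$. Writing $\sum_k d_{uk}x_k=\sum_k d_{uk}Tr_k\,(x_k/Tr_k)$, using $d_{uu}=0$, and bounding each ratio by $x_v/Tr_v$ gives $\sum_k d_{uk}x_k\le (x_v/Tr_v)\,T_u$, hence after dividing by $Tr_u$
\[
(\rho-\alpha Tr_u)\,\frac{x_u}{Tr_u}\ \le\ (1-\alpha)\,\frac{T_u}{Tr_u}\cdot\frac{x_v}{Tr_v},
\]
and symmetrically $(\rho-\alpha Tr_v)\,(x_v/Tr_v)\le(1-\alpha)(T_v/Tr_v)(x_u/Tr_u)$. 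Since $\mathcal{D}_{\alpha}(G)$ is symmetric we have $\rho(G)\ge(\mathcal{D}_{\alpha}(G))_{ii}=\alpha Tr_i$, so both left-hand sides are nonnegative; multiplying the two inequalities and cancelling the positive factor $(x_u/Tr_u)(x_v/Tr_v)$ yields
\[
(\rho-\alpha Tr_u)(\rho-\alpha Tr_v)\ \le\ (1-\alpha)^{2}\,\frac{T_u}{Tr_u}\cdot\frac{T_v}{Tr_v}.
\]
This is a quadratic inequality in $\rho$, so $\rho$ is at most its larger root; since $\alpha^{2}(Tr_u+Tr_v)^{2}-4\alpha^{2}Tr_uTr_v=\alpha^{2}(Tr_u-Tr_v)^{2}$, that root is precisely the bracketed expression of (\ref{gong2}) at $i=u,\ j=v$, and passing to the maximum over all pairs gives (\ref{gong2}).

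For the equality discussion: if $G$ is $k$-transmission regular then $Tr_i=k$ and $T_i=\sum_j d_{ij}Tr_j=k^{2}$ for all $i$, so the right side of (\ref{gong2}) equals $\tfrac{1}{2}\left(2\alpha k+\sqrt{4(1-\alpha)^{2}k^{2}}\right)=k=\rho(G)$. Conversely, equality in (\ref{gong2}) forces both displayed inequalities above to be equalities; tracing them back, and using that $G$ is connected so $d_{uk}>0$ for every $k\neq u$, gives $x_k/Tr_k=x_v/Tr_v$ for all $k\neq u$ and $x_k/Tr_k=x_u/Tr_u$ for all $k\neq v$, which for $n\ge3$ forces $x_k/Tr_k$ to be a single constant, i.e. $X$ is proportional to the transmission vector $(Tr_1,\dots,Tr_n)^{T}$. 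Feeding this back into the eigenequations gives $\rho=\alpha Tr_i+(1-\alpha)T_i/Tr_i$ for all $i$, from which one argues that all $Tr_i$ coincide, i.e. $G$ is transmission regular.

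The routine part is the eigenequation manipulation; the one real idea is the rescaling by $Tr_i$, which is exactly what upgrades the crude two-index estimate (which would involve $Tr_uTr_v$) to the sharper one with $(T_u/Tr_u)(T_v/Tr_v)$. The step I expect to be the main obstacle is the converse half of the equality statement: showing that a Perron vector proportional to the transmission vector forces transmission-regularity. The cleanest route seems to be to combine $\rho=\alpha Tr_i+(1-\alpha)T_i/Tr_i$ with the observation that $T_i/Tr_i$ is a $d_{i\cdot}$-weighted average of the transmissions, together with the identity $\rho=\sum_i Tr_i^{2}\big/\sum_i Tr_i$ (valid here because $X$ is proportional to $Tr$, obtained by pairing $\mathcal{D}_{\alpha}(G)X=\rho X$ against the all-ones vector), and compare the vertices of maximum and minimum transmission.
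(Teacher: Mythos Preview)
Your argument is correct and is essentially the paper's proof: the paper conjugates by the diagonal matrix $Tr$ and picks the two largest components of the Perron vector of $Tr^{-1}\mathcal{D}_{\alpha}(G)Tr$, which is exactly your choice of the two largest values of $x_i/Tr_i$; the resulting two inequalities and the quadratic are identical. Your treatment of the equality case is in fact more careful than the paper's, which simply asserts that constancy of $\alpha Tr_i+(1-\alpha)T_i/Tr_i$ forces transmission regularity without further argument.
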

\begin{proof}
 Let $X=(x_{1},x_{2},\cdots,x_{n})^{T}$ be an eigenvector of $Tr^{-1}\mathcal{D}_{\alpha}(G)Tr$ corresponding to $\rho(G)$. Suppose $x_{s}=\max\{x_{i}|i=1,2,\cdots,n\}$ and $x_{t}=\max\{x_{i}|x _{i}\neq x_{s},i=1,2,\cdots,n\}$. Now the $(i,j)$th entry of $Tr^{-1}\mathcal{D}_{\alpha}(G)Tr$ is
 \begin{equation*}
   \begin{cases}
   \alpha Tr_{i},&\mbox{if $i=j$}.\\
   (1-\alpha)\frac{Tr_{j}}{Tr_{i}}d_{ij},&\mbox{if $i\neq j$}.\\
   \end{cases}
  \end{equation*}
 Since,
 \begin{equation}\label{gong3}
\begin{aligned}  
Tr^{-1}\mathcal{D}_{\alpha}(G)TrX=\rho(G)X,
\end{aligned}
\end{equation}
from the $s$th equation of (\ref{gong3}), we have
\begin{equation}\label{gong4}
\begin{aligned}  
(\rho(G)-\alpha Tr_{s})x_{s}=(1-\alpha)\sum_{i=1}^{n}\frac{Tr_{i}}{Tr_{s}}d_{si}x_{i}\leq(1-\alpha)\frac{x_{t}}{Tr_{s}}\sum_{i=1}^{n}d_{si}Tr_{i}=(1-\alpha)\frac{T_{s}}{Tr_{s}}x_{t}.
\end{aligned}
\end{equation}
Similarly, from the $t$th equation of (\ref{gong3}), we have
\begin{equation}\label{gong5}
\begin{aligned}  
(\rho(G)-\alpha Tr_{t})x_{t}=(1-\alpha)\sum_{i=1}^{n}\frac{Tr_{i}}{Tr_{t}}d_{ti}x_{i}\leq(1-\alpha)\frac{x_{s}}{Tr_{t}}\sum_{i=1}^{n}d_{ti}Tr_{i}=(1-\alpha)\frac{T_{t}}{Tr_{t}}x_{s}.
\end{aligned}
\end{equation}
Combining (\ref{gong4}) and (\ref{gong5}) we get $(\rho(G)-\alpha Tr_{s})(\rho(G)-\alpha Tr_{t})x_{s}x_{t}\leq(1-\alpha)^{2}\frac{T_{s}}{Tr_{s}}\frac{T_{t}}{Tr_{t}}x_{t}x_{s}$, which implies that
\begin{equation*}
\begin{aligned}  
\rho^{2}(G)-\alpha(Tr_{s}+Tr_{t})\rho(G)+\alpha^{2}Tr_{s}Tr_{t}-(1-\alpha)^{2}\left(\frac{T_{s}}{Tr_{s}}\right)\left(\frac{T_{t}}{Tr_{t}}\right)\leq0.
\end{aligned}
\end{equation*}
i.e.,
\begin{equation*}
\begin{aligned}  
\rho(G)\leq\frac{\alpha (Tr_{i}+Tr_{j})+\sqrt{\alpha^{2}(Tr_{i}-Tr_{j})^{2}+4(1-\alpha)^{2}(\frac{T_{i}}{Tr_{i}})(\frac{T_{j}}{Tr_{j}})}}{2}.
\end{aligned}
\end{equation*}
Hence,
\begin{equation*}
\begin{aligned}  
\rho(G)\leq\max_{1\leq i,j\leq n}\left\{\frac{\alpha (Tr_{i}+Tr_{j})+\sqrt{\alpha^{2}(Tr_{i}-Tr_{j})^{2}+4(1-\alpha)^{2}(\frac{T_{i}}{Tr_{i}})(\frac{T_{j}}{Tr_{j}})}}{2}\right\}.
\end{aligned}
\end{equation*}
Now we assume that $G$ is $k$-transmission regular graph. Then $Tr_{i}=k, \forall i=1,2,\cdots,n$, and $\rho(G)=k$. Hence, equality in (\ref{gong2}) holds.

Conversely, if $\rho(G)$ attains the upper bound of (\ref{gong2}), then all equalities in the above argument must hold. In particular, from (\ref{gong4}) and (\ref{gong5}), $x_{1} = x _{2} = \cdots = x _{n}$. Hence, $\rho(G)=\alpha Tr_{1}+(1-\alpha)\frac{T_{1}}{Tr_{1}}=\alpha Tr_{2}+(1-\alpha)\frac{T_{2}}{Tr_{2}}= \cdots = \alpha Tr_{n}+(1-\alpha)\frac{T_{n}}{Tr_{n}}$. It means $G$ is transmission regular graph.
\qed\end{proof}
\begin{thm}\label{C6}
 If the transmission degree sequence and the second transmission degree sequence of $G$ are $\{Tr_{1},Tr_{2},\cdots,Tr_{n}\}$ and $\{T_{1},T_{2},\cdots,T_{n}\}$, respectively, then
 \begin{equation*}
\begin{aligned}  
\rho(G)\geq\min_{1\leq i,j\leq n}\left\{\frac{\alpha (Tr_{i}+Tr_{j})+\sqrt{\alpha^{2}(Tr_{i}-Tr_{j})^{2}+4(1-\alpha)^{2}(\frac{T_{i}}{Tr_{i}})(\frac{T_{j}}{Tr_{j}})}}{2}\right\},
\end{aligned}
\end{equation*}
with equality holding if and only if $G$ is a transmission regular graph.
\end{thm}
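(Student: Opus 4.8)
The plan is to run the argument of Theorem~\ref{thm44} with the \emph{maximal} entries of the relevant eigenvector replaced by the \emph{minimal} ones and all inequalities reversed. Since $G$ is connected, $\mathcal{D}_{\alpha}(G)$ is nonnegative and irreducible, hence so is the similar matrix $M:=Tr^{-1}\mathcal{D}_{\alpha}(G)Tr$ (recall $Tr_{i}>0$); it has the same spectrum as $\mathcal{D}_{\alpha}(G)$, so $\rho(G)$ is the spectral radius of $M$, and by the Perron--Frobenius theorem $M$ admits a positive eigenvector $X=(x_{1},\dots,x_{n})^{T}$ with $MX=\rho(G)X$. As noted in the proof of Theorem~\ref{thm44}, the $(i,j)$-entry of $M$ equals $\alpha Tr_{i}$ when $i=j$ and $(1-\alpha)\frac{Tr_{j}}{Tr_{i}}d_{ij}$ when $i\neq j$.

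First I would set $x_{s}=\min\{x_{i}\}$ and $x_{t}=\min\{x_{i}:x_{i}\neq x_{s}\}$, so that $x_{i}\geq x_{t}$ for all $i\neq s$ and $x_{i}\geq x_{s}$ for all $i$. Reading off the $s$-th coordinate of $MX=\rho(G)X$ and using $\sum_{i}d_{si}Tr_{i}=T_{s}$,
\begin{equation*}
(\rho(G)-\alpha Tr_{s})x_{s}=(1-\alpha)\sum_{i=1}^{n}\frac{Tr_{i}}{Tr_{s}}d_{si}x_{i}\geq(1-\alpha)\frac{x_{t}}{Tr_{s}}\sum_{i=1}^{n}d_{si}Tr_{i}=(1-\alpha)\frac{T_{s}}{Tr_{s}}x_{t},
\end{equation*}
and likewise from the $t$-th coordinate, $(\rho(G)-\alpha Tr_{t})x_{t}\geq(1-\alpha)\frac{T_{t}}{Tr_{t}}x_{s}$. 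All the factors appearing are nonnegative, and in fact $\rho(G)>\alpha Tr_{i}$ for every $i$, since the $i$-th eigenequation expresses $(\rho(G)-\alpha Tr_{i})x_{i}$ as a strictly positive quantity (for connected $G$ with $\alpha<1$; the case $\alpha=1$ is immediate). Multiplying the two displayed inequalities and cancelling $x_{s}x_{t}>0$ yields
\begin{equation*}
\rho^{2}(G)-\alpha(Tr_{s}+Tr_{t})\rho(G)+\alpha^{2}Tr_{s}Tr_{t}-(1-\alpha)^{2}\Big(\frac{T_{s}}{Tr_{s}}\Big)\Big(\frac{T_{t}}{Tr_{t}}\Big)\geq0.
\end{equation*}

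Next I would observe that the smaller root of this upward-opening quadratic in $\rho(G)$ is at most $\frac{\alpha(Tr_{s}+Tr_{t})-\alpha|Tr_{s}-Tr_{t}|}{2}=\alpha\min\{Tr_{s},Tr_{t}\}<\rho(G)$; since the quadratic is nonnegative at $\rho(G)$, it follows that $\rho(G)$ is at least its larger root, i.e.
\begin{equation*}
\rho(G)\geq\frac{\alpha(Tr_{s}+Tr_{t})+\sqrt{\alpha^{2}(Tr_{s}-Tr_{t})^{2}+4(1-\alpha)^{2}(\frac{T_{s}}{Tr_{s}})(\frac{T_{t}}{Tr_{t}})}}{2}\geq\min_{1\leq i,j\leq n}\left\{\frac{\alpha(Tr_{i}+Tr_{j})+\sqrt{\alpha^{2}(Tr_{i}-Tr_{j})^{2}+4(1-\alpha)^{2}(\frac{T_{i}}{Tr_{i}})(\frac{T_{j}}{Tr_{j}})}}{2}\right\},
\end{equation*}
which is the asserted bound. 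For the equality discussion I would argue exactly as in Theorem~\ref{thm44}: when $G$ is $k$-transmission regular, $Tr_{i}=k$ and $T_{i}=k^{2}$ for all $i$, so each term in the minimum equals $\frac{2\alpha k+2(1-\alpha)k}{2}=k=\rho(G)$; conversely, if equality holds then all inequalities above become equalities, forcing $x_{1}=\cdots=x_{n}$, whence $\rho(G)=\alpha Tr_{i}+(1-\alpha)\frac{T_{i}}{Tr_{i}}$ for every $i$, and (as concluded in the proof of Theorem~\ref{thm44}) $G$ is transmission regular. The main point requiring care is bookkeeping the reversed inequalities after switching to the minimal entries and verifying that $\rho(G)$ lies to the right of both roots of the quadratic; everything else is the mirror image of the upper-bound argument.
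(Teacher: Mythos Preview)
Your proposal is correct and follows exactly the approach the paper intends: the paper's own proof reads in full ``Similar to the proof of Theorem~\ref{thm44},'' and what you have written is precisely that mirror argument with the maxima replaced by minima and the inequalities reversed. Your added justification that $\rho(G)$ must exceed the smaller root of the quadratic (via $\rho(G)>\alpha\min\{Tr_{s},Tr_{t}\}$) is a genuine detail the paper glosses over in both the upper- and lower-bound versions, and it is handled correctly here.
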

\begin{proof}
Similar to the proof of Theorem \ref{thm44}.
\qed\end{proof}
In the following, we discuss about the lower and upper bounds of $\rho(G)$ involving maximum and minimum vertex degrees.
\begin{thm}\label{thm46}
 If $G$ is graph of order $n$, having maximum degree $\bigtriangleup_{1}$ and second maximum degree $\bigtriangleup_{2}$, then
\begin{small}
 \begin{equation}\label{gong6}
\begin{aligned}  
\rho(G)\geq\frac{\alpha(4n-4-\bigtriangleup_{1}-\bigtriangleup_{2})+\sqrt{\alpha^{2}(4n-4-\bigtriangleup_{1}-\bigtriangleup_{2})^{2}-4(2\alpha-1)(2n-2-\bigtriangleup_{1})(2n-2-\bigtriangleup_{2})}}{2},
\end{aligned}
\end{equation}
\end{small}
with equality holding if and only if $G$ is a regular graph with diameter less than or equal to $2$.
\end{thm}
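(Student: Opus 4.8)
The plan is to reduce the inequality \eqref{gong6} to a single quadratic inequality for $\rho(G)$ and then to establish that inequality by running, at the two vertices realising $\Delta_1$ and $\Delta_2$, the eigenvalue-equation argument of the proof of Theorem~\ref{thm44}, but in the lower-bound direction.

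First I would record the elementary transmission estimate underlying everything: if $v$ has degree $d$, then among the other $n-1$ vertices exactly $d$ are at distance $1$ from $v$ and the remaining $n-1-d$ at distance at least $2$, whence
\[
Tr(v)\ \ge\ d+2(n-1-d)\ =\ 2n-2-d ,
\]
with equality iff every non-neighbour of $v$ is at distance exactly $2$ from $v$. Let $v_1,v_2$ be vertices of degree $\Delta_1,\Delta_2$ and set $a:=2n-2-\Delta_1$, $b:=2n-2-\Delta_2$ (so $a\le b$). Then $Tr_1\ge a$, $Tr_2\ge b$, and — since every vertex has degree at most $\Delta_1$, while every vertex other than $v_1$ has degree at most $\Delta_2$ — we also get $Tr_w\ge a$ for all $w$ and $Tr_w\ge b$ for all $w\ne v_1$. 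Feeding these into $T_i=\sum_w d(v_i,w)Tr_w$ gives $T_2/Tr_2\ge a$ and $T_1/Tr_1\ge b$, hence
\[
\frac{T_1}{Tr_1}\cdot\frac{T_2}{Tr_2}\ \ge\ ab\ =\ (2n-2-\Delta_1)(2n-2-\Delta_2) .
\]

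Next, let $Y$ be the Perron vector of $Tr^{-1}\mathcal{D}_\alpha(G)Tr$ (equivalently, use the generalized distance Perron vector of $G$). Since $\rho\ge\alpha Tr_i$ for each $i$ — this comes from the $1\times1$ principal submatrix $[\alpha Tr_i]$ of the nonnegative matrix $\mathcal{D}_\alpha(G)$ — the factors $\rho-\alpha Tr_1,\ \rho-\alpha Tr_2$ are nonnegative, and the $(\rho,Y)$-eigenequations at $v_1$ and $v_2$, after bounding the off-diagonal sums from below, should yield
\[
(\rho-\alpha Tr_1)(\rho-\alpha Tr_2)\ \ge\ (1-\alpha)^2\,\frac{T_1}{Tr_1}\cdot\frac{T_2}{Tr_2}\ \ge\ (1-\alpha)^2 ab .
\]
Because $Tr_1\ge a$, $Tr_2\ge b$ and both factors are nonnegative, replacing $Tr_1,Tr_2$ by $a,b$ only decreases the left-hand side, so $(\rho-\alpha a)(\rho-\alpha b)\ge(1-\alpha)^2 ab$, i.e.
\[
\rho^2-\alpha(4n-4-\Delta_1-\Delta_2)\rho+(2\alpha-1)(2n-2-\Delta_1)(2n-2-\Delta_2)\ \ge\ 0 .
\]
Since $\rho\ge\alpha b\ge\tfrac{\alpha}{2}(a+b)$ is at least the half-sum of the two roots of this quadratic, $\rho$ must be at least its larger root, which is exactly the right-hand side of \eqref{gong6}.

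The delicate point, and the step I expect to be the main obstacle, is extracting the product inequality above with precisely the constant $(1-\alpha)^2ab$: crude bookkeeping (one $2\times2$ principal submatrix on $\{v_1,v_2\}$, or merely the minimum row sum of $\mathcal{D}_\alpha(G)$ or of $Tr^{-1}\mathcal{D}_\alpha(G)Tr$) only delivers the weaker bound $\rho(G)\ge 2n-2-\Delta_1$, so the argument must genuinely use both extremal-degree vertices together with the uniform estimate $Tr_w\ge 2n-2-\Delta_1$, losing no slack. For the equality characterisation, equality in \eqref{gong6} forces all of the above to be equalities: the transmission estimate becomes an equality at $v_1$ and $v_2$, which (with connectivity) forces $\diam(G)\le 2$, and the uniform estimate $Tr_w\ge 2n-2-\Delta_1$ becomes an equality for every $w$, forcing $\deg w=\Delta_1$ for all $w$, i.e.\ $G$ is regular. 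Conversely, an $r$-regular graph of diameter at most $2$ is $(2n-2-r)$-transmission regular, so $\rho(G)=2n-2-r$, which coincides with the right-hand side of \eqref{gong6} when $\Delta_1=\Delta_2=r$. Hence equality holds iff $G$ is regular with diameter at most $2$.
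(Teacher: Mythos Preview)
Your product inequality $(\rho-\alpha Tr_1)(\rho-\alpha Tr_2)\ge(1-\alpha)^2\frac{T_1}{Tr_1}\cdot\frac{T_2}{Tr_2}$ at the \emph{pre-selected} max-degree vertices $v_1,v_2$ cannot be extracted from the eigenequations in the way you describe. The $i$-th eigenequation (for either $\mathcal{D}_\alpha$ or $Tr^{-1}\mathcal{D}_\alpha Tr$) reads $(\rho-\alpha Tr_i)\,y_i=(1-\alpha)\sum_k c_{ik}y_k$ with $c_{ik}\ge 0$; lower-bounding the sum by $(\sum_k c_{ik})\,y_j$ needs $y_k\ge y_j$ for all $k$ in the sum, and you have no such control at vertices chosen for their degrees. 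If instead you bound by $y_{\min}$ and multiply the two inequalities, you obtain $(\rho-\alpha Tr_1)(\rho-\alpha Tr_2)\,y_1y_2\ge(1-\alpha)^2\frac{T_1T_2}{Tr_1Tr_2}\,y_{\min}^2$, and the factor $y_{\min}^2/(y_1y_2)\le 1$ sits on the wrong side. You correctly flag this as ``the main obstacle,'' and it is a genuine gap: as written, the argument does not close.

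The paper's proof avoids this by \emph{not} fixing the vertices in advance. It takes the Perron vector $X$ of $\mathcal{D}_\alpha(G)$ and lets $v_i,v_j$ be the vertices carrying the two \emph{smallest} components $x_i\le x_j$. At those indices the lower bound is immediate: since $x_k\ge x_j$ for every $k\ne i$ and $d_{ik}\ge 2$ for non-neighbours,
\[
\rho\,x_i=\sum_k d_{ik}\bigl((1-\alpha)x_k+\alpha x_i\bigr)\ \ge\ (2n-2-d_{v_i})\bigl((1-\alpha)x_j+\alpha x_i\bigr),
\]
and analogously at $j$ with $x_k\ge x_i$. Multiplying gives the quadratic inequality for $\rho$ with $d_{v_i},d_{v_j}$ in place of $\Delta_1,\Delta_2$, and only then are the degrees replaced by $\Delta_1,\Delta_2$ (the larger root is monotone in each of $2n-2-d_{v_i}$, $2n-2-d_{v_j}$, so this only weakens the bound). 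Thus the two extremal degrees enter at the very last step, not through a choice of vertices. Your equality analysis is fine and matches the paper's.
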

\begin{proof}
 Let $X=(x_{1},x_{2},\cdots,x_{n})^{T}$ be the generalized distance Perron vector of $G$ such that $x_{i}=\min\{x_{k}|k=1,2,\cdots,n\}$ and $x_{j}=\min\{x_{k}|x _{k}\neq x_{i},k=1,2,\cdots,n\}$. From the eigenequation for the component $x_{i}$, we have
\begin{equation*}
\begin{aligned}  
\rho(G)x_{i}&=\sum_{k=1}^{n}d_{ik}((1-\alpha)x_{k}+\alpha x_{i})\\& \geq d_{vi}((1-\alpha)x_{j}+\alpha x_{i})+2(n-1-d_{vi})((1-\alpha)x_{j}+\alpha x_{i})\\&
=(2n-2-d_{vi})((1-\alpha)x_{j}+\alpha x_{i}).
\end{aligned}
\end{equation*}
i.e.,
\begin{equation}\label{gong7}
\begin{aligned}  
(\rho(G)-\alpha(2n-2-d_{vi}))x_{i}\geq(1-\alpha)(2n-2-d_{vi})x_{j}.
\end{aligned}
\end{equation}
Analogously for the component $x_{j}$ , we have
 \begin{equation*}
\begin{aligned}  
\rho(G)x_{j}&=\sum_{k=1}^{n}d_{jk}((1-\alpha)x_{k}+\alpha x_{j})\\& \geq d_{vj}((1-\alpha)x_{i}+\alpha x_{j})+2(n-1-d_{vj})((1-\alpha)x_{i}+\alpha x_{j})\\&
=(2n-2-d_{vj})((1-\alpha)x_{i}+\alpha x_{j}).
\end{aligned}
\end{equation*}
 i.e.,
\begin{equation}\label{gong8}
\begin{aligned}  
(\rho(G)-\alpha(2n-2-d_{vj}))x_{j}\geq(1-\alpha)(2n-2-d_{vj})x_{i}.
\end{aligned}
\end{equation}
 Combining (\ref{gong7}) and (\ref{gong8}) we get $(\rho(G)-\alpha(2n-2-d_{vi}))(\rho(G)-\alpha(2n-2-d_{vj}))x_{i}x_{j}
 \geq(1-\alpha)^{2}(2n-2-d_{vi})(2n-2-d_{vj})x_{i}x_{j}$, which implies that
 \begin{equation*}
\begin{aligned}  
\rho^{2}(G)-\alpha(4n-4-d_{vi}-d_{vj})\rho(G)+(2\alpha-1)(2n-2-d_{vi})(2n-2-d_{vj})\geq0.
\end{aligned}
\end{equation*}
  i.e.,
  \begin{small}
 \begin{equation*}
\begin{aligned}  
\rho(G)&\geq\frac{\alpha(4n-4-d_{vi}-d_{vj})+\sqrt{\alpha^{2}(4n-4-d_{vi}
-d_{vj})^{2}-4(2\alpha-1)(2n-2-d_{vi})(2n-2-d_{vj})}}{2}\\&
=\frac{\alpha(4n-4-\bigtriangleup_{1}-\bigtriangleup_{2})+\sqrt{\alpha^{2}(4n-4-\bigtriangleup_{1}-
\bigtriangleup_{2})^{2}-4(2\alpha-1)(2n-2-\bigtriangleup_{1})(2n-2-\bigtriangleup_{2})}}{2},
\end{aligned}
\end{equation*}
\end{small}
 Suppose that $\rho(G)=\frac{\alpha(4n-4-\bigtriangleup_{1}-\bigtriangleup_{2})+\sqrt{\alpha^{2}(4n-4-\bigtriangleup_{1}-
\bigtriangleup_{2})^{2}-4(2\alpha-1)(2n-2-\bigtriangleup_{1})(2n-2-\bigtriangleup_{2})}}{2}$. Then equality must hold in each of the
inequalities in the above argument. This will imply that $G$ is a regular graph with
diameter less than or equal to $2$. Conversely, let $G$ be a regular graph and diameter
of G be at most $2$. Thus, all the components $x_{i}$ are equal. If $d(G) = 1$, $G\cong K_{n}$,
and $\rho(G)=n-1$. Thus, equality in (\ref{gong6}) holds. If $d(G)=2$,
we get $\rho(G)x_{i}=(2n-2-d_{vi})x_{i}$. Thus, $\rho(G)=2n-2-d_{vi}$, and the equality in (\ref{gong6}) holds.
\qed\end{proof}
\begin{thm}\label{thm47}
 If $G$ is graph of order $n$, having minimum degree $\delta_{1}$ and second minimum degree $\delta_{2}$,  If $d$ is the diameter of $G$, then
\begin{small}
 \begin{equation}\label{gong9}
\begin{aligned}  
\rho(G)\leq&\frac{\alpha[2nd-(d-1)(d+\delta_{1}+\delta_{2})-2]+\sqrt{\alpha^{2}[2nd-(d-1)(d+\delta_{1}+\delta_{2})-2]^{2}
-4(2\alpha-1)[nd-(d-1)}}{2}\\&
\frac{\overline{(\frac{d}{2}+\delta_{1})-1][nd-(d-1)(\frac{d}{2}+\delta_{2})-1]}}{2}.
\end{aligned}
\end{equation}
\end{small}
with equality holding if and only if $G$ is a regular graph with diameter less than or equal to $2$.
\end{thm}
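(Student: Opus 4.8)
The plan is to reuse the Perron-vector technique of Theorems~\ref{thm44} and~\ref{thm46}, but to steer every estimate so that it yields an \emph{upper} bound on $\rho(G)$. Let $X=(x_1,\dots,x_n)^T$ be the generalized distance Perron vector, and pick vertices $v_s,v_t$ with $x_s=\max_k x_k$ and $x_t=\max_{k\neq s}x_k$, so that $x_k\le x_t$ for every $k\neq s$ and $x_k\le x_s$ for every $k$. Since $d_{ss}=0$, the $(\rho,X)$-eigenequation at $v_s$, after replacing each $x_k$ by $x_t$, gives
\[
\rho(G)x_s=\alpha Tr(v_s)x_s+(1-\alpha)\sum_{k}d_{sk}x_k\le \alpha Tr(v_s)x_s+(1-\alpha)Tr(v_s)x_t,
\]
i.e.\ $\big(\rho(G)-\alpha Tr(v_s)\big)x_s\le(1-\alpha)Tr(v_s)x_t$, and symmetrically $\big(\rho(G)-\alpha Tr(v_t)\big)x_t\le(1-\alpha)Tr(v_t)x_s$. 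Testing $\mathcal D_\alpha(G)$ against the $s$-th and $t$-th standard basis vectors shows $\rho(G)\ge\alpha Tr(v_s)$ and $\rho(G)\ge\alpha Tr(v_t)$, so both left-hand sides are nonnegative; multiplying the two inequalities and cancelling $x_sx_t>0$ produces
\[
\rho^2(G)-\alpha\big(Tr(v_s)+Tr(v_t)\big)\rho(G)+(2\alpha-1)Tr(v_s)Tr(v_t)\le0 .
\]

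It remains to bound $Tr(v_s)$ and $Tr(v_t)$. For any vertex $v$ of degree $d_v$ in a connected graph of order $n$ and diameter $d$, exactly $d_v$ of the other $n-1$ vertices lie at distance $1$ and the distances occurring form $\{1,\dots,e\}$ with $e\le d$; making the distance levels as uneven as the constraints permit (one vertex at each of $2,\dots,d-1$, the rest at distance $d$) gives
\[
Tr(v)\le d_v+\big(2+3+\dots+(d-1)\big)+d\big(n-1-d_v-(d-2)\big)=nd-(d-1)\Big(\tfrac d2+d_v\Big)-1 ,
\]
a quantity nonincreasing in $d_v$. Because at most one vertex of $G$ has degree strictly below $\delta_2$ and $v_s\neq v_t$, after relabelling $d_{v_s}\ge\delta_1$ and $d_{v_t}\ge\delta_2$, whence $Tr(v_s)+Tr(v_t)\le 2nd-(d-1)(d+\delta_1+\delta_2)-2$ and $Tr(v_s)Tr(v_t)\le[nd-(d-1)(\tfrac d2+\delta_1)-1][nd-(d-1)(\tfrac d2+\delta_2)-1]$. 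Solving the displayed quadratic inequality gives $\rho(G)\le\tfrac12\big(\alpha S+\sqrt{\alpha^2S^2-4(2\alpha-1)P}\big)$ with $S=Tr(v_s)+Tr(v_t)$ and $P=Tr(v_s)Tr(v_t)$; writing $\alpha^2S^2-4(2\alpha-1)P=\alpha^2(Tr(v_s)-Tr(v_t))^2+4(1-\alpha)^2Tr(v_s)Tr(v_t)$ exhibits the right side as nondecreasing in each of $Tr(v_s),Tr(v_t)$, so substituting the two bounds just obtained yields exactly (\ref{gong9}).

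The part needing the most care is the transmission estimate: one must check it is not defeated when $v$ has too few non-neighbours for its eccentricity to reach $d$ (in that regime the diameter of $G$ is forced to be correspondingly small, so the estimate still survives), together with the monotonicity invoked in the final substitution, which is clear only after passing to the $\alpha^2(\cdot-\cdot)^2+4(1-\alpha)^2(\cdot)(\cdot)$ form of the discriminant. For the equality statement I would reverse the chain: equality forces $x_k=x_t$ for all $k\neq s$ and $x_k=x_s$ for all $k\neq t$, hence $X$ is constant and $G$ is transmission regular; it also forces $d_{v_s}=\delta_1$, $d_{v_t}=\delta_2$ and each transmission bound to be attained. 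Since transmission-regularity together with $Tr(v)\le nd-(d-1)(\tfrac d2+d_v)-1$ (whose right side strictly decreases in $d_v$ for $d\ge2$) forces all degrees to coincide, and the attainment of the bound then forces $d\le 2$, the graph $G$ must be a regular graph of diameter at most $2$. Conversely $\rho(K_n)=n-1$ and $\rho(G)=2n-2-r$ for an $r$-regular graph of diameter $2$, and in both cases direct substitution into (\ref{gong9}) gives equality.
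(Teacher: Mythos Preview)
Your argument is correct and follows essentially the same route as the paper's proof: pick the two largest Perron-vector entries, bound the corresponding eigenequations from above using the distance-level estimate $Tr(v)\le nd-(d-1)(\tfrac d2+d_v)-1$, multiply to obtain the quadratic inequality, and then pass to $\delta_1,\delta_2$. The only difference is organizational---you first bound $\rho$ in terms of the actual transmissions and afterwards invoke monotonicity of the larger root, whereas the paper inserts the distance-level bound directly into the eigenequation step; your explicit checks that $\rho\ge\alpha Tr(v_s)$ and that the resulting expression is nondecreasing in each transmission are in fact a small improvement over the paper's own (somewhat implicit) treatment of these points.
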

\begin{proof}
Let $X=(x_{1},x_{2},\cdots,x_{n})^{T}$ be the generalized distance Perron vector of $G$ such that $x_{i}=\max\{x_{k}|k=1,2,\cdots,n\}$ and $x_{j}=\max\{x_{k}|x _{k}\neq x_{i},k=1,2,\cdots,n\}$. From the eigenequation for the component $x_{i}$, we have
\begin{equation*}
\begin{aligned}  
\rho(G)x_{i}&=\sum_{k=1}^{n}d_{ik}[(1-\alpha)x_{k}+\alpha x_{i}]\\& \leq d_{vi}[(1-\alpha)x_{j}+\alpha x_{i}]+2[(1-\alpha)x_{j}+\alpha x_{i}]+3[(1-\alpha)x_{j}+\alpha x_{i}]+\cdots+(d-1)[(1-\alpha)\\&x_{j}+\alpha x_{i}]+d[n-1-d_{vi}-(d-2)][(1-\alpha)x_{j}+\alpha x_{i}]\\&
=\left[nd-\frac{d(d-1)}{2}-1-d_{vi}(d-1)\right][(1-\alpha)x_{j}+\alpha x_{i}].
\end{aligned}
\end{equation*}
i.e.,
\begin{equation}\label{gong10}
\begin{aligned}  
\left\{\rho(G)-\alpha\left[nd-\frac{d(d-1)}{2}-1-d_{vi}(d-1)\right]\right\}x_{i}\leq(1-\alpha)\left[nd-\frac{d(d-1)}{2}-1-d_{vi}(d-1)\right]x_{j}.
\end{aligned}
\end{equation}
Analogously for the component $x_{j}$ , we have
 \begin{equation*}
\begin{aligned}  
\rho(G)x_{j}&=\sum_{k=1}^{n}d_{jk}[(1-\alpha)x_{k}+\alpha x_{j}]\\& \leq d_{vj}[(1-\alpha)x_{i}+\alpha x_{j}]+2[(1-\alpha)x_{i}+\alpha x_{j}]+3[(1-\alpha)x_{i}+\alpha x_{j}]+\cdots+(d-1)[(1-\alpha)\\&~~~~
x_{i}+\alpha x_{j}]+d[n-1-d_{vj}-(d-2)][(1-\alpha)x_{i}+\alpha x_{j}]\\&
=\left[nd-\frac{d(d-1)}{2}-1-d_{vj}(d-1)\right][(1-\alpha)x_{i}+\alpha x_{j}].
\end{aligned}
\end{equation*}
 i.e.,
\begin{equation}\label{gong11}
\begin{aligned}  
\left\{\rho(G)-\alpha\left[nd-\frac{d(d-1)}{2}-1-d_{vj}(d-1)\right]\right\}x_{j}\leq(1-\alpha)\left[nd-\frac{d(d-1)}{2}-1-d_{vj}(d-1)\right]x_{i}.
\end{aligned}
\end{equation}
 Combining (\ref{gong10}) and (\ref{gong11}) we get $\{\rho(G)-\alpha[nd-\frac{d(d-1)}{2}-1-d_{vi}(d-1)]\}\{\rho(G)-\alpha[nd-\frac{d(d-1)}{2}-1-d_{vj}(d-1)]\}x_{i}x_{j}
 \leq(1-\alpha)^{2}[nd-\frac{d(d-1)}{2}-1-d_{vi}(d-1)][nd-\frac{d(d-1)}{2}-1-d_{vj}(d-1)]x_{i}x_{j}$, which implies that
 \begin{equation*}
\begin{aligned}  
&\rho^{2}(G)-\alpha[2nd-d(d-1)-2-(d-1)(d_{vi}+d_{vj})]\rho(G)+(2\alpha-1)[nd-\frac{d(d-1)}{2}-1-d_{vi}(d-1\\&)][nd-\frac{d(d-1)}{2}-1-d_{vj}(d-1)]\geq0.
\end{aligned}
\end{equation*}
  i.e.,
  \begin{small}
 \begin{equation*}
\begin{aligned}  
\rho(G)&\leq\frac{\alpha[2nd-d(d-1)-2-(d-1)(d_{vi}+d_{vj})]+\sqrt{\alpha^{2}[2nd-d(d-1)-2-(d-1)(d_{vi}+d_{vj})]^{2}
-4(2}}{2}\\&~~~~
\frac{\overline{\alpha-1)[nd-\frac{d(d-1)}{2}-d_{vi}(d-1)-1][nd-\frac{d(d-1)}{2}-d_{vj}(d-1)-1]}}{2}\\&
=\frac{\alpha[2nd-d(d-1)-2-(d-1)(\delta_{1}+\delta_{2})]+\sqrt{\alpha^{2}[2nd-d(d-1)-2-(d-1)(\delta_{1}+\delta_{2})]^{2}
-4(2\alpha-}}{2}\\&~~~~
\frac{\overline{1)[nd-\frac{d(d-1)}{2}-\delta_{1}(d-1)-1][nd-\frac{d(d-1)}{2}-\delta_{2}(d-1)-1]}}{2}.
\end{aligned}
\end{equation*}
\end{small}
 Suppose that $G$ is a regular graph and the diameter of $G$ is at most $2$. Thus, all
the components $x_{i}$ are equal. If $d(G) = 1$, $G\cong K_{n}$,
and $\rho(G)=n-1$. Thus, equality in (\ref{gong9}) holds. If $d(G)=2$,
we get $\rho(G)x_{i}=(2n-2-d_{vi})x_{i}$. Thus, $\rho(G)=2n-2-d_{vi}$, and the equality in (\ref{gong9}) holds.
\qed\end{proof}

We now turn our attention to obtain bounds of $\rho(G)$ for graphs which are not transmission regular.
\begin{thm}\label{thm48}
 Let $G$ be a connected graph of order $n$, where $n\geq2$.
If $Tr_{1}\geq\cdots\geq Tr_{n}$ and $Tr_{1}>Tr_{n-k+1}$, where $1\leq k\leq n-1$, then
\begin{equation}\label{gong12}
\begin{aligned}  
\rho(G)\leq&\frac{\alpha(Tr_{n-k+1}+1)+Tr_{1}-1+\sqrt{\left[Tr_{1}-\alpha Tr_{n-k+1}-(1-\alpha)(2k-1)\right]^{2}-4k(1-\alpha)^{2}(k-}}{2}\\&
\frac{\overline{Tr_{n-k+1}-1)}}{2},
\end{aligned}
\end{equation}
where $0\leq\alpha<1$, with equality holding if and only if $G$ is a graph with $k$ $(k\leq n-2)$ vertices of degree $n-1$ and the remaining $n-k$ vertices have equal degree less than $n-1$.
\end{thm}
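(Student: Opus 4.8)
The plan is to recognize the right-hand side of (\ref{gong12}) as the spectral radius of the $2\times 2$ nonnegative matrix
\[
M=\left(\begin{array}{cc} Tr_{1}-(1-\alpha)k & (1-\alpha)k\\ (1-\alpha)(Tr_{n-k+1}+1-k) & \alpha Tr_{n-k+1}+(1-\alpha)(k-1)\end{array}\right),
\]
and then to prove $\rho(G)\le\rho(M)$. A direct computation gives $M_{11}+M_{22}=Tr_{1}-1+\alpha(Tr_{n-k+1}+1)$, while $(M_{11}-M_{22})^{2}+4M_{12}M_{21}$ is exactly the quantity under the radical in (\ref{gong12}), so $\rho(M)$ equals the claimed bound. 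Since $0\le\alpha<1$, $1\le k\le n-1$ and $Tr_{n-k+1}\ge n-1\ge k$, the entries of $M$ are nonnegative and $M$ is irreducible; its row sums are $Tr_{1}$ and $Tr_{n-k+1}$, so by Lemma~\ref{lem12} $\rho(M)\le Tr_{1}$, with a positive Perron vector $Z=(z_{1},z_{2})^{T}$. From $M_{11}z_{1}+M_{12}z_{2}=\rho(M)z_{1}$ together with $M_{11}+M_{12}=Tr_{1}\ge\rho(M)$ one gets $z_{1}\ge z_{2}$, and $z_{1}>z_{2}$ whenever $Tr_{1}>Tr_{n-k+1}$.

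Next I would partition $V(G)$ into $V_{1}=\{v_{1},\dots,v_{n-k}\}$ and $V_{2}=\{v_{n-k+1},\dots,v_{n}\}$ and define $W$ on $V(G)$ by $w_{u}=z_{1}$ for $u\in V_{1}$ and $w_{u}=z_{2}$ for $u\in V_{2}$; the aim is to verify $(\mathcal{D}_{\alpha}(G)W)_{u}\le\rho(M)w_{u}$ at every vertex $u$, since then nonnegativity and irreducibility of $\mathcal{D}_{\alpha}(G)$ force $\rho(G)\le\rho(M)$. For $u\in V_{1}$ one splits $\sum_{v\ne u}d_{uv}=\sum_{v\in V_{1}\setminus\{u\}}d_{uv}+\sum_{v\in V_{2}}d_{uv}$, uses $\sum_{v\in V_{1}\setminus\{u\}}d_{uv}=Tr_{u}-\sum_{v\in V_{2}}d_{uv}$ and $\rho(M)z_{1}=M_{11}z_{1}+M_{12}z_{2}$, and the inequality collapses to
\[
z_{1}(Tr_{u}-Tr_{1})+(1-\alpha)\Big(k-\sum_{v\in V_{2}}d_{uv}\Big)(z_{1}-z_{2})\le 0,
\]
which holds because $Tr_{u}\le Tr_{1}$, $\sum_{v\in V_{2}}d_{uv}\ge|V_{2}|=k$, and $z_{1}\ge z_{2}$.

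The delicate case is $u\in V_{2}$. Writing $\sigma=\sum_{v\in V_{1}}d_{uv}$, the key estimate is $\sigma\le Tr_{u}-(k-1)\le Tr_{n-k+1}+1-k$ (the first step since the $k-1$ distances from $u$ inside $V_{2}$ are each $\ge 1$, the second since $Tr_{u}\le Tr_{n-k+1}$ on $V_{2}$). Inserting $\sigma\le Tr_{n-k+1}+1-k$ (with $z_{1}\ge z_{2}$) and $\rho(M)z_{2}=M_{21}z_{1}+M_{22}z_{2}$ into the target inequality, every $k$- and $Tr_{n-k+1}$-term cancels and one is left with exactly $z_{2}(Tr_{u}-Tr_{n-k+1})\le 0$, which is true since $Tr_{u}\le Tr_{n-k+1}$. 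This finishes the proof of (\ref{gong12}). I expect this $V_{2}$-step to be the main obstacle: one has to discover that bounding $\sigma$ through $Tr_{u}$ and the $k-1$ within-$V_{2}$ distances is precisely what produces the clean cancellation and brings $Tr_{n-k+1}$ (rather than the minimum transmission $Tr_{n}$) into the bound — a cruder estimate leaves residual $n$- and $k$-dependent terms and does not reproduce (\ref{gong12}).

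For the equality characterization I would trace the inequalities back. Since $Tr_{1}>Tr_{n-k+1}$ forces $z_{1}>z_{2}$, equality requires: for each $u\in V_{1}$, $Tr_{u}=Tr_{1}$ and $\sum_{v\in V_{2}}d_{uv}=k$ (so $u$ is adjacent to all of $V_{2}$); and for each $u\in V_{2}$, $Tr_{u}=Tr_{n-k+1}$, $\sum_{v\in V_{2}\setminus\{u\}}d_{uv}=k-1$ (so $u$ is adjacent to all of $V_{2}\setminus\{u\}$) and $\sigma=Tr_{n-k+1}+1-k$. Hence every vertex of $V_{2}$ is adjacent to all $n-1$ other vertices, giving $Tr_{n-k+1}=n-1$, i.e.\ $V_{2}$ is a set of $k$ vertices of degree $n-1$; then all distances inside $V_{1}$ are $\le 2$, and from $Tr_{u}=Tr_{1}$ on $V_{1}$ together with $Tr_{u}=k+2(n-k-1)-\deg_{V_{1}}(u)$ one sees the $n-k$ vertices of $V_{1}$ share a common degree, which is $<n-1$ because $Tr_{1}>Tr_{n-k+1}=n-1$; in particular $k\le n-2$. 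Conversely, for any $G=K_{k}\vee H$ with $H$ a regular non-complete graph on $n-k$ vertices, the partition $(V(H),V(K_{k}))$ is equitable for $\mathcal{D}_{\alpha}(G)$ with quotient matrix $M$, so $\rho(G)=\rho(M)$, which establishes the stated characterization.
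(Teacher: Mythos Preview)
Your argument is correct and is essentially equivalent to the paper's proof, but packaged differently. The paper performs a diagonal similarity $B=U^{-1}\mathcal{D}_{\alpha}(G)U$ with $U=\diag(\tfrac{1}{x}I_{n-k},I_{k})$, bounds the row sums of $B$ over $V_{1}$ and $V_{2}$ separately, and then solves for the parameter $x\in(0,1)$ that equalizes the two bounds; the resulting common value is the right-hand side of (\ref{gong12}). Your choice of test vector $W$ with $w_{u}=z_{1}$ on $V_{1}$ and $w_{u}=z_{2}$ on $V_{2}$ is exactly $U\mathbf{1}$ up to scaling, and taking $(z_{1},z_{2})$ to be the Perron vector of the $2\times2$ quotient $M$ amounts to the same optimization: it is precisely the ratio $z_{1}/z_{2}=1/x$ that makes the paper's two row-sum bounds coincide. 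What your framing buys is that the value of the bound is read off immediately as $\rho(M)$ without solving a quadratic for $x$, and the equality case becomes the clean statement that $(V_{1},V_{2})$ is an equitable partition of $\mathcal{D}_{\alpha}(G)$ with quotient $M$; the paper reaches the same conclusions by tracing when all row sums $B_{i}$ are equal.
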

\begin{proof}
Let $V_{1}=\{v_{1},v_{2},\cdots,v_{n-k}\}$ and $V_{2}=V(G)\backslash V_{1}$. Then $\mathcal{D}_{\alpha}(G)$ may be partitioned as
$$
\mathcal{D}_{\alpha}(G)=(1-\alpha)
\left[
\begin{array}{cc}
\mathcal{D}_{11} & \mathcal{D}_{12}\\
\mathcal{D}_{21} & \mathcal{D}_{22}\\
\end{array}
\right]
+\alpha
\left[
\begin{array}{cc}
Tr_{11} & 0\\
0 & Tr_{22}\\
\end{array}
\right],
$$
where $\mathcal{D}_{11}$ and $Tr_{11}$ are $(n-k)\times(n-k)$ matrices. Let
$$
U=
\left[
\begin{array}{cc}
\frac{1}{x}I_{n-k} & 0\\
0 & I_{k}\\
\end{array}
\right].
$$
for $0<x<1$ (to be determined) and $B=U^{-1}\mathcal{D}_{\alpha}(G)U$, where $I_{s}$ is the $s\times s$ unit matrix. Then
$$
B=(1-\alpha)
\left[
\begin{array}{cc}
\mathcal{D}_{11} & x\mathcal{D}_{12}\\
\frac{1}{x}\mathcal{D}_{21} & \mathcal{D}_{22}\\
\end{array}
\right]
+\alpha
\left[
\begin{array}{cc}
Tr_{11} & 0\\
0 & Tr_{22}\\
\end{array}
\right].
$$
is a non-negative irreducible matrix that has the same spectrum as $\mathcal{D}_{\alpha}(G)$ for $0\leq\alpha<1$. Let
$B_{i}$ denote the $i$th row sum of $B$. If $i = 1,2,\cdots,n-k$, then since $d_{ij}\geq1$, for
$j = n-k+1,\cdots,n$, we have
 \begin{equation*}
\begin{aligned}  
B_{i}&=(1-\alpha)\left(\sum_{j=1}^{n-k}d_{ij}+x\sum_{j=n-k+1}^{n}d_{ij}\right)+\alpha\sum_{j=1}^{n}d_{ij}\\&
=(1-\alpha)\left(\sum_{j=1}^{n}d_{ij}+(x-1)\sum_{j=n-k+1}^{n}d_{ij}\right)+\alpha\sum_{j=1}^{n}d_{ij}\\&
=Tr_{i}+(1-\alpha)(x-1)\sum_{j=n-k+1}^{n}d_{ij}\\&
\leq Tr_{i}+(1-\alpha)(x-1)k\leq Tr_{1}+(1-\alpha)(x-1)k.
\end{aligned}
\end{equation*}
 If $i = n-k+1,\cdots,n$, then since $d_{ii}=0$ and $d_{ij}\geq1$, for
$j = n-k+1,\cdots,n$ with $i\neq j$, we have
 \begin{equation*}
\begin{aligned}  
B_{i}&=(1-\alpha)\left(\frac{1}{x}\sum_{j=1}^{n-k}d_{ij}+\sum_{j=n-k+1}^{n}d_{ij}\right)+\alpha\sum_{j=1}^{n}d_{ij}\\&
=\left((1-\alpha)\frac{1}{x}+\alpha\right)\sum_{j=1}^{n}d_{ij}+(1-\alpha)\left(1-\frac{1}{x}\right)\sum_{j=n-k+1}^{n}d_{ij}\\&
\leq \left((1-\alpha)\frac{1}{x}+\alpha\right)Tr_{i}+(1-\alpha)\left(1-\frac{1}{x}\right)(k-1)\\&
\leq \left((1-\alpha)\frac{1}{x}+\alpha\right)Tr_{n-k-1}+(1-\alpha)\left(1-\frac{1}{x}\right)(k-1).
\end{aligned}
\end{equation*}
Let
$$Tr_{1}+(1-\alpha)(x-1)k=\left((1-\alpha)\frac{1}{x}+\alpha\right)Tr_{n-k-1}+(1-\alpha)\left(1-\frac{1}{x}\right)(k-1).$$
Then
\begin{equation*}
\begin{aligned}
x=&\frac{(1-\alpha)(2k-1)+\alpha Tr_{n-k+1}-Tr_{1}+\sqrt{\left[Tr_{1}-\alpha Tr_{n-k+1}-(1-\alpha)(2k-1)\right]^{2}-4k(1-\alpha)^{2}}}{2k(1-\alpha)}\\&
\frac{\overline{(k-Tr_{n-k+1}-1)}}{2k(1-\alpha)},
\end{aligned}
\end{equation*}
\begin{equation*}
\begin{aligned}
&Tr_{1}+(1-\alpha)(x-1)k\\&=\frac{\alpha(Tr_{n-k+1}+1)+Tr_{1}-1+\sqrt{\left[Tr_{1}-\alpha Tr_{n-k+1}-(1-\alpha)(2k-1)\right]^{2}-4k(1-\alpha)^{2}(k-}}{2}\\&
~~~~\frac{\overline{Tr_{n-k+1}-1)}}{2}.
\end{aligned}
\end{equation*}
Since $Tr_{1} > Tr_{n-k+1}\geq Tr_{n} \geq n -1 > k - 1$, we have $0 < x < 1$.
Thus, by Lemma \ref{thm12},
\begin{equation*}
\begin{aligned}
\rho(G)&\leq\max_{1\leq i\leq n}B_{i}\\&
\leq\frac{\alpha(Tr_{n-k+1}+1)+Tr_{1}-1+\sqrt{\left[Tr_{1}-\alpha Tr_{n-k+1}-(1-\alpha)(2k-1)\right]^{2}-4k(1-\alpha)^{2}(k-}}{2}\\&~~~~
\frac{\overline{Tr_{n-k+1}-1)}}{2}.
\end{aligned}
\end{equation*}
Suppose that the equality holds in (\ref{gong12}). Since $B_{i}=Tr_{1}+(1-\alpha)(x-1)k$, for
$i = 1,2,\cdots,n-k$, we have $d_{ij} = 1$, for $i = 1,2,\cdots,n-k$ and $j = n -k +1,\cdots,n$,
which implies that every vertex in $V_{1}$ is adjacent to all vertices in $V_{2}$. Again, since
$B _{i} =\left((1-\alpha)\frac{1}{x}+\alpha\right)Tr_{n-k-1}+(1-\alpha)\left(1-\frac{1}{x}\right)(k-1)$, for $i=n-k +1,\cdots,n$, we have $d _{ij}
= 1$, for $i,j = n-k+1,\cdots,n$ with $i\neq j$, which implies that $V _{2}$ induces a complete subgraph
in $G$. Thus, the degree of every vertex in $V_{2}$ is $n-1$ and hence the diameter of $G$
is at most $2$. Since $Tr _{1} = Tr _{2 }= \cdots = Tr _{n-k}$ , every vertex in $V _{1}$ has the same
degree. Moreover, since $Tr _{1} > Tr _{n-k+1}$, $G$ cannot be the complete graph, and thus, $k \leq n-2$.

 Conversely, if $G$ is a graph stated in the second part of the Theorem, then from
the proof above, we have $B _{1} = B _{2} = \cdots = B _{n}$ and thus, the equality holds.
\qed\end{proof}

Recall that, the line graph $L(G)$ of a graph $G$ is a graph such that the vertices of
$L(G)$ are the edges of $G$ and two vertices of $L(G)$ are adjacent if and only if their
corresponding edges in $G$ share a common vertex \cite{12}.

\begin{thm}\label{thm49}
 Let $G$ be a connected graph with $n$ vertices, $m$ edges and $d_{i}=deg(v_{i})$.
If $diam(G)\leq2$ and $G$ does not contain $F_{i}$, $i=1,2,3$ as an induced subgraph, then
\begin{equation*}
\begin{aligned}  
\rho(L(G))\geq\frac{2m^{2}-\sum_{i=1}^{n}d_{i}^{2}}{m}.
\end{aligned}
\end{equation*}
\end{thm}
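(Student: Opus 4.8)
The plan is to reduce everything to a transmission count in $L(G)$ via the diameter-two hypothesis. Since $G$ contains none of $F_1,F_2,F_3$ as an induced subgraph, Lemma~\ref{lem13} gives $diam(L(G))\leq 2$. Hence every off-diagonal entry of the distance matrix of $L(G)$ is $1$ (for edges of $G$ sharing a vertex) or $2$ (otherwise), so $\mathcal{D}(L(G))=2(J-I_m)-A(L(G))$, where $m=|E(G)|$ is the number of vertices of $L(G)$ and $A(L(G))$ is its adjacency matrix. In particular the transmission in $L(G)$ of the vertex corresponding to an edge $e$ is $Tr_{L(G)}(e)=\deg_{L(G)}(e)+2\big(m-1-\deg_{L(G)}(e)\big)=2(m-1)-\deg_{L(G)}(e)$.

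Next I would apply the Rayleigh principle stated in the Introduction to the normalized all-ones vector $X=\tfrac{1}{\sqrt m}\mathbf 1\in\mathbb R^m$, which has nonnegative components. Since $\mathcal{D}(L(G))$ has zero diagonal, one computes $\mathbf 1^{T}\mathcal{D}_{\alpha}(L(G))\mathbf 1=\alpha\sum_{e}Tr_{L(G)}(e)+(1-\alpha)\sum_{e}Tr_{L(G)}(e)=\sum_{e\in E(G)}Tr_{L(G)}(e)$, independently of $\alpha$. Therefore $\rho(L(G))\geq X^{T}\mathcal{D}_{\alpha}(L(G))X=\frac{1}{m}\sum_{e\in E(G)}Tr_{L(G)}(e)$.

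It then remains to evaluate the sum. By the formula for $Tr_{L(G)}(e)$ above, $\sum_{e}Tr_{L(G)}(e)=2m(m-1)-\sum_{e}\deg_{L(G)}(e)=2m(m-1)-2|E(L(G))|$. Counting pairs of edges of $G$ meeting at a common vertex gives $|E(L(G))|=\sum_{i=1}^{n}\binom{d_i}{2}=\frac12\big(\sum_{i=1}^{n}d_i^{2}-2m\big)$, so $\sum_{e}Tr_{L(G)}(e)=2m(m-1)-\big(\sum_{i=1}^{n}d_i^{2}-2m\big)=2m^{2}-\sum_{i=1}^{n}d_i^{2}$. Substituting into the bound of the previous paragraph yields $\rho(L(G))\geq\frac{2m^{2}-\sum_{i=1}^{n}d_i^{2}}{m}$, as required.

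The argument is mostly bookkeeping once Lemma~\ref{lem13} is invoked; the two points needing care are the identity $\mathcal{D}(L(G))=2(J-I_m)-A(L(G))$, which is exactly where the exclusion of $F_1,F_2,F_3$ enters, and the edge count $|E(L(G))|=\sum_i\binom{d_i}{2}$. Since the statement is only an inequality, no extremal analysis is strictly needed, though one may remark that equality in the Rayleigh step forces $X$ to be the generalized distance Perron vector of $L(G)$, i.e. $L(G)$ to be transmission regular.
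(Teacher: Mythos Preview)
Your proof is correct and follows essentially the same route as the paper: invoke Lemma~\ref{lem13} to get $\mathrm{diam}(L(G))\le 2$, apply the Rayleigh principle with the all-ones vector so that the bound becomes the average transmission, and evaluate this via $|E(L(G))|=\tfrac12\sum_i d_i^2-m$. The only cosmetic difference is that the paper first records the general diameter-two bound $\rho(H)\ge \frac{2n_1^2-2n_1-2m_1}{n_1}$ for any $H$ with $\mathrm{diam}(H)\le 2$ and then substitutes $H=L(G)$, whereas you work directly in $L(G)$; your version makes clear that the hypothesis $\mathrm{diam}(G)\le 2$ is in fact not used in the argument.
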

\begin{proof}
Let $G$ be a connected graph of diameter $2$, which does not contain $F_{i}$ for $i=1,2,3$ as an induced subgraph, and let its vertices be labeled as $v_{1}, v_{2}, \cdots, v_{n}$. Let $d_{i}$ denote the degree of $v_{i}$. Then, as $G$ is of diameter $2$, it is easy to observe that the
$i$th row of $\mathcal{D}_{\alpha}(G)$ consists of $(1-\alpha)d_{i}$ one's, $(1-\alpha)(n-d_{i}-1)$ two's and diagonal entry $\alpha(2n-d_{i}-2)$.
Let $X=(1,1,\cdots,1)^{T}$ be the all one vector. Then by the Raleigh's principle,
\begin{equation*}
\begin{aligned}  
\rho(G)\geq\frac{X^{T}\mathcal{D}_{\alpha}(G)X}{X^{T}X}=\frac{1}{n}\sum_{i=1}^{n}(2n-d_{i}-2)=\frac{2n^{2}-2n-2m}{n}.
\end{aligned}
\end{equation*}
The number of vertices of $L(G)$ is $n_{1}=m$ and the number of edges of $L(G)$ is $m_{1}=\frac{1}{2}\sum_{i=1}^{n}d_{i}^{2}-m$. Now since $G$ has no $F_{i}$
for $i=1,2,3$ as its induced subgraph, by Lemma \ref{lem13}, we get $diam(L(G))\leq2$. Therefore,
\begin{equation*}
\begin{aligned}  
\rho(L(G))&\geq\frac{2n_{1}^{2}-2n_{1}-2m_{1}}{n_{1}}\\&=\frac{2m^{2}-2m-2(\frac{1}{2}\sum_{i=1}^{n}d_{i}^{2}-m)}{m}\\&=\frac{2m^{2}-\sum_{i=1}^{n}d_{i}^{2}}{m}.
\end{aligned}
\end{equation*}
\qed\end{proof}

\begin{cor}\label{thm410}
 If $G$ is a connected $r$-regular graph on $n$ vertices and none of $F_{i}$, $i=1,2,3$ is an induced subgraph of $G$, then
\begin{equation*}
\begin{aligned}  
\rho(L(G))\geq r(n-2).
\end{aligned}
\end{equation*}
\end{cor}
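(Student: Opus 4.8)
The plan is to deduce this directly from Theorem \ref{thm49} by specializing to the regular case. Since $G$ is $r$-regular, every vertex has degree $d_i=r$, so the number of edges is $m=\tfrac12\sum_{i=1}^n d_i=\tfrac{nr}{2}$ and $\sum_{i=1}^n d_i^2=nr^2$. The hypothesis that none of $F_1,F_2,F_3$ is an induced subgraph of $G$ is precisely the structural assumption used in Theorem \ref{thm49}; via Lemma \ref{lem13} it guarantees $diam(L(G))\le 2$, which is the only fact about $L(G)$ that the Rayleigh-quotient estimate in that proof actually needs.

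First I would recall the bound from Theorem \ref{thm49}, namely
\[
\rho(L(G))\ \geq\ \frac{2m^{2}-\sum_{i=1}^{n}d_{i}^{2}}{m},
\]
and then substitute the regular-graph values $m=\tfrac{nr}{2}$ and $\sum_{i=1}^n d_i^2=nr^2$. The whole content of the corollary is then the short simplification
\[
\frac{2m^{2}-\sum_{i=1}^{n}d_{i}^{2}}{m}
=\frac{2\left(\tfrac{nr}{2}\right)^{2}-nr^{2}}{\tfrac{nr}{2}}
=\frac{\tfrac{n^{2}r^{2}}{2}-nr^{2}}{\tfrac{nr}{2}}
=\frac{nr^{2}\!\left(\tfrac{n}{2}-1\right)}{\tfrac{nr}{2}}
=r(n-2),
\]
which yields $\rho(L(G))\geq r(n-2)$, as required.

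The only point deserving a moment's care is that Theorem \ref{thm49} is stated with the extra hypothesis $diam(G)\le 2$, which is not assumed here. That hypothesis, however, is not used in the part of the argument that bounds $\rho(L(G))$: that part applies the Rayleigh estimate to $L(G)$ itself, using only $diam(L(G))\le 2$, and the latter follows from Lemma \ref{lem13} under our assumption on induced subgraphs. So if one wishes to be scrupulous, one re-runs the Rayleigh argument of the proof of Theorem \ref{thm49} directly on $L(G)$ — which has $n_1=m$ vertices, $m_1=\tfrac12\sum_{i=1}^n d_i^2-m$ edges, and diameter at most $2$ — obtaining $\rho(L(G))\geq \dfrac{2n_1^{2}-2n_1-2m_1}{n_1}=\dfrac{2m^{2}-\sum_{i=1}^{n}d_{i}^{2}}{m}$, and then performs the same substitution. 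I do not anticipate any genuine obstacle here; the proof is essentially the one-line algebraic reduction above together with the check that the induced-subgraph hypothesis supplies $diam(L(G))\le 2$.
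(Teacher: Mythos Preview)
Your proof is correct and follows essentially the same approach as the paper: substitute $m=\tfrac{nr}{2}$ and $\sum_i d_i^2=nr^2$ into the bound of Theorem~\ref{thm49}. You are in fact more careful than the paper in noting that the hypothesis $diam(G)\le 2$ stated in Theorem~\ref{thm49} is not actually needed for the conclusion about $\rho(L(G))$, since only $diam(L(G))\le 2$ (guaranteed by Lemma~\ref{lem13}) is used in that part of the argument.
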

\begin{proof}
 Since $G$ is an $r$-regular graph on $n$ vertices, the number of edges of $G$ is $m=\frac{nr}{2}$
and $d_{i}=deg(v_{i})=r$. Then from Theorem \ref{thm49}, we get $\rho(L(G))\geq r(n-2)$.
\qed\end{proof}
\begin{thm}\label{thm411}
 Let G be a connected graph with vertex set $V(G)=\{v_{1},v_{2},\cdots,v_{n}\}$ and edge set $E(G)=\{e_{1},e_{2},\cdots,e_{m}\}$. Let $deg(e_{i})$ denote the number of edges adjacent to $e_{i}$. Then
\begin{equation*}
\begin{aligned}  
\rho(L(G))\geq2(m-1)-\frac{1}{m}\sum_{i=1}^{m}deg(e_{i}).
\end{aligned}
\end{equation*}
\end{thm}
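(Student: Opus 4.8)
The plan is to apply Rayleigh's principle to $\mathcal{D}_{\alpha}(L(G))$ with the all-ones test vector, in the same spirit as the proof of Theorem~\ref{thm49}, but this time without invoking any bound on $\diam(L(G))$. First I would record that $L(G)$ is connected (since $G$ is connected), so $\mathcal{D}_{\alpha}(L(G))$ is a nonnegative irreducible matrix on the vertex set $\{e_{1},\dots,e_{m}\}$ and $\rho(L(G))$ is its Perron value. Take $X=(1,1,\dots,1)^{T}\in\mathbb{R}^{m}$. Because $\mathcal{D}_{\alpha}(L(G))=\alpha Tr(L(G))+(1-\alpha)\mathcal{D}(L(G))$, and because $X^{T}Tr(L(G))X=\sum_{i=1}^{m}Tr_{L(G)}(e_{i})$ while $X^{T}\mathcal{D}(L(G))X=\sum_{i,j}d_{L(G)}(e_{i},e_{j})=\sum_{i=1}^{m}Tr_{L(G)}(e_{i})$ as well, the quadratic form $X^{T}\mathcal{D}_{\alpha}(L(G))X$ collapses to $\sum_{i=1}^{m}Tr_{L(G)}(e_{i})$ independently of $\alpha$. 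Hence by Rayleigh's principle $\rho(L(G))\ge \tfrac{1}{m}\sum_{i=1}^{m}Tr_{L(G)}(e_{i})$.

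Next I would bound each transmission $Tr_{L(G)}(e_{i})$ from below by a purely combinatorial estimate: in $L(G)$ the vertex $e_{i}$ has exactly $\deg(e_{i})$ neighbours, each at distance $1$, and the remaining $m-1-\deg(e_{i})$ vertices are at distance at least $2$. Therefore $Tr_{L(G)}(e_{i})\ge \deg(e_{i})+2\bigl(m-1-\deg(e_{i})\bigr)=2(m-1)-\deg(e_{i})$. Summing this over $i=1,\dots,m$ and dividing by $m$ yields $\rho(L(G))\ge 2(m-1)-\tfrac{1}{m}\sum_{i=1}^{m}\deg(e_{i})$, which is exactly the claimed inequality.

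I do not expect any serious obstacle; this is a short Rayleigh-quotient argument. The only two points that need a little care are: (i) observing that $X^{T}\mathcal{D}_{\alpha}(L(G))X$ is $\alpha$-free, which is what makes the bound uniform in $\alpha$ and is due to $Tr(L(G))$ and $\mathcal{D}(L(G))$ sharing the same all-entries sum; and (ii) the elementary distance inequality $d_{L(G)}(e_{i},e_{j})\ge 2$ for nonadjacent $e_{i},e_{j}$, which is automatic. If desired, one can append an equality analysis: equality forces $d_{L(G)}(e_{i},e_{j})\le 2$ for all $i,j$ (i.e.\ $\diam(L(G))\le 2$) together with all transmissions of $L(G)$ equal, paralleling Corollary~\ref{thm410}; but this is optional and is not needed for the stated bound.
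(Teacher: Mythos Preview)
Your proposal is correct and follows essentially the same route as the paper: apply Rayleigh's principle to $\mathcal{D}_{\alpha}(L(G))$ with the all-ones vector and then bound each transmission $Tr_{L(G)}(e_{i})$ from below by $\deg(e_{i})+2(m-1-\deg(e_{i}))$. Your explicit observation that $X^{T}\mathcal{D}_{\alpha}(L(G))X$ is $\alpha$-free and your optional equality remark are pleasant additions, but the core argument is identical to the paper's.
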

\begin{proof}
Consider an edge $e=uv$ which is adjacent to $deg(u)+deg(v)-2=deg(e)$ edges at $u$ and $v$ taken together.
Hence the edge $e$ is not adjacent to remaining $m-1-deg(e)$ edges of $G$. In $L(G)$ the
distance between $e$ and the remaining these $m-1-deg(e)$ vertices is more than $1$.
Hence each edge $e=uv$ contributes the distance at least $2(m-1-deg(e))$ in $L(G)$.
Let $X=(1,1,\cdots,1)^{T}$ be the all one vector of size $m$. Then by the Raleigh's principle,
\begin{equation*}
\begin{aligned}  
\rho(L(G))\geq\frac{X^{T}\mathcal{D}_{\alpha}(L(G))X}{X^{T}X}\geq\frac{1}{m}\sum_{i=1}^{m}(2m-deg{e_{i}}-2)=2(m-1)-\frac{1}{m}\sum_{i=1}^{m}deg(e_{i}).
\end{aligned}
\end{equation*}
\qed\end{proof}

\section{The generalized distance spectrum of some composite graphs}
\begin{thm}\label{c42}
Let $G$ be a $k-$transmission regular graph of order $p$ with its distance spectrum $\{\mu_{1}^{\mathcal{D}},\mu_{2}^{\mathcal{D}},\cdots,\mu_{p}^{\mathcal{D}}\}$. Let $H$ be an $r-$regular graph on $n$ vertices with its adjacency spectrum $\{r,\lambda_{1},\lambda_{2},\cdots,\lambda_{n}\}$. Then, the generalized distance spectrum of $G[H]$ is:\\
$(1)$ $\alpha(kn+2n-r-2)+(1-\alpha)(n\mu_{i}^{\mathcal{D}}+2n-r-2)$ for $i=1,2,\cdots,p$;\\
$(2)$ $\alpha(kn+2n-r-2)-(1-\alpha)(2+\lambda_{j})$, $p$ times, $j=2,3,\cdots,n$.
\end{thm}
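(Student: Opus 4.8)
The plan is to write the distance matrix of $G[H]$ as a sum of Kronecker products, observe that this makes $G[H]$ transmission regular, and then diagonalize $\mathcal{D}_{\alpha}(G[H])$ using tensor products of eigenvectors of $\mathcal{D}(G)$ with eigenvectors of $A(H)$.

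First I would record the distances in $G[H]$. Writing a vertex as a pair $(u_{i},v)$ with $u_{i}\in V(G)$ and $v\in V(H)$, one has $d_{G[H]}((u_{i},v),(u_{j},v'))=d_{G}(u_{i},u_{j})$ whenever $u_{i}\neq u_{j}$, while within a single copy of $H$ (that is, $u_{i}=u_{j}$) the distance between $v$ and $v'$ is $1$ if $vv'\in E(H)$ and $2$ otherwise, the length-$2$ detour being available because $G$ is connected on at least two vertices, so $u_{i}$ has a neighbour to pass through. Ordering the $pn$ vertices by copies of $H$, this gives
\[
\mathcal{D}(G[H])=\mathcal{D}(G)\otimes J_{n}+I_{p}\otimes\big(2J_{n}-2I_{n}-A(H)\big),
\]
where $J_{n}$ is the $n\times n$ all-ones matrix. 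Summing the entries in any row yields the transmission $nk+2n-2-r$ (using $\sum_{j}d_{G}(v_{i},v_{j})=k$ since $G$ is $k$-transmission regular, and that $H$ is $r$-regular), so $G[H]$ is transmission regular and
\[
\mathcal{D}_{\alpha}(G[H])=\alpha(nk+2n-2-r)I_{pn}+(1-\alpha)\big[\mathcal{D}(G)\otimes J_{n}+I_{p}\otimes(2J_{n}-2I_{n}-A(H))\big].
\]

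Next I would substitute eigenvectors. Let $z_{i}$ be an eigenvector of $\mathcal{D}(G)$ for $\mu_{i}^{\mathcal{D}}$, $i=1,2,\dots,p$, with $z_{1}=J_{p\times1}$ and $z_{i}\perp J_{p\times1}$ for $i\geq2$. Since $J_{n}J_{n\times1}=nJ_{n\times1}$ and $(2J_{n}-2I_{n}-A(H))J_{n\times1}=(2n-2-r)J_{n\times1}$, the vector $z_{i}\otimes J_{n\times1}$ is an eigenvector of $\mathcal{D}_{\alpha}(G[H])$ with eigenvalue $\alpha(kn+2n-r-2)+(1-\alpha)(n\mu_{i}^{\mathcal{D}}+2n-r-2)$, which is part $(1)$. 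For part $(2)$, let $w_{j}$ be an eigenvector of $A(H)$ for $\lambda_{j}$, $j=2,\dots,n$; then $w_{j}\perp J_{n\times1}$, so $J_{n}w_{j}=0$, and for every $y\in\mathbb{R}^{p}$ the vector $y\otimes w_{j}$ satisfies $\mathcal{D}_{\alpha}(G[H])(y\otimes w_{j})=\big[\alpha(kn+2n-r-2)-(1-\alpha)(2+\lambda_{j})\big](y\otimes w_{j})$, so this eigenvalue occurs with multiplicity $p$. Counting $p+p(n-1)=pn$ shows these exhaust the spectrum of $\mathcal{D}_{\alpha}(G[H])$.

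The manipulations are routine Kronecker-product algebra, so the step to watch is the claim that, inside a single copy of $H$, nonadjacent vertices lie at distance $2$ in $G[H]$, i.e. $d_{G[H]}((u_{i},v),(u_{i},v'))=2$ when $vv'\notin E(H)$; this uses only that $G$ is connected with $p\geq2$ vertices, and it is the reason $H$ itself need not be assumed connected. One should also sanity-check the degenerate cases (for instance $H$ complete, where $2J_{n}-2I_{n}-A(H)=J_{n}-I_{n}=A(K_{n})$ and $\lambda_{j}=-1$) to confirm that the two families of eigenvalues remain correct.
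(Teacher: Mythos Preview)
Your proposal is correct and follows essentially the same approach as the paper: both write $\mathcal{D}(G[H])=\mathcal{D}(G)\otimes J_{n}+I_{p}\otimes(2(J-I)-A(H))$, observe that $G[H]$ is $(kn+2n-r-2)$-transmission regular, and then diagonalize $\mathcal{D}_{\alpha}(G[H])$ via the tensor eigenvectors $X_{i}\otimes\mathbf{1}_{n}$ and $y\otimes Y_{j}$. Your explicit remark that the distance-$2$ claim within a copy of $H$ relies on $p\geq2$ is a nice clarification the paper leaves implicit.
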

\begin{proof} Let $G$ and $H$ be two connected graphs with at least two vertices and let $u=(u_{1},v_{1}), v=(u_{2},v_{2})\in V(G)\times V(H)$. Then
\begin{equation*}
    d_{G[H]}(u,v)=
   \begin{cases}
   d_{G}(u_{1},u_{2})&\mbox{if $u_{1}\neq u_{2}$}\\
   1&\mbox{if $u_{1}= u_{2}$ and $v_{1}$ adjacent to $v_{2}$}\\
    2&\mbox{if $u_{1}= u_{2}$ and $v_{1}$ not adjacent to $v_{2}$}\\
   \end{cases}
  \end{equation*}
By a proper ordering of the vertices of $G[H]$,
its distance matrix $\mathcal{D}(G[H])$ can be written in the form
\begin{equation*}
\mathcal{D}(G[H])=\mathcal{D}(G)\otimes J_{n}+I_{p}\otimes(2(J-I)-A(H)).
\end{equation*}

For graph $G[H]$, the transmission of every vertex is $Tr(u)=kn+2n-r-2$. Then, its transmission matrix $Tr(G[H])$ can be written in the form
\begin{equation*}
Tr(G[H])=I_{p}\otimes (kn+2n-r-2)I_{n}.
\end{equation*}
Thus, the generalized distance matrix $\mathcal{D_{\alpha}}(G[H])$ can be written as
\begin{equation*}
\mathcal{D_{\alpha}}(G[H])=\alpha I_{p}\otimes (kn+2n-r-2)I_{n}+(1-\alpha)[\mathcal{D}(G)\otimes J_{n}+I_{p}\otimes(2(J-I)-A(H))],
\end{equation*}
where $J$ is the all-one matrix, and $I$ is the identity matrix of appropriate orders.

As a regular graph, $H$ has the all-one vector $\mathbf{1}$ as an eigenvector corresponding to the eigenvalue $r$ , while all the
other eigenvectors are orthogonal to $\mathbf{1}$.
Let $\lambda_{j}\neq r$, $j=2,3,\cdots,n$ be an eigenvalue of $A(H)$ with an eigenvector $Y_{j}$, such that
$\mathbf{1}^{T} Y_{j} = 0$ and $A(H)Y_{j}=\lambda_{j}Y_{j}$. Thus, we have
$(2(J-I)-A(H))\mathbf{1}=(2n-r-2)\mathbf{1}$ and
 $(2(J-I)-A(H))Y_{j}=-(\lambda_{i}+2)Y_{j}$.\\
Let $X_{i}$, $i=1,2,\cdots,p$ be an eigenvector corresponding to the eigenvalue $\mu_{i}^{\mathcal{D}}$ of $\mathcal{D}(G)$. Therefore
\begin{equation*}
 \mathcal{D}(G)X_{i}=\mu_{i}^{\mathcal{D}}X_{i}
\end{equation*}

Now
$$\scriptsize{
\begin{aligned} 
\mathcal{D_{\alpha}}(G[H])(X_{i}\otimes \mathbf{1_{n}})&=
\{\alpha I_{p}\otimes (kn+2n-r-2)I_{n}+(1-\alpha)[\mathcal{D}(G)\otimes J_{n}+I_{p}\otimes(2(J-I)-A(H))]\}(X_{i}\otimes \mathbf{1_{n}})\\  &
=\alpha I_{p}X_{i}\otimes(kn+2n-r-2)I_{n}\mathbf{1_{n}}+(1-\alpha)\mathcal{D}(G)X_{i}\otimes J_{n}\mathbf{1_{n}}+(1-\alpha)I_{p}X_{i}\otimes(2(J-I)-A(H))\mathbf{1_{n}}\\&
=\alpha X_{i}\otimes(kn+2n-r-2)\mathbf{1_{n}}+(1-\alpha)\mu_{i}^{\mathcal{D}}X_{i}\otimes n\mathbf{1_{n}}+(1-\alpha)X_{i}\otimes(2n-r-2)\mathbf{1_{n}}\\&
=\alpha(kn+2n-r-2)X_{i}\otimes\mathbf{1_{n}}+(1-\alpha)n\mu_{i}^{\mathcal{D}}X_{i}\otimes \mathbf{1_{n}}+(1-\alpha)(2n-r-2)X_{i}\otimes\mathbf{1_{n}}\\&
=[\alpha(kn+2n-r-2)+(1-\alpha)(n\mu_{i}^{\mathcal{D}}+2n-r-2)]X_{i}\otimes\mathbf{1_{n}}.
\end{aligned}
}$$
Therefore, $\alpha(kn+2n-r-2)+(1-\alpha)(n\mu_{i}^{\mathcal{D}}+2n-r-2)$, $i=1,2,\cdots,p$ is an eigenvalue of $\mathcal{D_{\alpha}}(G[H])$ with eigenvector $X_{i}\otimes \mathbf{1_{n}}$.

Let $\{Z_{k}\}$, $k=1,2,\cdots,p$ be the family of $p$ linearly independent eigenvectors associated with the eigenvalue $1$ of $I_{p}$. Then for each $j=2,3,\cdots,n$, the $p$ vectors $Z_{k}\otimes Y_{j}$ are eigenvectors of $\mathcal{D_{\alpha}}(G[H])$ with eigenvalue $\alpha(kn+2n-r-2)-(1-\alpha)(2+\lambda_{j})$. For
$$\scriptsize{
\begin{aligned} 
\mathcal{D_{\alpha}}(G[H])(Z_{k}\otimes Y_{j})&=
\{\alpha I_{p}\otimes (kn+2n-r-2)I_{n}+(1-\alpha)[\mathcal{D}(G)\otimes J_{n}+I_{p}\otimes(2(J-I)-A(H))]\}(X_{i}\otimes \mathbf{1_{n}})\\  &
=\alpha I_{p}Z_{k}\otimes(kn+2n-r-2)I_{n}Y_{j}+(1-\alpha)\mathcal{D}(G)Z_{k}\otimes J_{n}Y_{j}+(1-\alpha)I_{p}Z_{k}\otimes(2(J-I)-A(H))Y_{j}\\&
=\alpha Z_{k}\otimes(kn+2n-r-2)Y_{j}+(1-\alpha)\mathcal{D}(G)Z_{k}\otimes 0+(1-\alpha)Z_{k}\otimes(-\lambda_{j}-2)Y_{j}\\&
=[\alpha(kn+2n-r-2)-(1-\alpha)(\lambda_{j}+2)](Z_{k}\otimes Y_{j}).
\end{aligned}
}$$
Also, the $pn$ vectors $X_{i}\otimes \mathbf{1_{n}}$ and $Z_{k}\otimes Y_{j}$ are linearly independent. As the eigenvectors belonging to different eigenvalues are linearly independent and as $\mathcal{D_{\alpha}}(G[H])$ has a basis consisting entirely of eigenvectors, the theorem follows. \qed\end{proof}

\begin{thm}\label{c45}
 Let Ham$(3,n)$ be the cubic lattice graph of characteristic $n$. Then, the generalized distance spectrum of cubic lattice graph is:\\
$(1)$ $3n^{2}(n-1)$;\\
$(2)$ $(\alpha-1)n^{2}+3\alpha n^{2}(n-1)$, with multiplicity $3(n-1)$;\\
$(3)$ $3\alpha n^{2}(n-1)$, with multiplicity $n^{3}-3n+2$;\\
\end{thm}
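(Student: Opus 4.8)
The plan is to exploit the product structure. By Definition~\ref{lem8}, $\mathrm{Ham}(3,n)=K_n+K_n+K_n$ is the Cartesian product of three copies of $K_n$, so it has $n^{3}$ vertices, and the distance between $u=(u_1,u_2,u_3)$ and $v=(v_1,v_2,v_3)$ equals the number of coordinates in which they differ (each edge of a Cartesian product changes exactly one coordinate, and inside $K_n$ any two distinct vertices are at distance $1$); in particular $\mathrm{diam}=3$. Counting the vertices at distance $j$ from a fixed vertex as $\binom{3}{j}(n-1)^{j}$ gives the transmission
\begin{equation*}
Tr(u)=\sum_{j=1}^{3}j\binom{3}{j}(n-1)^{j}=3(n-1)\bigl(1+(n-1)\bigr)^{2}=3n^{2}(n-1)
\end{equation*}
for every vertex $u$, so $\mathrm{Ham}(3,n)$ is $k$-transmission regular with $k=3n^{2}(n-1)$ and $Tr(\mathrm{Ham}(3,n))=kI$. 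Consequently $\mathcal{D}_{\alpha}(\mathrm{Ham}(3,n))=\alpha kI+(1-\alpha)\mathcal{D}(\mathrm{Ham}(3,n))$, so it suffices to compute the ordinary distance spectrum: applying $\mu\mapsto\alpha k+(1-\alpha)\mu$ to each distance eigenvalue (with its multiplicity) yields the generalized distance spectrum.

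Next I would record the distance matrix via Kronecker products. Since $d(u,v)=\sum_{i=1}^{3}\bigl(1-[u_i=v_i]\bigr)$, one has
\begin{equation*}
\mathcal{D}(\mathrm{Ham}(3,n))=3\,(J_n\otimes J_n\otimes J_n)-\bigl(I_n\otimes J_n\otimes J_n+J_n\otimes I_n\otimes J_n+J_n\otimes J_n\otimes I_n\bigr).
\end{equation*}
Using the eigenbasis of $K_n$ --- the all-ones vector $\mathbf{1}_n$ with $J_n\mathbf{1}_n=n\mathbf{1}_n$, together with $n-1$ vectors $g\perp\mathbf{1}_n$ satisfying $J_ng=0$ --- I would take the tensor basis $a\otimes b\otimes c$ with each factor in $\{\mathbf{1}_n\}\cup\{g^{(1)},\dots,g^{(n-1)}\}$ and split into four cases by the number $t\in\{0,1,2,3\}$ of factors orthogonal to $\mathbf{1}_n$. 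A direct evaluation of the displayed matrix on such a vector gives the distance eigenvalue $3n^{2}(n-1)$ for $t=0$ (multiplicity $1$), $-n^{2}$ for $t=1$ (multiplicity $3(n-1)$), and $0$ for $t=2$ and $t=3$ (multiplicities $3(n-1)^{2}$ and $(n-1)^{3}$); the identity $1+3(n-1)+3(n-1)^{2}+(n-1)^{3}=n^{3}$ confirms that no eigenvalue is missed.

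Finally, pooling the eigenvalue $0$ with total multiplicity $3(n-1)^{2}+(n-1)^{3}=(n-1)^{2}(n+2)=n^{3}-3n+2$ and substituting into $\alpha k+(1-\alpha)\mu$ gives $3n^{2}(n-1)$, then $3\alpha n^{2}(n-1)+(\alpha-1)n^{2}$ with multiplicity $3(n-1)$, and $3\alpha n^{2}(n-1)$ with multiplicity $n^{3}-3n+2$, which is exactly the claimed spectrum. The main obstacle is not conceptual but bookkeeping: one must (i) justify carefully that the graph distance is the Hamming distance so that the Kronecker expression for $\mathcal{D}$ is valid, and (ii) track the four Kronecker cases and their multiplicities without error; the reduction $\mathcal{D}_{\alpha}=\alpha kI+(1-\alpha)\mathcal{D}$ then makes the dependence on $\alpha$ immediate.
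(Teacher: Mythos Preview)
Your argument is correct. The Kronecker expression for $\mathcal{D}(\mathrm{Ham}(3,n))$, the four cases according to the number $t$ of tensor factors orthogonal to $\mathbf{1}_n$, the resulting distance eigenvalues $3n^{2}(n-1)$, $-n^{2}$, $0$, $0$ with the stated multiplicities, and the final affine shift $\mu\mapsto\alpha k+(1-\alpha)\mu$ all check out.

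The route, however, differs from the paper's. The paper does not write down $\mathcal{D}$ explicitly; it simply observes that $K_n$ is $(n-1)$-transmission regular and then invokes Lemma~\ref{lem14} (the $\mathcal{D}_\alpha$-spectrum of a Cartesian product $G+H$ of two transmission-regular graphs) twice, once for $K_n+K_n$ and once more for $(K_n+K_n)+K_n$. Your approach bypasses that lemma entirely: you identify the graph metric with the Hamming metric, write $\mathcal{D}$ as a linear combination of threefold Kronecker products of $J_n$ and $I_n$, and diagonalise directly in the tensor eigenbasis. What this buys you is a fully self-contained argument that does not rely on any black-box product formula and that makes the multiplicity $n^{3}-3n+2$ transparent as $(n-1)^{2}(n+2)$; what the paper's approach buys is brevity once Lemma~\ref{lem14} is in hand, and it generalises immediately to $\mathrm{Ham}(d,n)$ for arbitrary $d$ by iterating the lemma $d-1$ times.
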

\begin{proof}
The graph $K_{n}$ is distance regular with distance regularity $n-1$. Now the proof follows by repeated application of Lemma \ref{lem14} and from the distance spectrum of $K_{n}$.
\qed\end{proof}
The graph $C_{k}+C_{m}$ where both $k$ and $m$ are odd is defined as the $C_{4}$ nanotori, $T_{k,m,C_{4}}$\cite{23}.

\begin{thm}\label{CoronaTh4}
 The generalized distance spectrum of the $C_{4}$ nanotori, $T_{k,m,C_{4}}$ consists of the following numbers\\
$(1)$ $\frac{(m+k)(mk-1)}{4}$;\\
$(2)$ $-\frac{m(1-\alpha)}{4}\sec^{2}(\frac{\pi j}{2k})+\frac{\alpha(m+k)(mk-1)}{4}$, $j\in\{1,2,\cdots,k-1\}$ and even;\\
$(3)$ $-\frac{m(1-\alpha)}{4}\csc^{2}(\frac{\pi r}{2k})+\frac{\alpha(m+k)(mk-1)}{4}$, $r\in\{1,2,\cdots,k-1\}$ and odd;\\
$(4)$ $-\frac{k(1-\alpha)}{4}\sec^{2}(\frac{\pi t}{2m})+\frac{\alpha(m+k)(mk-1)}{4}$, $t\in\{1,2,\cdots,m-1\}$ and even;\\
$(5)$ $-\frac{k(1-\alpha)}{4}\csc^{2}(\frac{\pi l}{2m})+\frac{\alpha(m+k)(mk-1)}{4}$, $l\in\{1,2,\cdots,m-1\}$ and odd;\\
$(6)$ $\frac{\alpha(m+k)(mk-1)}{4}$, with multiplicity $(k-1)(m-1)$.
\end{thm}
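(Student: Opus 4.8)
The plan is to recognise the $C_4$ nanotorus as the Cartesian product $T_{k,m,C_4}=C_k+C_m$ and then to apply Lemma \ref{lem14} verbatim, feeding in the distance spectra of the two odd cycles from Lemma \ref{lem15}. First I would record that for odd $n$ the cycle $C_n$ is transmission regular: from any vertex there are exactly two vertices at each distance $1,2,\ldots,\frac{n-1}{2}$, so $Tr_{C_n}(v)=2\bigl(1+2+\cdots+\frac{n-1}{2}\bigr)=\frac{n^{2}-1}{4}$ for every $v$. Hence $C_k$ is $\frac{k^{2}-1}{4}$-transmission regular on $k$ vertices and $C_m$ is $\frac{m^{2}-1}{4}$-transmission regular on $m$ vertices, which is precisely the situation covered by Lemma \ref{lem14}.

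Next I would apply Lemma \ref{lem14} with $G=C_k$ (so the order is $p=k$ and the transmission regularity is $\frac{k^{2}-1}{4}$) and $H=C_m$ (order $n=m$, transmission regularity $\frac{m^{2}-1}{4}$). Part $(1)$ of that lemma yields the single eigenvalue $m\cdot\frac{k^{2}-1}{4}+k\cdot\frac{m^{2}-1}{4}=\frac{mk(k+m)-(m+k)}{4}=\frac{(m+k)(mk-1)}{4}$, which is item $(1)$. Part $(4)$ of the lemma gives $\alpha\cdot\frac{(m+k)(mk-1)}{4}$ with multiplicity $(k-1)(m-1)$, which is item $(6)$. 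For the remaining eigenvalues I would substitute the distance spectrum of $C_k$ from Lemma \ref{lem15}: since $k$ is odd, the non-principal distance eigenvalues of $C_k$ are $-\frac14\sec^{2}\!\bigl(\frac{\pi j}{2k}\bigr)$ for even $j\in\{1,\ldots,k-1\}$ and $-\frac14\csc^{2}\!\bigl(\frac{\pi j}{2k}\bigr)$ for odd $j\in\{1,\ldots,k-1\}$; plugging each into part $(2)$ of Lemma \ref{lem14}, namely $(1-\alpha)m\,\mu_i^{\mathcal{D}}(C_k)+\alpha\cdot\frac{(m+k)(mk-1)}{4}$, produces items $(2)$ and $(3)$. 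Symmetrically, inserting the non-principal distance eigenvalues of $C_m$ into part $(3)$ of the lemma, $(1-\alpha)k\,\eta_j^{\mathcal{D}}(C_m)+\alpha\cdot\frac{(m+k)(mk-1)}{4}$, gives items $(4)$ and $(5)$.

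There is essentially no analytic difficulty here; the work is bookkeeping. The points I would be careful about are the parity split — keeping the $\sec^{2}$ terms attached to even indices and the $\csc^{2}$ terms to odd indices for each cycle — and the fact that the $C_k$-contributions are scaled by $m$ (the order of the other factor) while the $C_m$-contributions are scaled by $k$, exactly as Lemma \ref{lem14} prescribes. I would close with a multiplicity check: $1+(k-1)+(m-1)+(k-1)(m-1)=km$, the number of vertices of $T_{k,m,C_4}$, confirming that the list above is the full spectrum. The only genuine (and very mild) obstacle is matching the indexing conventions of Lemmas \ref{lem14} and \ref{lem15} against the statement.
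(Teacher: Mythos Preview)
Your proposal is correct and follows exactly the paper's own route: the paper simply notes that the odd cycle $C_{2n+1}$ is distance (transmission) regular with regularity $n(n+1)=\frac{(2n+1)^2-1}{4}$ and then invokes Lemmas \ref{lem14} and \ref{lem15}. You have merely written out the bookkeeping that the paper leaves implicit, including the transmission computation, the substitution of the cycle distance spectra into each part of Lemma \ref{lem14}, and the multiplicity check.
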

\begin{proof}
The cycle $C_{2n+1}$ is distance regular with distance regularity $n(n+1)$. Now the proof follows from Lemma \ref{lem14} and Lemma \ref{lem15}.
\qed\end{proof}


\end{document}